\newcommand{\red}{\textcolor{red}}
\newcommand{\blue}{\textcolor{blue}}
\newcommand{\p}{\partial}
\newcommand{\eps}{\varepsilon}
\newcommand{\id}{\textup{Id}}
\newtheorem{theorem}{Theorem}%[section]
\newtheorem{lemma}[theorem]{Lemma}
\newtheorem{definition}[theorem]{Definition}
\newtheorem{proposition}[theorem]{Proposition}
\newtheorem{corollary}[theorem]{Corollary}
\numberwithin{equation}{section}
\numberwithin{theorem}{section}
\newcommand{\rB}{B^{+}}
\newcommand{\lB}{B^{-}}
\newcommand{\rP}{\Gamma^{+}}
\newcommand{\lP}{\Gamma^{-}}
\newcommand{\dl}{\langle\!\langle}
\newcommand{\Bdl}{\Big\langle\!\!\Big\langle}
\newcommand{\dr}{\rangle\!\rangle}
\newcommand{\Bdr}{\Big\rangle\!\!\Big\rangle}
\newcommand{\reverse}{\mathfrak{r}}
\newcommand{\myuwave}[1]{%
  \setbox0=\hbox{#1}
  \raisebox{-2pt}{\uwave{\phantom{#1}}}
  \llap{\usebox0}
}
\author{Chenjiayue Qi\thanks{chenjiayue@ihes.fr}}
\affil{Institut des Hautes Études Scientifiques (IHES), Bures-sur-Yvette, France}
\date{}
\begin{document}

\title{Global Solution of a Functional Hamilton-Jacobi Equation associated with a Hard Sphere Gas}
\maketitle
\vspace{-25pt}
\begin{abstract}
In recent years it has been shown for hard sphere gas that, by retaining the correlation information, dynamical fluctuation and large deviation of empirical measure around Boltzmann equation could be proved, in addition to the classical kinetic limit result by Lanford. After taking low-density limit, the correlation information can be encoded into a functional Hamilton-Jacobi equation. The results above are restricted to short time. This paper establishes global-in-time construction of a solution of the Hamilton-Jacobi equation, by analyzing a system of coupled Boltzmann equations. The global solution converges to a non-trivial stationary solution of the Hamilton-Jacobi equation in the long-time limit under proper assumptions.
\end{abstract}

\paragraph{Acknowledgements}
The author is very grateful to Laure Saint-Raymond and Thierry Bodineau for their many inspiring discussions on the topic of this paper, as well as their valuable suggestions on the overall understanding of its main results.

\tableofcontents

\section{Introduction}
In the seminal work of Lanford \cite{Lanford_1975}, it is shown that the average dynamics of a hard sphere gas in the low-density limit is governed by the Boltzmann equation. The proof establishes the propagation of chaos for a hard sphere gas: dynamical correlations between different hard spheres are negligible in a certain sense. 

Since the result above could be seen as a law of large numbers, one can also look at the corresponding central limit theorem and large deviation theory. In \cite{BGSS_2023}, by retaining the correlation information between different particles, the dynamical fluctuations and large deviations of the empirical measure around Boltzmann equation are derived. 

In particular, the correlation information is encoded into the so-called cumulant generating functional $\mathcal{I}_{\eps}(t,g)$, and it is shown that after taking the low-density limit the limiting functional $\mathcal{I}(t,g)$ satisfies a functional Hamilton-Jacobi equation. The functional Hamilton-Jacobi equation could provide a direct new proof for the convergence of the empirical measure towards the solution of Boltzmann equation. It also plays an important role in establishing the dynamical fluctuations and large deviations of a hard sphere gas. All the results in \cite{Lanford_1975} and \cite{BGSS_2023} mentioned above are however restricted to short times. Here we mention the recent breakthrough \cite{Deng_Hani_Ma_2024} on extending Lanford's argument \cite{Lanford_1975} into long times.

The current paper is devoted to the construction of global-in-time solution $\mathcal{I}(t,g)$ of the limiting functional Hamilton-Jacobi equation. The construction is based on the study of the Euler-Lagrange system (coupled Boltzmann equations) associated with the Hamilton-Jacobi equation. 

In subsection \ref{sec;hard_sphere_gas} we recall the basic setting of the hard sphere gas, while introducing the formulation of the functional Hamilton-Jacobi equation. Then in subsection \ref{sec:coupled_intro}, we introduce the associated Euler-Lagrange system, i.e. the coupled Boltzmann equations. In subsection \ref{sec:main_result_subsection} we claim the main results of our paper: the global well-posedness of the coupled Boltzmann equations, and further the existence of global-in-time bounded solutions of the functional Hamilton-Jacobi equation. In subsection \ref{sec:implications}, we discuss future directions based on the current results.

\subsection{A Hamilton-Jacobi Equation for Hard Sphere Gas}\label{sec;hard_sphere_gas}
One approach to describe hard sphere gas at microscopic level is to fix the total number $N$, as well as the diameter $\eps>0$ of these identical hard spheres. The evolution for the positions $(\mathbf{x}_1^\eps,...,\mathbf{x}_N^\eps)\in \mathds{T}^{dN}$ and velocities $(\mathbf{v}_1^\eps,...,\mathbf{v}_N^\eps)\in \mathds{R}^{dN}$ of the $N$ particles, satisfies a system of ordinary differential equations (Newton's laws)
\begin{equation}
\frac{d\mathbf{x}_i^\eps}{dt}=\mathbf{v}_i^{\eps},\quad \frac{d\mathbf{v}_i^\eps}{dt}=0\quad \textup{as long as $|\mathbf{x}_i^\eps-\mathbf{x}_j^\eps|>\eps$ for arbitrary $j$ with $1\leq i\not=j\leq N$},
\label{eq:hard_sphere_law_1}
\end{equation}
with specular reflection at collsions if $|\mathbf{x}_i^{\eps}-\mathbf{x}_j^{\eps}|=\eps$
\begin{equation}\label{eq:hard_sphere_law_2}
(\mathbf{v}_i^{\eps})':=\mathbf{v}_i^{\eps}-(\mathbf{v}_i^{\eps}-\mathbf{v}_j^{\eps})\cdot\omega\omega,\quad (\mathbf{v}_j^{\eps})':=\mathbf{v}_j^{\eps}+(\mathbf{v}_i^{\eps}-\mathbf{v}_j^{\eps})\cdot\omega\omega,\quad \omega:=\frac{\mathbf{x}_i^{\eps}-\mathbf{x}_j^{\eps}}{\eps}
\end{equation}
This means after a collision with $|\mathbf{x}_i^{\eps}-\mathbf{x}_j^{\eps}|=\eps$, the velocities $(\mathbf{v}_i^{\eps},\mathbf{v}_j^{\eps})$ of the two particles will be changed into $\big((\mathbf{v}_i^{\eps})',(\mathbf{v}_j^{\eps})'\big)$. This will induce a well-defined trajectory for initial conditions of full Lebesgue measure in the canonical phase space $\mathcal{D}_N^{\eps}$
\begin{equation*}
\mathcal{D}_{N}^{\eps}:=\Big\{(\mathbf{x}_1^\eps,...,\mathbf{x}_N^\eps,\mathbf{v}_1^\eps,...,\mathbf{v}_N^\eps)\in\mathds{T}^{dN}\times \mathds{R}^{dN}:\ \forall i\not=j,\ |\mathbf{x}_i^\eps-\mathbf{x}_j^\eps|>\eps\Big\},
\end{equation*}
excluding multiple collisions and accumulation of collision times. 

This microscopic dynamics induces a Liouville equation for the probability density $W_N^{\eps}$ of the $N$ particles, where $W_{N}^{\eps}(t,X_N,V_N)$ refers to the probability density of finding $N$ hard spheres with configuration $(X_N,V_N)$ at time $t$. Here the variable $X_N$ refers to the positions of the $N$ particles $X_N:=(x_1,...,x_N)$, and the variable $V_N$ refers to the velocities of the $N$ particles $V_N:=(v_1,...,v_N)$. The Liouville equation for $W_{N}^{\eps}$ is
\begin{equation*}
\p_t W_{N}^\eps+V_N\cdot\nabla_{X_N}W_{N}^\eps=0,
\end{equation*}
with boundary condition corresponding to specular reflection. 

One can further consider the grand canonical formulation of a hard sphere gas: instead of fixing the total number of particles, we assume the total number $\mathcal{N}$ of particles to be random with a modified Poisson distribution law. For each diameter $\eps$, we fix 
a constant $\mu_\eps$ as the parameter for the modified Poisson distribution of the total number of particles. We assume that at initial time $t=0$ the probability density of having $N$ particles and configuration $(X_N,V_N)$ as the following with $z_i:=(x_i,v_i)$
\begin{equation}\label{eq:grand_canonical_measure}
\frac{1}{\mathcal{Z}^{\eps}}\frac{\mu_{\eps}^N}{N!}\prod_{i=1}^{N}f^0(z_i)\mathbbm{1}_{\mathcal{D}_{N}^{\eps}}.
\end{equation}
Here $\mathcal{Z}^{\eps}$ is the normalizing factor defined as
\begin{equation*}
\mathcal{Z}^{\eps}:=1+\sum_{N\geq 1}\frac{\mu_{\eps}^N}{N!}\int_{\mathds{T}^{dN}\times \mathds{R}^{dN}}\prod_{i=1}^{N}f^0(z_i)\mathbbm{1}_{\mathcal{D}_{N}^{\eps}}dV_NdX_N.
\end{equation*}
It is clear that we have two sources of randomness: the number $\mathcal{N}$ of particles is random, while given the total number $N$, the configuration $(X_{N},V_{N})$ is also random. For each sample $(X_{N},V_{N})$, it will follow the evolution law given by equations \eqref{eq:hard_sphere_law_1} and \eqref{eq:hard_sphere_law_2}. Thus at each time $t\geq 0$ we would have the distribution law for $(X_{N},V_{N})$.

In the Boltzmann-Grad scaling, we impose $\mu_{\eps}=\eps^{-(d-1)}$ to ensure the number of collisions per particle is of order $1$ per unit time \cite{Grad_1949}. This scaling implies the asymptotic below for $\mathds{E}_{\eps}(\mathcal{N})$, where $\mathds{E}_\eps$ is the expectation upon the probability measure given by \eqref{eq:grand_canonical_measure}
\begin{equation*}
\lim_{\eps\rightarrow 0}\mathds{E}_{\eps}(\mathcal{N})\eps^{d-1}=1.
\end{equation*}
A central object in the study of hard sphere gas is the empirical measure $\pi_t^{\eps}$, defined as
\begin{equation*}
\pi_t^{\eps}:=\frac{1}{\mu_{\eps}}\sum_{i=1}^{\mathcal{N}}\delta_{\mathbf{z}_i^{\eps}(t)},
\end{equation*}
where $\delta_{\mathbf{z}_i^\eps(t)}$ means the Dirac mass at $\mathbf{z}_i^\eps(t)\in \mathds{T}^d\times \mathds{R}^d$. To encode the correlation information, the cumulant generating functional for hard sphere gas with diameter $\eps$ is introduced
\begin{equation}\label{eq:cumulant_intro}
\Lambda^{\eps}(t,h):=\frac{1}{\mu_{\eps}}\log\mathds{E}_{\eps}\Big[\exp\Big(\mu_{\eps}\pi_t^{\eps}(h)\Big)\Big]=\frac{1}{\mu_{\eps}}\log\mathds{E}_{\eps}\Big[\exp\Big(\sum_{i=1}^{\mathcal{N}}h(\mathbf{z}_i^{\eps}(t))\Big)\Big],
\end{equation}
where $h$ is a test function with variables $(x,v)$. By taking the Boltzmann-Grad limit $\eps\rightarrow 0$ with $\mu_\eps\eps^{d-1}=1$, the functional $\Lambda^{\eps}(t,h)$ should converge to a limiting cumulant generating functional $\Lambda(t,h)$. This functional satisfies the following functional Hamilton-Jacobi equation, with the functional derivative $\frac{\p \Lambda(t,h)}{\p h(t)}$ taken as a measure in $x$ and $v$ for each $t$
\begin{equation}\label{eq:functional_HJ_transport}
\p_t\Lambda(t,h)=\mathcal{H}\Big(\frac{\p \Lambda(t,h)}{\p h(t)},h(t)\Big)+\int v\cdot\nabla_x h \frac{\p \Lambda(t,h)}{\p h(t)}dvdx,\quad \Lambda(0,g)=\int (e^{g(0)}-1)f^0 dvdx.
\end{equation}
In this equation, the Hamiltonian $\mathcal{H}(\varphi,p)$ is defined as
\begin{equation}\label{eq:functional_hamiltonian}
\mathcal{H}(\varphi,p)=\frac{1}{2}\int\varphi(x,v)\varphi(x,v_*)\Big(e^{\Delta p (x,v,v_*)}-1\Big)\big((v_*-v)\cdot\omega\big)_+d\omega dv_* dvdx,
\end{equation}
with $\omega\in \mathds{S}^{d-1}$ being the collision direction. The function $\Delta p$ is defined as
\begin{equation}\label{eq:Delta_p}
\Delta p(x,v,v_*):=p(x,v')+p(x,v_*')-p(x,v)-p(x,v_*).
\end{equation}
The variables $(v',v_*')$ is the pre-collisional configuration, defined in a way similar to \eqref{eq:hard_sphere_law_2}
\begin{equation*}
v'=v-\big((v-v_*)\cdot\omega \big)\omega,\quad v_*'=v_*+\big((v-v_*)\cdot\omega \big)\omega.
\end{equation*}
This Hamilton-Jacobi equation contains a collision term represented by $\mathcal{H}$, and a tranport term, which resembles the Boltzmann equation. In the Hamiltonian $\mathcal{H}$, the term $e^{\Delta p}-1$ represents the effect of collision: the $p(x,v')+p(x,v_*')$ in $\Delta p$ has a similar role as the gain term in Boltzmann equation, and the $-p(x,v)-p(x,v_*)$ in $\Delta p$ has the same role as the loss term in Boltzmann equation. By taking the derivative $\frac{\p \Lambda(t,h)}{\p h(t)}$ at $h=0$, the Boltzmann equation is recovered formally in a weak sense.

In fact, the Hamilton-Jacobi equation encodes much more information about the hard sphere dynamics than the usual Boltzmann equation, in particular it encodes all the dynamical correlations. For a complete justification of the contents above, the readers may read \cite{BGSS_2023}. In \cite{Bouchet_2020}, more formal discussion with physical motivation about the meaning of this Hamiltonian is given.

One can also use test functions $h$ on the entire trajectory during the time interval $[0,t]$, as $h\big(z([0,t])\big)$, which is the case in \cite{BGSS_cluster}. Particularly in this paper, we may choose the test function of the form
\begin{equation}\label{eq:def_of_g_by_h}
h\big(z([0,t])\big)=g\big(t,z(t)\big)-\int_0^tD_sg\big(s,z(s)\big)ds,
\end{equation}
where $D_s$ refers to $\p_s+v\cdot \nabla_x$, and $g$ is a function depending on variables $(t,x,v)$. This choice of test functions enables us to integrate the transport term in equation \eqref{eq:functional_HJ_transport}. It then gives the Hamilton-Jacobi equation for $\mathcal{I}(t,g):=\Lambda(t,h)$, where $h$ is defined through $g$ by \eqref{eq:def_of_g_by_h}
\begin{equation}\label{eq:functional_HJ}
\p_t\mathcal{I}(t,g)=\mathcal{H}\Big(\frac{\p \mathcal{I}(t,g)}{\p g(t)},g(t)\Big),\quad \mathcal{I}(0,g)=\int (e^{g(0)}-1)f^0 dvdx.
\end{equation}
This functional equation has been introduced in Theorem 7 of \cite{BGSS_2023}.

\subsection{Coupled Boltzmann Equations as an Euler-Lagrange System}\label{sec:coupled_intro}
To find the solution $\mathcal{I}(t,g)$ of the Hamiltonian-Jacobi equation \eqref{eq:functional_HJ}, it is shown in the subsection 7.1.1. of \cite{BGSS_2023} that we can look at the associated Hamiltonian system. There are two interesting equivalent formulations of the Hamiltonian system. The first formulation is for $s\in[0,t]$
\begin{equation}\label{eq:Hamiltonian_physical_intro}
\begin{split}
&D_s\varphi_t(s)=\frac{\p \mathcal{H}}{\p p}\big(\varphi_t(s),p_t(s)\big),\quad \textup{with $\varphi_t(0)=f^0e^{p_t(0)}$},\\
&D_s(p_t-g)(s)=-\frac{\p \mathcal{H}}{\p \varphi}\big(\varphi_t(s),p_t(s)\big),\quad \textup{with $p_t(t)=g(t)$}.
\end{split}
\end{equation}
The subscript $t$ means we are studying the coupled system in the time interval $s\in[0,t]$, with terminal data given at time $t$. Given a mild solution $(\varphi_t,p_t)$ of equation \eqref{eq:Hamiltonian_physical_intro} on $[0,t]$ with initial data $\varphi_t(0)=f^0e^{p_t(0)}$ and terminal data $p_t(t)=g(t)$, we define the functional $\widehat{\mathcal{I}}(t,g)$ as
\begin{equation}\label{eq:characteristic_functional_intro}
    \widehat{\mathcal{I}}(t,g):=\int (e^{p_t(0)}-1)f^0 dvdx+\int_0^t\int \varphi_t(s) D_s\big(p_t(s)-g(s)\big)dvdxds+\int_0^t\mathcal{H}\Big(\varphi_t(s),p_t(s)\Big)ds.
\end{equation}
It will be proved in Theorem \ref{th:justification_mild} that an equivalent form of the functional $\widehat{\mathcal{I}}(t,g)$ constructed in equation \eqref{eq:characteristic_functional_intro} is a mild solution of the Hamilton-Jacobi equation \eqref{eq:functional_HJ}. The notion of mild solution will be specified in Section \ref{sec:main_result}.

However in this paper we do not directly deal with the coupled system given above. In \cite{BGSS_2023} (Section 7), by performing the change of variables
\begin{equation}\label{eq:change_of_variable_BGSS}
\Big(\psi_t(s),\eta_t(s)\Big)=\Big(\varphi_t(s)e^{-p_t(s)},e^{p_t(s)}\Big),
\end{equation}
an alternative equivalent formulation with better symmetry is introduced 
\begin{equation}\label{eq:coupled_Boltzmann_intro_alpha=0}
\begin{split}
D_s\psi_t&=-\psi_t D_sg+\int \big((v_*-v)\cdot\omega\big)_+\eta_t(v_*)\Big[\psi_t(v')\psi_t(v_*')-\psi_t(v)\psi_t(v_*)\Big]d\omega dv_*,\quad \psi_t(0)=f^0,\\
D_s \eta_t&=\eta_t D_sg-\int \big((v_*-v)\cdot\omega\big)_+\psi_t(v_*)\Big[\eta_t(v')\eta_t(v_*')-\eta_t(v)\eta_t(v_*)\Big]d\omega dv_*,\quad \eta_t(t)=e^{g(t)}.
\end{split}
\end{equation}
In the paper, we generalize the change of variables \eqref{eq:change_of_variable_BGSS} into
\begin{equation}\label{eq:change_of_variable_alpha}
\Big(\psi_t(s),\eta_t(s)\Big)=\Big(\varphi_t(s)e^{-p_t(s)+\alpha'|v|^2},e^{p_t(s)-\alpha'|v|^2}\Big),
\end{equation}
for arbitrary $\alpha'\in\mathds{R}$. The change of variables \eqref{eq:change_of_variable_BGSS} in \cite{BGSS_2023} corresponds to the particular case $\alpha'=0$. It will be proved in Lemma \ref{lem:symmetrization} that after this generalized change of variables, the $(\psi_t,\eta_t)$ satisfies the following coupled Boltzmann equations during the time interval $[0,t]$
\begin{equation}\label{eq:coupled_Boltzmann_intro_pre}
\begin{split}
&D_s\psi_t=-\psi_t D_sg+\int \big((v_*-v)\cdot\omega\big)_+\eta_t(v_*)\Big[\psi_t(v')\psi_t(v_*')-\psi_t(v)\psi_t(v_*)\Big]d\omega dv_*,\\
&D_s \eta_t=\eta_t D_sg-\int \big((v_*-v)\cdot\omega\big)_+\psi_t(v_*)\Big[\eta_t(v')\eta_t(v_*')-\eta_t(v)\eta_t(v_*)\Big]d\omega dv_*,\\
&\psi_t(0)=f^0e^{\alpha'|v|^2},\quad \eta_t(t)=e^{g(t)}e^{-\alpha'|v|^2}.
\end{split}
\end{equation}
Under the generalized change of variables, the form of the coupled Boltzmann equation is the same as \eqref{eq:coupled_Boltzmann_intro_alpha=0}, but with different $\psi_t(0)$ and $\eta_t(t)$. It will be explained in Section \ref{sec:perturbation} that a proper choice of $\alpha'$ enables us to solve the equation in a convenient functional setting. 
\begin{definition}\label{def:biased_collision_operator}
\textup{\textbf{[Biased Collision Operator] }}We define the biased collision operator $\mathcal{Q}_{\eta}(\psi_1,\psi_2)$ as follows
\begin{equation}\label{eq:biased_collision_operator}
\begin{split}
&\mathcal{Q}_\eta(\psi_1,\psi_2):=\frac{1}{2}\int \big((v_*-v)\cdot\omega\big)_+ \eta(v_*)\Big[\psi_1(v')\psi_2(v_*')+\psi_2(v')\psi_1(v_*')\\
&\qquad\qquad\qquad\qquad\qquad\qquad\qquad\qquad\ \ \ -\psi_1(v)\psi_2(v_*)-\psi_2(v)\psi_1(v_*)\Big]d\omega dv_*.
\end{split}
\end{equation}
\end{definition}
The function $\phi$ is defined as follows, related to the spatial transport
\begin{equation}\label{eq:transport}
\phi(s,x,v):=D_sg(s,x,v)=(\p_s+v\cdot\nabla_x)g(s,x,v)
\end{equation}
Based on the definitions of the biased collision operator and the function $\phi$, we rewrite equation \eqref{eq:coupled_Boltzmann_intro_pre} in a more compact form 
\begin{equation}\label{eq:coupled_Boltzmann_sym_intro}
\begin{split}
D_s \psi_t&=-\psi_t\phi+\mathcal{Q}_{\eta_t}(\psi_t,\psi_t),\quad \psi_t(0)=f^0e^{\alpha'|v|^2},\\
D_s \eta_t&=\eta_t\phi-\mathcal{Q}_{\psi_t}(\eta_t,\eta_t),\quad \eta_t(t)=e^{g(t)}e^{-\alpha' |v|^2}.
\end{split}
\end{equation}
Since equation \eqref{eq:coupled_Boltzmann_sym_intro} is equivalent to \eqref{eq:Hamiltonian_physical_intro} through the change of variables \eqref{eq:change_of_variable_alpha}, we can as well construct the functional $\widehat{\mathcal{I}}(t,g)$ given in \eqref{eq:characteristic_functional_intro}, by solving equation \eqref{eq:coupled_Boltzmann_sym_intro}. 

We call $\psi_t$ the 'forward component', due to its given initial data and the positive sign of collision operator. The other component $\eta_t$ is called the 'backward component', due to its given terminal data at time $t$ and also the negative sign of collision operator. Each component provides a bias for the nonlinear collision of the other component, which is transparent in the equation \eqref{eq:coupled_Boltzmann_sym_intro}. If we take $g-\alpha'|v|^2\equiv 0$, the coupled Boltzmann equations would degenerate to the usual Boltzmann equation, with $\eta_t\equiv 1$ and $\phi\equiv 0$.

A pivotal tool we use in the present paper to solve equation \eqref{eq:coupled_Boltzmann_sym_intro} is the theory of global-in-time solution for the Boltzmann equation with given initial data. There have been many works dealing with global solutions of Boltzmann equation, with different notions of solutions. Early works include \cite{ukai_1977} for classical solutions, \cite{Illner_Shinbrot_1984} for mild solutions, and \cite{DiPerna_Lions_1989} for renormalized solutions. Specifically in the present paper, we will adapt the perturbation regime for global mild solutions in a certain weighted $L^{\infty}$ space \cite{Ukai_2006}. 

A subtle issue in the present paper is that we want to solve equation \eqref{eq:coupled_Boltzmann_sym_intro} for those functions $e^{g(t)}$ with quadratic exponential growth in the velocity variable, for example when $g(t)=\frac{1}{4}|v|^2$. If we take $\alpha'=0$ in \eqref{eq:change_of_variable_alpha}, which corresponds to the original change of variables \eqref{eq:change_of_variable_BGSS} in \cite{BGSS_2023}, it naturally requires the forward component $\psi_t$ to have quadratic exponential decay in the velocity variable. The initial data $f^0$ could be assumed to have quadratic exponential decay, but it is hard to prove the propagation of this quadratic exponential decay. To overcome this difficulty, we will carry out a symmetrization procedure in Section \ref{sec:perturbation} by choosing a proper $\alpha'$.

\section{Main Results}\label{sec:main_result}
\subsection{Global-in-time Solution}\label{sec:main_result_subsection}
As explained in subsection \ref{sec:coupled_intro}, to solve the Hamilton-Jacobi equation \eqref{eq:functional_HJ} we will look at the associated Euler-Lagrange system, which is the coupled Boltzmann equations \eqref{eq:coupled_Boltzmann_sym_intro}. The goal is to find a certain class of functions $g$ such that, the mild solution $\mathcal{I}(t,g)$ could be constructed for arbitrary time $t\geq 0$. 

We say a pair of functions $(\psi_t,\eta_t)$ is a mild solution of the coupled Boltzmann equations \eqref{eq:coupled_Boltzmann_sym_intro}, if for arbitrary $s\in[0,t]$
\begin{equation}\label{eq:mild_solution_Boltzmann}
\begin{split}
&\psi_t(s)=S_sf^0e^{\alpha'|v|^2}-\int_0^s S_{s-\tau}\psi_t(\tau)\phi(\tau)d\tau+\int_0^s S_{s-\tau}\mathcal{Q}_{\eta_t(\tau)}\big(\psi_t(\tau),\psi_t(\tau)\big)d\tau,\\
&\eta_t(s)=S_{-(t-s)}e^{g(t)}e^{-\alpha'|v|^2}-\int_s^t S_{-(\tau-s)}\eta_t(\tau)\phi(\tau)d\tau+\int_s^t S_{-(\tau-s)}\mathcal{Q}_{\psi_t(\tau)}\big(\eta_t(\tau),\eta_t(\tau)\big)d\tau.
\end{split}
\end{equation}
Here the operators $\{S_{\tau}\}_{\tau\in\mathds{R}}$ are the transport semigroup defined as $S_{\tau}f(x,v)=f(x-\tau v,v)$. For the Hamilton-Jacobi equation \eqref{eq:functional_HJ}, we say a functional $\mathcal{I}(t,g)$ is a mild solution of the equation if for arbitrary $t\geq 0$
\begin{equation}\label{eq:mild_solution_HJ}
\mathcal{I}(t,g)=\mathcal{I}(0,g)+\int_0^t \mathcal{H}\Big(\frac{\p \mathcal{I}(s,g)}{\p g(s)},g(s)\Big)ds,\quad \mathcal{I}(0,g)=\int (e^{g(0)}-1)f^0dvdx.
\end{equation}
The main result of the paper is to construct a mild solution of the Hamilton-Jacobi equation with $f^0$ close to the spatially homogeneous standard Maxwellian $M$
\begin{equation*}
M(x,v)=(2\pi)^{-\frac{d}{2}}\exp\big(-\frac{1}{2}|v|^2\big),
\end{equation*}
and the function $e^{g}$ close to a certain reference function $\mathcal{E}$. In this paper, we consider those $\mathcal{E}$ of the form
\begin{equation}\label{assump:eta_equi}
\mathcal{E}(x,v)=(2\pi)^{-\frac{d}{2}}\exp(\alpha|v|^2),\quad \alpha<\frac{1}{2}.
\end{equation}
To simplify notations, we define the normalization function $\mathcal{B}$ as $e^{-\alpha'|v|^2}$. We want to choose a proper $\alpha'$ that symmetrizes the forward initial data and the backward terminal data in \eqref{eq:coupled_Boltzmann_sym_intro}, with $f^0e^{\alpha'|v|^2}$ and $e^{g(t)}e^{-\alpha'|v|^2}$ being close to the same coupled equilibrium $\mathcal{G}$
\begin{equation}\label{eq:coupled_equilibrium}
\mathcal{G}:=M^{\frac{1}{2}}\mathcal{E}^{\frac{1}{2}}=(2\pi)^{-\frac{d}{2}}\exp\Big(-\frac{1}{2}(\frac{1}{2}-\alpha)|v|^2\Big).
\end{equation}
This requires us to choose $\mathcal{B}$ with $M\mathcal{B}^{-1}=\mathcal{EB}=\mathcal{G}$, which yields
\begin{equation}\label{eq:normalizing_function}
\mathcal{B}=M^{\frac{1}{2}}\mathcal{E}^{-\frac{1}{2}}=\exp\Big(-\frac{1}{2}(\frac{1}{2}+\alpha)|v|^2\Big),
\end{equation}
and thus
\begin{equation}
\alpha'=\frac{1}{2}\big(\frac{1}{2}+\alpha\big).
\end{equation}

Since we only consider those reference functions $\mathcal{E}$ with $\alpha<\frac{1}{2}$, the coupled equilibrium $\mathcal{G}$ has quadratic exponential decay in $v$.

We will solve the coupled Boltzmann equations \eqref{eq:coupled_Boltzmann_sym_intro} with $\psi_t$ and $\eta_t$ being perturbations around $\mathcal{G}$. The perturbations should be in a $L^{\infty}$ space with a polynomial weight on $v$, denoted by $L_{\beta}^{\infty}$
\begin{equation}\label{eq:weighted_L_infty}
\lVert f\rVert_{L_{\beta}^{\infty}}=\sup_{(x,v)\in \mathds{T}^d\times \mathds{R}^d}\Big|f(x,v)(1+|v|)^{\beta}\Big|.
\end{equation}

For Theorem \ref{th:main_theorem_1}, we assume the forward initial data $f^0\mathcal{B}^{-1}$ is close to the coupled equilibrium $\mathcal{G}$ with the perturbation $L_{x,v}^2$-orthogonal to a kernel $\mathcal{K}$, representing the conserved quantities, to be defined in equation \eqref{eq:kernel} 
\begin{equation}\label{assump:main_1}
    \Big\lVert f^0\mathcal{B}^{-1}-\mathcal{G}\Big\rVert_{L_{\beta+1}^{\infty}}<c,\quad f^0\mathcal{B}^{-1}-\mathcal{G}\in\mathcal{K}^{\perp}. \tag{H1}
\end{equation}
The kernel $\mathcal{K}$ is defined as
\begin{equation}\label{eq:kernel}
\mathcal{K}:=\textup{span}\Big\{\mathcal{G},\mathcal{G}(v)v_1,...,\mathcal{G}(v)v_d,\mathcal{G}(v)|v|^2\Big\}.
\end{equation}
Throughout the paper, the constant $c>0$ is always taken to be small enough and properly tunned according to other parameters.

The terminal data $e^{g(t)}\mathcal{B}$ is assumed to be close to the coupled equilibrium $\mathcal{G}$ with the perturbation orthogonal to the kernel $\mathcal{K}$ at any time $t>0$
    \begin{equation}\label{assump:main_2}
    \Big\lVert e^{g(t)}\mathcal{B}-\mathcal{G}\Big\rVert_{L_{\beta+1}^{\infty}}< c,\quad e^{g(t)}\mathcal{B}-\mathcal{G}\in\mathcal{K}^{\perp}.  \tag{H2}
    \end{equation}
These orthogonality conditions are common in the literature for the solution of Boltzmann equations on torus to have decay in time. For example, the readers may see Theorem 2.3.1 of \cite{ukai_note}.

The function $g$ and the forward initial data $f^0\mathcal{B}^{-1}$ are assumed to have certain regularity and continuity
\begin{equation}\label{assump:main_3}
    \phi(s,x,v) \equiv 0,\ \textup{$g$ has uniformly bounded derivatives in $t$ and $x$},\quad S_{\tau}f^0\mathcal{B}^{-1}\in \textup{Lip}\big(\mathds{R}_{\tau};L_{\beta}^{\infty}\big)  \tag{H3}.
\end{equation}
Throughout the paper, the dimension $d$ will be taken as $d\geq 3$.

\begin{theorem}\label{th:main_theorem_1}
For arbitrary $\beta>4$ and $\sigma>1$, we can take constants $c>0$ and $a_*>0$ depending on $\beta,\sigma$ such that for any $f^0$ and $g$ satisfying the assumptions \eqref{assump:main_1}-\eqref{assump:main_3} and any time $t>0$, there exists a unique mild solution $(\psi_t,\eta_t)$ of the coupled Boltzmann equations \eqref{eq:coupled_Boltzmann_sym_intro} in the function class below 
\begin{equation*}
\sup_{0\leq s\leq t}(1+s)^{\sigma}\lVert \psi_t(s)-\mathcal{G}\rVert_{L_{\beta}^{\infty}}<a_*,\quad \sup_{0\leq s\leq t}\big(1+(t-s)\big)^{\sigma}\lVert \eta_t(s)-\mathcal{G}\rVert_{L_{\beta}^{\infty}}<a_*.
\end{equation*}
Furthermore, the functional $\widehat{\mathcal{I}}(t,g)$ in \eqref{eq:characteristic_functional_intro} is well-defined for any functions $f^0$ and $g$ satisfying the assumptions \eqref{assump:main_1}-\eqref{assump:main_3}, and is a global-in-time mild solution of the Hamilton-Jacobi equation \eqref{eq:functional_HJ}. The mild solution $\widehat{\mathcal{I}}(t,g)$ is uniformly bounded for any time $t\geq 0$ and any functions $f^0,g$ satisfying the assumptions \eqref{assump:main_1}-\eqref{assump:main_3}. This solution $\widehat{\mathcal{I}}(t,g)$ also converges to a non-trivial stationary solution as $t\rightarrow +\infty$.
\end{theorem}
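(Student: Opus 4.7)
The plan is to build $(\psi_t,\eta_t)$ by a coupled contraction argument in the polynomial-decay class specified in the theorem, then read off $\widehat{\mathcal{I}}(t,g)$ from \eqref{eq:characteristic_functional_intro} and pass to the limit $t\to\infty$. Since assumption (H3) forces $\phi\equiv 0$, the system \eqref{eq:coupled_Boltzmann_sym_intro} reduces to $D_s\psi_t=\mathcal{Q}_{\eta_t}(\psi_t,\psi_t)$ and $D_s\eta_t=-\mathcal{Q}_{\psi_t}(\eta_t,\eta_t)$ with initial data $f^0\mathcal{B}^{-1}$ and terminal data $e^{g(t)}\mathcal{B}$. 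Substituting $\psi_t=\mathcal{G}+u_t$ and $\eta_t=\mathcal{G}+w_t$, one uses that $\mathcal{G}$ is Gaussian in $v$ so that both $\mathcal{Q}_\mathcal{G}(\mathcal{G},\mathcal{G})=0$ and the cross term $\mathcal{Q}_w(\mathcal{G},\mathcal{G})=0$ vanish, via the detailed-balance identity $\mathcal{G}(v')\mathcal{G}(v_*')=\mathcal{G}(v)\mathcal{G}(v_*)$. One then arrives at
\begin{equation*}
D_s u_t=\mathcal{L}u_t+\mathcal{N}_1[u_t,w_t],\qquad D_s w_t=-\mathcal{L}w_t-\mathcal{N}_2[u_t,w_t],
\end{equation*}
with linearized operator $\mathcal{L}h:=2\mathcal{Q}_\mathcal{G}(\mathcal{G},h)$ acting diagonally on each component, and nonlinear residuals $\mathcal{N}_i$ that are at least quadratic in the perturbations.

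The next step is to invoke a Ukai-type decay estimate for $\mathcal{L}$ in $L^\infty_\beta$: under the orthogonality to $\mathcal{K}$ provided by (H1)-(H2) and preserved along the flow via the mass/momentum/energy cancellations $\int\mathcal{Q}_\eta(\psi,\psi)\chi\,dvdx=0$ for $\chi\in\{1,v_i,|v|^2\}$, the forward semigroup $U(s)$ generated by $-v\cdot\nabla_x+\mathcal{L}$ satisfies
\begin{equation*}
\lVert U(s)h\rVert_{L^\infty_\beta}\lesssim (1+s)^{-\sigma}\lVert h\rVert_{L^\infty_{\beta+1}}
\end{equation*}
for $h\in\mathcal{K}^\perp$, by adapting the scheme of \cite{Ukai_2006}; the backward semigroup enjoys the symmetric bound in $(1+(t-s))^{-\sigma}$. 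I would then set up a Picard iteration on the product of the two weighted decay classes, closing the fixed point with the bilinear estimate
\begin{equation*}
\lVert\mathcal{Q}_\eta(f_1,f_2)\rVert_{L^\infty_{\beta+1}}\lesssim \lVert\eta\rVert_{L^\infty_\beta}\lVert f_1\rVert_{L^\infty_\beta}\lVert f_2\rVert_{L^\infty_\beta}
\end{equation*}
together with the time-convolution bound $\int_0^s(1+(s-\tau))^{-\sigma}(1+\tau)^{-2\sigma}d\tau\lesssim(1+s)^{-\sigma}$, which is valid precisely because $\sigma>1$. For $c$ and $a_*$ small enough this gives a unique fixed point in the stated class, and a continuity argument ensures the $\mathcal{K}^\perp$ condition propagates through the iteration.

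Once $(\psi_t,\eta_t)$ is constructed, the middle term of \eqref{eq:characteristic_functional_intro} vanishes by (H3), leaving
\begin{equation*}
\widehat{\mathcal{I}}(t,g)=\int(e^{p_t(0)}-1)f^0\,dvdx+\int_0^t\mathcal{H}(\varphi_t(s),p_t(s))\,ds.
\end{equation*}
The Hamiltonian integrand is controlled pointwise by the perturbation sizes $\lVert\psi_t(s)-\mathcal{G}\rVert_{L^\infty_\beta}$ and $\lVert\eta_t(s)-\mathcal{G}\rVert_{L^\infty_\beta}$ against the quadratic-exponential $\mathcal{G}^2$ weight which absorbs the polynomial factors in $v$, so the product of the two polynomial decays in $s$ and $t-s$ yields a time integral uniformly bounded in $t$ for $\sigma>1$. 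The HJ identity then follows from Theorem \ref{th:justification_mild}. For the long-time limit, the backward decay forces $\eta_t(s)\to\mathcal{G}$ on any fixed compact $s$-interval as $t\to\infty$, so $\psi_t\to\psi_\infty$ where $\psi_\infty$ solves the biased Boltzmann equation $D_s\psi_\infty=\mathcal{Q}_\mathcal{G}(\psi_\infty,\psi_\infty)$ with initial data $f^0\mathcal{B}^{-1}$; passing to the limit in the formula above yields a stationary solution of \eqref{eq:functional_HJ}, non-trivial whenever the initial data $f^0\mathcal{B}^{-1}$ or the reference $e^g\mathcal{B}$ differs from $\mathcal{G}$.

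The main obstacle is closing the coupled fixed point uniformly in $t$: the two polynomial decays in $s$ and $t-s$ must each survive the biased collision term which mixes them near the midpoint $s\approx t/2$. The threshold $\sigma>1$ is precisely what makes the time-convolution bound above converge, and the constraint $\alpha<1/2$ is what makes $\mathcal{G}$ have sufficient Gaussian decay for the bilinear collision estimate in $L^\infty_\beta$. A secondary subtlety is keeping $u_t,w_t$ in $\mathcal{K}^\perp$ at every Picard step, since on the hydrodynamic modes the linear decay degenerates and the whole scheme would collapse; this is why the orthogonality is imposed on both ends in (H1)-(H2) rather than only one.
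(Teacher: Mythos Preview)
Your overall strategy---perturb around $\mathcal{G}$, run a contraction in polynomially weighted $L^\infty_\beta$, Duhamel against the linearized semigroup---matches the paper's, but two of your key technical claims fail.

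The main gap is the asserted identity $\int\mathcal{Q}_\eta(\psi,\psi)\chi\,dvdx=0$ for collision invariants $\chi$ and \emph{arbitrary} bias $\eta$. This holds only when $\eta=\mathcal{G}$: the factor $\eta(v_*)$ breaks the pre/post-collision symmetry needed for the cancellation, and the paper proves precisely the special case $\mathcal{Q}_\mathcal{G}(\psi_1,\psi_2)\in\mathcal{K}^\perp$ in \eqref{eq:second_order_ortho}. Hence orthogonality does \emph{not} propagate, and you cannot invoke semigroup decay on the $\eta_p$-biased Duhamel terms. Your time-convolution bound $\int_0^s(1+(s-\tau))^{-\sigma}(1+\tau)^{-2\sigma}d\tau\lesssim(1+s)^{-\sigma}$ covers only the cubic term; it does not touch the critical quadratic cross term $2\mathcal{Q}_{\eta_p}(\psi_p,\mathcal{G})$, which carries one forward-decaying factor $(1+\tau)^{-\sigma}$ and one backward-decaying factor $(1+(t-\tau))^{-\sigma}$. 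The paper closes this by a two-tier argument: for $\mathcal{Q}_\mathcal{G}(\psi_p,\psi_p)$ it uses the exponential decay of $e^{sB^+}$ on $\mathcal{K}^\perp$ (Lemma~\ref{lem:L_beta_decay}), while for the $\eta_p$-biased terms it uses \emph{only boundedness} of the semigroup (Proposition~\ref{prop:semigroup_bound_for_general}) and closes via $(1+t)^\sigma\int_0^t(1+(t-\tau))^{-\sigma}(1+\tau)^{-\sigma}\,d\tau\leq C$. This last inequality, not yours, is where $\sigma>1$ is genuinely needed. (Incidentally, your bilinear estimate has the weight on the wrong side: the hard-sphere kernel costs one power of $|v|$, so $\mathcal{Q}_\eta(f_1,f_2)\in L^\infty_{\beta-1}$, not $L^\infty_{\beta+1}$.)

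The second gap is your claim that the middle term $\dl D_s(p_t-g),\varphi_t\dr$ in \eqref{eq:characteristic_functional_intro} vanishes under (H3). Assumption (H3) gives $D_sg\equiv 0$, but $D_s(p_t-g)=-\partial\mathcal{H}/\partial\varphi$ is the backward equation itself and is nonzero. The paper obtains the uniform bound on $\widehat{\mathcal{I}}(t,g)$ by integrating by parts and substituting the mild equations, which collapses the functional to $-1+\langle\eta_t(t),\psi_t(t)\rangle-\tfrac{1}{2}\dl\eta_t,\mathcal{Q}_{\eta_t}(\psi_t,\psi_t)\dr$; splitting $\eta_t=\mathcal{G}+\eta_{t,p}$, the $\mathcal{G}$-piece telescopes to boundary terms via the forward mild equation, and the $\eta_{t,p}$-piece is time-integrable by the backward decay. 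The same manipulation yields the explicit long-time limit $\widehat{\mathcal{I}}_\infty(\widehat{g})=-1+\langle e^{\widehat{g}},M\rangle$, which is then checked directly to be a stationary solution, rather than the indirect limit you sketched.
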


We present now a similar result (Theorem \ref{th:main_theorem_2}) for forcing $g$ satisfying a different set of assumptions. The assumptions are more general in one aspect, while being more restrictive in another aspect.

For Theorem \ref{th:main_theorem_2}, the initial data $f^0\mathcal{B}^{-1}$ is only assumed to be close to the coupled equilibrium $\mathcal{G}$, without the orthogonality condition
    \begin{equation}\label{assump:main_1_th2}
    \Big\lVert f^0\mathcal{B}^{-1}-\mathcal{G}\Big\rVert_{L_{\beta+1}^{\infty}}<c.\tag{H4}
    \end{equation}
The terminal data $e^{g(t)}\mathcal{B}$ is assumed to be close to the coupled equilibrium $\mathcal{G}$ without orthogonality condition, but its perturbation is assumed to decay exponentially in time with $\sigma>0$
    \begin{equation}\label{assump:main_2_th2}
    \Big\lVert e^{g(t)}\mathcal{B}-\mathcal{G}\Big\rVert_{L_{\beta+1}^{\infty}}< e^{-\sigma t}c.\tag{H5}
    \end{equation}
The function $g$ and the forward initial data $f^0\mathcal{B}^{-1}$ are assumed to have certain regularity and continuity, where the function $\phi$ is defined in \eqref{eq:transport} and $\eps_{\phi}>0$ is a small enough positive constant
\begin{equation}\label{assump:main_3_th2}
\begin{split}
    &\lVert \phi\rVert_{L_t^1(L_{x,v}^{\infty})}+\lVert \phi\rVert_{C_t^0(L_{x,v}^{\infty})}\leq \eps_{\phi},\ \textup{$g$ has uniformly bounded derivatives in $(t,x)$},\\
    &\qquad\qquad\qquad\qquad\qquad\qquad\quad S_{\tau}f^0\mathcal{B}^{-1}\in \textup{Lip}\big(\mathds{R}_{\tau};L_{\beta}^{\infty}\big) 
    \end{split}\tag{H6}
\end{equation}
\begin{theorem}\label{th:main_theorem_2}
For arbitrary $\beta>4$ and $\sigma>1$, we can take constants $c>0$ and $a_*>0$ depending on $\beta,\sigma$ such that for any $f^0$ and $g$ satisfying the assumptions \eqref{assump:main_1_th2}-\eqref{assump:main_3_th2} and any time $t>0$, there exists a unique mild solution $(\psi_t,\eta_t)$ of the coupled Boltzmann equations \eqref{eq:coupled_Boltzmann_sym_intro} in the function class below 
\begin{equation*}
\sup_{0\leq s\leq t}\lVert \psi_t(s)-\mathcal{G}\rVert_{L_{\beta}^{\infty}}<a_*,\quad \sup_{0\leq s\leq t}e^{\sigma s}\lVert \eta_t(s)-\mathcal{G}\rVert_{L_{\beta}^{\infty}}<a_*.
\end{equation*}
Furthermore, the functional $\widehat{\mathcal{I}}(t,g)$ in \eqref{eq:characteristic_functional_intro} is well-defined for any functions $f^0$ and $g$ satisfying the assumptions \eqref{assump:main_1_th2}-\eqref{assump:main_3_th2}, and is a global-in-time mild solution of the Hamilton-Jacobi equation \eqref{eq:functional_HJ}. The mild solution $\widehat{\mathcal{I}}(t,g)$ is uniformly bounded for any time $t\geq 0$ and any functions $f^0,g$ satisfying the assumptions \eqref{assump:main_1_th2}-\eqref{assump:main_3_th2}.
\end{theorem}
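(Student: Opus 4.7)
My plan is to solve the coupled Boltzmann system \eqref{eq:coupled_Boltzmann_sym_intro} by a coupled fixed-point argument in the class prescribed by the theorem, then apply Theorem~\ref{th:justification_mild} to promote the resulting $\widehat{\mathcal{I}}(t,g)$ to a mild solution of \eqref{eq:functional_HJ}.

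Writing $\psi_t=\mathcal{G}+p_1$ and $\eta_t=\mathcal{G}+p_2$ and using that $\mathcal{G}$ is Maxwellian, so that $\mathcal{G}(v')\mathcal{G}(v_*')=\mathcal{G}(v)\mathcal{G}(v_*)$, both $\mathcal{Q}_\mathcal{G}(\mathcal{G},\mathcal{G})$ and every cross term $\mathcal{Q}_p(\mathcal{G},\mathcal{G})$ vanish. Expanding the biased collision operator around $\mathcal{G}$ therefore yields
\begin{equation*}
D_s p_1 - L p_1 = -(\mathcal{G}+p_1)\phi + \mathcal{Q}_\mathcal{G}(p_1,p_1) + 2\mathcal{Q}_{p_2}(\mathcal{G},p_1) + \mathcal{Q}_{p_2}(p_1,p_1),
\end{equation*}
with $L = 2\mathcal{Q}_\mathcal{G}(\mathcal{G},\cdot)$ the standard linearized Boltzmann operator at $\mathcal{G}$, and an analogous equation for $p_2$ obtained by replacing $L$ by $-L$ and exchanging the roles of $p_1$ and $p_2$. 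The linear parts thus decouple; the coupling is entirely nonlinear, carried by the biased collision.

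For the forward component I would use the mild formulation and the Ukai-type framework \cite{Ukai_2006} in the weighted space $L^\infty_\beta$ ($\beta>4$), in which the standard trilinear bound for $\mathcal{Q}_\eta(\psi_1,\psi_2)$ closes the Duhamel estimate uniformly in $s$ thanks to the built-in loss frequency. Define
\begin{equation*}
X := \bigl\{(p_1,p_2)\in C([0,t];L^\infty_\beta)^2 : \sup_{s\in[0,t]}\lVert p_1(s)\rVert_{L^\infty_\beta}\le a_*,\ \sup_{s\in[0,t]}e^{\sigma s}\lVert p_2(s)\rVert_{L^\infty_\beta}\le a_*\bigr\},
\end{equation*}
and the map $\mathcal{T}:(p_1,p_2)\mapsto(\tilde p_1,\tilde p_2)$ by solving each linear-source mild equation with the other component frozen as a bias. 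The bound on $\tilde p_1$ collects $O(c)$ from the initial data via \eqref{assump:main_1_th2}, $O(\eps_\phi)$ from $\phi$ via \eqref{assump:main_3_th2}, $O(a_*^2)$ from the self-quadratic $\mathcal{Q}_\mathcal{G}(p_1,p_1)$, and $O(a_*^2/\sigma)$ from the cross term, which is time-integrable because $\lVert p_2(\tau)\rVert\le a_*e^{-\sigma\tau}$. For $\tilde p_2$ I would time-reverse ($\tau=t-s$) and velocity-reverse so the backward mild equation becomes a forward Cauchy problem with initial data of size $ce^{-\sigma t}$ by \eqref{assump:main_2_th2}; the growth factor $e^{\sigma s}=e^{\sigma t}e^{-\sigma\tau}$ built into the target norm exactly offsets this smallness, and the same trilinear bookkeeping closes the bound. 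Contractivity is obtained by repeating the same accounting with one factor of $a_*$ replaced by a difference, and smallness of $c$, $\eps_\phi$, and $a_*$.

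Once $(\psi_t,\eta_t)$ is in hand, reverting to $(\varphi_t,p_t)$ via \eqref{eq:change_of_variable_alpha} and inserting into \eqref{eq:characteristic_functional_intro} defines $\widehat{\mathcal{I}}(t,g)$; Theorem~\ref{th:justification_mild} then identifies it as a mild solution of \eqref{eq:functional_HJ}. Uniform boundedness of $\widehat{\mathcal{I}}$ in $t$ follows because every integrand in \eqref{eq:characteristic_functional_intro} either involves $\phi$ (bounded in $L^1_tL^\infty_{x,v}$ by \eqref{assump:main_3_th2}) or is quadratic in perturbations carrying at least one factor of $\eta_t-\mathcal{G}$, whose exponential decay keeps the $s$-integral bounded independently of $t$. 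The main obstacle, compared with Theorem~\ref{th:main_theorem_1}, is the absence of any orthogonality against $\mathcal{K}$: the linearized semigroup has no spectral gap, so polynomial-in-time decay of the forward component is unavailable and one cannot close estimates by decay alone. The compensation comes entirely from \eqref{assump:main_2_th2}, whose exponentially small terminal data is the only source of time-uniform smallness both in the forward--backward coupling and in the final bound on $\widehat{\mathcal{I}}$; carrying the transport forcing $\phi$ (nonzero here, unlike in Theorem~\ref{th:main_theorem_1}) through the weighted $L^\infty$ estimates with only an $L^1_tL^\infty_{x,v}$ bound is the additional technical ingredient that must be handled throughout.
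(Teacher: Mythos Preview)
Your proposal is correct and follows essentially the same route as the paper: perturb around $\mathcal{G}$, run a contraction in the asymmetric pair of norms (your $X$ is exactly the paper's class $\Omega$ in Section~6.2, with $\lVert p_1\rVert_{E_\beta^0}$ and $e^{\sigma t}\lVert p_2^{\reverse}\rVert_{E_\beta^{-\sigma}}$), then invoke Theorem~\ref{th:justification_mild}. Your identification of the mechanism---no spectral gap on the forward side, with the exponentially small terminal data from \eqref{assump:main_2_th2} supplying the only uniform smallness in the coupling---matches the paper precisely.

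One small difference worth noting: for the uniform bound on $\widehat{\mathcal{I}}$, you argue directly that after substituting $D_s\eta_t-\eta_t\phi=-\mathcal{Q}_{\psi_t}(\eta_t,\eta_t)$ into \eqref{eq:characteristic_functional_2} every time-integrated term carries a factor of $\eta_t-\mathcal{G}$ (since $\mathcal{Q}_\psi(\mathcal{G},\mathcal{G})=0$ and $\mathcal{H}'$ vanishes when either argument is $\mathcal{G}$), hence is $L^1$ in $s$. The paper instead integrates by parts to reach $\widehat{\mathcal{I}}=-1+\langle\eta_t(t),\psi_t(t)\rangle-\tfrac12\dl\eta_t,\mathcal{Q}_{\eta_t}(\psi_t,\psi_t)\dr$, then handles the piece $\dl\mathcal{G},\mathcal{Q}_{\eta_t}(\psi_t,\psi_t)\dr$ by feeding in the mild formulation of the \emph{forward} equation to collapse it to boundary terms. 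Both routes work; yours is slightly more direct here, while the paper's rearrangement is what also yields the explicit long-time limit in Proposition~\ref{prop:time_limit} under the assumptions of Theorem~\ref{th:main_theorem_1}.
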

\begin{figure}[H]
    \centering
    \includegraphics[scale=0.2]{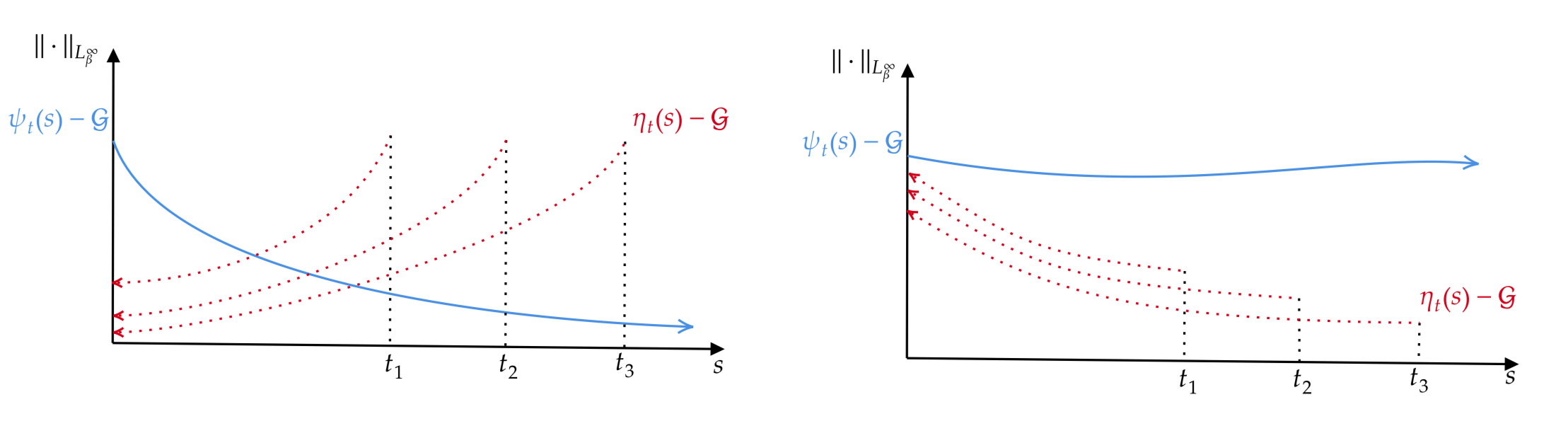}
    \caption{The blue arrow refers to the forward component $\psi_t$, and the red arrow refers to the backward component $\eta_t$. The numbers $t_1$, $t_2$, and $t_3$ refer to different terminal times. For each terminal time $t$, we solve the coupled Boltzmann equations on $[0,t]$.}
    \label{fig:equilibriate}
\end{figure}
Figure \ref{fig:equilibriate} illustrates the shape of the perturbations $\psi_t(s)-\mathcal{G}$ and $\eta_t(s)-\mathcal{G}$ in different cases: Theorem \ref{th:main_theorem_1} or \ref{th:main_theorem_2}. 

In Theorem \ref{th:main_theorem_1}, for each terminal time $t$, the forward component $\psi_t(s)$ decays forwards to the coupled equilibrium $\mathcal{G}$, and the backward component $\eta_t$ also decays backwards to the $\mathcal{G}$. The polynomial decay rate $\sigma>1$, and the size $\lVert \eta_t(t)-\mathcal{G}\rVert_{L_{\beta}^{\infty}}$ of perturbation at terminal time $t$ is uniform for all terminal time $t$.

In Theorem \ref{th:main_theorem_2}, we do not have the estimate for the convergence towards equilibrium. This is because we do not assume orthogonality condition for the initial data $f^0$ and the terminal data $e^{g(t)}$, as well as the function $D_sg$ could be not constant zero. These may give some (hydrodynamic) modes of constant order that will be preserved in the evolution. The price of having this generality is that, we assume that the size $\lVert \eta_t(t)-\mathcal{G}\rVert_{L_{\beta}^{\infty}}$ of the perturbation at terminal time $t$ must decay as $t\rightarrow+\infty$.

\subsection{Future Directions}\label{sec:implications}

\underline{\textbf{Global Solution of Forced Boltzmann Equation:}} Based on the results of the current paper, it would be interesting to look at the global-in-time solution $\varphi$ of the forced Boltzmann equation with forcing $p$ given by
\begin{equation}\label{eq:forced_Boltzmann}
D_s\varphi=\int \Big(\varphi(v')\varphi(v_*')\exp(-\Delta p)-\varphi(v)\varphi(v_*)\exp(\Delta p)\Big)\big((v_*-v)\cdot\omega\big)_+d\omega dv_*.
\end{equation}
This type of modified equation is crucial for the large deviation theory established in \cite{BGSS_2023} for a hard sphere gas. It is shown that for appropriate function $p$, we are able to look at the asymptotic probability of empirical measure $\pi_t^{\eps}$ converging to an atypical density $\varphi(t)$
\begin{equation*}
\pi_{t}^{\eps}\rightarrow \varphi(t),\quad \eps\rightarrow 0,
\end{equation*}
when $\varphi$ is a solution of \eqref{eq:forced_Boltzmann}.  

Previously only local-in-time result about the forced Boltzmann equation is known. The relation between this future direction and the present results is that, the forced equation \eqref{eq:forced_Boltzmann} is exactly the forward equation in the Euler-Lagrange system \eqref{eq:Hamiltonian_physical_intro}. The difference is that, in this paper we solve \eqref{eq:Hamiltonian_physical_intro} with given $g$, while for \eqref{eq:forced_Boltzmann} we solve it with a given forcing $p$.

\underline{\textbf{Relation with Schrödinger Problem:}} In the previous paragraph we mentioned that for $\varphi$ being a solution of \eqref{eq:forced_Boltzmann}, we can study the large deviation cost of it. The $(\varphi,p)$ is related with $(\psi,\eta)$ through the change of variables \eqref{eq:change_of_variable_alpha}, with $\varphi=\psi\eta$.
\begin{figure}[h]
    \centering
    \includegraphics[scale=0.18]{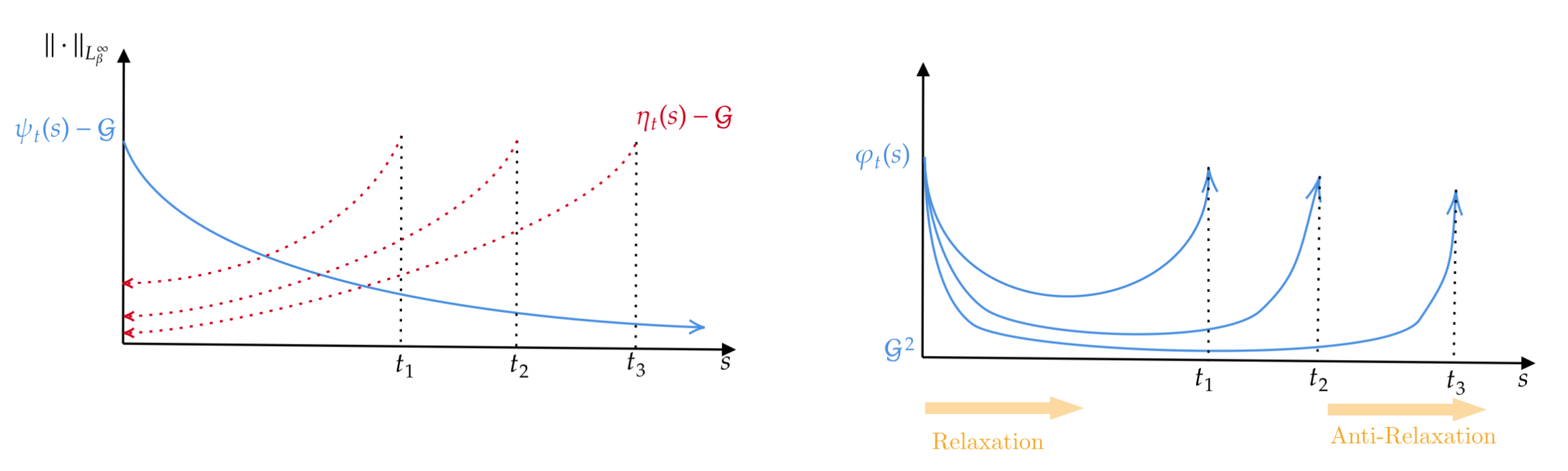}
    \caption{The figure on the left hand side is the illustration of $(\psi_t,\eta_t)$ given in Theorem \ref{th:main_theorem_1}. The figure on the right hand side is the illustration of the corresponding density profile $\varphi_t$. When the time $s$ is distant from both the initial time $0$ and the terminal time $t$, two components $\psi_t$ and $\eta_t$ are both near to $\mathcal{G}$. Thus by $\varphi_t(s)=\psi_t(s)\eta_t(s)$, the density profile $\varphi_t(s)$ should be close to $\mathcal{G}^2$, which is the physical reference measure for the perturbation theory.}
    \label{fig:enter-label}
\end{figure}

Based on the solution $(\psi_t,\eta_t)$ given in Theorem \ref{th:main_theorem_1}, as $t\rightarrow +\infty$ the corresponding density profile $\varphi_t$ converges to a 'Relaxation and Anti-Relaxation' dynamics: it first relaxes to an equilibrium, and then anti-relaxes to an atypical density profile. This behaviour is related to the Schrödinger problem, namely the computation of the optimal path, given a large deviation cost function, followed by particle system from a given density at time $0$ to another density at time $t$. The mean-field version of this relation has been investigated in \cite{Léonard_2020}. For a survey of the Schrödinger Problem and its connection with optimal transport, see \cite{Leonard_2014}. 

\underline{\textbf{Uniform Control of the Limiting Cumulant Generating Functional:}}  As we have explained in Subsection \ref{sec;hard_sphere_gas}, the following functional is used to encode the correlation information of a hard sphere gas with diameter $\eps$
\begin{equation}\label{eq:cumulant_future}
\Lambda^{\eps}(t,h):=\frac{1}{\mu_{\eps}}\log\mathds{E}_{\eps}\Big[\exp\Big(\mu_{\eps}\pi_t^{\eps}(h)\Big)\Big].
\end{equation}
After taking the Boltzmann-Grad limit, this functional should converge to the functional $\Lambda(t,h)$ encoding correlation of the limiting particle system. 
\begin{figure}[H]
    \centering
    \includegraphics[scale=0.15]{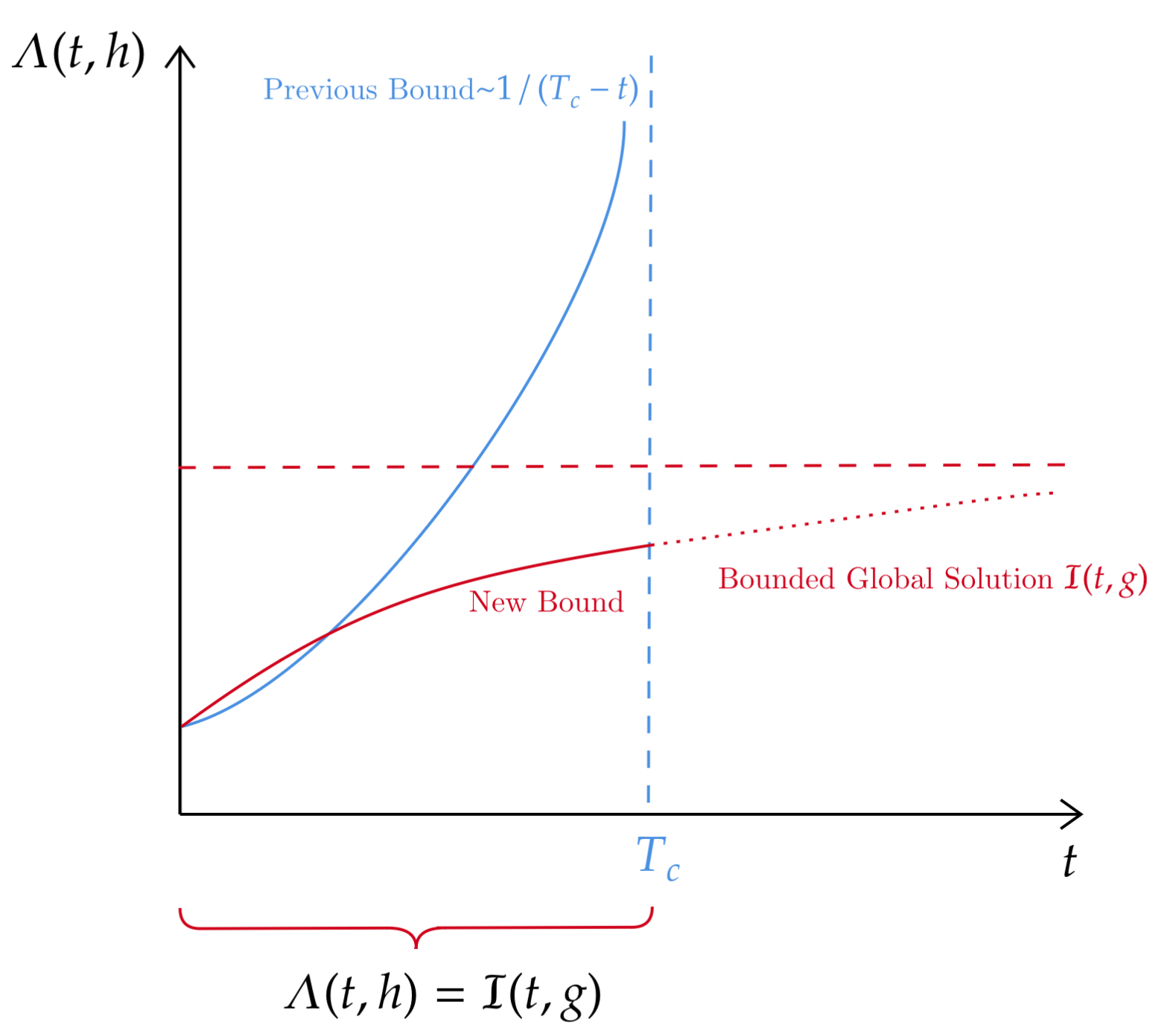}
    \caption{By constructing global-in-time solution $\mathcal{I}(t,g)$ of the Hamilton-Jacobi equation, we can give a uniform upper bound for the limiting cumulant generating functional $\Lambda(t,h)$.}
    \label{fig:blowup}
\end{figure}
In \cite{BGSS_2023} it has been shown the functional $\Lambda(t,h)$ coincide with the solution $\mathcal{I}(t,g)$ of the Hamilton-Jacobi equation \eqref{eq:functional_HJ} in a finite time interval $[0,T_c)$ for $T_c<+\infty$, with $h$ determined by $g$ as in \eqref{eq:def_of_g_by_h}. The coincidence between $\Lambda(t,h)$ and $\mathcal{I}(t,g)$ for any time $t$ still remains to be proved.

One of the main difficulties for hard sphere gas to have global-in-time results about kinetic limit, dynamical fluctuations, and large deviations is the divergence of the upper bound for cumulant generating functionals when the time $t$ approaches $T_c$. By establishing global-in-time solution $\mathcal{I}$, with the coincidence between $\Lambda(t,h)$ and $\mathcal{I}(t,g)$ for $t\in[0,T_c)$, we can provide a uniform upper bound for the limiting cumulant generating functional (Figure \ref{fig:blowup}). If this coincidence between functionals can be extended to the whole time interval, then the solution $\mathcal{I}$ will provide a global-in-time uniform control of the limiting cumulant generating functional.

However the current result does not provide uniform control for the $\eps$-cumulant generating functional. This will be left to future work. A uniform control of the cumulant generating functionals is expected to be an important step in proving long-time results about kinetic limit, dynamical fluctuations, and large deviations.

\section{Symmetrization and Perturbation Regime for Coupled Boltzmann Equations}\label{sec:perturbation}
In this section, we will perform the symmetrization procedure and define the perturbation regime, needed to solve the coupled Boltzmann equations \eqref{eq:coupled_Boltzmann_sym_intro}. 

As we have discussed in the previous sections, the symmetrization procedure consists in the change of variables 
\begin{equation}\label{eq:change_of_variables_sec3}
\Big(\psi_t(s),\eta_t(s)\Big)=\Big(\varphi_t(s)e^{-p_t(s)+\alpha'|v|^2},e^{p_t(s)-\alpha'|v|^2}\Big),\quad \alpha'=\frac{1}{2}(\frac{1}{2}+\alpha).
\end{equation}
It will be proved in Lemma \ref{lem:symmetrization} that the pair $(\psi_t,\eta_t)$ satisfies the coupled Boltzmann equation \eqref{eq:coupled_Boltzmann_sym_intro}. 

After the symmetrization, we will perform a perturbation decomposition to $(\psi,\eta)$: we look at the evolution for the perturbation of $(\psi,\eta)$ from the coupled equilibrium, and we denote the perturbation as $(\psi_p,\eta_p)$. The evolution equation \eqref{eq:transformed_evolution_perturb} of the perturbation is given as \eqref{eq:transformed_evolution_perturb}. Finding a mild solution of equation \eqref{eq:coupled_Boltzmann_sym_intro} is equivalent to finding a mild solution of equation \eqref{eq:transformed_evolution_perturb}, and we will use a fixed-point method to find the solution of the latter.

\begin{lemma}\label{lem:symmetrization}
A pair of functions $(\varphi_t,p_t)$ is a mild solution of the coupled Boltzmann equations \eqref{eq:Hamiltonian_physical_intro}, if and only if the pair of functions $(\psi_t,\eta_t)$ given by the change of variables \eqref{eq:change_of_variables_sec3} is a mild solution of the coupled Boltzmann equations with $\mathcal{B}$ defined in \eqref{eq:normalizing_function}
\begin{equation}\label{eq:transformed_evolution}
\begin{split}
D_s \psi(s)&=-\psi(s)\phi(s)+\mathcal{Q}_{\eta}(\psi,\psi),\quad \psi(0)=f^0\mathcal{B}^{-1},\\
D_s \eta(s)&=\eta(s)\phi(s)-\mathcal{Q}_{\psi}(\eta,\eta),\quad \eta(t)=e^{g(t)}\mathcal{B}.
\end{split}
\end{equation}
\end{lemma}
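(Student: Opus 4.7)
The plan is to first express \eqref{eq:Hamiltonian_physical_intro} in its explicit nonlinear form by computing the two functional derivatives of the Hamiltonian $\mathcal{H}$ in \eqref{eq:functional_hamiltonian}, and then to verify that the pointwise change of variables \eqref{eq:change_of_variables_sec3} carries this system onto \eqref{eq:transformed_evolution} by direct algebraic substitution. Because the change of variables is pointwise in $(s,x,v)$ and involves only smooth bijective operations on positive functions, once the equivalence is established at the level of the strong system it transfers immediately to the mild (Duhamel) formulation.

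For the first step, I would use the symmetric collision involution $(v,v_*)\leftrightarrow(v',v_*')$ combined with $\omega\mapsto -\omega$, which preserves $\bigl((v_*-v)\cdot\omega\bigr)_+$ and flips the sign of $\Delta p$. A short computation yields
$$\frac{\p\mathcal{H}}{\p p}(x,v)=\int\Big\{\varphi(v')\varphi(v_*')e^{-\Delta p}-\varphi(v)\varphi(v_*)e^{\Delta p}\Big\}\big((v_*-v)\cdot\omega\big)_+d\omega dv_*,$$
$$\frac{\p\mathcal{H}}{\p\varphi}(x,v)=\int\varphi(v_*)\bigl(e^{\Delta p}-1\bigr)\big((v_*-v)\cdot\omega\big)_+d\omega dv_*,$$
which recasts \eqref{eq:Hamiltonian_physical_intro} as a pair of forward/backward nonlinear kinetic equations in $(\varphi,p)$. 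The sign on $\frac{\p\mathcal{H}}{\p p}$ in particular produces the bias structure recognized in \eqref{eq:forced_Boltzmann}.

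The second step is the substitution. Writing $\varphi=\psi\eta$ and $p=\log\eta+\alpha'|v|^2$, the crucial observation is that elastic collisions preserve the kinetic energy $|v|^2+|v_*|^2$, so the quadratic term drops out of $\Delta p$, leaving $e^{\Delta p}=\eta(v')\eta(v_*')/\bigl(\eta(v)\eta(v_*)\bigr)$. The exponential then cancels cleanly with the quadratic factors in $\varphi$: one checks
$$\varphi(v')\varphi(v_*')e^{-\Delta p}=\psi(v')\psi(v_*')\eta(v)\eta(v_*),\qquad \varphi(v)\varphi(v_*)e^{\Delta p}=\psi(v)\psi(v_*)\eta(v')\eta(v_*').$$
Combined with the identities $D_sp=D_s\eta/\eta$ (since $D_s(\alpha'|v|^2)=0$) and $D_s\varphi=\eta\,D_s\psi+\psi\,D_s\eta$, a short calculation shows that the $p$-equation multiplied by $\eta$ becomes $D_s\eta=\eta\phi-\mathcal{Q}_\psi(\eta,\eta)$, and that the $\varphi$-equation divided by $\eta$, minus $\psi$ times the $p$-equation, gives $D_s\psi=-\psi\phi+\mathcal{Q}_\eta(\psi,\psi)$ after the $\psi(v)\psi(v_*)\eta(v')\eta(v_*')/\eta(v)$ terms cancel between the two contributions. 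The boundary values transform as $\psi(0)=f^0e^{\alpha'|v|^2}=f^0\mathcal{B}^{-1}$ and $\eta(t)=e^{g(t)-\alpha'|v|^2}=e^{g(t)}\mathcal{B}$, matching \eqref{eq:transformed_evolution}. The converse implication follows from inverting the same pointwise bijection.

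Finally, because the transport semigroup $\{S_\tau\}$ acts pointwise in $x$ and commutes with the multiplicative factorization $\varphi(s)=\psi(s)\eta(s)$, the strong-level equivalence lifts term by term to Duhamel's formula, giving the equivalence of mild solutions. I do not anticipate a serious obstacle: the proof is essentially bookkeeping. The only delicate points are keeping the pre/post-collisional conventions and the sign of $\Delta p$ consistent, and verifying that the symmetrized biased operator $\mathcal{Q}_\eta(\psi,\psi)$ of Definition \ref{def:biased_collision_operator} collapses to its unsymmetrized form when both arguments coincide; this is exactly what makes the cancellation in step two work.
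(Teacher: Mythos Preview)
Your proposal is correct and complete. The approach is essentially the same as the paper's: both begin by expanding the functional derivatives of $\mathcal{H}$ to write \eqref{eq:Hamiltonian_physical_intro} explicitly, and both exploit that $\Delta(\alpha'|v|^2)=0$ by energy conservation. The only organizational difference is that the paper shortcuts the algebra by observing that the explicit $(\varphi_t,p_t)$ system is invariant under the shift $p_t\mapsto p_t-\alpha'|v|^2$, thereby reducing to the known $\alpha'=0$ case of \cite{BGSS_2023}; you instead carry out the substitution and cancellation directly for arbitrary $\alpha'$, which is slightly longer but fully self-contained.
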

\begin{proof}
Expanding the Hamiltonian system \eqref{eq:Hamiltonian_physical_intro}, we have the following equation with $\Delta p_t$ defined in \eqref{eq:Delta_p} 
\begin{equation*}
    \begin{split}
    &D_s\varphi_t=\int \Big(\varphi_t(v')\varphi_t(v_*')\exp(-\Delta p_t)-\varphi_t(v)\varphi_t(v_*)\exp(\Delta p_t)\Big)\big((v_*-v)\cdot\omega\big)_+d\omega dv_*,\\
    &D_sp_t=\phi(s)-\int \varphi_t(v_*)\Big(\exp(\Delta p_t)-1\Big)\big((v_*-v)\cdot\omega\big)_+d\omega dv_*,\\
    &\varphi_t(0)=f^0e^{p_t(0)},\quad p_t(t)=g(t)
    \end{split}
\end{equation*}
According to the definition of $\Delta p_t$, a pair $(\varphi_t,p_t)$ is a solution of the equation if and only if $(\varphi_t,p_t-\alpha'|v|^2)$ is a solution of the equation. Thus
\begin{equation*}
    \begin{split}
    &D_s\varphi_t=\int \Big(\varphi_t(v')\varphi_t(v_*')\exp(-\Delta p_t)-\varphi_t(v)\varphi_t(v_*)\exp(\Delta p_t)\Big)\big((v_*-v)\cdot\omega\big)_+d\omega dv_*,\\
    &D_sp_t=\phi(s)-\int \varphi_t(v_*)\Big(\exp(\Delta p_t)-1\Big)\big((v_*-v)\cdot\omega\big)_+d\omega dv_*,\\
    &\varphi_t(0)=f^0e^{\alpha'|v|^2}e^{p_t(0)},\quad p_t(t)=g(t)-\alpha'|v|^2
    \end{split}
\end{equation*}
This concludes the proof of the lemma.
\end{proof}

It is natural to look at the evolution of perturbations, with the hope that the smallness of the initial and terminal perturbations could imply the global well-posedness of the equation
\begin{equation}\label{eq:perturbation_decomposition}
    \psi_p:=\psi-\mathcal{G},\quad \eta_p:=\eta-\mathcal{G}.
\end{equation}

The evolution of the perturbations $(\psi_p,\eta_p)$ should satisfy of the following coupled Boltzmann equations
\begin{equation}\label{eq:transformed_evolution_perturb}
\begin{split}
&\p_s\psi_p(s)\\
=&\underbrace{-v\cdot\nabla_x \psi_p(s)+2\mathcal{Q}_{\mathcal{G}}(\psi_p,\mathcal{G})}_{\textup{\blue{linear}}}+\underbrace{\mathcal{Q}_{\eta_p}(\psi_p,\psi_p)+2\mathcal{Q}_{\eta_p}(\psi_p,\mathcal{G})+\mathcal{Q}_{\mathcal{G}}(\psi_p,\psi_p)}_{\textup{\red{nonlinear}}}-\psi_p\phi(s)-\mathcal{G}\phi(s),\\
&\p_s\eta_p(s)\\
=&\underbrace{-v\cdot\nabla_x \eta_p(s)-2\mathcal{Q}_{\mathcal{G}}(\eta_p,\mathcal{G})}_{\textup{\blue{linear}}}-\underbrace{\mathcal{Q}_{\psi_p}(\eta_p,\eta_p)-2\mathcal{Q}_{\psi_p}(\eta_p,\mathcal{G})-\mathcal{Q}_{\mathcal{G}}(\eta_p,\eta_p)}_{\textup{\red{nonlinear}}}+\eta_p\phi(s)+\mathcal{G}\phi(s).
\end{split}
\end{equation}

The mild solution of \eqref{eq:transformed_evolution_perturb} is defined as the fixed point of the fixed-point map $\Gamma=(\Gamma^+,\Gamma^-)$, which will be introduced in Definition \ref{def:fixed_point}. For simplicity, we identify the mild solutions of \eqref{eq:coupled_Boltzmann_sym_intro} with the mild solutions of \eqref{eq:transformed_evolution_perturb}. At a rigorous level, for the solution $(\psi_p,\eta_p)$ of \eqref{eq:transformed_evolution_perturb} considered in this paper, by performing series expansion the corresponding $(\psi,\eta)$ can be shown to be a solution of \eqref{eq:coupled_Boltzmann_sym_intro}.

For the formal proof, We only detail it for the evolution of forward perturbation $\psi_p$, while the proof for the backward perturbation $\eta_p$ is almost the same.

The definition of perturbation \eqref{eq:perturbation_decomposition} implies
\begin{equation}\label{eq:inverse_decomposition}
\psi=\mathcal{G}+\psi_p,\quad \eta=\mathcal{G}+\eta_p.
\end{equation}
Use the equation above to replace $(\psi,\eta)$ with $(\psi_p,\eta_p)$ in equation \eqref{eq:transformed_evolution}. For the evolution of the forward component, we have
\begin{equation*}
\p_s \big(\mathcal{G}+\psi_p\big)(s)=-v\cdot\nabla_x\big(\mathcal{G}+\psi_p\big)(s)-\big(\mathcal{G}+\psi_p\big)(s)\phi(s)+\mathcal{Q}_{\mathcal{G}+\psi_p}(\mathcal{G}+\psi_p,\mathcal{G}+\psi_p).
\end{equation*}
Since the reference function is independent of the time $t$ and space $x$ variables, we get
\begin{equation*}
\p_s \psi_p(s)=-v\cdot\nabla_x\psi_p(s)-\big(\mathcal{G}+\psi_p\big)(s)\phi(s)+\mathcal{Q}_{\mathcal{G}+\psi_p}(\mathcal{G}+\psi_p,\mathcal{G}+\psi_p).
\end{equation*}
Now we only need the following equality to expand the third-order nonlinear collision term
\begin{equation*}
\mathcal{Q}_{\mathcal{G}+\psi_p}(\mathcal{G}+\psi_p,\mathcal{G}+\psi_p)=2\mathcal{Q}_{\mathcal{G}}(\psi_p,\mathcal{G})+2\mathcal{Q}_{\eta_p}(\psi_p,\mathcal{G})+\mathcal{Q}_{\mathcal{G}}(\psi_p,\psi_p)+\mathcal{Q}_{\eta_p}(\psi_p,\psi_p).
\end{equation*}
This equality can be checked directly, using the fact that $\mathcal{G}$ is the exponential of a collision invariant
\begin{equation*}
\mathcal{Q}_{\mathcal{G}}(\mathcal{G},\mathcal{G})=0.
\end{equation*}

The coupled Boltzmann equations \eqref{eq:transformed_evolution_perturb} for the perturbation $(\psi_p,\eta_p)$ will be one of our central objects in the rest of the paper. In the definition below, we define the relevant notation needed to study that equation.

\begin{definition}\label{def:fixed_point}
We define the linear operators $\rB$ and $\lB$ separately as the linearized Boltzmann operators for the forward perturbation $\psi_p$ or the backward perturbation $\eta_p$
\begin{equation*}
\rB\psi_p:=-v\cdot \nabla_x \psi_p+2\mathcal{Q}_{\mathcal{G}}(\psi_p,\mathcal{G}),\quad \lB\eta_p:=v\cdot \nabla_x \eta_p+2\mathcal{Q}_{\mathcal{G}}(\eta_p,\mathcal{G}).
\end{equation*}
The nonlinearity in the evolution of perturbation is denoted as
\begin{equation*}
\mathcal{N}[\psi_p,\eta_p]:=2\mathcal{Q}_{\eta_p}(\psi_p,\mathcal{G})+\mathcal{Q}_{\mathcal{G}}(\psi_p,\psi_p)+\mathcal{Q}_{\eta_p}(\psi_p,\psi_p).
\end{equation*}
We introduce the map $\Gamma:(\psi_p,\eta_p)\mapsto (\rP[\psi_p,\eta_p],\lP[\psi_p,\eta_p])$, whose fixed-point is a mild solution of the coupled Boltzmann equations \eqref{eq:transformed_evolution_perturb} 
\begin{equation}\label{eq:fixed_point_problem}
\left\{
\begin{split}
&\rP[\psi_p,\eta_p](s):=e^{s\rB}\psi_p(0)-\int_{0}^s e^{(s-\tau)\rB}(\mathcal{G}+\psi_p)\phi(\tau)d\tau+\int_{0}^se^{(s-\tau)\rB}\mathcal{N}[\psi_p,\eta_p]d\tau,\\
&\lP[\psi_p,\eta_p](s):=e^{(t-s)\lB}\eta_{p}(t)-\int_{s}^t e^{(\tau-s)\lB}(\mathcal{G}+\eta_p)\phi(\tau)d\tau+\int_{s}^t e^{(\tau-s)\lB}\mathcal{N}[\eta_p,\psi_p]d\tau.
\end{split}
\right.
\end{equation}
\end{definition}
In the notations, the $+$ sign of $B^+$ and $\rP$ means that the operators are associated with the forward perturbation $\psi_p$, while the $-$ sign of $B^-$ and $\lP$ means that the operators are associated with the backward perturbation $\eta_p$. Specifically, since the terminal data of $\eta_p$ is given at time $t$, we will consider the evolution of $\eta_p$ in the reversed time. This caused the operator $B^-$ having a different sign from the linearized collision operator in the second line of equation \eqref{eq:transformed_evolution_perturb}.

Sections \ref{sec:decay_semigroup}, \ref{sec:nonliearity}, and \ref{sec:fixed_point} will be devoted to proving the existence and uniqueness of the fixed-point $(\psi_p,\eta_p)$ of $\Gamma$,
\begin{equation}\label{eq:fixed_point_problem_2}
\rP[\psi_p,\eta_p]=\psi_p,\quad \lP[\psi_p,\eta_p]=\eta_p.
\end{equation}
Specifically, we will prove that the map $\Gamma$ is a contraction map in certain function spaces. In section \ref{sec:decay_semigroup} we prove the decay estimates for the semigroups generated by $\rB$ and $\lB$; In section \ref{sec:nonliearity} we prove estimates of the nonlinear terms involved in the fixed-point problem; In section \ref{sec:fixed_point}, we prove the contraction property of the map $\Gamma$.

\section{Estimate of Relevant Semigroups in $L_{\beta}^{\infty}$ Norm}\label{sec:decay_semigroup}
The operator $\rB$ (resp. $\lB$) defined in Definition \ref{def:fixed_point} generates a strongly continuous semigroup $e^{s\rB}$ (resp. $e^{s\lB}$) on $L^2(\mathds{T}_x^d\times \mathds{R}_v^d)$. We can decompose the two semigroups as
\begin{equation}\label{eq:decomposition_semigroup}
e^{sB^+}=\mathcal{D}_1^{+}(s)+\mathcal{D}_2^{+}(s),\quad e^{sB^-}=\mathcal{D}_1^{-}(s)+\mathcal{D}_2^{-}(s).
\end{equation}
The components $\mathcal{D}_1^+(s)$ and $\mathcal{D}_1^-(s)$ have explicit expressions as
\begin{equation}\label{eq:explicit_expression_D_1}
\mathcal{D}_1^{+}(s)f(x,v)=e^{-\nu(v)s}f(x-sv,v),\quad \mathcal{D}_1^{-}(s)f(x,v)=e^{-\nu(v)s}f(x+sv,v),
\end{equation}
where $\nu(v)$ is the frequency multiplier defined in \eqref{eq:multiplier_and_K}. For hard spheres, there are positive constants $c_1,c_2>0$ such that $c_1(1+|v|)\leq \nu(v)\leq c_2(1+|v|)$.

The components $\mathcal{D}_2^+(s)$ and $\mathcal{D}_2^-(s)$ are respectively defined in \eqref{eq:D_1 and D_2 +} and \eqref{eq:D_1 and D_2 -}.

Recall the definition of $\mathcal{G}$ as 
\begin{equation*}
\mathcal{G}:=M^{\frac{1}{2}}\mathcal{E}^{\frac{1}{2}}=(2\pi)^{-\frac{d}{2}}\exp\Big(-\frac{1}{2}(\frac{1}{2}-\alpha)|v|^2\Big).
\end{equation*}
We define $\mathcal{P}$ as the projection operator from $L^2(\mathds{T}_x^d\times \mathds{R}_v^d)$ to the subspace $\mathcal{K}$ defined in \eqref{eq:kernel}. The normalized orthogonal basis spanning $\mathcal{K}$ is independent of the variable $x$, and is denoted as $\{f_i\}_{0\leq i\leq d+1}$. Notice that $\{f_i\}_{0\leq i\leq d+1}$ is different from the basis in \eqref{eq:kernel}, since that basis may is not orthogonal and normalized. For a function $f\in L^2(\mathds{T}_x^d\times \mathds{R}_v^d)$, we say $f\in\mathcal{K}^{\perp}$ if $\mathcal{P}f=0$. 

Lemma \ref{lem:L_beta_decay} is essentially a reorganization of the results in \cite{Ukai_2006}. Its proof will be recalled in Appendix \ref{app:decomposition_of_semigroup}.

\begin{lemma}\label{lem:L_beta_decay}
\textbf{\textup{[$L_\beta^\infty$ Estimate] }}For $\beta>4$, there exist constants $\nu_*>0$ and $C>0$ such that for any function $f\in\mathcal{K}^{\perp}$ and $s\geq 0$, we have
\begin{equation*}
\lVert \mathcal{D}_2^{+}(s)f\rVert_{L_{\beta}^\infty}\leq Ce^{-\nu_* s}\lVert (1+|v|)^{-1}f\rVert_{L_{\beta}^{\infty}},\qquad \lVert \mathcal{D}_2^{-}(s)f\rVert_{L_{\beta}^\infty}\leq Ce^{-\nu_* s}\lVert (1+|v|)^{-1}f\rVert_{L_{\beta}^{\infty}},
\end{equation*}
and for any function $f\in L_{\beta}^{\infty}$ and $s\geq 0$, we have
\begin{equation*}
\begin{split}
&\lVert\mathcal{D}_2^+(s)f\rVert_{L_{\beta}^{\infty}}\leq C\lVert f\rVert_{L_{\beta}^{\infty}},\quad \lVert\mathcal{D}_2^-(s)f\rVert_{L_{\beta}^{\infty}}\leq C\lVert f\rVert_{L_{\beta}^{\infty}},\\
&\lVert\mathcal{D}_1^+(s)f\rVert_{L_{\beta}^{\infty}}\leq Ce^{-\nu_*s}\lVert f\rVert_{L_{\beta}^{\infty}},\quad \lVert\mathcal{D}_1^-(s)f\rVert_{L_{\beta}^{\infty}}\leq Ce^{-\nu_*s}\lVert f\rVert_{L_{\beta}^{\infty}}.
\end{split}
\end{equation*}
\end{lemma}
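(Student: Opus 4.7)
The plan is to follow the standard Ukai–Guo decomposition strategy for the linearized Boltzmann semigroup, which the lemma statement essentially rearranges. Write $B^{+} = -v\cdot\nabla_x - \nu(v) + K$, where $K$ is the compact (gain-type) integral operator coming from $2\mathcal{Q}_{\mathcal{G}}(\,\cdot\,,\mathcal{G})$ after subtracting the loss frequency $\nu(v)$. Then $\mathcal{D}_1^{+}(s)f = e^{-\nu(v)s} f(x-sv,v)$ is the semigroup generated by $-v\cdot\nabla_x - \nu(v)$, and $\mathcal{D}_2^{+}(s)$ is defined via the Duhamel identity $e^{sB^{+}} = \mathcal{D}_1^{+}(s) + \int_0^s \mathcal{D}_1^{+}(s-\tau)\, K\, e^{\tau B^{+}}\, d\tau$. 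The backward semigroup $e^{sB^{-}}$ is treated identically with the sign of $v$ flipped.

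The estimates on $\mathcal{D}_1^{\pm}$ are immediate. Since $\nu(v)\geq c_1(1+|v|)\geq c_1$, the pointwise inequality
\begin{equation*}
|\mathcal{D}_1^{\pm}(s)f(x,v)|(1+|v|)^{\beta} \leq e^{-c_1 s}\, |f(x\mp sv, v)|(1+|v|)^{\beta}
\end{equation*}
gives the third line of the lemma with $\nu_* \leq c_1$. For the uniform bound on $\mathcal{D}_2^{\pm}(s)$, the plan is to establish first that $e^{sB^{\pm}}$ is uniformly bounded on $L_\beta^\infty$ on any finite interval by iterating the Duhamel identity: the kernel of $K$ has good decay in $v$ (of Hilbert–Schmidt type against weights), so after one or two iterations the operator $\int_0^s \mathcal{D}_1^{+}(s-\tau) K \cdots d\tau$ maps $L_\beta^\infty$ into itself with a constant depending only on $\beta$. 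Combined with the $\mathcal{D}_1^{+}$ bound this yields $\|\mathcal{D}_2^{+}(s)\|_{L_\beta^\infty \to L_\beta^\infty} \leq C$ uniformly in $s\geq 0$.

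The main part is the exponential decay of $\mathcal{D}_2^{\pm}(s)$ on $\mathcal{K}^{\perp}$. First I would invoke the $L^2$ spectral gap for $B^{\pm}$: since $\mathcal{K}$ is exactly the kernel of the linearized collision operator $2\mathcal{Q}_{\mathcal{G}}(\,\cdot\,,\mathcal{G})$ (conservation of mass, momentum, energy) and the transport part $v\cdot\nabla_x$ commutes with $\mathcal{P}$, the classical coercivity estimate together with a hypocoercivity / averaging argument on the torus gives $\|e^{sB^{\pm}} f\|_{L^2} \leq C e^{-\lambda s}\|f\|_{L^2}$ for all $f\in\mathcal{K}^\perp$ and some $\lambda>0$. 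Second, I would bootstrap this $L^2$ decay to $L_\beta^\infty$ decay using a further iteration of the Duhamel formula: write
\begin{equation*}
\mathcal{D}_2^{+}(s) f = \int_0^s \mathcal{D}_1^{+}(s-\tau) K e^{\tau B^{+}} f\, d\tau,
\end{equation*}
split the integral into $[0,s/2]$ and $[s/2,s]$, and iterate once more inside the second piece. On the first piece the factor $e^{-c_1(s-\tau)}\leq e^{-c_1 s/2}$ from $\mathcal{D}_1^{+}$ provides exponential decay while $K$ absorbs a factor of $(1+|v|)^{-1}$. On the second piece, after inserting the identity $e^{\tau B^{+}} = \mathcal{D}_1^{+}(\tau) + \int_0^\tau \mathcal{D}_1^{+}(\tau-\tau') K e^{\tau' B^{+}} d\tau'$ once more, the two successive actions of $K$ produce enough velocity regularization that $L^2$ control of $e^{\tau' B^{+}} f$ translates into pointwise control in $L_\beta^\infty$. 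Using the $L^2$ decay on $\mathcal{K}^\perp$ for the innermost factor and $L^2\hookrightarrow L_\beta^\infty$ after two actions of $K$ (this is where the condition $\beta>4$ is used, so the weight $(1+|v|)^{\beta}$ is dominated by the Hilbert–Schmidt weight of $K\circ K$), one obtains the required bound with $\nu_* = \min(c_1, \lambda)/2$ or similar.

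The routine parts are the $\mathcal{D}_1^{\pm}$ bounds and the uniform $L_\beta^\infty$ boundedness of $\mathcal{D}_2^{\pm}$. The main obstacle is the $L^2 \to L_\beta^\infty$ bootstrap, which requires careful tracking of how many iterations of $K$ are needed so that the two-fold composition $\mathcal{D}_1^{+} K \mathcal{D}_1^{+} K$ is bounded from $L^2$ into $L_\beta^\infty$ with norm decaying exponentially in $s$; this is exactly the argument carried out in \cite{Ukai_2006}, which is why the lemma is stated as a reorganization of that reference, with the detailed proof deferred to Appendix \ref{app:decomposition_of_semigroup}. The backward case requires no new ideas: replacing $v$ by $-v$ throughout preserves $\mathcal{G}$ (even in $v$), preserves the collision operator, and preserves the kernel $\mathcal{K}$, so all estimates transfer verbatim.
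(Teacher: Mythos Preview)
Your approach is essentially the same as the paper's: decompose $B^{\pm}=A^{\pm}+K$ with $A^{\pm}=\mp v\cdot\nabla_x-\nu$, iterate the Duhamel identity, and bootstrap the $L^2$ spectral gap on $\mathcal{K}^{\perp}$ to $L_\beta^\infty$ via the smoothing of $K$. One imprecision worth flagging: two applications of $K$ are not enough to pass from $L^2$ to $L_\beta^\infty$ when $\beta>4$, since each $K$ gains only one power of the velocity weight (Lemma~\ref{lem:smoothing_effect}); the paper iterates $N=\lceil\beta\rceil$ times, writing $\mathcal{D}_2^{+}(s)=\sum_{j=1}^{N-1}(e^{sA^{+}}K)^{*j}*e^{sA^{+}}+(e^{sA^{+}}K)^{*N}*e^{sB^{+}}$ so that the last term maps $L^2\to L_\beta^\infty$ with exponential decay and the intermediate terms are handled directly from the explicit form of $e^{sA^{+}}$.
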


For general data $f\in L_{\beta}^{\infty}$, the estimates of $\mathcal{D}_2^+(s)f$ and $\mathcal{D}_2^-(s)f$ are improved by the next Proposition.

\begin{proposition}\label{prop:semigroup_bound_for_general}
For $\beta>4$, there exists a constant $C>0$ such that for arbitrary function $f\in L_{\beta}^{\infty}$ and $s\geq 0$ we have
\begin{equation}\label{eq:semigroup_bound_for_general}
\lVert \mathcal{D}_2^{+}(s)f\rVert_{L_{\beta}^\infty}\leq C\lVert (1+|v|)^{-1}f\rVert_{L_{\beta}^{\infty}},\qquad \lVert \mathcal{D}_2^-(s)f\rVert_{L_{\beta}^\infty}\leq C\lVert (1+|v|)^{-1}f\rVert_{L_{\beta}^{\infty}}.
\end{equation}
\end{proposition}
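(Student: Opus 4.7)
The plan is to reduce the proposition to Lemma \ref{lem:L_beta_decay} by splitting $f$ according to the finite-dimensional kernel $\mathcal{K}$, writing $f = \mathcal{P}f + (\textup{Id}-\mathcal{P})f$. On the orthogonal piece $(\textup{Id}-\mathcal{P})f \in \mathcal{K}^\perp$, the first part of Lemma \ref{lem:L_beta_decay} applies directly and gives a bound that is in fact \emph{stronger} than what is claimed, with an additional factor $e^{-\nu_* s}$. So the only thing left is to bound $\mathcal{D}_2^{\pm}(s)\mathcal{P}f$ by $\lVert (1+|v|)^{-1}f\rVert_{L_\beta^\infty}$, now without any decay in $s$ (which is correct, since the hydrodynamic modes do not relax).

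For the kernel part, I would use that $\mathcal{K}$ has an $x$-independent orthonormal basis $\{f_i\}_{0\leq i\leq d+1}$ whose elements are Gaussians in $v$ times polynomials of degree $\leq 2$. Writing $\mathcal{P}f = \sum_i \langle f,f_i\rangle_{L^2} f_i$ and using the pointwise bound $|f(x,v)| \leq \lVert (1+|v|)^{-1}f\rVert_{L_\beta^\infty}\,(1+|v|)^{-(\beta-1)}$, the coefficients satisfy
\[
|\langle f,f_i\rangle_{L^2}| \leq \lVert (1+|v|)^{-1}f\rVert_{L_\beta^\infty} \int_{\mathds{T}^d\times\mathds{R}^d}(1+|v|)^{-(\beta-1)}|f_i(v)|\,dv\,dx \leq C\lVert (1+|v|)^{-1}f\rVert_{L_\beta^\infty},
\]
the integral being finite because $f_i$ decays like a Gaussian (the condition $\beta>4$ is more than enough to ensure integrability). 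Since the $f_i$ themselves lie in $L_\beta^\infty$ for any $\beta$, this gives $\lVert \mathcal{P}f\rVert_{L_\beta^\infty} \leq C\lVert (1+|v|)^{-1}f\rVert_{L_\beta^\infty}$.

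Having upgraded the weight on the projection, the uniform-in-$s$ bound $\lVert \mathcal{D}_2^{\pm}(s)g\rVert_{L_\beta^\infty}\leq C\lVert g\rVert_{L_\beta^\infty}$ from Lemma \ref{lem:L_beta_decay} applied with $g=\mathcal{P}f$ yields the desired estimate for the kernel part. Combining with the orthogonal estimate (and noting that $\lVert(1+|v|)^{-1}(\textup{Id}-\mathcal{P})f\rVert_{L_\beta^\infty}$ is controlled by $\lVert(1+|v|)^{-1}f\rVert_{L_\beta^\infty}$, since $(1+|v|)^{-1}\mathcal{P}f$ is again Gaussian-decaying) closes the proof.

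The argument is essentially routine once the projection decomposition is in place, so I do not expect a serious obstacle. The only conceptual point is the mechanism by which the estimate is obtained: for $f\in\mathcal{K}^\perp$ one gains \emph{both} an extra $(1+|v|)^{-1}$ \emph{and} exponential decay $e^{-\nu_*s}$, whereas for general $f$ one must give up the decay on the hydrodynamic component, but the rapid decay of the basis of $\mathcal{K}$ means the $(1+|v|)^{-1}$ improvement is free on that component.
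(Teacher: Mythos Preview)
Your proposal is correct and follows essentially the same route as the paper: decompose $f=\mathcal{P}f+(\textup{Id}-\mathcal{P})f$, apply the decaying estimate of Lemma~\ref{lem:L_beta_decay} on the orthogonal piece, and on the kernel piece use the uniform-in-$s$ bound of Lemma~\ref{lem:L_beta_decay} together with the inequality $\lVert \mathcal{P}f\rVert_{L_\beta^\infty}\leq C\lVert(1+|v|)^{-1}f\rVert_{L_\beta^\infty}$. The only cosmetic difference is that the paper bounds the coefficients $\langle f,f_j\rangle_{L^2}$ via Cauchy--Schwarz and the embedding $L_{\beta-1}^\infty\hookrightarrow L_{x,v}^2$, whereas you bound them by a direct pointwise estimate and integration against the Gaussian $f_i$; both are equivalent here.
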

\begin{proof}
We detail the proof only for the forward component $\mathcal{D}_2^+$ since the proof for the other component $\mathcal{D}_2^-$ is exactly the same.

First we consider the decomposition of $f$ as $f=\mathcal{P}f+(\id-\mathcal{P})f$
\begin{equation*}
\mathcal{D}_2^+(s)f=\mathcal{D}_2^+(s)\mathcal{P}f+\mathcal{D}_2^+(s)(\id-\mathcal{P})f.
\end{equation*}
Using Lemma \ref{lem:L_2_decay} and Lemma \ref{lem:L_beta_decay} as well as the decomposition of $f$, there is
\begin{equation*}
\lVert \mathcal{D}_2^+(s)f  \rVert_{L_{\beta}^\infty}\leq \lVert \mathcal{D}_2^+(s)\mathcal{P}f\rVert_{L_{\beta}^\infty}+\lVert \mathcal{D}_2^+(s)(\id-\mathcal{P})f  \rVert_{L_{\beta}^\infty}\leq C\lVert \mathcal{P}f  \rVert_{L_{\beta}^\infty}+C\lVert (1+|v|)^{-1}(\id-\mathcal{P})f  \rVert_{L_{\beta}^\infty}
\end{equation*}
Thus to derive the desired estimate \eqref{eq:semigroup_bound_for_general}, it is enough to prove
\begin{equation}\label{eq:general_data_control_0}
\lVert \mathcal{P}f  \rVert_{L_{\beta}^\infty}\leq C\lVert (1+|v|)^{-1}f\rVert_{L_{\beta}^{\infty}},\quad \lVert (1+|v|)^{-1}(\id-\mathcal{P})f  \rVert_{L_{\beta}^\infty}\leq C\lVert (1+|v|)^{-1}f\rVert_{L_{\beta}^{\infty}}
\end{equation}
For $\lVert \mathcal{P}f\rVert_{L_{\beta}^{\infty}}$, we use the fact that $\mathcal{P}f\in\mathcal{K}$, and that $\mathcal{K}$ is spanned by a family of orthogonal functions $\{f_j\}_{0\leq j\leq d+1}$ with exponential decay in $v$, therefore belonging to $L_{\beta}^{\infty}$
\begin{equation}\label{eq:general_data_control}
\begin{split}
\Big\lVert \mathcal{P}f\Big\rVert_{L_{\beta}^{\infty}} = &\Big\lVert \sum_{j=0}^{d+1}\langle f,f_j\rangle_{L_{x,v}^2}f_j\Big\rVert_{L_{\beta}^{\infty}}\leq \sum_{j=0}^{d+1}\Big\lVert  \langle f,f_j\rangle_{L_{x,v}^2}f_j\Big\rVert_{L_{\beta}^{\infty}}.
\end{split}
\end{equation}
Using Cauchy-Schwartz inequality for the inner product $\langle f,f_j\rangle$ as well as the fact $\lVert f_j\rVert_{L_{\beta}^{\infty}}$ is bounded, we further have
\begin{equation*}
\Big\lVert  \langle f,f_j\rangle_{L_{x,v}^2}f_j\Big\rVert_{L_{\beta}^{\infty}}\leq C\Big\lVert \lVert f\rVert_{L_{x,v}^2}\lVert f_j\rVert_{L_{x,v}^2} f_j\Big\rVert_{L_{\beta}^{\infty}}\leq C\lVert f\rVert_{L_{x,v}^2}.
\end{equation*}
Since $\beta>4$, the norm $L_{\beta-1}^{\infty}$ is stronger than the norm $L_{x,v}^2$. This implies
\begin{equation}\label{eq:general_data_control_2}
\Big\lVert  \langle f,f_j\rangle_{L_{x,v}^2}f_j\Big\rVert_{L_{\beta}^{\infty}}\leq  C\lVert f\rVert_{L_{x,v}^2}\leq C\lVert f\rVert_{L_{\beta-1}^{\infty}}\leq C\lVert (1+|v|)^{-1}f\rVert_{L_{\beta}^{\infty}}.
\end{equation}
Combining \eqref{eq:general_data_control} with \eqref{eq:general_data_control_2}, we derive the first inequality in \eqref{eq:general_data_control_0}
\begin{equation}\label{eq:general_data_control_3}
\lVert \mathcal{P}f\rVert_{L_{\beta}^{\infty}}\leq C\lVert (1+|v|)^{-1}f\rVert_{L_{\beta}^{\infty}}.
\end{equation}
For the second inequality in \eqref{eq:general_data_control_0}, using the triangle inequality we have
\begin{equation*}
\lVert (1+|v|)^{-1}(\id-\mathcal{P})f  \rVert_{L_{\beta}^\infty}\leq \lVert (1+|v|)^{-1}f  \rVert_{L_{\beta}^\infty}+\lVert (1+|v|)^{-1}\mathcal{P}f \rVert_{L_{\beta}^\infty}.
\end{equation*}
By \eqref{eq:general_data_control_3} there is
\begin{equation*}
    \lVert (1+|v|)^{-1}\mathcal{P}f  \rVert_{L_{\beta}^\infty}\leq \lVert \mathcal{P}f\rVert_{L_{\beta}^\infty}\leq C\lVert (1+|v|)^{-1}f\rVert_{L_{\beta}^{\infty}}
\end{equation*}
This implies the second inequality in \eqref{eq:general_data_control_0}, and thus concludes the proof of the theorem.
\end{proof}

\section{Control of the Nonlinearity}\label{sec:nonliearity}
This section is devoted to the control of the nonlinear terms in the fixed-point map $\Gamma$ in definition \ref{def:fixed_point}. Subsection \ref{sec:biased_operator} proves the control of the biased collision operator. Subsection \ref{sec:convo_nonlinear_sigma1} further gives the estimate of nonlinear terms needed for Theorem \ref{th:main_theorem_1}, while Subsection \ref{sec:convo_nonlinear_sigma0} is for Theorem \ref{th:main_theorem_2}. The framework of the estimates in this section originates from \cite{Ukai_2006}, with some additional analysis to handle the coupled Boltzmann equations.

\subsection{Estimates for the Biased Collision Operator}\label{sec:biased_operator}
\begin{lemma}\label{lem:multiplicative_nonlinearity}
For any parameter $\beta>4$ and functions $\eta,\psi_1,\psi_2\in L_{\beta}^{\infty}$, the biased collision operator $\mathcal{Q}_{\eta}(\psi_1,\psi_2)$ is bounded from above by
\begin{equation}\label{eq:multiplicative_nonlinearity}
\lVert (1+|v|)^{-1}\mathcal{Q}_{\eta}(\psi_1,\psi_2) \rVert_{L_\beta^{\infty}}\leq C\lVert \eta\rVert_{L_{\beta}^{\infty}}\lVert \psi_1\rVert_{L_{\beta}^{\infty}}\lVert \psi_2 \rVert_{L_{\beta}^{\infty}}.
\end{equation}
\end{lemma}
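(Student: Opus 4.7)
The plan is to carry out a pointwise estimate of $\mathcal{Q}_\eta(\psi_1,\psi_2)(x,v)$ in the spirit of the classical Grad bound, modified to accommodate the third weight coming from $\eta$. I would first split the four terms inside the bracket in \eqref{eq:biased_collision_operator} into two ``gain'' contributions of the form $\psi_i(v')\psi_j(v_*')$ and two ``loss'' contributions of the form $\psi_i(v)\psi_j(v_*)$; by symmetry in $(\psi_1,\psi_2)$ it suffices to bound one representative of each. After factoring out the three $L^\infty_\beta$ norms of $\eta,\psi_1,\psi_2$, the inequality \eqref{eq:multiplicative_nonlinearity} reduces to showing, uniformly in $v$,
\begin{equation*}
\int_{\mathbb{R}^d}\int_{S^{d-1}}\bigl((v_*-v)\cdot\omega\bigr)_+(1+|v_*|)^{-\beta}\, K(v,v_*,\omega)\, d\omega\, dv_* \leq C(1+|v|)^{-\beta+1},
\end{equation*}
with $K=(1+|v|)^{-\beta}(1+|v_*|)^{-\beta}$ for the loss and $K=(1+|v'|)^{-\beta}(1+|v_*'|)^{-\beta}$ for the gain.

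For the loss term, after factoring out $(1+|v|)^{-\beta}$ and integrating the cross section over $\omega\in S^{d-1}$, I would bound the remainder via
\begin{equation*}
\int_{\mathbb{R}^d}|v-v_*|\,(1+|v_*|)^{-2\beta}\,dv_* \leq \int_{\mathbb{R}^d}(|v|+|v_*|)(1+|v_*|)^{-2\beta}\,dv_* \leq C(1+|v|),
\end{equation*}
which holds thanks to the integrability of $(1+|v_*|)^{-2\beta+1}$ on $\mathbb{R}^d$, guaranteed by $2\beta>d+1$. Combined with the prefactor $(1+|v|)^{-\beta}$ this gives the target $(1+|v|)^{-\beta+1}$.

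The gain term is the main step. Here the key is to transfer the weight from the post-collisional pair $(v',v_*')$ back to the original $(v,v_*)$. I would use the conservation of kinetic energy $|v'|^2+|v_*'|^2=|v|^2+|v_*|^2$ together with the elementary inequality $(1+a)(1+b)\geq 1+a+b$ for $a,b\geq 0$ and the algebraic bound $(1+|v'|+|v_*'|)^2\geq 1+|v'|^2+|v_*'|^2=1+|v|^2+|v_*|^2\geq \tfrac{1}{3}(1+|v|+|v_*|)^2$, to obtain
\begin{equation*}
(1+|v'|)(1+|v_*'|)\geq c(1+|v|+|v_*|),
\end{equation*}
and hence $(1+|v'|)^{-\beta}(1+|v_*'|)^{-\beta}\leq C(1+|v|+|v_*|)^{-\beta}\leq C(1+|v|)^{-\beta}$. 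Once this factor is pulled out, what is left is exactly the same integral as in the loss case, and the same calculation closes the estimate provided $\beta>d+1$, which in dimension three matches the stated hypothesis $\beta>4$.

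Summing the four contributions and multiplying by $(1+|v|)^{\beta-1}$ yields \eqref{eq:multiplicative_nonlinearity}. The only non-routine step is the weight transfer for the gain term, and the conservation of kinetic energy is the standard device for it; everything else is a polynomial integrability check in $v_*$. The presence of the extra factor $\eta$ relative to the classical two-argument Boltzmann estimate manifests itself only as one additional weight $(1+|v_*|)^{-\beta}$ in the integrand and does not alter the structure of the argument.
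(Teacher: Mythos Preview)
Your proposal is correct and follows essentially the same route as the paper's proof: split into the four terms, factor out the three $L_\beta^\infty$ norms, use energy conservation to transfer the weight $(1+|v'|)^{-\beta}(1+|v_*'|)^{-\beta}\leq C(1+|v|)^{-\beta}$ for the gain contribution, and close with the integrability of $(1+|v_*|)^{-\beta+1}$ on $\mathbb{R}^d$, which is exactly the condition $\beta>d+1$ (matching $\beta>4$ in $d=3$). The paper only spells out the gain term and refers the reader to the same computation for the remaining three, whereas you treat the loss term separately; otherwise the arguments coincide step for step.
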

\begin{proof}
Recall the definition \eqref{eq:biased_collision_operator} of the biased collision operator
\begin{equation*}
\begin{split}
&\mathcal{Q}_\eta(\psi_1,\psi_2):=\frac{1}{2}\int \big((v_*-v)\cdot\omega\big)_+ \eta(v_*)\Big[\psi_1(v')\psi_2(v_*')+\psi_2(v')\psi_1(v_*')\\
&\qquad\qquad\qquad\qquad\qquad\qquad\qquad\qquad\ \ \ -\psi_1(v)\psi_2(v_*)-\psi_2(v)\psi_1(v_*)\Big]d\omega dv_*.
\end{split}
\end{equation*}
The operator is a summation of four terms. We will detail the proof of the following inequality
\begin{equation}\label{eq:one_of_four}
\Big\lVert(1+|v|)^{-1}\Big[\int \big((v_*-v)\cdot\omega\big)_+ \eta(v_*)\psi_1(v')\psi_2(v_*')d\omega dv_*\Big]\Big\rVert_{L_{\beta}^{\infty}}\leq C\lVert \eta\rVert_{L_{\beta}^{\infty}}\lVert \psi_1\rVert_{L_{\beta}^{\infty}}\lVert \psi_2 \rVert_{L_{\beta}^{\infty}}.
\end{equation}
The inequality above gives an upper bound for one of the four terms in the biased collision operator. The proof of upper bound for the other three terms is essentially the same. These together imply \eqref{eq:multiplicative_nonlinearity}.

By the definition \eqref{eq:weighted_L_infty} of the $L_{\beta}^{\infty}$ norm, we have for arbitrary function $g\in L_{\beta}^{\infty}$
\begin{equation*}
|g(v)|\leq \lVert g\rVert_{L_{\beta}^{\infty}} (1+|v|^{\beta})^{-1}.
\end{equation*}
Using the inequality above, we deduce
\begin{equation*}
\begin{split}
&\Big\lVert(1+|v|)^{-1}\Big[\int \big((v_*-v)\cdot\omega\big)_+ \eta(v_*)\psi_1(v')\psi_2(v_*')d\omega dv_*\Big]\Big\rVert_{L_{\beta}^{\infty}}\\
\leq &\Big\lVert  \int_{\mathds{R}^d\times \mathds{S}^{2}} (1+|v|)^{-1}\big((v_*-v)\cdot\omega\big)_+(1+|v_*|)^{-\beta}(1+|v'|)^{-\beta}(1+|v_*'|)^{-\beta} d\omega dv_*\Big\rVert_{L_{\beta}^\infty}\\
&\hspace{9cm}\times \lVert\eta \rVert_{L_{\beta}^{\infty}}\lVert \psi_1\rVert_{L_{\beta}^{\infty}}\lVert \psi_2 \rVert_{L_{\beta}^{\infty}}.
\end{split}
\end{equation*}
According to the definition \eqref{eq:weighted_L_infty} of the $L_{\beta}^{\infty}$ norm, this term is less than
\begin{equation}\label{eq:one_of_four_1}
\begin{split}
&\Big\lVert  \int_{\mathds{R}^d\times \mathds{S}^{d-1}} (1+|v|)^{-1}\big((v_*-v)\cdot\omega\big)_+(1+|v|)^{\beta}(1+|v_*|)^{-\beta}(1+|v'|)^{-\beta}(1+|v_*'|)^{-\beta} d\omega dv_*\Big\rVert_{L^\infty}\\
&\hspace{10cm}\times \lVert\eta \rVert_{L_{\beta}^{\infty}}\lVert \psi_1\rVert_{L_{\beta}^{\infty}}\lVert \psi_2 \rVert_{L_{\beta}^{\infty}}.
\end{split}
\end{equation}
To control the $L^\infty$-norm, we write
\begin{equation*}
\Big((v_*-v)\cdot\omega\Big)_+\leq |v-v_*|\leq |v|+|v_*|.
\end{equation*}
With the inequality above, we are able to control the $L^{\infty}$ norm in \eqref{eq:one_of_four_1}
\begin{equation*}
\begin{split}
&\Big\lVert  \int_{\mathds{R}^d\times \mathds{S}^{d-1}} (1+|v|)^{-1}\big((v-v_*)\cdot\omega\big)_+(1+|v|)^\beta(1+|v_*|)^{-\beta}(1+|v'|)^{-\beta}(1+|v_*'|)^{-\beta} d\omega dv_*\Big\rVert_{L^{\infty}}\\
\leq &\Big\lVert  \int_{\mathds{R}^d\times \mathds{S}^{d-1}} (1+|v|)^{-1}(|v|+|v_*|)(1+|v|)^\beta(1+|v_*|)^{-\beta}(1+|v'|)^{-\beta}(1+|v_*'|)^{-\beta} d\omega dv_*\Big\rVert_{L^{\infty}}.
\end{split}
\end{equation*}
By the definition of the pre-collisional configuration $(v',v_*')$, we have
\begin{equation*}
(1+|v'|)^{-\beta}(1+|v_*'|)^{-\beta}\leq (1+|v'|+|v_*'|+|v'||v_*'|)^{-\beta}\leq C(1+|v|)^{-\beta}.
\end{equation*}
Using the fact that $\beta>4$, the inequality above further implies
\begin{equation*}
\begin{split}
&\Big\lVert  \int_{\mathds{R}^d\times \mathds{S}^{d-1}} (1+|v|)^{-1}\big((v-v_*)\cdot\omega\big)_+(1+|v|)^\beta(1+|v_*|)^{-\beta}(1+|v'|)^{-\beta}(1+|v_*'|)^{-\beta} d\omega dv_*\Big\rVert_{L^{\infty}}\\
\leq &C\Big \lVert  \int_{\mathds{R}^d\times \mathds{S}^{d-1}} (1+|v|^{-1})(|v|+|v_*|)(1+|v_*|)^{-\beta} d\omega dv_*\Big\rVert_{L^{\infty}}\leq C.
\end{split}
\end{equation*}
This concludes the proof of inequality \eqref{eq:one_of_four}, and thus concludes the proof of the lemma.
\end{proof}
Based on Lemma \ref{lem:multiplicative_nonlinearity}, we can derive the corollary below, by replacing the $\eta$ or $\psi_2$ in Lemma \ref{lem:multiplicative_nonlinearity} with the coupled equilibrium $\mathcal{G}$.
\begin{corollary}
For any parameter $\beta>4$ and functions $\eta,\psi_1,\psi_2\in L_{\beta}^{\infty}$, we have
\begin{equation*}
\lVert (1+|v|)^{-1}\mathcal{Q}_{\mathcal{G}}(\psi_1,\psi_2) \rVert_{L_{\beta}^{\infty}}\leq C\lVert \psi_1\rVert_{L_{\beta}^{\infty}}\lVert \psi_2 \rVert_{L_{\beta}^{\infty}},\quad \lVert (1+|v|)^{-1}\mathcal{Q}_{\eta}(\psi_1,\mathcal{G}) \rVert_{L_{\beta}^{\infty}}\leq C\lVert \eta\rVert_{L_{\beta}^{\infty}}\lVert \psi_1 \rVert_{L_{\beta}^{\infty}}.
\end{equation*}
\end{corollary}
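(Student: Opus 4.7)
The plan is to view this corollary as a direct specialization of Lemma \ref{lem:multiplicative_nonlinearity}, exploiting only the fact that the coupled equilibrium $\mathcal{G}$ has finite $L_{\beta}^{\infty}$ norm for every $\beta \geq 0$. Since
\begin{equation*}
\mathcal{G}(x,v) = (2\pi)^{-d/2}\exp\Big(-\tfrac{1}{2}(\tfrac{1}{2}-\alpha)|v|^2\Big),
\end{equation*}
and the assumption $\alpha < 1/2$ in \eqref{assump:eta_equi} ensures a strictly positive Gaussian rate, the quantity $\lVert \mathcal{G}\rVert_{L_{\beta}^{\infty}} = \sup_{v}(1+|v|)^{\beta}\mathcal{G}(v)$ is finite and is a constant depending only on $d,\alpha,\beta$. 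I would simply absorb this constant into the generic $C$ appearing in the statement.

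For the first inequality, I apply Lemma \ref{lem:multiplicative_nonlinearity} with $\eta$ replaced by $\mathcal{G}$, which gives
\begin{equation*}
\lVert (1+|v|)^{-1}\mathcal{Q}_{\mathcal{G}}(\psi_1,\psi_2)\rVert_{L_{\beta}^{\infty}} \leq C\lVert \mathcal{G}\rVert_{L_{\beta}^{\infty}}\lVert \psi_1\rVert_{L_{\beta}^{\infty}}\lVert \psi_2\rVert_{L_{\beta}^{\infty}} \leq C'\lVert \psi_1\rVert_{L_{\beta}^{\infty}}\lVert \psi_2\rVert_{L_{\beta}^{\infty}}.
\end{equation*}
For the second inequality, I apply Lemma \ref{lem:multiplicative_nonlinearity} with $\psi_2$ replaced by $\mathcal{G}$, which gives
\begin{equation*}
\lVert (1+|v|)^{-1}\mathcal{Q}_{\eta}(\psi_1,\mathcal{G})\rVert_{L_{\beta}^{\infty}} \leq C\lVert \eta\rVert_{L_{\beta}^{\infty}}\lVert \psi_1\rVert_{L_{\beta}^{\infty}}\lVert \mathcal{G}\rVert_{L_{\beta}^{\infty}} \leq C'\lVert \eta\rVert_{L_{\beta}^{\infty}}\lVert \psi_1\rVert_{L_{\beta}^{\infty}}.
\end{equation*}

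There is no real obstacle here: the symmetric role of $\eta,\psi_1,\psi_2$ in the trilinear bound of Lemma \ref{lem:multiplicative_nonlinearity} means that fixing one of the three slots to be the Gaussian $\mathcal{G}$ produces exactly the advertised bilinear bound. The only point worth flagging in writing out the proof is that the expression \eqref{eq:biased_collision_operator} for $\mathcal{Q}_\eta(\psi_1,\psi_2)$ is symmetric in $(\psi_1,\psi_2)$ but treats $\eta$ asymmetrically (only $\eta(v_*)$ appears), so replacing $\psi_2$ by $\mathcal{G}$ in the second inequality is literally a substitution in the integrand, while replacing $\eta$ by $\mathcal{G}$ in the first inequality is likewise a substitution in the velocity-weight factor; in both cases the inequality of Lemma \ref{lem:multiplicative_nonlinearity} applies verbatim.
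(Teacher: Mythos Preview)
Your proposal is correct and matches the paper's approach exactly: the paper states that the corollary is obtained from Lemma~\ref{lem:multiplicative_nonlinearity} by replacing $\eta$ or $\psi_2$ with the coupled equilibrium $\mathcal{G}$, and gives no further proof. Your additional remark that $\lVert \mathcal{G}\rVert_{L_{\beta}^{\infty}}<\infty$ because $\alpha<1/2$ is the only ingredient needed, and you have supplied it.
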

From now on the terminal time $t$ is fixed, but all the constants are independent of $t$.
\begin{definition}\label{def:convo_nonlinear}
\textbf{\textup{[Nonlinearity After Convolution] }}For the forward component, we define the convolution of the semigroup $e^{sB^+}$ and the biased collision operator as
\begin{equation*}
\Psi^{+}[\eta,\psi_1,\psi_2](s):=\int_{0}^se^{(s-\tau)\rB}\mathcal{Q}_{\eta(\tau)}\big(\psi_1(\tau),\psi_2(\tau)\big)d\tau.
\end{equation*}
Due to the decomposition \eqref{eq:decomposition_semigroup} of $e^{sB^+}$, we also define the decomposition components of $\Psi^+$
\begin{equation*}
\begin{split}
&\Psi_1^{+}[\eta,\psi_1,\psi_2](s):=\int_{0}^s\mathcal{D}_1^{+}(s-\tau)\mathcal{Q}_{\eta(\tau)}\big(\psi_1(\tau),\psi_2(\tau)\big)d\tau\\
&\Psi_2^{+}[\eta,\psi_1,\psi_2](s):=\int_{0}^s\mathcal{D}_2^{+}(s-\tau)\mathcal{Q}_{\eta(\tau)}\big(\psi_1(\tau),\psi_2(\tau)\big)d\tau
\end{split}
\end{equation*}
For the backward component, we define the convolution of the semigroup $e^{sB^-}$ and the biased collision operator as
\begin{equation*}
\Psi^{-}[\psi,\eta_1,\eta_2](s)=\int_{s}^te^{(\tau-s)\lB}\mathcal{Q}_{\psi(\tau)}\big(\eta_1(\tau),\eta_2(\tau)\big)d\tau.
\end{equation*}
Due to the decomposition \eqref{eq:decomposition_semigroup} of $e^{sB^-}$, we also define the decomposition components of $\Psi^+$
\begin{equation*}
\begin{split}
&\Psi_1^{-}[\eta,\psi_1,\psi_2](s):=\int_{s}^t\mathcal{D}_1^{-}(\tau-s)\mathcal{Q}_{\eta(\tau)}\big(\psi_1(\tau),\psi_2(\tau)\big)d\tau\\
&\Psi_2^{-}[\eta,\psi_1,\psi_2](s):=\int_{s}^t\mathcal{D}_2^{-}(\tau-s)\mathcal{Q}_{\eta(\tau)}\big(\psi_1(\tau),\psi_2(\tau)\big)d\tau
\end{split}
\end{equation*}
\end{definition}
Based on the definitions of $\Psi^+$ and $\Psi^-$, we can rewrite the fixed-point map $\Gamma$ in a more useful form
\begin{equation}\label{eq:fixed_point_map_psi}
\begin{split}
&\rP[\psi_p,\eta_p]\\
=&e^{sB^+}\psi_p(0)-\int_0^se^{(s-\tau)B^+}(\mathcal{G}+\psi_p)\phi  d\tau
+\Psi^+[\eta_p,\psi_p,\psi_p]+2\Psi^+[\eta_p,\psi_p,\mathcal{G}]+\Psi^+[\mathcal{G},\psi_p,\psi_p],\\
&\lP[\psi_p,\eta_p]\\
=&e^{(t-s)B^-}\eta_p(t)-\int_s^te^{(\tau-s)B^-}(\mathcal{G}+\eta_p)\phi  d\tau
+\Psi^+[\psi_p,\eta_p,\eta_p]+2\Psi^+[\psi_p,\eta_p,\mathcal{G}]+\Psi^+[\mathcal{G},\eta_p,\eta_p].
\end{split}
\end{equation}
A proper norm must be chosen to prove that $\Gamma$ is a contraction map. For this purpose we define the norm $P_{\beta}^{\sigma}$.
\begin{definition}\label{def:v-s-t_norm}
We define the norm $\lVert\cdot\rVert_{P_{\beta}^{\sigma}}$ as
\begin{equation}\label{eq:forward_P_norm}
\lVert\psi_p\rVert_{P_{\beta}^{\sigma}}:=\sup_{0\leq s\leq t}\Big(1+s\Big)^\sigma \lVert\psi_p(s)\rVert_{L_{\beta}^{\infty}}.
\end{equation}
We also define the norm $\lVert\cdot\rVert_{E_{\beta}^{\sigma}}$ as
\begin{equation}\label{eq:forward_E_norm}
\lVert\psi_p\rVert_{E_{\beta}^{\sigma}}:=\sup_{0\leq s\leq t}e^{\sigma s} \lVert\psi_p(s)\rVert_{L_{\beta}^{\infty}}.
\end{equation}
Recalling that $t$ is the terminal time, we introduce the time reversal operator $\reverse$ as
\begin{equation*}
\eta_p^{\reverse}(s)=\eta_p(t-s).
\end{equation*}
\end{definition}
According to the definition of $P_{\beta}^{\sigma}$ and $\reverse$, we have
\begin{equation}\label{eq:bound_provided_norm_P}
\lVert \psi_p(s)\rVert_{L_{\beta}^{\infty}}\leq (1+s)^{-\sigma}\lVert \psi_p\rVert_{P_{\beta}^{\sigma}},\qquad\lVert\eta_p(s)\rVert_{L_{\beta}^{\infty}}\leq \Big(1+(t-s)\Big)^{-\sigma}\lVert \eta_p^{\reverse}\rVert_{P_{\beta}^{\sigma}},
\end{equation}
where the second inequality is due to
\begin{equation*}
\begin{split}
\lVert \eta_p^{\reverse}\rVert_{P_{\beta}^{\sigma}}:=\sup_{0\leq s\leq t}(1+s)^{\sigma}\lVert \eta_p^{\reverse}(s)\rVert_{L_{\beta}^{\infty}}=&\sup_{0\leq s\leq t}\big(1+s\big)^{\sigma}\lVert \eta_p(t-s)\rVert_{L_{\beta}^{\infty}}\\
=&\sup_{0\leq s\leq t}\big(1+(t-s)\big)^{\sigma}\lVert \eta_p(s)\rVert_{L_{\beta}^{\infty}}.
\end{split}
\end{equation*}
Similar inequalities are also true for $E_{\beta}^{\sigma}$
\begin{equation}
\lVert \psi_p(s)\rVert_{L_{\beta}^{\infty}}\leq e^{-\sigma s}\lVert \psi_p\rVert_{E_{\beta}^{\sigma}},\qquad\lVert\eta_p(s)\rVert_{L_{\beta}^{\infty}}\leq e^{-\sigma (t-s)}\lVert \eta_p^{\reverse}\rVert_{E_{\beta}^{\sigma}}.
\end{equation}

\subsection{Control of Convolutional Nonlinearity for Theorem \ref{th:main_theorem_1}}\label{sec:convo_nonlinear_sigma1}
In this subsection, we provide the control of $P_{\beta}^{\sigma}$ norm for $\Psi^+$ and $\Psi^-$, which is useful for Theorem \ref{th:main_theorem_1}.

In subsections \ref{sec:convo_nonlinear_sigma1} and \ref{sec:convo_nonlinear_sigma0}, all the estimates of $\Psi^+$ (resp. $\Psi^-$) will be reduced to the estimates of $\Psi_1^+$ and $\Psi_2^+$ (resp. $\Psi_1^-$ and $\Psi_2^-$). 
\begin{lemma}\label{lem:(1,1,1)-estimate}
\textup{\textbf{[Control for the Forward Component]}} For any parameters $\beta>4$, $\sigma>1$, and functions $\eta^{\reverse},\psi_1,\psi_2\in P_{\beta}^{\sigma}$, we have the upper bound for the $P_{\beta}^{\sigma}$ norm of $\Psi^+[\eta,\psi_1,\psi_2]$ as
\begin{equation}\label{eq:(1,1,1)-estimate_2}
\lVert \Psi^{+}[\eta,\psi_1,\psi_2]\rVert_{P_{\beta}^{\sigma}}\leq C\lVert \eta^{\reverse}\rVert_{P_{\beta}^{\sigma}}\lVert \psi_1\rVert_{P_{\beta}^{\sigma}}\lVert \psi_2 \rVert_{P_{\beta}^{\sigma}}.
\end{equation}
\end{lemma}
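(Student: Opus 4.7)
The plan is to decompose $\Psi^+ = \Psi_1^+ + \Psi_2^+$ via the semigroup splitting \eqref{eq:decomposition_semigroup} and estimate each piece separately in the $P_\beta^\sigma$ norm. The common input is a pointwise-in-time bound on the biased collision operator: applying Lemma \ref{lem:multiplicative_nonlinearity} together with \eqref{eq:bound_provided_norm_P} gives, with $M := \lVert\eta^\reverse\rVert_{P_\beta^\sigma}\lVert\psi_1\rVert_{P_\beta^\sigma}\lVert\psi_2\rVert_{P_\beta^\sigma}$,
$$\bigl\lVert (1+|v|)^{-1} \mathcal{Q}_{\eta(\tau)}(\psi_1(\tau),\psi_2(\tau))\bigr\rVert_{L_\beta^\infty} \leq C\, M\, (1+(t-\tau))^{-\sigma}(1+\tau)^{-2\sigma}.$$

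The key algebraic observation producing forward decay in $s$ is the elementary inequality $(1+\tau)(1+(t-\tau))\geq 1+t\geq 1+s$, valid for $\tau,s\in[0,t]$. It yields $(1+(t-\tau))^{-\sigma}(1+\tau)^{-\sigma}\leq (1+s)^{-\sigma}$ uniformly in $\tau\in[0,s]$, hence
$$\int_0^s (1+(t-\tau))^{-\sigma}(1+\tau)^{-2\sigma} d\tau \leq (1+s)^{-\sigma}\int_0^\infty (1+\tau)^{-\sigma} d\tau \leq C(1+s)^{-\sigma},$$
the last integral converging precisely because $\sigma>1$. Heuristically, the backward decay of $\eta$ supplies exactly the factor needed to upgrade the mere integrability of the forward decay of $\psi_1,\psi_2$ into a decaying convolution in $s$.

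For $\Psi_2^+$ the argument is immediate: Proposition \ref{prop:semigroup_bound_for_general} gives $\lVert \mathcal{D}_2^+(s-\tau)g\rVert_{L_\beta^\infty}\leq C\lVert (1+|v|)^{-1}g\rVert_{L_\beta^\infty}$ uniformly in $s-\tau\geq 0$, and plugging the collision estimate into the time integral and invoking the algebraic inequality yields $\lVert\Psi_2^+(s)\rVert_{L_\beta^\infty}\leq CM(1+s)^{-\sigma}$. For $\Psi_1^+$ we rely on the explicit formula $\mathcal{D}_1^+(s-\tau)f(x,v)=e^{-\nu(v)(s-\tau)}f(x-(s-\tau)v,v)$ of \eqref{eq:explicit_expression_D_1}: the unavoidable $(1+|v|)$ loss produced by Lemma \ref{lem:multiplicative_nonlinearity} is absorbed by the collision frequency via
$$(1+|v|)\,e^{-\nu(v)(s-\tau)} \leq c_1^{-1}\,\nu(v)\,e^{-\nu(v)(s-\tau)},$$
after which $\int_0^s \nu(v)e^{-\nu(v)(s-\tau)}d\tau\leq 1$ reduces the time integral to a supremum on $[0,s]$, to which the same algebraic trick applies.

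The main obstacle is precisely the $(1+|v|)^{-1}$ discrepancy between Lemma \ref{lem:multiplicative_nonlinearity} and the plain $L_\beta^\infty$ estimates of the semigroup pieces: unlike $\mathcal{D}_2^+$, the free-transport part $\mathcal{D}_1^+$ does not absorb this loss automatically, and one has to exploit the structural identification between the polynomial weight $(1+|v|)$ appearing in the collision estimate and the collision frequency $\nu(v)$ that governs the exponential decay of $\mathcal{D}_1^+$. Once these pointwise bounds are combined with the integral estimate above and the sup over $s\in[0,t]$ is taken after multiplication by $(1+s)^\sigma$, we obtain \eqref{eq:(1,1,1)-estimate_2}.
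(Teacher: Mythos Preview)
Your proof is correct and follows the same overall architecture as the paper: split $\Psi^+=\Psi_1^++\Psi_2^+$, feed in Lemma~\ref{lem:multiplicative_nonlinearity} combined with \eqref{eq:bound_provided_norm_P}, and handle the $(1+|v|)$ loss in the $\Psi_1^+$ piece via the identification $\nu(v)\sim 1+|v|$.

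The one genuine difference is in the time integral. You use the pointwise inequality $(1+\tau)(1+(t-\tau))\geq 1+t\geq 1+s$ for $\tau\in[0,s]$ to extract the factor $(1+s)^{-\sigma}$ directly, leaving a single integrable tail $\int_0^\infty(1+\tau)^{-\sigma}d\tau$. The paper instead, for $\Psi_2^+$, bounds $(1+s)^\sigma\leq(1+t)^\sigma$ crudely and then invokes the convolution Lemma~\ref{lem:convolution_inequality} on $[0,t]$; for $\Psi_1^+$ it discards the backward decay of $\eta$ altogether (using only $(1+(t-\tau))^{-\sigma}\leq 1$) and recovers $(1+s)^{-\sigma}$ by the standard $[0,s/2]\cup[s/2,s]$ splitting of $\int_0^s e^{-\nu(v)(s-\tau)}(1+\tau)^{-2\sigma}\nu(v)\,d\tau$. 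Your route is more unified (the same algebraic trick serves both pieces) and avoids the auxiliary convolution lemma; the paper's route is more classical and makes transparent that the $\eta$ decay is actually not needed for $\Psi_1^+$. Either way the conclusion is the same.
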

\begin{proof}
\underline{Estimate of $\Psi_2^+$:} First we would like to prove the upper bound for the $L_{\beta}^{\infty}$ norm of $\Psi_2^+(s)$, with $0\leq s \leq t$. According to the definition of $\Psi^+$, there is
\begin{equation*}
\lVert \Psi_2^{+}[\eta,\psi_1,\psi_2](s)\rVert_{L_{\beta}^{\infty}}= \Big\lVert \int_{0\leq \tau\leq s}\mathcal{D}_2^+(s-\tau)\mathcal{Q}_{\eta(\tau)}\big(\psi_1(\tau),\psi_2(\tau)\big)d\tau \Big\rVert_{L_{\beta}^{\infty}}.
\end{equation*}
Using the boundedness of $\mathcal{D}_2^+(s)$ from $L_{\beta-1}^{\infty}$ to $L_{\beta}^{\infty}$ given by Proposition \ref{prop:semigroup_bound_for_general}, we further have
\begin{equation*}
\lVert \Psi_2^{+}[\eta,\psi_1,\psi_2](s)\rVert_{L_{\beta}^{\infty}}\leq C\int_{0\leq\tau\leq s}\Big\lVert (1+|v|)^{-1}\mathcal{Q}_{\eta(\tau)}\big(\psi_1(\tau),\psi_2(\tau)\big)\Big\rVert_{L_{\beta}^{\infty}} d\tau.
\end{equation*}
To conclude the estimate of the $L_\beta^{\infty}$ norm, we use Lemma \ref{lem:multiplicative_nonlinearity} to control the $L_{\beta}^{\infty}$ norm related to the biased collision operator
\begin{equation}\label{eq:(1,1,1)-estimate_1}
\lVert \Psi_2^{+}[\eta,\psi_1,\psi_2](s)\rVert_{L_{\beta}^{\infty}}\leq C\int_{0\leq \tau\leq s}\lVert \eta(\tau)\rVert_{L_{\beta}^{\infty}}\lVert \psi_1(\tau)\rVert_{L_{\beta}^{\infty}}\lVert \psi_2(\tau) \rVert_{L_{\beta}^{\infty}}d\tau.
\end{equation}
Based on the estimate \eqref{eq:(1,1,1)-estimate_1} of the $L_{\beta}^{\infty}$ norm, we want to further control the $P_{\beta}^{\sigma}$ norm of $\Psi_2^+$. According to the definition \eqref{eq:forward_P_norm} of the $P_{\beta}^{\sigma}$ norm
\begin{equation}\label{eq:W_Psi+_1}
\begin{split}
\lVert \Psi_2^+[\eta,\psi_1,\psi_2]\rVert_{P_{\beta}^{\sigma}}&=\sup_{s\in[0,t]}(1+s)^{\sigma}\lVert \Psi_2^+[\eta,\psi_1,\psi_2](s)\rVert_{L_{\beta}^{\infty}}\\
&\leq \sup_{s\in[0,t]}C(1+s)^{\sigma}\int_{0\leq \tau\leq s}\lVert \eta(\tau)\rVert_{L_{\beta}^{\infty}}\lVert \psi_1(\tau)\rVert_{L_{\beta}^{\infty}}\lVert \psi_2(\tau) \rVert_{L_{\beta}^{\infty}}d\tau\\
&\leq C(1+t)^{\sigma}\int_{0\leq \tau\leq t}\lVert \eta(\tau)\rVert_{L_{\beta}^{\infty}}\lVert \psi_1(\tau)\rVert_{L_{\beta}^{\infty}}\lVert \psi_2(\tau) \rVert_{L_{\beta}^{\infty}}d\tau.
\end{split}
\end{equation}
To get the upper bound in \eqref{eq:(1,1,1)-estimate_2}, we will use the upper bounds \eqref{eq:bound_provided_norm_P} provided by the norm $P_{\beta}^{\sigma}$. The inequality \eqref{eq:bound_provided_norm_P} implies
\begin{equation}\label{eq:three_norm_bound}
\lVert \eta(\tau)\rVert_{L_{\beta}^{\infty}}\lVert \psi_1(\tau)\rVert_{L_{\beta}^{\infty}}\lVert \psi_2(\tau) \rVert_{L_{\beta}^{\infty}}\leq \big(1+(t-\tau)\big)^{-\sigma}(1+\tau)^{-2\sigma}\lVert \eta^{\reverse}\rVert_{P_{\beta}^{\sigma}}\lVert \psi_1\rVert_{P_{\beta}^{\sigma}}\lVert \psi_2 \rVert_{P_{\beta}^{\sigma}}.
\end{equation}
Combining \eqref{eq:three_norm_bound} with \eqref{eq:W_Psi+_1} leads to
\begin{equation*}
\begin{split}
&\lVert \Psi_2^+[\eta,\psi_1,\psi_2]\rVert_{P_{\beta}^{\sigma}}\\
\leq& C(1+t)^{\sigma}\int_{0\leq \tau\leq t}\big(1+(t-\tau)\big)^{-\sigma}(1+\tau)^{-2\sigma}d\tau\lVert \eta^{\reverse}\rVert_{P_{\beta}^{\sigma}}\lVert \psi_1\rVert_{P_{\beta}^{\sigma}}\lVert \psi_2 \rVert_{P_{\beta}^{\sigma}}\\
\leq& C(1+t)^{\sigma}(1+t)^{-\min\{\sigma,2\sigma\}}\lVert \eta^{\reverse}\rVert_{P_{\beta}^{\sigma}}\lVert \psi_1\rVert_{P_{\beta}^{\sigma}}\lVert \psi_2 \rVert_{P_{\beta}^{\sigma}},
\end{split}
\end{equation*}
where the second inequality is due to Lemma \ref{lem:convolution_inequality}. Noticing the simple fact that $\min\{\sigma,2\sigma\}=\sigma$ concludes the estimate of $\Psi_2^+$.

\underline{Estimate of $\Psi_1^+$:} Using the explicit expression \eqref{eq:explicit_expression_D_1} of $\mathcal{D}_1^{+}$, we get
\begin{equation*}
\begin{split}
&|\Psi_1^+[\eta,\psi_1,\psi_2](s,x,v)|\\
\leq& C\int_0^se^{-\nu(v)(s-\tau)}\Big|\sup_{x\in\mathds{T}^d}\mathcal{Q}_{\eta(\tau)}\big(\psi_1(\tau),\psi_2(\tau)\big)(x,v)\Big|d\tau\\
\leq& C\int_0^se^{-\nu(v)(s-\tau)}(1+\tau)^{-2\sigma}\nu(v)(1+|v|)^{-\beta}\underbrace{\Big((1+\tau)^{2\sigma}(1+|v|)^{\beta}\frac{1}{\nu(v)}\Big|\sup_{x\in\mathds{T}^d}\mathcal{Q}_{\eta(\tau)}\big(\psi_1(\tau),\psi_2(\tau)\big)(x,v)\Big|\Big)d\tau}_{\textup{\blue{(I)}}}.
\end{split}
\end{equation*}
Taking the supremum over $\tau$ and $v$ in term (I), along with the fact that $\nu(v)$ is equivalent to $1+|v|$ up to constants, we further have
\begin{equation}\label{eq:Psi_1_initial}
\begin{split}
&\big|\Psi_1^+[\eta,\psi_1,\psi_2](s,x,v)\big|\\
\leq&\int_{0}^se^{-\nu(v)(s-\tau)}(1+\tau)^{-2\sigma}\nu(v)d\tau (1+|v|)^{-\beta} \Big(\sup_{0\leq \tau\leq s}(1+\tau)^{2\sigma}\lVert\frac{1}{\nu(v)}\mathcal{Q}_{\eta(\tau)}\big(\psi_1(\tau),\psi_2(\tau)\big)\rVert_{L_{\beta}^{\infty}}\Big)\\
\leq & C\int_{0}^se^{-\nu(v)(s-\tau)}(1+\tau)^{-2\sigma}\nu(v)d\tau (1+|v|)^{-\beta}\Big(\sup_{0\leq \tau\leq s}(1+\tau)^{2\sigma}\lVert \eta(\tau)\rVert_{L_{\beta}^{\infty}}\lVert \psi_1(\tau)\rVert_{L_{\beta}^{\infty}} \lVert \psi_2(\tau)\rVert_{L_{\beta}^{\infty}}\Big)\\
\leq &C\int_{0}^se^{-\nu(v)(s-\tau)}(1+\tau)^{-2\sigma}\nu(v)d\tau (1+|v|)^{-\beta}\lVert \eta^{\reverse}\rVert_{P_{\beta}^{\sigma}}\lVert \psi_1\rVert_{P_{\beta}^{\sigma}}\lVert \psi_2 \rVert_{P_{\beta}^{\sigma}},
\end{split}
\end{equation}
where in the second inequality we have used Lemma \ref{lem:multiplicative_nonlinearity}. For the time integral in the inequalities above, we decompose the integral and get
\begin{equation}
\begin{split}
&\int_{0}^se^{-\nu(v)(s-\tau)}(1+\tau)^{-2\sigma}\nu(v)d\tau\\
=&\int_{0}^{\frac{s}{2}}e^{-\nu(v)(s-\tau)}(1+\tau)^{-2\sigma}\nu(v)d\tau+\int_{\frac{s}{2}}^se^{-\nu(v)(s-\tau)}(1+\tau)^{-2\sigma}\nu(v)d\tau\\
\leq &e^{-\nu_*\frac{s}{4}}\int_{0}^{\frac{s}{2}}e^{-\nu(v)\frac{(s-\tau)}{2}}(1+\tau)^{-2\sigma}\nu(v)d\tau+(1+\frac{s}{2})^{-2\sigma}\int_{\frac{s}{2}}^se^{-\nu(v)(s-\tau)}\nu(v)d\tau\leq C(1+s)^{-\sigma}.
\end{split}
\end{equation}
The convolution inequality above implies
\begin{equation}
\lVert\Psi_1^+[\eta,\psi_1,\psi_2](s)\rVert_{L_{\beta}^{\infty}}\leq (1+s)^{-\sigma}\lVert \eta^{\reverse}\rVert_{P_{\beta}^{\sigma}}\lVert \psi_1\rVert_{P_{\beta}^{\sigma}}\lVert \psi_2 \rVert_{P_{\beta}^{\sigma}}.
\end{equation}
Thus by the definition \eqref{eq:forward_P_norm} of the $P_{\beta}^{\sigma}$ norm
\begin{equation}
\lVert \Psi_1^{+}[\eta,\psi_1,\psi_2]\rVert_{P_{\beta}^{\sigma}}\leq C\lVert \eta^{\reverse}\rVert_{P_{\beta}^{\sigma}}\lVert \psi_1\rVert_{P_{\beta}^{\sigma}}\lVert \psi_2 \rVert_{P_{\beta}^{\sigma}}.
\end{equation}
This along with the estimate of $\Psi_2^+$ concludes the proof.
\end{proof}

\begin{lemma}\label{lem:(1,1,0)}
\textup{\textbf{[Control for the Forward Component]}} For any parameters $\beta>4,\sigma>1$, and functions $\eta^{\reverse},\psi_1\in P_{\beta}^{\sigma}$, we have the upper bound for the $P_{\beta}^{\sigma}$ norm of $\Psi^+[\eta,\psi_1,\mathcal{G}]$ as
\begin{equation*}
\lVert \Psi^{+}[\eta,\psi_1,\mathcal{G}]\rVert_{P_{\beta}^{\sigma}}\leq C\lVert \eta^{\reverse}\rVert_{P_{\beta}^{\sigma}}\lVert \psi_1\rVert_{P_{\beta}^{\sigma}}.
\end{equation*}
\end{lemma}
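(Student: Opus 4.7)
The plan is to mirror the proof of Lemma \ref{lem:(1,1,1)-estimate} with $\psi_2=\mathcal{G}$, exploiting the fact that $\mathcal{G}$ is time-independent with $\lVert\mathcal{G}\rVert_{L_{\beta}^{\infty}}\leq C$. In particular, the bias factor $\lVert\psi_2(\tau)\rVert_{L_{\beta}^{\infty}}$ that produced a third $(1+\tau)^{-\sigma}$ decay in the previous lemma is now replaced by a constant, leaving only two temporal decay factors. I would first split $\Psi^{+}=\Psi_1^{+}+\Psi_2^{+}$ via \eqref{eq:decomposition_semigroup}, and handle each piece separately, invoking at the collision-operator step the corollary to Lemma \ref{lem:multiplicative_nonlinearity} with $\psi_2=\mathcal{G}$:
\[
\lVert(1+|v|)^{-1}\mathcal{Q}_{\eta(\tau)}(\psi_1(\tau),\mathcal{G})\rVert_{L_{\beta}^{\infty}}\leq C\lVert\eta(\tau)\rVert_{L_{\beta}^{\infty}}\lVert\psi_1(\tau)\rVert_{L_{\beta}^{\infty}}.
\]

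For the $\Psi_2^{+}$ piece I would apply Proposition \ref{prop:semigroup_bound_for_general} to pull the $(1+|v|)^{-1}$-gain through $\mathcal{D}_2^{+}$, yielding
\[
\lVert\Psi_2^{+}[\eta,\psi_1,\mathcal{G}](s)\rVert_{L_{\beta}^{\infty}}\leq C\int_0^s\lVert\eta(\tau)\rVert_{L_{\beta}^{\infty}}\lVert\psi_1(\tau)\rVert_{L_{\beta}^{\infty}}d\tau.
\]
Inserting the $P_{\beta}^{\sigma}$ bounds \eqref{eq:bound_provided_norm_P} and taking $\sup_{s\in[0,t]}(1+s)^{\sigma}$ gives
\[
\lVert\Psi_2^{+}[\eta,\psi_1,\mathcal{G}]\rVert_{P_{\beta}^{\sigma}}\leq C(1+t)^{\sigma}\lVert\eta^{\reverse}\rVert_{P_{\beta}^{\sigma}}\lVert\psi_1\rVert_{P_{\beta}^{\sigma}}\int_0^t(1+(t-\tau))^{-\sigma}(1+\tau)^{-\sigma}d\tau.
\]
Since $\sigma>1$, the convolution inequality (Lemma \ref{lem:convolution_inequality}, already invoked in the proof of Lemma \ref{lem:(1,1,1)-estimate}) bounds the integral by $C(1+t)^{-\sigma}$, cancelling the prefactor $(1+t)^{\sigma}$ and producing the claimed estimate.

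For the $\Psi_1^{+}$ piece I would use the explicit formula \eqref{eq:explicit_expression_D_1} for $\mathcal{D}_1^{+}$ and repeat the pointwise estimate that produced \eqref{eq:Psi_1_initial}, with the third factor $\lVert\psi_2(\tau)\rVert_{L_{\beta}^{\infty}}$ removed. This reduces the task to controlling, uniformly in $v$,
\[
\int_0^s e^{-\nu(v)(s-\tau)}\nu(v)(1+\tau)^{-\sigma}(1+(t-\tau))^{-\sigma}d\tau \leq \int_0^s e^{-\nu(v)(s-\tau)}\nu(v)(1+\tau)^{-\sigma}d\tau,
\]
and the split of $[0,s]$ into $[0,s/2]$ and $[s/2,s]$ used at the end of the proof of Lemma \ref{lem:(1,1,1)-estimate} shows the latter integral is $\leq C(1+s)^{-\sigma}$. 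Multiplying by $(1+s)^{\sigma}$ and taking the supremum in $s\in[0,t]$ concludes.

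The only subtle point is the convolution bound in the $\Psi_2^{+}$ step: with only two decay factors $(1+\tau)^{-\sigma}$ and $(1+(t-\tau))^{-\sigma}$ instead of three, the cancellation of the $(1+t)^{\sigma}$ prefactor relies critically on $\sigma>1$ to secure integrability in Lemma \ref{lem:convolution_inequality}. No new analytical input is needed beyond those already used for Lemma \ref{lem:(1,1,1)-estimate}.
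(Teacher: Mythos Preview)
Your proposal is correct and follows essentially the same approach as the paper's proof: the same $\Psi_1^{+}/\Psi_2^{+}$ split, the same use of Proposition~\ref{prop:semigroup_bound_for_general} and Lemma~\ref{lem:multiplicative_nonlinearity} (with $\psi_2=\mathcal{G}$) for $\Psi_2^{+}$, the same convolution inequality (Lemma~\ref{lem:convolution_inequality}) to cancel the $(1+t)^{\sigma}$ prefactor, and the same $[0,s/2]\cup[s/2,s]$ split for the $\Psi_1^{+}$ time integral. You also correctly flag that $\sigma>1$ is exactly what makes the two-factor convolution bound work here, which is indeed the only place the argument tightens compared to Lemma~\ref{lem:(1,1,1)-estimate}.
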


\begin{proof}
\underline{Estimate of $\Psi_2^+$:} Using exactly the same method of proving equation \eqref{eq:(1,1,1)-estimate_1} in Lemma \ref{lem:(1,1,1)-estimate}, we have the upper bound for the $L_{\beta}^{\infty}$ norm of $\Psi^+[\eta,\psi_1,\mathcal{G}]$ as
\begin{equation*}
\begin{split}
\lVert \Psi_2^{+}[\eta,\psi_1,\mathcal{G}](s)\rVert_{L_{\beta}^{\infty}}&\leq C\int_{0\leq \tau\leq s}\lVert \eta(\tau)\rVert_{L_{\beta}^{\infty}}\lVert \psi_1(\tau)\rVert_{L_{\beta}^{\infty}}d\tau.
\end{split}
\end{equation*}
This further implies 
\begin{equation*}
\begin{split}
\lVert \Psi_2^{+}[\eta,\psi_1,\mathcal{G}](s)\rVert_{P_{\beta}^{\sigma}}&\leq C\sup_{0\leq s\leq t}(1+s)^{\sigma}\int_{0\leq \tau\leq s}\lVert \eta(\tau)\rVert_{L_{\beta}^{\infty}}\lVert \psi_1(\tau)\rVert_{L_{\beta}^{\infty}}d\tau\\
&\leq C(1+t)^{\sigma}\int_{0\leq \tau\leq t}\lVert \eta(\tau)\rVert_{L_{\beta}^{\infty}}\lVert \psi_1(\tau)\rVert_{L_{\beta}^{\infty}}d\tau.
\end{split}
\end{equation*}
Again we use the upper bounds \eqref{eq:bound_provided_norm_P} provided by the norm $P_{\beta}^{\sigma}$, just as in the proof of Lemma \ref{lem:(1,1,1)-estimate},
\begin{equation*}
\begin{split}
\lVert \Psi_2^{+}[\eta,\psi_1,\mathcal{G}]\rVert_{P_{\beta}^{\sigma}}\leq& C(1+t)^{\sigma}\int_{0\leq \tau\leq t}\big(1+(t-\tau)\big)^{-\sigma}(1+\tau)^{-\sigma}\lVert \eta^{\reverse}\rVert_{P_{\beta}^{\sigma}}\lVert \psi_1\rVert_{P_{\beta}^{\sigma}}d\tau\\
\leq& C(1+t)^{\sigma}(1+t)^{-\min\{\sigma,\sigma\}}\lVert \eta^{\reverse}\rVert_{P_{\beta}^{\sigma}}\lVert \psi_1\rVert_{P_{\beta}^{\sigma}}.
\end{split}
\end{equation*}
Noticing the simple fact that $\min\{\sigma,\sigma\}=\sigma$ concludes the estimate of $\Psi_2^+$.

\underline{Estimate of $\Psi_1^+$:} The estimate of $\Psi_1^+[\eta,\psi_1,\mathcal{G}]$ is similar to the estimate of $\Psi_1^+[\eta,\psi_1,\psi_2]$. Similar to \eqref{eq:Psi_1_initial}, we have
\begin{equation}
\big|\Psi_1^+[\eta,\psi_1,\mathcal{G}](s,x,v)\big|\leq C\int_{0}^se^{-\nu(v)(s-\tau)}(1+\tau)^{-\sigma}\nu(v)d\tau (1+|v|)^{-\beta}\lVert \eta^{\reverse}\rVert_{P_{\beta}^{\sigma}}\lVert \psi_1\rVert_{P_{\beta}^{\sigma}}.
\end{equation}
Then the time integral gives a factor $(1+s)^{-\sigma}$
\begin{equation}
\lVert\Psi_1^+[\eta,\psi_1,\mathcal{G}](s)\rVert_{L_{\beta}^{\infty}}\leq (1+s)^{-\sigma}\lVert \eta^{\reverse}\rVert_{P_{\beta}^{\sigma}}\lVert \psi_1\rVert_{P_{\beta}^{\sigma}}.
\end{equation}
This eventually gives
\begin{equation}
\lVert \Psi_1^{+}[\eta,\psi_1,\mathcal{G}]\rVert_{P_{\beta}^{\sigma}}\leq C\lVert \eta^{\reverse}\rVert_{P_{\beta}^{\sigma}}\lVert \psi_1\rVert_{P_{\beta}^{\sigma}}.
\end{equation}
This along with the estimate of $\Psi_2^+$ conclude the proof.
\end{proof}

\begin{lemma}\label{lem:(0,1,1)}
\textup{\textbf{[Control for the Forward Component]}} For any parameters $\beta>4,\sigma>1$, and functions $\psi_1,\psi_2\in P_{\beta}^{\sigma}$, we have the upper bound for the $P_{\beta}^{\sigma}$ norm of $\Psi^+[\mathcal{G},\psi_1,\psi_2]$ as
\begin{equation*}
\lVert \Psi^{+}[\mathcal{G},\psi_1,\psi_2]\rVert_{P_{\beta}^{\sigma}}\leq C\lVert \psi_1\rVert_{P_{\beta}^{\sigma}}\lVert \psi_2\rVert_{P_{\beta}^{\sigma}}.
\end{equation*}
\end{lemma}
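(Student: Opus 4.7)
My plan is to decompose $\Psi^+=\Psi_1^++\Psi_2^+$ via the semigroup splitting $e^{sB^+}=\mathcal{D}_1^+(s)+\mathcal{D}_2^+(s)$ and estimate the two pieces separately, following the template used in Lemmas \ref{lem:(1,1,1)-estimate} and \ref{lem:(1,1,0)}. The new subtlety, compared with those previous lemmas, is the absence of a backward factor $\eta$ to supply a $(1+(t-\tau))^{-\sigma}$ decay, so the polynomial in-time decay must now come entirely from the semigroup together with the extra $(1+\tau)^{-2\sigma}$ contributed by the product $\psi_1(\tau)\psi_2(\tau)$.

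For $\Psi_1^+$ I will follow exactly the template used in the proof of Lemma \ref{lem:(1,1,0)}: the explicit formula $\mathcal{D}_1^+(s-\tau)f(x,v)=e^{-\nu(v)(s-\tau)}f(x-(s-\tau)v,v)$ together with Lemma \ref{lem:multiplicative_nonlinearity} (applied with $\eta=\mathcal{G}$, whose $L_\beta^\infty$-norm is finite) gives
\begin{equation*}
(1+|v|)^\beta\big|\Psi_1^+[\mathcal{G},\psi_1,\psi_2](s,x,v)\big|\leq C\int_0^s\nu(v)e^{-\nu(v)(s-\tau)}(1+\tau)^{-2\sigma}\,d\tau\,\lVert\psi_1\rVert_{P_\beta^\sigma}\lVert\psi_2\rVert_{P_\beta^\sigma},
\end{equation*}
and the same $\int_0^s=\int_0^{s/2}+\int_{s/2}^s$ splitting already used in the proof of Lemma \ref{lem:(1,1,1)-estimate} bounds the time integral by $C(1+s)^{-\sigma}$, yielding $\lVert\Psi_1^+\rVert_{P_\beta^\sigma}\leq C\lVert\psi_1\rVert_{P_\beta^\sigma}\lVert\psi_2\rVert_{P_\beta^\sigma}$.

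For $\Psi_2^+$ the naive argument that would use Proposition \ref{prop:semigroup_bound_for_general} produces only $\int_0^s(1+\tau)^{-2\sigma}\,d\tau\leq C$, which multiplied by $(1+s)^\sigma$ grows without bound as $s\to\infty$; the ``enlarge the integral to $[0,t]$'' trick that closes Lemmas \ref{lem:(1,1,1)-estimate}--\ref{lem:(1,1,0)} also fails here because $(1+t)^\sigma\int_0^t(1+\tau)^{-2\sigma}\,d\tau$ is not bounded uniformly in $t$. The remedy is to use instead the sharper estimate of Lemma \ref{lem:L_beta_decay} valid on $\mathcal{K}^\perp$, which delivers the exponential factor $e^{-\nu_*(s-\tau)}$. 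The key step is therefore to verify that for every $\tau$ the output $\mathcal{Q}_\mathcal{G}(\psi_1(\tau),\psi_2(\tau))$ lies in $\mathcal{K}^\perp$. Once this orthogonality is granted, I obtain
\begin{equation*}
\lVert\Psi_2^+(s)\rVert_{L_\beta^\infty}\leq C\int_0^s e^{-\nu_*(s-\tau)}(1+\tau)^{-2\sigma}\,d\tau\,\lVert\psi_1\rVert_{P_\beta^\sigma}\lVert\psi_2\rVert_{P_\beta^\sigma}\leq C(1+s)^{-2\sigma}\lVert\psi_1\rVert_{P_\beta^\sigma}\lVert\psi_2\rVert_{P_\beta^\sigma}
\end{equation*}
by the usual $[0,s/2]\cup[s/2,s]$ splitting, and multiplying by $(1+s)^\sigma$ leaves $(1+s)^{-\sigma}\leq 1$, which closes the bound.

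The main obstacle is precisely the orthogonality $\mathcal{Q}_\mathcal{G}(\psi_1,\psi_2)\in\mathcal{K}^\perp$, which is not manifest because the asymmetric factor $\mathcal{G}(v_*)$ in the biased operator breaks the usual exchange symmetry of the Boltzmann operator. To verify it I would test against $\phi(v)\mathcal{G}(v)$ for each collision invariant $\phi\in\{1,v_i,|v|^2\}$, use the detailed balance $\mathcal{G}(v)\mathcal{G}(v_*)=\mathcal{G}(v')\mathcal{G}(v_*')$ to render the prefactor symmetric in $(v,v_*)$, apply the involution $(v,v_*)\leftrightarrow(v',v_*')$ (which exchanges $((v_*-v)\cdot\omega)_\pm$) together with the exchange $v\leftrightarrow v_*$, and finally observe that after these symmetrizations the residual $\omega$-integrand reduces to a combination of odd polynomials in $\omega$ of odd total degree, all of which vanish on $\mathds{S}^{d-1}$ by reflection symmetry.
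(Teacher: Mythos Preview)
Your overall strategy matches the paper's proof exactly: split $\Psi^+=\Psi_1^++\Psi_2^+$, handle $\Psi_1^+$ via the explicit formula for $\mathcal{D}_1^+$ together with the $[0,s/2]\cup[s/2,s]$ time-splitting (this part is fine), and for $\Psi_2^+$ exploit the orthogonality $\mathcal{Q}_{\mathcal{G}}(\psi_1(\tau),\psi_2(\tau))\in\mathcal{K}^\perp$ so as to invoke the exponentially decaying bound of Lemma~\ref{lem:L_beta_decay} rather than the merely bounded estimate of Proposition~\ref{prop:semigroup_bound_for_general}. Your diagnosis of why the naive route (and the ``enlarge to $[0,t]$'' trick of Lemmas~\ref{lem:(1,1,1)-estimate}--\ref{lem:(1,1,0)}) fails here is also correct.

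The one genuine gap is the final step of your orthogonality verification. Your setup is right---test against $\phi(v)\mathcal{G}(v)$ for collision invariants $\phi$, use $\mathcal{G}(v)\mathcal{G}(v_*)=\mathcal{G}(v')\mathcal{G}(v_*')$, and symmetrize via $(v,v_*,\omega)\mapsto(v_*,v,-\omega)$ and $(v,v_*,\omega)\mapsto(v',v_*',-\omega)$---but the conclusion does \emph{not} come from ``odd polynomials in $\omega$''. The integrand still contains $\psi_1(v'),\psi_2(v_*')$, which depend on $\omega$ non-polynomially (through $\omega\omega^T$, hence evenly), so there is no parity mechanism of the kind you describe. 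What actually happens, and what the paper does, is that after the symmetrizations the factor $\phi(v)$ is replaced by $\tfrac14\big(\phi(v)+\phi(v_*)-\phi(v')-\phi(v_*')\big)$, and this is \emph{identically zero} for each $\phi\in\{1,v_i,|v|^2\}$ by momentum and energy conservation---i.e.\ by the very definition of a collision invariant. The vanishing is algebraic, not an $\omega$-integration trick; once you replace your last sentence by this observation, the proof is complete and coincides with the paper's.
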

\begin{proof}
\underline{Estimate of $\Psi_2^+$:} The proof of this lemma is slightly different from the proof of Lemma \ref{lem:(1,1,1)-estimate} and Lemma \ref{lem:(1,1,0)}. Again due to the definition of $\Psi^+$ there is
\begin{equation}\label{eq:second_order_1}
\lVert \Psi_2^{+}[\mathcal{G},\psi_1,\psi_2](s)\rVert_{L_{\beta}^{\infty}}= \Big\lVert \int_{0\leq \tau\leq s}e^{(s-\tau)\rB}\mathcal{Q}_{\mathcal{G}}\big(\psi_1(\tau),\psi_2(\tau)\big)d\tau \Big\rVert_{L_{\beta}^{\infty}}.
\end{equation}
The reason for the difference is that for arbitrary $\tau\geq 0$ we have
\begin{equation}\label{eq:second_order_ortho}
\mathcal{Q}_{\mathcal{G}}\big(\psi_1(\tau),\psi_2(\tau)\big)\in\mathcal{K}^{\perp}.
\end{equation}
This equality can be verified as the following. Suppose $h$ is a collision invariant. For each $x$ we perform the integration over $v$ 
\begin{equation*}
\begin{split}
&\int \mathcal{G}h(v)\mathcal{Q}_{\mathcal{G}}\big(\psi_1(\tau),\psi_2(\tau)\big)(v)dvdx\\
=&\frac{1}{2}\int \big((v_*-v)\cdot\omega\big)_+ h(v)\mathcal{G}(v)\mathcal{G}(v_*)\\
&\hspace{2.5cm}\times\Big(\psi_1(v')\psi_2(v_*')+\psi_2(v')\psi_1(v_*')-\psi_1(v)\psi_2(v_*)-\psi_2(v)\psi_1(v_*)\Big)d\omega dv_*dvdx.
\end{split}
\end{equation*}
Using the symmetry of the collision measure
\begin{equation*}
\big((v_*-v)\cdot\omega\big)_+d\omega dv_*dv,
\end{equation*}
we can perform the change of variables $(v,v_*,\omega)\mapsto (v_*,v,-\omega)$ or $(v,v_*,\omega)\mapsto (v',v_*',-\omega)$. Then we would have
\begin{equation*}
\begin{split}
&\int \mathcal{G}h(v)\mathcal{Q}_{\mathcal{G}}\big(\psi_1(\tau),\psi_2(\tau)\big)(v)dvdx\\
=&\frac{1}{8}\int \big((v_*-v)\cdot\omega\big)_+ \mathcal{G}(v)\mathcal{G}(v_*)\big(h(v)+h(v_*)-h(v')-h(v_*')\big)\\
&\hspace{2.5cm}\times\Big(\psi_1(v')\psi_2(v_*')+\psi_2(v')\psi_1(v_*')-\psi_1(v)\psi_2(v_*)-\psi_2(v)\psi_1(v_*)\Big)d\omega dv_*dvdx.
\end{split}
\end{equation*}
Since $h$ is a collision invariant, we have $h(v')+h(v_*')-h(v)-h(v_*)=0$. This implies 
\begin{equation*}
\langle f_i, \mathcal{Q}_{\mathcal{G}}\big(\psi_1(\tau),\psi_2(\tau)\big)\rangle_{L_{x,v}^2}=0
\end{equation*}
for arbitrary $0\leq i\leq d+1$, where $\{f_i\}_{0\leq i\leq d+1}$ is the basis of the kernel $\mathcal{K}$. Thus there is $\mathcal{Q}_{\mathcal{G}}\big(\psi_1(\tau),\psi_2(\tau)\big)\in\mathcal{K}^{\perp}$.

Using the orthogonality \eqref{eq:second_order_ortho} and Lemma \ref{lem:L_beta_decay}, we can transform \eqref{eq:second_order_1} into
\begin{equation*}
\lVert \Psi_2^{+}[\mathcal{G},\psi_1,\psi_2](s)\rVert_{L_{\beta}^{\infty}}\leq C\int_{0\leq \tau\leq s}e^{-\nu_*(s-\tau)}\Big\lVert (1+|v|)^{-1}\mathcal{Q}_{\mathcal{G}}\big(\psi_1(\tau),\psi_2(\tau)\big)\Big\rVert_{L_{\beta}^{\infty}}d\tau.
\end{equation*}
We can further control the $L_{\beta}^{\infty}$ norm of the biased collision term by Lemma \ref{lem:multiplicative_nonlinearity}
\begin{equation}\label{eq:(0,1,1)-estimate_2}
\lVert \Psi_2^{+}[\mathcal{G},\psi_1,\psi_2](s)\rVert_{L_{\beta}^{\infty}}\leq C\int_{0\leq \tau\leq s}e^{-\nu_*(s-\tau)}\lVert \psi_1(\tau)\rVert_{L_{\beta}^{\infty}}\lVert \psi_2(\tau)\rVert_{L_{\beta}^{\infty}}d\tau.
\end{equation}
According to the definition of the $P_{\beta}^{\sigma}$ norm, we have the inequality
\begin{equation*}
\begin{split}
\lVert \Psi_2^{+}[\mathcal{G},\psi_1,\psi_2]\rVert_{P_{\beta}^{\sigma}}\leq& C(1+s)^{\sigma}\int_{0\leq \tau\leq s}e^{-\nu_*(s-\tau)}(1+\tau)^{-2\sigma}d\tau\lVert \psi_1\rVert_{P_{\beta}^{\sigma}}\lVert \psi_2 \rVert_{P_{\beta}^{\sigma}}\\
\leq & C(1+s)^{\sigma}(1+s)^{-2\sigma}\lVert \psi_1\rVert_{P_{\beta}^{\sigma}}\lVert \psi_2 \rVert_{P_{\beta}^{\sigma}}.
\end{split}
\end{equation*}
This concludes the estimate of $\Psi_2^+$.

\underline{Estimate of $\Psi_1^+$:} The estimate of $\Psi_1^+[\mathcal{G},\psi_1,\psi_2]$ is also similar to the estimate of $\Psi_1^+[\eta,\psi_1,\psi_2]$. As in \eqref{eq:Psi_1_initial}, we have
\begin{equation}
\lVert\Psi_1^+[\mathcal{G},\psi_1,\psi_2](s)\rVert_{L_{\beta}^{\infty}}\leq C\int_{0}^se^{-\nu(v)(s-\tau)}(1+\tau)^{-\sigma}\nu(v)d\tau (1+|v|)^{-\beta}\lVert \psi_1\rVert_{P_{\beta}^{\sigma}}\lVert \psi_2\rVert_{P_{\beta}^{\sigma}}.
\end{equation}
Consequently we get
\begin{equation*}
\lVert\Psi_1^+[\mathcal{G},\psi_1,\psi_2](s)\rVert_{P_{\beta}^{\sigma}}\leq C\lVert \psi_1\rVert_{P_{\beta}^{\sigma}}\lVert \psi_2\rVert_{P_{\beta}^{\sigma}}
\end{equation*}
This along with the estimate of $\Psi_2^+$ concludes the proof.
\end{proof}

Since the evolution of the perturbations $(\psi_p,\eta_p)$ is symmetric, we straightforwardly have the lemma below.
\begin{lemma}
\textup{\textbf{[Control for the Backward Component]}} For any parameters $\beta>4$, $\sigma>1$, and functions $\psi,\eta_1^{\reverse},\eta_2^{\reverse}\in P_{\beta}^{\sigma}$, we have the following upper bounds for the $P_{\beta}^{\sigma}$ norm of various terms
\begin{equation*}
\left\{
\begin{split}
&\lVert (\Psi^{-}[\psi,\eta_1,\eta_2])^{\reverse}\rVert_{P_{\beta}^{\sigma}}\leq C\lVert \psi\rVert_{P_{\beta}^{\sigma}}\lVert \eta_1^{\reverse}\rVert_{P_{\beta}^{\sigma}}\lVert \eta_2^{\reverse} \rVert_{P_{\beta}^{\sigma}},\\
&\lVert (\Psi^{-}[\psi,\eta_1,\mathcal{G}])^{\reverse}\rVert_{P_{\beta}^{\sigma}}\leq C\lVert \psi\rVert_{P_{\beta}^{\sigma}}\lVert \eta_1^{\reverse}\rVert_{P_{\beta}^{\sigma}},\\
&\lVert (\Psi^{-}[\mathcal{G},\eta_1,\eta_2])^{\reverse}\rVert_{P_{\beta}^{\sigma}}\leq C\lVert \eta_1^{\reverse}\rVert_{P_{\beta}^{\sigma}}\lVert \eta_2^{\reverse}\rVert_{P_{\beta}^{\sigma}}.
\end{split}
\right.
\end{equation*}
\end{lemma}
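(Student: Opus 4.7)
The plan is to reduce each of the three backward estimates to the corresponding forward estimate (Lemmas \ref{lem:(1,1,1)-estimate}, \ref{lem:(1,1,0)}, and \ref{lem:(0,1,1)}) via the time-reversal operator $\reverse$, exploiting the structural symmetry between the forward and backward problems.

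First I would perform the change of variables $\tau = t - \tau'$ in the time integral defining $\Psi^-$. Concretely, for $s \in [0,t]$,
\begin{equation*}
(\Psi^{-}[\psi,\eta_1,\eta_2])^{\reverse}(s) = \int_{t-s}^t e^{(\tau-(t-s))\lB}\mathcal{Q}_{\psi(\tau)}\bigl(\eta_1(\tau),\eta_2(\tau)\bigr)d\tau = \int_0^s e^{(s-\tau')\lB}\mathcal{Q}_{\psi^\reverse(\tau')}\bigl(\eta_1^\reverse(\tau'),\eta_2^\reverse(\tau')\bigr)d\tau'.
\end{equation*}
This is a \emph{forward-type} convolution (i.e.\ a Duhamel integral on $[0,s]$), but driven by the semigroup $e^{s\lB}$ instead of $e^{s\rB}$, and with all three arguments of the biased collision operator replaced by their time reversals.

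Next, I would invoke the fact that the semigroup estimates in Lemma \ref{lem:L_beta_decay} and Proposition \ref{prop:semigroup_bound_for_general} are stated symmetrically for the pair $(\mathcal{D}_1^+,\mathcal{D}_2^+)$ and $(\mathcal{D}_1^-,\mathcal{D}_2^-)$: the only difference between $\mathcal{D}_1^+(s)f(x,v)=e^{-\nu(v)s}f(x-sv,v)$ and $\mathcal{D}_1^-(s)f(x,v)=e^{-\nu(v)s}f(x+sv,v)$ is a sign in the spatial translation, which is invisible to the translation-invariant $L_\beta^\infty$ norm on $\mathds{T}^d$. Hence decomposing $e^{s\lB}=\mathcal{D}_1^-(s)+\mathcal{D}_2^-(s)$ and repeating the arguments of Lemmas \ref{lem:(1,1,1)-estimate}--\ref{lem:(0,1,1)} verbatim yields the three desired bounds: for the $\mathcal{D}_2^-$-piece I would combine Proposition \ref{prop:semigroup_bound_for_general} (and, for the third estimate, the exponential decay on $\mathcal{K}^\perp$ from Lemma \ref{lem:L_beta_decay}) with the pointwise trilinear bound of Lemma \ref{lem:multiplicative_nonlinearity} and the convolution inequality (Lemma \ref{lem:convolution_inequality}); for the $\mathcal{D}_1^-$-piece I would use the explicit exponential kernel $e^{-\nu(v)(s-\tau)}$ exactly as in \eqref{eq:Psi_1_initial}.

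The only bookkeeping point to check is that the norm appearing naturally in these reproductions is the one applied to the \emph{time-reversed} arguments: since $\lVert \eta_i^\reverse\rVert_{P_\beta^\sigma}$ is the $\sup_{s}(1+s)^\sigma\lVert\eta_i(t-s)\rVert_{L_\beta^\infty}$, it plays on the left-hand side of the transported estimates precisely the role that $\lVert\eta^\reverse\rVert_{P_\beta^\sigma}$ played in the forward lemmas, so the coefficients match exactly as written. For the third estimate (with $\mathcal{G}$ as bias), the orthogonality $\mathcal{Q}_\mathcal{G}(\eta_1,\eta_2)\in\mathcal{K}^\perp$ follows from the same collision-invariance manipulation used in the proof of Lemma \ref{lem:(0,1,1)}, which does not see the direction of transport. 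I expect no genuine obstacle: the entire argument is a transcription of the forward proofs under the symmetry $(\rB,s)\leftrightarrow(\lB,t-s)$, which is why the author marks the result as straightforward.
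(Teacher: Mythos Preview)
Your proposal is correct and is exactly the symmetry argument the paper invokes (the paper gives no detailed proof, simply stating that the forward and backward evolutions are symmetric). Your explicit change of variables $\tau'=t-\tau$ turning $(\Psi^-)^{\reverse}$ into a forward-type Duhamel integral with semigroup $e^{s\lB}$ and time-reversed arguments, together with the observation that $(\psi^{\reverse})^{\reverse}=\psi$ and $\mathcal{G}^{\reverse}=\mathcal{G}$, makes precise how the three forward lemmas transfer verbatim; there is no gap.
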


\subsection{Control of Convolutional Nonlinearity for Theorem \ref{th:main_theorem_2}}\label{sec:convo_nonlinear_sigma0}
In this section we provide the control of the $E_{\beta}^0$-norm (see Definition \ref{def:v-s-t_norm}) of $\Psi^+$ and the $E_{\beta}^{-\sigma}$ norm of $\Psi^-$, which is useful for Theorem \ref{th:main_theorem_2}.
\begin{lemma}\label{lem:convolu_nonlinear_E_+}
\textup{\textbf{[Control for the Forward Component]}} For any parameters $\beta>4,\sigma>0$, functions $\psi_1,\psi_2\in E_{\beta}^{0}$ and $\eta^{\reverse}\in E_{\beta}^{-\sigma}$, we have the following upper bounds for the $E_{\beta}^{0}$ norm of various terms
\begin{equation}\label{eq:convolu_nonlinear_E_+}
\left\{
\begin{split}
&\lVert \Psi^{+}[\eta,\psi_1,\psi_2]\rVert_{E_{\beta}^{0}}\leq C e^{\sigma t}\lVert \eta^{\reverse}\rVert_{E_{\beta}^{-\sigma}}\lVert \psi_1\rVert_{E_{\beta}^{0}}\lVert \psi_2 \rVert_{E_{\beta}^{0}},\\
&\lVert \Psi^{+}[\eta,\psi_1,\mathcal{G}]\rVert_{E_{\beta}^{0}}\leq C e^{\sigma t}\lVert \eta^{\reverse}\rVert_{E_{\beta}^{-\sigma}}\lVert \psi_1\rVert_{E_{\beta}^{0}},\\
&\lVert \Psi^{+}[\mathcal{G},\psi_1,\psi_2]\rVert_{E_{\beta}^{0}}\leq C \lVert \psi_1\rVert_{E_{\beta}^{0}}\lVert \psi_2 \rVert_{E_{\beta}^{0}}.
\end{split}
\right.
\end{equation}
\end{lemma}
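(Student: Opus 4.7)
The plan is to prove the three inequalities by the same template used in Lemmas \ref{lem:(1,1,1)-estimate}, \ref{lem:(1,1,0)}, \ref{lem:(0,1,1)}, replacing the polynomial weights $(1+s)^{\sigma}$ by the exponential weights $e^{\sigma s}$. First I would split $\Psi^+[\eta,\psi_1,\psi_2] = \Psi_1^+[\eta,\psi_1,\psi_2] + \Psi_2^+[\eta,\psi_1,\psi_2]$ using the decomposition \eqref{eq:decomposition_semigroup}, and reduce every bound to a pointwise control of the biased collision operator via Lemma \ref{lem:multiplicative_nonlinearity}. The only genuinely new ingredient, compared to Section \ref{sec:convo_nonlinear_sigma1}, is translating the growth of $\eta$ at the left endpoint into an $e^{\sigma t}$ factor through the time reversal identity
\begin{equation*}
\lVert \eta(\tau)\rVert_{L_\beta^\infty} = \lVert \eta^{\reverse}(t-\tau)\rVert_{L_\beta^\infty} \leq e^{\sigma(t-\tau)} \lVert \eta^{\reverse}\rVert_{E_\beta^{-\sigma}},
\end{equation*}
together with $\lVert \psi_i(\tau)\rVert_{L_\beta^\infty} \leq \lVert \psi_i\rVert_{E_\beta^0}$.

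For $\Psi_2^+[\eta,\psi_1,\psi_2]$ I would use Proposition \ref{prop:semigroup_bound_for_general} to move a factor of $(1+|v|)^{-1}$ from inside the integrand to an $L_\beta^\infty \to L_\beta^\infty$ bound of $\mathcal{D}_2^+(s-\tau)$, apply Lemma \ref{lem:multiplicative_nonlinearity}, and obtain
\begin{equation*}
\lVert \Psi_2^+[\eta,\psi_1,\psi_2](s)\rVert_{L_\beta^\infty} \leq C \int_0^s e^{\sigma(t-\tau)} d\tau \, \lVert \eta^{\reverse}\rVert_{E_\beta^{-\sigma}} \lVert \psi_1\rVert_{E_\beta^0} \lVert \psi_2\rVert_{E_\beta^0},
\end{equation*}
where the time integral is bounded by $e^{\sigma t}/\sigma$, uniformly in $s \in [0,t]$. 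For $\Psi_1^+[\eta,\psi_1,\psi_2]$, the explicit form $\mathcal{D}_1^+(s-\tau)f = e^{-\nu(v)(s-\tau)} f(x-(s-\tau)v, v)$ combined with the pointwise bound from Lemma \ref{lem:multiplicative_nonlinearity} gives
\begin{equation*}
|\Psi_1^+(s,x,v)| \leq C (1+|v|)^{1-\beta} \int_0^s e^{-\nu(v)(s-\tau)} e^{\sigma(t-\tau)} d\tau \, \lVert \eta^{\reverse}\rVert_{E_\beta^{-\sigma}} \lVert \psi_1\rVert_{E_\beta^0} \lVert \psi_2\rVert_{E_\beta^0},
\end{equation*}
and bounding the integral crudely by $e^{\sigma t} \int_0^s e^{-\nu(v)(s-\tau)} d\tau \leq C e^{\sigma t} (1+|v|)^{-1}$ yields the weighted $L^\infty$ bound with the same $C e^{\sigma t}$ factor. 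The second inequality is identical after substituting $\psi_2 = \mathcal{G}$ and absorbing $\lVert \mathcal{G}\rVert_{L_\beta^\infty}$ into the constant.

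The third inequality is the one place where the scheme genuinely differs: since $\eta$ is replaced by $\mathcal{G}$, there is no $\eta$-induced growth, and the desired bound must be independent of $t$. The key observation, already exploited in the proof of Lemma \ref{lem:(0,1,1)}, is the orthogonality $\mathcal{Q}_{\mathcal{G}}(\psi_1(\tau),\psi_2(\tau)) \in \mathcal{K}^{\perp}$, which comes from testing against collisional invariants and using the symmetry of the collision measure. This lets me apply the sharper estimate of Lemma \ref{lem:L_beta_decay} with exponential decay $\lVert \mathcal{D}_2^+(s-\tau) f\rVert_{L_\beta^\infty} \leq C e^{-\nu_*(s-\tau)} \lVert (1+|v|)^{-1} f\rVert_{L_\beta^\infty}$, so that $\int_0^s e^{-\nu_*(s-\tau)} d\tau \leq 1/\nu_*$ uniformly in $s$ and $t$. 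The $\Psi_1^+$ part produces a $\int_0^s e^{-\nu(v)(s-\tau)} d\tau \leq 1/\nu(v)$ factor with no time growth, so both pieces give the required $C \lVert \psi_1\rVert_{E_\beta^0} \lVert \psi_2\rVert_{E_\beta^0}$ bound.

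I do not foresee a serious obstacle: everything reduces to an elementary convolution inequality against the transport semigroup kernel and the already-proved semigroup estimates. The only minor pitfall is making sure the crude bound $\int_0^s e^{\sigma(t-\tau)} d\tau \leq C e^{\sigma t}$ is used consistently (rather than trying to extract finer cancellation between $\nu(v)$ and $\sigma$), so that no condition on the relative size of $\sigma$ and $\nu_*$ is needed and the constant $C$ depends only on $\beta$ and $\sigma$.
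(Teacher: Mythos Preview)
Your proposal is correct and follows essentially the same approach as the paper: the same $\Psi_1^+/\Psi_2^+$ splitting, the same use of Lemma \ref{lem:multiplicative_nonlinearity} and Proposition \ref{prop:semigroup_bound_for_general} for the first two inequalities, and the same orthogonality argument $\mathcal{Q}_{\mathcal{G}}(\psi_1,\psi_2)\in\mathcal{K}^\perp$ combined with Lemma \ref{lem:L_beta_decay} for the third. The only cosmetic difference is that the paper keeps the $\nu(v)$ factor inside the $\Psi_1^+$ time integral and bounds $\int_0^s e^{-\nu(v)(s-\tau)}e^{-\sigma\tau}\nu(v)\,d\tau\le C$, whereas you first pull out $e^{\sigma t}$ and then use $\int_0^s e^{-\nu(v)(s-\tau)}\,d\tau\le C(1+|v|)^{-1}$; these are equivalent.
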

\begin{proof}
\ \\
\underline{The first inequality in \eqref{eq:convolu_nonlinear_E_+}:}\\
\myuwave{The $\Psi_2^+$ Term:} We use the upper bound for the $L_{\beta}^\infty$ norm of $\Psi_2^+$, which has been proved in \eqref{eq:(1,1,1)-estimate_1}
\begin{equation*}
\lVert \Psi_2^{+}[\eta,\psi_1,\psi_2](s)\rVert_{L_{\beta}^{\infty}}\leq C\int_{0\leq \tau\leq s}\lVert \eta(\tau)\rVert_{L_{\beta}^{\infty}}\lVert \psi_1(\tau)\rVert_{L_{\beta}^{\infty}}\lVert \psi_2(\tau) \rVert_{L_{\beta}^{\infty}}d\tau.
\end{equation*}
Since the $E_{\beta}^{0}$ norm is defined as the supremum of $L_{\beta}^{\infty}$ for different time $0\leq s\leq t$, we have
\begin{equation*}
\lVert \Psi_2^{+}[\eta,\psi_1,\psi_2]\rVert_{E_{\beta}^{0}}\leq C\int_{0\leq \tau\leq t}e^{\sigma(t-\tau)}\lVert \eta^{\reverse} \rVert_{E_{\beta}^{-\sigma}}\lVert \psi_1\rVert_{E_{\beta}^{0}}\lVert \psi_2 \rVert_{E_{\beta}^{0}}d\tau\leq C e^{\sigma t}\lVert \eta^{\reverse}\rVert_{E_{\beta}^{-\sigma}}\lVert \psi_1\rVert_{E_{\beta}^{0}}\lVert \psi_2 \rVert_{E_{\beta}^{0}}.
\end{equation*}
\myuwave{The $\Psi_1^+$ term:} For $\Psi_1^+$, we have
\begin{equation}\label{eq:Psi_1_initial_Exp}
\begin{split}
&\big|\Psi_1^+[\eta,\psi_1,\psi_2](s,x,v)\big|\\
\leq&C\int_{0}^se^{-\nu(v)(s-\tau)}e^{\sigma (t-\tau)}\nu(v)d\tau (1+|v|)^{-\beta} \Big(\sup_{0\leq \tau\leq s}e^{-\sigma(t-\tau)}\lVert\frac{1}{1+|v|}\mathcal{Q}_{\eta(\tau)}\big(\psi_1(\tau),\psi_2(\tau)\big)\rVert_{L_{\beta}^{\infty}}\Big)\\
\leq &C \int_{0}^se^{-\nu(v)(s-\tau)}e^{\sigma(t-\tau)}\nu(v)d\tau (1+|v|)^{-\beta}\Big(\sup_{0\leq \tau\leq s}e^{-\sigma (t-\tau)}\lVert \eta(\tau)\rVert_{L_{\beta}^{\infty}}\lVert \psi_1(\tau)\rVert_{L_{\beta}^{\infty}} \lVert \psi_2(\tau)\rVert_{L_{\beta}^{\infty}}\Big)\\
\leq &e^{\sigma t}\int_{0}^se^{-\nu(v)(s-\tau)}e^{-\sigma \tau}\nu(v)d\tau (1+|v|)^{-\beta}\lVert \eta^{\reverse}\rVert_{E_{\beta}^{-\sigma}}\lVert \psi_1\rVert_{E_{\beta}^{0}}\lVert \psi_2 \rVert_{E_{\beta}^{0}},
\end{split}
\end{equation}
The time integral is uniformly bounded for all $s$ and $v$
\begin{equation*}
\int_{0}^se^{-\nu(v)(s-\tau)}e^{-\sigma \tau}\nu(v)d\tau\leq C.
\end{equation*}
Consequently
\begin{equation*}
\lVert \Psi_1^{+}[\eta,\psi_1,\psi_2]\rVert_{E_{\beta}^{0}}\leq C e^{\sigma t}\lVert \eta^{\reverse}\rVert_{E_{\beta}^{-\sigma}}\lVert \psi_1\rVert_{E_{\beta}^{0}}\lVert \psi_2 \rVert_{E_{\beta}^{0}}.
\end{equation*}
This concludes the proof of the first inequality.

\underline{The second inequality in \eqref{eq:convolu_nonlinear_E_+}:} It can be derived by replacing $\psi_2$ with $\mathcal{G}$.

\underline{The third inequality in \eqref{eq:convolu_nonlinear_E_+}:}\\
\myuwave{The $\Psi_2^+$ term:} We first use the following upper bound for the $L_{\beta}^{\infty}$, which has been proved as \eqref{eq:(0,1,1)-estimate_2} in Lemma \ref{lem:(0,1,1)}
\begin{equation*}
\lVert \Psi_2^{+}[\mathcal{G},\psi_1,\psi_2](s)\rVert_{L_{\beta}^{\infty}}\leq C\int_{0\leq \tau\leq s}e^{-\nu_*(s-\tau)}\lVert \psi_1(\tau)\rVert_{L_{\beta}^{\infty}}\lVert \psi_2(\tau)\rVert_{L_{\beta}^{\infty}}d\tau.
\end{equation*}
Then due to the definition of the $E_{\beta}^0$ norm, we have
\begin{equation*}
\lVert \Psi_2^{+}[\mathcal{G},\psi_1,\psi_2]\rVert_{E_{\beta}^{0}}\leq C\int_{0}^te^{-\nu_*(s-\tau)}\lVert \psi_1\rVert_{E_{\beta}^{0}}\lVert \psi_2\rVert_{E_{\beta}^{0}}d\tau\leq C\lVert \psi_1\rVert_{E_{\beta}^{0}}\lVert \psi_2\rVert_{E_{\beta}^{0}}.
\end{equation*}
This concludes the estimate of $\Psi_2^+$.

\myuwave{The $\Psi_1^+$ term:} Similar to \eqref{eq:Psi_1_initial_Exp}, we obtain
\begin{equation*}
\begin{split}
&\big|\Psi_1^+[\mathcal{G},\psi_1,\psi_2](s,x,v)\big|\\
\leq&C\int_{0}^se^{-\nu(v)(s-\tau)}\nu(v)d\tau (1+|v|)^{-\beta} \Big(\sup_{0\leq \tau\leq s}\lVert\frac{1}{1+|v|}\mathcal{Q}_{\mathcal{G}}\big(\psi_1(\tau),\psi_2(\tau)\big)\rVert_{L_{\beta}^{\infty}}\Big)\\
\leq &C\int_{0}^se^{-\nu(v)(s-\tau)}\nu(v)d\tau (1+|v|)^{-\beta}\lVert \psi_1\rVert_{E_{\beta}^{0}}\lVert \psi_2 \rVert_{E_{\beta}^{0}},
\end{split}
\end{equation*}
The time integral gives a uniformly bounded constant. As a consequence
\begin{equation*}
\lVert \Psi_1^{+}[\mathcal{G},\psi_1,\psi_2]\rVert_{E_{\beta}^{0}}\leq C \lVert \psi_1\rVert_{E_{\beta}^{0}}\lVert \psi_2 \rVert_{E_{\beta}^{0}}.
\end{equation*}
This, along with the estimate of $\Psi_2^+$, concludes the proof.

\end{proof}

\begin{lemma}\label{lem:convolu_nonlinear_E_-}
\textup{\textbf{[Control for the Backward Component]}} For any parameters $\beta>4,\sigma>0$, any functions $\psi\in E_{\beta}^{0}$ and $\eta_1^{\reverse},\eta_2^{\reverse}\in E_{\beta}^{-\sigma}$, we have the following upper bounds for the norm of various terms
\begin{equation}\label{eq:convolu_nonlinear_E_-}
\left\{
\begin{split}
&\lVert (\Psi^{-}[\psi,\eta_1,\eta_2])^{\reverse}\rVert_{E_{\beta}^{-\sigma}}\leq Ce^{\sigma t}\lVert \psi\rVert_{E_{\beta}^{0}}\lVert \eta_1^{\reverse}\rVert_{E_{\beta}^{-\sigma}}\lVert \eta_2^{\reverse} \rVert_{E_{\beta}^{-\sigma}},\\
&\lVert (\Psi^{-}[\psi,\eta_1,\mathcal{G}])^{\reverse}\rVert_{E_{\beta}^{-\sigma}}\leq C\lVert \psi\rVert_{E_{\beta}^{0}}\lVert \eta_1^{\reverse}\rVert_{E_{\beta}^{-\sigma}},\\
&\lVert (\Psi^{-}[\mathcal{G},\eta_1,\eta_2])^{\reverse}\rVert_{E_{\beta}^{-\sigma}}\leq Ce^{\sigma t}\lVert \eta_1^{\reverse}\rVert_{E_{\beta}^{-\sigma}}\lVert \eta_2^{\reverse}\rVert_{E_{\beta}^{-\sigma}}.
\end{split}
\right.
\end{equation}
\end{lemma}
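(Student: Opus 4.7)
The strategy is to mirror the proof of Lemma \ref{lem:convolu_nonlinear_E_+} by exploiting the time-reversal symmetry: the operators $\lB,\mathcal{D}_1^-,\mathcal{D}_2^-$ obey the same estimates as their forward counterparts (Lemma \ref{lem:L_beta_decay}, Proposition \ref{prop:semigroup_bound_for_general}), and the pointwise bounds
\begin{equation*}
\lVert\eta_i(\tau)\rVert_{L_{\beta}^{\infty}}\leq e^{\sigma(t-\tau)}\lVert\eta_i^{\reverse}\rVert_{E_{\beta}^{-\sigma}},\qquad \lVert\psi(\tau)\rVert_{L_{\beta}^{\infty}}\leq\lVert\psi\rVert_{E_{\beta}^{0}}
\end{equation*}
follow from Definition \ref{def:v-s-t_norm} by the change of variables $\tau\mapsto t-\tau$. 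I decompose $\Psi^-=\Psi_1^-+\Psi_2^-$ via \eqref{eq:decomposition_semigroup} and handle each term separately.

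For the first inequality, combining Proposition \ref{prop:semigroup_bound_for_general} with Lemma \ref{lem:multiplicative_nonlinearity} gives
\begin{equation*}
\lVert\Psi_2^-[\psi,\eta_1,\eta_2](s)\rVert_{L_{\beta}^{\infty}}\leq C\int_s^t \lVert\psi(\tau)\rVert_{L_{\beta}^{\infty}}\lVert\eta_1(\tau)\rVert_{L_{\beta}^{\infty}}\lVert\eta_2(\tau)\rVert_{L_{\beta}^{\infty}}d\tau,
\end{equation*}
into which the pointwise bounds yield $\int_s^t e^{2\sigma(t-\tau)}d\tau\leq C e^{2\sigma(t-s)}$ times $\lVert\psi\rVert_{E_{\beta}^{0}}\lVert\eta_1^{\reverse}\rVert_{E_{\beta}^{-\sigma}}\lVert\eta_2^{\reverse}\rVert_{E_{\beta}^{-\sigma}}$. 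For $\Psi_1^-$, the explicit form $\mathcal{D}_1^-(\tau-s)f(x,v)=e^{-\nu(v)(\tau-s)}f(x+(\tau-s)v,v)$ lets me follow \eqref{eq:Psi_1_initial_Exp} verbatim: extract $(1+|v|)^{-\beta}$ and a supremum in $\tau,v$, and control $\int_s^t e^{-\nu(v)(\tau-s)}\nu(v) e^{2\sigma(t-\tau)}d\tau\leq Ce^{2\sigma(t-s)}$. Passing to the reverse-time norm via $u=t-s$, the factor $e^{2\sigma(t-s)}=e^{2\sigma u}$ against $e^{-\sigma u}$ collapses to $e^{\sigma u}\leq e^{\sigma t}$, giving the stated prefactor.

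The second inequality is identical but with $\mathcal{G}$ replacing $\eta_2$, so only one factor $e^{\sigma(t-\tau)}$ arises and $\int_s^t e^{\sigma(t-\tau)}d\tau\leq Ce^{\sigma(t-s)}$; the reverse-time supremum then cancels this exactly, producing the bound without an $e^{\sigma t}$ prefactor. The third inequality mirrors the first with $\psi$ replaced by $\mathcal{G}$, and either the general bound on $\mathcal{D}_2^-$ or the sharper $e^{-\nu_*(\tau-s)}$ decay from $\mathcal{Q}_{\mathcal{G}}(\eta_1,\eta_2)\in\mathcal{K}^{\perp}$ (as in Lemma \ref{lem:(0,1,1)}) suffices. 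No genuinely new functional-analytic input is required; the only bookkeeping to watch is the collapse of the growth factors under reversal, so I do not anticipate a real obstacle.
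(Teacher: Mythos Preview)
Your proposal is correct and follows essentially the same approach as the paper: decompose $\Psi^-=\Psi_1^-+\Psi_2^-$, apply Proposition \ref{prop:semigroup_bound_for_general} and Lemma \ref{lem:multiplicative_nonlinearity} for $\Psi_2^-$ and the explicit kernel of $\mathcal{D}_1^-$ for $\Psi_1^-$, then integrate the exponential weights and collapse via the reverse-time norm. The paper carries out exactly these computations, including deriving the third inequality from the first by replacing $\psi$ with $\mathcal{G}$.
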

\begin{proof}
\ \\
\underline{The first inequality in \eqref{eq:convolu_nonlinear_E_-}:} \\
\myuwave{The $\Psi_2^-$ term:} Similar to \eqref{eq:(1,1,1)-estimate_1}, we have
\begin{equation*}
\lVert \Psi_2^{-}[\psi,\eta_1,\eta_2](s)\rVert_{L_{\beta}^{\infty}}\leq C\int_{s\leq \tau\leq t}\lVert \psi(\tau)\rVert_{L_{\beta}^{\infty}}\lVert \eta_1(\tau)\rVert_{L_{\beta}^{\infty}}\lVert \eta_2(\tau) \rVert_{L_{\beta}^{\infty}}d\tau.
\end{equation*}
According to the definition \eqref{eq:forward_E_norm} of the relevant norms, the equation above implies
\begin{equation}
\begin{split}
\lVert (\Psi_2^-[\psi,\eta_1,\eta_2])^{\reverse} \rVert_{E_{\beta}^{-\sigma}}&:=\sup_{0\leq s\leq t}e^{-\sigma (t-s)}\lVert \Psi_2^{-}[\psi,\eta_1,\eta_2](s)\rVert_{L_{\beta}^{\infty}}\\
&\leq C\sup_{0\leq s\leq t}e^{-\sigma (t-s)}\int_{s\leq \tau\leq t}\lVert \psi(\tau)\rVert_{L_{\beta}^{\infty}}\lVert \eta_1(\tau)\rVert_{L_{\beta}^{\infty}}\lVert \eta_2(\tau) \rVert_{L_{\beta}^{\infty}}d\tau\\
&\leq C\sup_{0\leq s\leq t}e^{-\sigma (t-s)}\int_{s\leq \tau\leq t}e^{2\sigma(t-\tau)}\lVert \psi\rVert_{E_{\beta}^{0}}\lVert \eta_1\rVert_{E_{\beta}^{-\sigma}}\lVert \eta_2 \rVert_{E_{\beta}^{-\sigma}}d\tau
\end{split}
\end{equation}
After the integrating over time $s\leq \tau\leq t$, there is
\begin{equation}
\begin{split}
\lVert (\Psi_2^-[\psi,\eta_1,\eta_2])^{\reverse} \rVert_{E_{\beta}^{-\sigma}}&\leq C\sup_{0\leq s\leq t}e^{\sigma (t-s)}\lVert \psi\rVert_{E_{\beta}^{0}}\lVert \eta_1\rVert_{E_{\beta}^{-\sigma}}\lVert \eta_2 \rVert_{E_{\beta}^{-\sigma}}\\
&\leq Ce^{\sigma t}\lVert \psi\rVert_{E_{\beta}^{0}}\lVert \eta_1^{\reverse}\rVert_{E_{\beta}^{-\sigma}}\lVert \eta_2^{\reverse} \rVert_{E_{\beta}^{-\sigma}}.
\end{split}
\end{equation}
\myuwave{The $\Psi_1^-$ term:} For $\Psi_1^-$, we have
\begin{equation}\label{eq:Psi_1_initial_exp_-}
\begin{split}
&\big|\Psi_1^-[\eta,\psi_1,\psi_2](s,x,v)\big|\\
\leq&C\int_{s}^te^{-\nu(v)(\tau-s)}e^{2\sigma (t-\tau)}\nu(v)d\tau (1+|v|)^{-\beta} \Big(\sup_{0\leq \tau\leq s}e^{-2\sigma(t-\tau)}\lVert\frac{1}{1+|v|}\mathcal{Q}_{\psi(\tau)}\big(\eta_1(\tau),\eta_2(\tau)\big)\rVert_{L_{\beta}^{\infty}}\Big)\\
\leq &C \int_{s}^te^{-\nu(v)(\tau-s)}e^{2\sigma(t-\tau)}\nu(v)d\tau (1+|v|)^{-\beta}\Big(\sup_{0\leq \tau\leq s}e^{-2\sigma (t-\tau)}\lVert \psi(\tau)\rVert_{L_{\beta}^{\infty}}\lVert \eta_1(\tau)\rVert_{L_{\beta}^{\infty}} \lVert \eta_2(\tau)\rVert_{L_{\beta}^{\infty}}\Big)\\
\leq &C\int_{s}^te^{-\nu(v)(\tau-s)}e^{2\sigma(t- \tau)}\nu(v)d\tau (1+|v|)^{-\beta}\lVert \psi\rVert_{E_{\beta}^{0}}\lVert \eta_1^{\reverse}\rVert_{E_{\beta}^{-\sigma}}\lVert \eta_2^{\reverse} \rVert_{E_{\beta}^{-\sigma}},
\end{split}
\end{equation}
For the time integral, there is
\begin{equation*}
\int_{s}^te^{-\nu(v)(s-\tau)}e^{2\sigma(s- \tau)}\nu(v)d\tau=e^{2\sigma(t-s)}\int_{0}^se^{-(\nu(v)+2\sigma)(s-\tau)}\nu(v)d\tau\leq C  e^{2\sigma(t-s)}.
\end{equation*}
Consequently
\begin{equation*}
\lVert \Psi_1^-[\psi,\eta_1,\eta_2](s)\rVert_{L_{\beta}^{\infty}}\leq Ce^{2\sigma(t-s)}\lVert \psi\rVert_{E_{\beta}^{0}}\lVert \eta_1^{\reverse}\rVert_{E_{\beta}^{-\sigma}}\lVert \eta_2^{\reverse} \rVert_{E_{\beta}^{-\sigma}}.
\end{equation*}
According to the definition \eqref{eq:forward_E_norm} of the $E_{\beta}^{-\sigma}$ norm, we further have
\begin{equation*}
\lVert \Psi_1^-[\psi,\eta_1,\eta_2]\rVert_{E_{\beta}^{-\sigma}}\leq Ce^{\sigma t}\lVert \psi\rVert_{E_{\beta}^{0}}\lVert \eta_1^{\reverse}\rVert_{E_{\beta}^{-\sigma}}\lVert \eta_2^{\reverse} \rVert_{E_{\beta}^{-\sigma}}.
\end{equation*}
This concludes the proof of the first inequality in \eqref{eq:convolu_nonlinear_E_-}. 

\underline{The second inequality in \eqref{eq:convolu_nonlinear_E_-}:} \\
\myuwave{The $\Psi_2^-$ term:} In a way similar to \eqref{eq:(1,1,1)-estimate_1}, there is
\begin{equation*}
\lVert \Psi_2^{-}[\psi,\eta_1,\mathcal{G}](s)\rVert_{L_{\beta}^{\infty}}\leq C\int_{s\leq \tau\leq t}\lVert \psi(\tau)\rVert_{L_{\beta}^{\infty}}\lVert \eta_1(\tau)\rVert_{L_{\beta}^{\infty}}d\tau,
\end{equation*}
which implies
\begin{equation*}
\begin{split}
\lVert (\Psi_2^-[\psi,\eta_1,\mathcal{G}])^{\reverse} \rVert_{E_{\beta}^{-\sigma}}&\leq C\sup_{0\leq s\leq t}e^{-\sigma (t-s)}\int_{s\leq \tau\leq t}e^{\sigma(t-\tau)}\lVert \psi\rVert_{E_{\beta}^{0}}\lVert \eta_1^{\reverse}\rVert_{E_{\beta}^{-\sigma}}d\tau\leq C\lVert \psi\rVert_{E_{\beta}^{0}}\lVert \eta_1^{\reverse}\rVert_{E_{\beta}^{-\sigma}}.
\end{split}
\end{equation*}
\myuwave{The $\Psi_1^-$ term:} Similar to \eqref{eq:Psi_1_initial_exp_-}, we have
\begin{equation*}
\big|\Psi_1^-[\eta,\psi_1,\mathcal{G}](s,x,v)\big|\leq C\int_{s}^te^{-\nu(v)(\tau-s)}e^{\sigma(t- \tau)}\nu(v)d\tau (1+|v|)^{-\beta}\lVert \psi\rVert_{E_{\beta}^{0}}\lVert \eta_1^{\reverse}\rVert_{E_{\beta}^{-\sigma}}.
\end{equation*}
Consequently
\begin{equation*}
\lVert (\Psi_1^{-}[\psi,\eta_1,\mathcal{G}])^{\reverse}\rVert_{E_{\beta}^{-\sigma}}\leq C\lVert \psi\rVert_{E_{\beta}^{0}}\lVert \eta_1^{\reverse}\rVert_{E_{\beta}^{-\sigma}}.
\end{equation*}
This concludes the proof of the second inequality.            

\underline{The third inequality in \eqref{eq:convolu_nonlinear_E_-}:} It can be derived by replacing $\psi$ with $\mathcal{G}$.
\end{proof}

\section{Solving the Fixed-Point Problem}\label{sec:fixed_point}
In this section, we will prove the fixed-point map $(\rP,\lP)$ defined in \eqref{eq:fixed_point_problem} is a contraction map. Thus it has a unique fixed point in a certain function class, while a fixed point of $(\Gamma^+,\Gamma^-)$ is consequently a mild solution of the coupled Boltzmann equations \eqref{eq:transformed_evolution_perturb}.  Under the assumptions of Theorem \ref{th:main_theorem_1}, it is a contraction w.r.t. $\psi_p\in P_{\beta}^{\sigma}$ and $\eta_p^{\reverse}\in P_{\beta}^{\sigma}$ with $\beta>4,\sigma>1$; Under the assumptions of Theorem \ref{th:main_theorem_2}, it is a contraction w.r.t. $\psi_p\in E_{\beta}^{0}$ and $\eta_p^{\reverse}\in E_{\beta}^{-\sigma}$ with $\beta>4,\sigma>0$.
\subsection{Fixed Point for Theorem \ref{th:main_theorem_1}}

\begin{lemma}\label{lem:bounds-for-Phi}
For any parameters $\beta>4,\sigma>1$ and assuming \eqref{assump:main_3}, we have the following estimates of the fixed point map $\Gamma$: for the forward component $\rP$ there is
\begin{equation*}
\begin{split}
\lVert\rP[\psi_p,\eta_p]\rVert_{P_{\beta}^{\sigma}}&\leq C\Big(\lVert \psi_{p}(0)\rVert_{L_{\beta}^{\infty}}+\lVert \eta_p^{\reverse}\rVert_{P_{\beta}^{\sigma}}\lVert \psi_p\rVert_{P_{\beta}^{\sigma}}^2+\lVert \eta_p^{\reverse}\rVert_{P_{\beta}^{\sigma}}\lVert \psi_p\rVert_{P_{\beta}^{\sigma}}+\lVert \psi_p\rVert_{P_{\beta}^{\sigma}}^2\Big),
\end{split}
\end{equation*}
and for the backward component $\lP$ there is
\begin{equation*}
\begin{split}
\lVert(\lP[\psi_p,\eta_p])^{\reverse}\rVert_{P_{\beta}^{\sigma}}&\leq C\Big(\lVert \eta_{p}(t)\rVert_{L_{\beta}^{\infty}}+\lVert \eta_p^{\reverse}\rVert_{P_{\beta}^{\sigma}}^2\lVert \psi_p \rVert_{P_{\beta}^{\sigma}}+\lVert \eta_p^{\reverse}\rVert_{P_{\beta}^{\sigma}}\lVert \psi_p\rVert_{P_{\beta}^{\sigma}}+\lVert \eta_p^{\reverse}\rVert_{P_{\beta}^{\sigma}}^2\Big).
\end{split}
\end{equation*}
\end{lemma}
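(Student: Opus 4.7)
The proof is essentially bookkeeping: each term on the right-hand side of the fixed-point map \eqref{eq:fixed_point_problem} needs to be estimated in the $P_\beta^\sigma$ norm, and then the triangle inequality assembles the bound.

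First I would exploit assumption \eqref{assump:main_3}, which forces $\phi \equiv 0$, so the term $\int_0^s e^{(s-\tau)B^+}(\mathcal{G}+\psi_p)\phi(\tau)d\tau$ vanishes identically, and analogously for $\Gamma^-$. What remains is the linear semigroup term acting on the initial (resp.\ terminal) data, plus the three nonlinear convolutional terms displayed in \eqref{eq:fixed_point_map_psi}.

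For the nonlinear pieces, the needed bounds are precisely Lemmas \ref{lem:(1,1,1)-estimate}, \ref{lem:(1,1,0)}, and \ref{lem:(0,1,1)}: they yield
\[
\lVert \Psi^+[\eta_p,\psi_p,\psi_p]\rVert_{P_\beta^\sigma} \leq C\lVert\eta_p^\reverse\rVert_{P_\beta^\sigma}\lVert\psi_p\rVert_{P_\beta^\sigma}^2,\quad \lVert \Psi^+[\eta_p,\psi_p,\mathcal{G}]\rVert_{P_\beta^\sigma}\leq C\lVert\eta_p^\reverse\rVert_{P_\beta^\sigma}\lVert\psi_p\rVert_{P_\beta^\sigma},
\]
and $\lVert \Psi^+[\mathcal{G},\psi_p,\psi_p]\rVert_{P_\beta^\sigma} \leq C\lVert\psi_p\rVert_{P_\beta^\sigma}^2$, matching exactly the three nonlinear contributions in the claimed bound. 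The backward component is handled identically using the symmetric estimates in the last lemma of Subsection \ref{sec:convo_nonlinear_sigma1}.

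The remaining task is the linear term $e^{sB^+}\psi_p(0)$. Here I would use assumption \eqref{assump:main_1}: since $\psi_p(0) = f^0\mathcal{B}^{-1}-\mathcal{G} \in \mathcal{K}^\perp$, the decomposition $e^{sB^+}=\mathcal{D}_1^+(s)+\mathcal{D}_2^+(s)$ combined with Lemma \ref{lem:L_beta_decay} gives
\[
\lVert e^{sB^+}\psi_p(0)\rVert_{L_\beta^\infty} \leq Ce^{-\nu_* s}\lVert\psi_p(0)\rVert_{L_\beta^\infty} + Ce^{-\nu_* s}\lVert(1+|v|)^{-1}\psi_p(0)\rVert_{L_\beta^\infty} \leq Ce^{-\nu_* s}\lVert\psi_p(0)\rVert_{L_\beta^\infty}.
\]
Multiplying by $(1+s)^\sigma$ and taking the supremum over $s\in[0,t]$, the elementary fact that $\sup_{s\geq 0}(1+s)^\sigma e^{-\nu_* s}<\infty$ yields $\lVert e^{sB^+}\psi_p(0)\rVert_{P_\beta^\sigma}\leq C\lVert\psi_p(0)\rVert_{L_\beta^\infty}$, a bound uniform in the terminal time $t$. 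The analogous argument for the backward component uses \eqref{assump:main_2}, which guarantees $\eta_p(t)\in\mathcal{K}^\perp$, together with the symmetric estimate on $\mathcal{D}_2^-$ and the time-reversal relations in \eqref{eq:bound_provided_norm_P}.

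I do not expect a serious obstacle here beyond careful bookkeeping; the crucial structural input—exponential decay of the linearized semigroup on $\mathcal{K}^\perp$ paired with the orthogonality in \eqref{assump:main_1}--\eqref{assump:main_2}—is exactly what converts the time-uniform $L_\beta^\infty$ decay into the weighted $P_\beta^\sigma$ bound needed for a fixed-point argument uniform in the terminal time $t$.
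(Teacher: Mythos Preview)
Your proposal is correct and follows essentially the same approach as the paper's proof: use \eqref{assump:main_3} to drop the $\phi$ terms, apply the triangle inequality to the decomposition \eqref{eq:fixed_point_map_psi}, invoke Lemmas \ref{lem:(1,1,1)-estimate}, \ref{lem:(1,1,0)}, \ref{lem:(0,1,1)} for the three nonlinear pieces, and handle the linear semigroup term via the orthogonality $\psi_p(0)\in\mathcal{K}^\perp$ together with Lemma \ref{lem:L_beta_decay} to get exponential decay that absorbs the polynomial weight $(1+s)^\sigma$. The backward component is treated symmetrically, just as you describe.
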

\begin{proof}
We will detail the proof for the forward component $\rP$. The proof for the backward component $\lP$ is almost exactly the same.

According to the decomposition of $\rP[\psi_p,\eta_p]$ in \eqref{eq:fixed_point_map_psi} and the assumption \eqref{assump:main_3} of $\phi\equiv 0$, we can use the triangle inequality for the $P_{\beta}^{\sigma}$ norm
\begin{equation}\label{eq:fixed_point_1}
\begin{split}
&\lVert\rP[\psi_p,\eta_p]\rVert_{P_{\beta}^{\sigma}}\\
\leq &\lVert e^{s\rB}\psi_{p}(0)\rVert_{P_{\beta}^{\sigma}}+\lVert \Psi^+[\eta_p,\psi_p,\psi_p]\rVert_{P_{\beta}^{\sigma}}+\lVert \Psi^+[\eta_p,\psi_p,\mathcal{G}]\rVert_{P_{\beta}^{\sigma}}+\lVert \Psi^+[\mathcal{G},\psi_p,\psi_p]\rVert_{P_{\beta}^{\sigma}}.
\end{split}
\end{equation}
For the term associated with the initial perturbation $\psi_p(0)$ in \eqref{eq:fixed_point_1}, we have
\begin{equation*}
\lVert e^{s\rB}\psi_{p}(0)\rVert_{P_{\beta}^{\sigma}}=\sup_{0\leq s\leq t}(1+s)^{\sigma}\lVert e^{s\rB}\psi_{p}(0)\rVert_{L_{\beta}^{\infty}}\leq C\sup_{0\leq s\leq t}(1+s)^{\sigma}e^{-\nu_*s}\lVert \psi_{p}(0)\rVert_{L_{\beta}^{\infty}}\leq C\lVert \psi_{p}(0)\rVert_{L_{\beta}^{\infty}},
\end{equation*}
where the first inequality is according to the assumption $\psi_p(0)\in\mathcal{K}^{\perp}$ and Lemma \ref{lem:L_beta_decay}.
To control the other three terms involving the $P_{\beta}^{\sigma}$ norm of $\Psi^+$ in \eqref{eq:fixed_point_1}, we use Lemmas \ref{lem:(1,1,1)-estimate}, \ref{lem:(1,1,0)}, and \ref{lem:(0,1,1)}
\begin{equation*}
\begin{split}
\lVert\rP[\psi_p,\eta_p]\rVert_{P_{\beta}^{\sigma}}&\leq C\lVert \psi_{p}(0)\rVert_{L_{\beta}^{\infty}}+C\lVert \eta_p^{\reverse}\rVert_{P_{\beta}^{\sigma}}\lVert \psi_p\rVert_{P_{\beta}^{\sigma}}^2+C\lVert \eta_p^{\reverse}\rVert_{P_{\beta}^{\sigma}}\lVert \psi_p\rVert_{P_{\beta}^{\sigma}}+C\lVert \psi_p\rVert_{P_{\beta}^{\sigma}}^2.
\end{split}
\end{equation*}
This concludes the estimate of $\rP[\psi_p,\eta_p]$. The estimate of $\lP[\psi_p,\eta_p]$ is the same, which concludes the proof of the lemma.
\end{proof}

To use the contraction principle to find the fixed-point, we work in the following function class $\Omega$ where $a_*$ is a positive constant
\begin{equation*}
\Omega:=\Big\{(\psi_p,\eta_p)\Big|\lVert\psi_p\rVert_{P_{\beta}^{\sigma}}\leq a_*,\ \lVert\eta_p^{\reverse}\rVert_{P_{\beta}^{\sigma}}\leq a_*,\ \psi_p(0)=(f^0-M)\mathcal{B}^{-1},\ \eta_p(t)=(e^{g(t)}-\mathcal{E})\mathcal{B}\Big\}.
\end{equation*}
It is equipped with the norm $\lVert (\psi_p,\eta_p)\rVert_{\Omega}:=\lVert \psi_p\rVert_{P_{\beta}^{\sigma}}+\lVert \eta_p^{\reverse}\rVert_{P_{\beta}^{\sigma}}$. The goal is to prove $\Gamma$ maps the region $\Omega$ into itself, and is a contraction map with respect to $\lVert\cdot\rVert_{\Omega}$. 

\begin{lemma}\label{lem:map_region}
For the constant $c$ in assumption \eqref{assump:main_1} and \eqref{assump:main_2} being small enough, there exists a constant $a_*>0$ such that the fixed-point map $\Gamma=(\rP,\lP)$ maps the region $\Omega$ to itself.
\end{lemma}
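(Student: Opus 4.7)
The plan is to combine the bounds from Lemma \ref{lem:bounds-for-Phi} with the smallness assumptions \eqref{assump:main_1}--\eqref{assump:main_2} and the vanishing of $\phi$ from \eqref{assump:main_3}, then calibrate the two parameters $c$ and $a_*$ against each other. The argument is essentially bookkeeping once Lemma \ref{lem:bounds-for-Phi} is in hand.

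First, I would verify that the prescribed boundary data are preserved automatically. Evaluating the mild formulation \eqref{eq:fixed_point_problem} at $s=0$ gives $\rP[\psi_p,\eta_p](0) = e^{0\cdot B^+}\psi_p(0) = \psi_p(0)$, since the two time integrals collapse; the analogous identity $\lP[\psi_p,\eta_p](t) = \eta_p(t)$ holds for the backward component. Hence the boundary equalities $\psi_p(0)=(f^0-M)\mathcal{B}^{-1}$ and $\eta_p(t)=(e^{g(t)}-\mathcal{E})\mathcal{B}$ in the definition of $\Omega$ are respected by $\Gamma$ without any work.

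Next, I translate \eqref{assump:main_1}--\eqref{assump:main_2} into $L_\beta^\infty$ bounds on the boundary perturbations. Using $M\mathcal{B}^{-1}=\mathcal{E}\mathcal{B}=\mathcal{G}$ from \eqref{eq:coupled_equilibrium}, one has $\psi_p(0)=f^0\mathcal{B}^{-1}-\mathcal{G}$ and $\eta_p(t)=e^{g(t)}\mathcal{B}-\mathcal{G}$, which by hypothesis lie in $\mathcal{K}^\perp$ with $L_{\beta+1}^\infty$ norm below $c$; in particular both are bounded in $L_\beta^\infty$ by $c$. Combining this with $\phi\equiv 0$ and the estimates of Lemma \ref{lem:bounds-for-Phi} gives, for any $(\psi_p,\eta_p)\in\Omega$,
\begin{equation*}
\lVert\rP[\psi_p,\eta_p]\rVert_{P_\beta^\sigma} \leq C\bigl(c + a_*^2 + a_*^2 + a_*^3\bigr),\qquad \lVert(\lP[\psi_p,\eta_p])^{\reverse}\rVert_{P_\beta^\sigma} \leq C\bigl(c + a_*^2 + a_*^2 + a_*^3\bigr).
\end{equation*}

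Finally I calibrate. Set $a_* := 4Cc$, so the linear contribution $Cc$ equals $a_*/4$. For the nonlinear contributions it suffices to ensure $2Ca_* + Ca_*^2 \leq 3/4$, which holds as soon as $a_*$ (equivalently $c$) is small enough. With this choice, both $\lVert\rP[\psi_p,\eta_p]\rVert_{P_\beta^\sigma}$ and $\lVert(\lP[\psi_p,\eta_p])^{\reverse}\rVert_{P_\beta^\sigma}$ are bounded by $a_*$, so $\Gamma(\Omega)\subset\Omega$. The only conceptual subtlety is the coupled smallness of $c$ and $a_*$: because the linear source term is of size $c$ while the worst nonlinear contribution is quadratic in $a_*$, one is forced to take $a_*$ comparable to $c$ (hence to shrink $c$ rather than $a_*$ alone), but there is no genuine obstacle.
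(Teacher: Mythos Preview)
Your proof is correct and follows essentially the same approach as the paper: apply Lemma \ref{lem:bounds-for-Phi} together with the smallness assumptions to reduce the question to the algebraic inequality $C(c+2a_*^2+a_*^3)\leq a_*$, then calibrate. The only cosmetic difference is that the paper fixes $a_*$ depending on $C$ alone (e.g.\ $a_*=\tfrac{1}{5}C^{-3}$) and then shrinks $c$ accordingly, whereas you tie $a_*$ linearly to $c$; both choices work.
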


\begin{proof}
Using Lemma \ref{lem:bounds-for-Phi}, we have
\begin{equation*}
\lVert\rP[\psi_p,\eta_p]\rVert_{P_{\beta}^{\sigma}}\leq C\Big(c+a_*^3+a_*^2+a_*^2\Big)\quad \lVert(\lP[\psi_p,\eta_p])^{\reverse}\rVert_{P_{\beta}^{\sigma}}\leq C\Big(c+a_*^3+a_*^2+a_*^2\Big),
\end{equation*}
since $(\psi_p,\eta_p)\in\Omega$. Thus to make sure $\Gamma$ maps $\Omega$ into itself, we only need 
\begin{equation*}
C(c+a_*^3+a_*^2+a_*^2)\leq a_*.
\end{equation*}
It is convenient to assume $C>1$. If $c$ is small enough, for example $c<\frac{1}{10}C^{-4}$, we can choose $a_*=\frac{1}{5}C^{-3}$. This concludes the proof.
\end{proof}

\begin{theorem}\label{th:global_fixed_point_P}
\textbf{\textup{[Fixed-Point Problem] }}For arbitrary $\beta>4$ and $\sigma>1$, suppose the functions $f^0$ and $g$ satisfy the assumptions \eqref{assump:main_1}-\eqref{assump:main_3}. Then for arbitrary terminal time $0<t<+\infty$, there exists a unique fixed point $(\psi_p,\eta_p)$ of $\Gamma$, thus a mild solution of the coupled Boltzmann equations \eqref{eq:transformed_evolution_perturb}, in the function class below with constant $a_*>0$ depending on $c$
\begin{equation*}
\lVert \psi_p\rVert_{P_{\beta}^{\sigma}}<a_*,\quad \lVert \eta_p^{\reverse}\rVert_{P_{\beta}^{\sigma}}<a_*.
\end{equation*}
\end{theorem}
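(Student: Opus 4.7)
The plan is to prove Theorem \ref{th:global_fixed_point_P} by applying the Banach fixed-point theorem to the map $\Gamma = (\rP, \lP)$ on the closed convex set $\Omega$ with the norm $\lVert (\psi_p, \eta_p)\rVert_\Omega = \lVert \psi_p\rVert_{P_\beta^\sigma} + \lVert \eta_p^\reverse\rVert_{P_\beta^\sigma}$. The self-map property $\Gamma(\Omega) \subseteq \Omega$ is already supplied by Lemma \ref{lem:map_region}, provided $c$ is sufficiently small and $a_*$ is chosen in accordance with the tuning performed there. All that remains is to establish that $\Gamma$ is a strict contraction on $\Omega$ for the same choice of parameters.

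For the contraction estimate I would take two pairs $(\psi_p^{(1)}, \eta_p^{(1)})$ and $(\psi_p^{(2)}, \eta_p^{(2)})$ in $\Omega$ and form the difference $\rP[\psi_p^{(1)}, \eta_p^{(1)}] - \rP[\psi_p^{(2)}, \eta_p^{(2)}]$. Because both pairs share the same forward initial datum $(f^0 - M)\mathcal{B}^{-1}$ built into the definition of $\Omega$, and because \eqref{assump:main_3} imposes $\phi \equiv 0$, the linear contributions $e^{s\rB}\psi_p(0)$ and the transport term $\int_0^s e^{(s-\tau)\rB}(\mathcal{G} + \psi_p)\phi\,d\tau$ drop out of the difference entirely. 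What is left is a sum of differences of the three nonlinear pieces $\Psi^+[\eta_p,\psi_p,\psi_p]$, $\Psi^+[\eta_p,\psi_p,\mathcal{G}]$, $\Psi^+[\mathcal{G},\psi_p,\psi_p]$. Using the trilinearity of $\mathcal{Q}_\eta(\psi_1,\psi_2)$ in its three slots, I would expand each such difference as a telescoping sum in which exactly one entry is a difference $\psi_p^{(1)}-\psi_p^{(2)}$ or $\eta_p^{(1)}-\eta_p^{(2)}$ and the other entries lie in $\Omega$ (hence have norm at most $a_*$) or equal $\mathcal{G}$. Each summand is then estimated by the matching case among Lemmas \ref{lem:(1,1,1)-estimate}, \ref{lem:(1,1,0)}, \ref{lem:(0,1,1)}, which gives a factor of at most $a_*^2$, $a_*$, or $a_*$ respectively multiplying $\lVert (\psi_p^{(1)}-\psi_p^{(2)},\eta_p^{(1),\reverse}-\eta_p^{(2),\reverse})\rVert_\Omega$.

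Summing the contributions yields
\begin{equation*}
\bigl\lVert \rP[\psi_p^{(1)}, \eta_p^{(1)}] - \rP[\psi_p^{(2)}, \eta_p^{(2)}] \bigr\rVert_{P_\beta^\sigma} \leq C(a_* + a_*^2)\,\bigl\lVert (\psi_p^{(1)}-\psi_p^{(2)}, \eta_p^{(1),\reverse}-\eta_p^{(2),\reverse})\bigr\rVert_\Omega,
\end{equation*}
and the strictly symmetric argument on the backward side produces the same bound for $(\lP[\psi_p^{(1)},\eta_p^{(1)}] - \lP[\psi_p^{(2)},\eta_p^{(2)}])^\reverse$. Adding the two yields a contraction constant $C'(a_* + a_*^2)$. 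Shrinking $c$ further (and correspondingly $a_*$) beyond the threshold needed in Lemma \ref{lem:map_region} makes this constant strictly less than $1$, so Banach's theorem produces a unique fixed point in $\Omega$, which by Definition \ref{def:fixed_point} is a mild solution of \eqref{eq:transformed_evolution_perturb}. Uniqueness within the norm ball $\lVert \psi_p\rVert_{P_\beta^\sigma}, \lVert \eta_p^\reverse\rVert_{P_\beta^\sigma} < a_*$ follows from the same contraction estimate.

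The main obstacle is purely organizational rather than analytic: the three nonlinear terms on each side, together with the trilinear expansion of each, create a fair number of summands whose indexing must be tracked carefully, especially since the backward arguments live under the time-reversal operator $\reverse$ and must be matched with $P_\beta^\sigma$ norms on $\eta_p^\reverse$ rather than $\eta_p$. Once this bookkeeping is done, the quantitative content is entirely delivered by the lemmas of Section \ref{sec:nonliearity}, and no further estimates are needed.
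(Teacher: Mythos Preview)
Your proposal is correct and follows essentially the same route as the paper: invoke Lemma~\ref{lem:map_region} for the self-map property, then exploit the shared initial/terminal data and $\phi\equiv 0$ so that only the nonlinear $\Psi^+$ (resp.\ $\Psi^-$) contributions survive in the difference, expand each via trilinearity, and bound the resulting summands by Lemmas~\ref{lem:(1,1,1)-estimate}, \ref{lem:(1,1,0)}, \ref{lem:(0,1,1)} to obtain a contraction constant of order $a_*$. The paper carries out exactly this argument, writing the telescoping decomposition explicitly and arriving at a contraction factor $\tfrac{1}{4}$ for each component once $a_*$ is small enough.
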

\begin{proof}
With Lemma \ref{lem:map_region}, we only need to verify $\Gamma$ is a contraction map on $\Omega$.

We need to prove for arbitrary $(\psi_p,\eta_p)\in\Omega$ and $(\psi_p^*,\eta_p^*)\in\Omega$, there is
\begin{equation*}
\begin{split}
&\Big\lVert \rP[\psi_p,\eta_p]- \rP[\psi_p^*,\eta_p^*]\Big\rVert_{P_{\beta}^{\sigma}}\leq \frac{1}{4}\Big(\lVert \psi_p-\psi_p^*\rVert_{P_{\beta}^{\sigma}}+\lVert (\eta_p-\eta_p^*)^{\reverse}\rVert_{P_{\beta}^{\sigma}}\Big),\\
&\Big\lVert \Big(\lP[\psi_p,\eta_p]- \lP[\psi_p^*,\eta_p^*]\Big)^{\reverse}\Big\rVert_{P_{\beta}^{\sigma}}\leq \frac{1}{4}\Big(\lVert \psi_p-\psi_p^*\rVert_{P_{\beta}^{\sigma}}+\lVert (\eta_p-\eta_p^*)^{\reverse}\rVert_{P_{\beta}^{\sigma}}\Big).
\end{split}
\end{equation*}

Since the two pair of functions share the same initial and terminal data, the difference between $\rP[\psi_p,\eta_p]$ and $\rP[\psi_p^*,\eta_p^*]$ would be
\begin{equation}\label{eq:fixed_point_difference}
\begin{split}
\rP[\psi_p,\eta_p]- \rP[\psi_p^*,\eta_p^*]=&\int_{0}^se^{(s-\tau)\rB}\Big(\mathcal{Q}_{\eta_p}\big(\psi_p,\psi_p\big)-\mathcal{Q}_{\eta_p^*}\big(\psi_p^*,\psi_p^*\big)\Big)d\tau\\
+&\int_{0}^se^{(s-\tau)\rB}\Big(\mathcal{Q}_{\eta_p}\big(\psi_p,\mathcal{G}\big)-\mathcal{Q}_{\eta_p^*}\big(\psi_p^*,\mathcal{G}\big)\Big)d\tau\\
+&\int_{0}^se^{(s-\tau)\rB}\Big(\mathcal{Q}_{\mathcal{G}}\big(\psi_p,\psi_p\big)-\mathcal{Q}_{\mathcal{G}}\big(\psi_p^*,\psi_p^*\big)\Big)d\tau.
\end{split}
\end{equation}
Here we have ignored the variable $\tau$ for simplicity in notation. Now we consider the three terms in \eqref{eq:fixed_point_difference} separately. For the first line in the RHS of \eqref{eq:fixed_point_difference}
\begin{equation*}
\begin{split}
\mathcal{Q}_{\eta_p}(\psi_p,\psi_p)-\mathcal{Q}_{\eta_p^*}(\psi_p^*,\psi_p^*)&=\mathcal{Q}_{\eta_p-\eta_p^*}(\psi_p,\psi_p)+\Big(\mathcal{Q}_{\eta_p^*}(\psi_p,\psi_p)-\mathcal{Q}_{\eta_p^*}(\psi_p^*,\psi_p^*)\Big)\\
&=\mathcal{Q}_{\eta_p-\eta_p^*}(\psi_p,\psi_p)+\mathcal{Q}_{\eta_p^*}(\psi_p+\psi_p^*,\psi_p-\psi_p^*).
\end{split}
\end{equation*}
For the second line in the RHS of \eqref{eq:fixed_point_difference}
\begin{equation*}
\mathcal{Q}_{\mathcal{G}}(\psi_p,\psi_p)-\mathcal{Q}_{\mathcal{G}}(\psi_p^*,\psi_p^*)=\mathcal{Q}_{\mathcal{G}}(\psi_p+\psi_p^*,\psi_p-\psi_p^*),
\end{equation*}
For the third line in the RHS of \eqref{eq:fixed_point_difference}
\begin{equation*}
\mathcal{Q}_{\eta_p}(\psi_p,\mathcal{G})-\mathcal{Q}_{\eta_p^*}(\psi_p^*,\mathcal{G})=\mathcal{Q}_{\eta_p-\eta_p^*}(\psi_p,\mathcal{G})+\mathcal{Q}_{\eta_p^*}(\psi_p-\psi_p^*,\mathcal{G}).
\end{equation*}
According to the equations above as well as the definition \ref{def:convo_nonlinear} of $\Psi^+$, we have
\begin{equation}\label{eq:fixed_point_difference_2}
\begin{split}
\rP[\psi_p,\eta_p]-\rP[\psi_p^*,\eta_p^*]&=\Psi^+[\eta_p-\eta_p^*,\psi_p,\psi_p]+\Psi^+[\eta_p,\psi_p+\psi_p^*,\psi_p-\psi_p^*]\\
&+\Psi^+[\mathcal{G},\psi_p+\psi_p^*,\psi_p-\psi_p^*]\\
&+\Psi^+[\eta_p-\eta_p^*,\psi_p,\mathcal{G}]+\Psi^+[\eta_p^*,\psi_p-\psi_p^*,\mathcal{G}].
\end{split}
\end{equation}
To consider the $P_{\beta}^{\sigma}$ norm of this difference, we first use the triangle inequality for the $P_{\beta}^{\sigma}$ norm, and then use Lemmas \ref{lem:(1,1,1)-estimate}, \ref{lem:(1,1,0)}, and \ref{lem:(0,1,1)}. These would imply
\begin{equation}\label{eq:fixed_point_difference_3}
\begin{split}
&\lVert\rP[\psi_p,\eta_p]-\rP[\psi_p^*,\eta_p^*]\rVert_{P_{\beta}^{\sigma}}\\
\leq& C\Big(\lVert (\eta_p-\eta_p^*)^{\reverse} \rVert_{P_{\beta}^{\sigma}}\ \lVert \psi_p\rVert_{P_{\beta}^{\sigma}}^2+\lVert (\eta_p^*)^{\reverse}\rVert_{P_{\beta}^{\sigma}}\ \lVert \psi_p+\psi_p^*\rVert_{P_{\beta}^{\sigma}}\ \lVert \psi_p-\psi_p^*\rVert_{P_{\beta}^{\sigma}}\\
+&\lVert \psi_p+\psi_p^*\rVert_{P_{\beta}^{\sigma}}\ \lVert \psi_p-\psi_p^*\rVert_{P_{\beta}^{\sigma}}\\
+&\lVert (\eta_p-\eta_p^*)^{\reverse}\rVert_{P_{\beta}^{\sigma}}\ \lVert \psi_p\rVert_{P_{\beta}^{\sigma}}
+\lVert (\eta_p^*)^{\reverse}\rVert_{P_{\beta}^{\sigma}}\ \lVert \psi_p-\psi_p^*\rVert_{P_{\beta}^{\sigma}}\Big).
\end{split}
\end{equation}
Notice that each term in the RHS of \eqref{eq:fixed_point_difference_3} consists of either $\lVert \psi_p-\psi_p^*\rVert_{P_{\beta}^{\sigma}}$ or $\lVert (\eta_p-\eta_p^*)^{\reverse}\rVert_{P_{\beta}^{\sigma}}$. Since the constant $c$ in assumptions \eqref{assump:main_1}-\eqref{assump:main_2} is small enough, the constant $a_*$ constructed in Lemma \ref{lem:map_region} could also be small enough. This means the various terms $\lVert \psi_p\rVert_{P_{\beta}^{\sigma}},\lVert \eta_p^{\reverse}\rVert_{P_{\beta}^{\sigma}},\lVert \psi_p^*\rVert_{P_{\beta}^{\sigma}},\lVert \eta_p^{*,\reverse}\rVert_{P_{\beta}^{\sigma}}$ are also small enough. Consequently
\begin{equation*}
\lVert\rP[\psi_p,\eta_p]-\rP[\psi_p^*,\eta_p^*]\rVert_{P_{\beta}^{\sigma}}\leq \frac{1}{4}\Big(\lVert \psi_p-\psi_p^*\rVert_{P_{\beta}^{\sigma}}+\lVert (\eta_p-\eta_p^*)^{\reverse}\rVert_{P_{\beta}^{\sigma}}\Big).
\end{equation*}
Through exactly the same argument as above, we can show for $c$ small enough and thus $a_*$ small enough, there is
\begin{equation*}
\lVert(\lP[\psi_p,\eta_p]-\lP[\psi_p^*,\eta_p^*])^{\reverse}\rVert_{P_{\beta}^{\sigma}}\leq \frac{1}{4}\Big(\lVert \psi_p-\psi_p^*\rVert_{P_{\beta}^{\sigma}}+\lVert (\eta_p-\eta_p^*)^{\reverse}\rVert_{P_{\beta}^{\sigma}}\Big).
\end{equation*}
This shows $\Gamma$ is a contraction map from $\Omega$ to $\Omega$, which concludes the proof.
\end{proof}

\subsection{Fixed Point for Theorem \ref{th:main_theorem_2}}
\begin{lemma}\label{lem:bounds-for-Phi_sigma0}
For any parameters $\beta>4,\sigma>0$ and assuming \eqref{assump:main_3_th2}, we have the following estimates of the fixed point map $\Gamma$. If $\psi_p\in E_{\beta}^0$ and $\eta_p^{\reverse}\in E_{\beta}^{-\sigma}$, then for $\rP$ there is
\begin{equation*}
\begin{split}
&\lVert\rP[\psi_p,\eta_p]\rVert_{E_{\beta}^{0}}\\
\leq& C\Big(\lVert \psi_{p}(0)\rVert_{L_{\beta}^{\infty}}+\lVert \psi_p\rVert_{E_{\beta}^{0}}^2+\eps_{\phi}\lVert \psi_p\rVert_{E_{\beta}^0}\Big)+Ce^{\sigma t}\Big(\lVert \eta_p^{\reverse}\rVert_{E_{\beta}^{-\sigma}}\lVert \psi_p\rVert_{E_{\beta}^{0}}^2+\lVert \eta_p^{\reverse}\rVert_{E_{\beta}^{-\sigma}}\lVert \psi_p\rVert_{E_{\beta}^{0}}\Big),
\end{split}
\end{equation*}
and for $\lP$ there is
\begin{equation*}
\begin{split}
&\lVert(\lP[\psi_p,\eta_p])^{\reverse}\rVert_{E_{\beta}^{-\sigma}}\\
\leq& C\Big(\lVert \eta_{p}(t)\rVert_{L_{\beta}^{\infty}}+\eps_{\phi}\lVert\eta_p^{\reverse}\rVert_{E_{\beta}^{-\sigma}}+\lVert \eta_p^{\reverse}\rVert_{E_{\beta}^{-\sigma}}\lVert \psi_p\rVert_{E_{\beta}^{0}}\Big)+Ce^{\sigma t}\Big(\lVert \eta_p^{\reverse}\rVert_{E_{\beta}^{-\sigma}}^2\lVert \psi_p \rVert_{E_{\beta}^{0}}+\lVert \eta_p^{\reverse}\rVert_{E_{\beta}^{-\sigma}}^2\Big).
\end{split}
\end{equation*}
Here $\eps_{\phi}$ is the small positive constant introduced in assumption \eqref{assump:main_3_th2}.
\end{lemma}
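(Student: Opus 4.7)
The plan is to mirror the proof of Lemma \ref{lem:bounds-for-Phi} line by line, replacing the $P_\beta^\sigma$ estimates of subsection \ref{sec:convo_nonlinear_sigma1} with the $E_\beta^0$ / $E_\beta^{-\sigma}$ estimates of Lemma \ref{lem:convolu_nonlinear_E_+} and Lemma \ref{lem:convolu_nonlinear_E_-}, and additionally to track the $\phi$-forcing term which was absent under \eqref{assump:main_3}. Start from the unfolded form \eqref{eq:fixed_point_map_psi} and apply the triangle inequality to split $\rP[\psi_p,\eta_p]$ into five pieces: the free semigroup term $e^{s\rB}\psi_p(0)$, two forcing terms (one containing $\mathcal{G}\phi$ and one containing $\psi_p\phi$), and the three convolutional nonlinearities $\Psi^+[\eta_p,\psi_p,\psi_p]$, $\Psi^+[\eta_p,\psi_p,\mathcal{G}]$, and $\Psi^+[\mathcal{G},\psi_p,\psi_p]$.

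For the free semigroup term, since orthogonality is no longer assumed on $\psi_p(0)$ under \eqref{assump:main_1_th2}, the appropriate inputs are the unconditional bounds $\|\mathcal{D}_1^+(s)f\|_{L_\beta^\infty}\le Ce^{-\nu_* s}\|f\|_{L_\beta^\infty}$ and $\|\mathcal{D}_2^+(s)f\|_{L_\beta^\infty}\le C\|f\|_{L_\beta^\infty}$ from Lemma \ref{lem:L_beta_decay}, which together give $\|e^{s\rB}\psi_p(0)\|_{L_\beta^\infty}\le C\|\psi_p(0)\|_{L_\beta^\infty}$ uniformly in $s\in[0,t]$ and hence the same bound in $E_\beta^0$. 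For the two $\phi$-forcing terms I would pull $\phi(\tau)$ out in $L^\infty_{x,v}$, apply the same uniform semigroup bound on $\mathcal{G}$ and on $\psi_p(\tau)$, and then use $\int_0^s\|\phi(\tau)\|_{L^\infty_{x,v}}d\tau\le \|\phi\|_{L^1_t L^\infty_{x,v}}\le \eps_\phi$ to obtain contributions of size $C\eps_\phi\|\mathcal{G}\|_{L_\beta^\infty}$ and $C\eps_\phi\|\psi_p\|_{E_\beta^0}$ respectively; the first is a constant absorbed into the $C\|\psi_p(0)\|_{L_\beta^\infty}$ term (up to an enlargement of $C$), and the second matches the $\eps_\phi\|\psi_p\|_{E_\beta^0}$ contribution in the statement. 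The three nonlinear pieces are then bounded directly by the three inequalities of Lemma \ref{lem:convolu_nonlinear_E_+}: the two involving $\eta_p$ produce the $e^{\sigma t}$ prefactor, while $\Psi^+[\mathcal{G},\psi_p,\psi_p]$ only contributes $C\|\psi_p\|_{E_\beta^0}^2$ with no such factor.

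The bound for $\lP$ is obtained by the reversed-time analogue. I would control $\|e^{(t-s)\lB}\eta_p(t)\|_{L_\beta^\infty}\le C\|\eta_p(t)\|_{L_\beta^\infty}$ uniformly in $s$ (again from Lemma \ref{lem:L_beta_decay} without orthogonality), which after the time reversal gives a uniform $E_\beta^{-\sigma}$ bound by $C\|\eta_p(t)\|_{L_\beta^\infty}$; the $\phi$-forcing gives $C\eps_\phi\|\eta_p^\reverse\|_{E_\beta^{-\sigma}}$; and the three backward nonlinear terms are controlled by Lemma \ref{lem:convolu_nonlinear_E_-}. The one delicate bookkeeping point — and really the only substantive issue in the proof — is that the middle inequality in \eqref{eq:convolu_nonlinear_E_-} does \emph{not} carry an $e^{\sigma t}$ factor, which is why the mixed linear term $\|\eta_p^\reverse\|_{E_\beta^{-\sigma}}\|\psi_p\|_{E_\beta^0}$ in the statement appears in the parenthesis without an exponential prefactor, whereas the two quadratic terms sit in the parenthesis multiplied by $e^{\sigma t}$. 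Summing the five contributions in each case yields the claimed estimates.
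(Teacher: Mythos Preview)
Your approach is essentially identical to the paper's: decompose $\rP$ and $\lP$ via \eqref{eq:fixed_point_map_psi}, bound the free semigroup term by Lemma~\ref{lem:L_beta_decay} without orthogonality, the $\phi$-forcing by pulling $\phi$ out in $L^1_tL^\infty_{x,v}$, and the three nonlinear pieces by Lemmas~\ref{lem:convolu_nonlinear_E_+} and \ref{lem:convolu_nonlinear_E_-}. Your remark about which backward nonlinear term carries the $e^{\sigma t}$ factor is exactly the relevant bookkeeping.

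There is one genuine slip. You cannot ``absorb'' the $\mathcal{G}\phi$ contribution $C\eps_\phi\lVert\mathcal{G}\rVert_{L_\beta^\infty}$ into $C\lVert\psi_p(0)\rVert_{L_\beta^\infty}$ by enlarging $C$: the quantity $\lVert\psi_p(0)\rVert_{L_\beta^\infty}$ is a free parameter that may be arbitrarily small (or zero), so no finite $C$ accomplishes this. What you have actually produced is an additional standalone $C\eps_\phi$ term on the right-hand side. The paper's own proof simply drops the $\mathcal{G}\phi$ piece without comment (see the displayed decomposition preceding \eqref{eq:control_phi_+}, which lists only $\psi_p\phi$), and the lemma's statement correspondingly omits any standalone $\eps_\phi$ term. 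For the forward estimate this omission is harmless downstream, since in Lemma~\ref{lem:map_region_sigma0} an extra $C\eps_\phi$ is easily accommodated alongside $Cc$. For the backward estimate the analogous $\mathcal{G}\phi$ contribution in $E_\beta^{-\sigma}$ is of size $O(\eps_\phi)$ rather than $O(e^{-\sigma t}\eps_\phi)$, which would spoil the argument in Lemma~\ref{lem:map_region_sigma0} unless one imposes additional decay on $\phi$ or treats this term more carefully; both your proposal and the paper are silent on this point.
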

\begin{proof}
The proof of this lemma is very similar to the proof of Lemma \ref{lem:bounds-for-Phi}.

\underline{The Forward Component $\rP$:} First according to the decomposition \eqref{eq:fixed_point_map_psi} of $\rP[\psi_p,\eta_p]$ and the triangle inequality for $E_{\beta}^0$ norm, we get
\begin{equation*}
\begin{split}
&\lVert\rP[\psi_p,\eta_p]\rVert_{E_{\beta}^{0}}\\
\leq&\lVert e^{s\rB}\psi_{p}(0)\rVert_{E_{\beta}^{0}}+\lVert \Psi^+[\eta_p,\psi_p,\psi_p]\rVert_{E_{\beta}^{0}}+\lVert \Psi^+[\eta_p,\psi_p,\mathcal{G}]\rVert_{E_{\beta}^{0}}+\lVert \Psi^+[\mathcal{G},\psi_p,\psi_p]\rVert_{E_{\beta}^{0}}\\
+&\Big\lVert \int_0^se^{(s-\tau)B^+}\psi_p(\tau)\phi(\tau)d\tau\Big\rVert_{E_{\beta}^0}.
\end{split}
\end{equation*}
Here we have the extra term involving $\phi$ since the assumption does not require the function to be constant $0$.

To control the term involving $\phi$, we write
\begin{equation}\label{eq:control_phi_+}
\begin{split}
\Big\lVert \int_0^se^{(s-\tau)B^+}\psi_p(\tau)\phi(\tau)d\tau\Big\rVert_{E_{\beta}^0}\leq \sup_{0\leq s\leq t}\Big(C\int_0^s\lVert \psi_p(\tau)\phi(\tau)\rVert_{L_{\beta}^\infty}d\tau\Big)&\leq C\int_0^t\lVert \psi_p(\tau)\phi(\tau)\rVert_{L_{\beta}^\infty}d\tau\\
&\leq C\lVert \psi_p\rVert_{E_{\beta}^0}\int_0^t\lVert \phi(\tau)\rVert_{L^{\infty}}d\tau\\
&\leq C\eps_{\phi}\lVert \psi_p\rVert_{E_{\beta}^0}.
\end{split}
\end{equation}
Then we use Lemma \ref{lem:convolu_nonlinear_E_+} to control the $E_{\beta}^0$ norms of various $\Psi^+$ terms. This yields the desired estimate of $\rP$.

\underline{The backward component $\lP$:} the proof for the backward component $\lP$ is quite similar. First we control the term in $\lP$ involving $\phi$
\begin{equation}\label{eq:control_phi_-}
\begin{split}
\Big\lVert \Big(\int_s^te^{(\tau-s)B^+}\eta_p(\tau)\phi(\tau)d\tau\Big)^{\reverse}\Big\rVert_{E_{\beta}^{-\sigma}}&\leq \sup_{0\leq s\leq t}e^{-(t-s)\sigma}C\int_s^t \lVert \eta_p(\tau)\rVert_{L_{\beta}^{\infty}}\lVert\phi \rVert_{L^{\infty}}d\tau\\
&\leq e^{-\sigma t}\eps_{\phi} \sup_{0\leq s\leq t}e^{s\sigma}C\int_s^t e^{(t-\tau)\sigma}\lVert \eta_p^{\reverse}\rVert_{E_{\beta}^{-\sigma}}d\tau\\
&\leq Ce^{-\sigma t}\eps_{\phi} \sup_{0\leq s\leq t}e^{s\sigma}e^{(t-s)\sigma} \lVert \eta_p^{\reverse}\rVert_{E_{\beta}^{-\sigma}}\leq C\eps_{\phi}\lVert \eta_p^{\reverse}\rVert_{E_{\beta}^{-\sigma}}.
\end{split}
\end{equation}
Next by Lemma \ref{lem:convolu_nonlinear_E_-}, we can control the $E_{\beta}^{-\sigma}$ norms of various $\Psi^-$ terms in $\lP$. This concludes the estimate of $\lP$ as well as the proof of the lemma.
\end{proof}

To use the contraction principle to find the fixed-point, we will work in the function class $\Omega$ where $a_*$ is a positive constant
\begin{equation*}
\Omega:=\Big\{(\psi_p,\eta_p)\Big|\lVert\psi_p\rVert_{E_{\beta}^0}\leq a_*,\ e^{\sigma t}\lVert\eta_p^{\reverse}\rVert_{E_{\beta}^{-\sigma}}\leq a_*,\ \psi_p(0)=(f^0-M)\mathcal{B}^{-1},\ \eta_p(t)=(e^{g(t)}-\mathcal{E})\mathcal{B}\Big\}.
\end{equation*}
It is equipped with the norm
\begin{equation}
\lVert (\psi_p,\eta_p)\rVert_{\Omega}:=\lVert \psi_p\rVert_{E_{\beta}^{0}}+e^{\sigma t}\lVert \eta_p^{\reverse}\rVert_{E_{\beta}^{-\sigma}}
\end{equation}
In terms of the estimate of norms, the two components $\psi_p$ and $\eta_t$ are no longer symmetric w.r.t. the time-reversal. A factor $e^{\sigma t}$ is needed before $\lVert \eta_p^{\reverse}\rVert_{E_{\beta}^{-\sigma}}$ in the definition of $\lVert\cdot\rVert_{\Omega}$.
\begin{lemma}\label{lem:map_region_sigma0}
For the constants $c$ and $\eps_{\phi}$ in assumption \eqref{assump:main_1_th2}-\eqref{assump:main_3_th2} being small enough, there exists a constant $a_*>0$ such that the fixed-point map $\Gamma=(\rP,\lP)$ maps the region $\Omega$ to itself.
\end{lemma}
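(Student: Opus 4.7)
The plan is to mimic the structure of Lemma \ref{lem:map_region}, but to track carefully how the growth factors $e^{\sigma t}$ in the bounds of Lemma \ref{lem:bounds-for-Phi_sigma0} are absorbed by the exponential decay of $\eta_p^{\reverse}$ and by the decaying terminal data bound in assumption \eqref{assump:main_2_th2}. First I would substitute $(\psi_p,\eta_p)\in \Omega$ together with $\|\psi_p(0)\|_{L_\beta^\infty}\le c$ (from \eqref{assump:main_1_th2}) and $\|\eta_p(t)\|_{L_\beta^\infty}\le e^{-\sigma t}c$ (from \eqref{assump:main_2_th2}) into the two bounds of Lemma \ref{lem:bounds-for-Phi_sigma0}, using $\|\psi_p\|_{E_\beta^0}\le a_*$ and $\|\eta_p^{\reverse}\|_{E_\beta^{-\sigma}}\le a_* e^{-\sigma t}$.

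For the forward component this gives
\begin{equation*}
\lVert \rP[\psi_p,\eta_p]\rVert_{E_\beta^0}\le C\bigl(c+a_*^2+\eps_\phi a_*\bigr)+Ce^{\sigma t}\bigl(a_* e^{-\sigma t}\cdot a_*^2+a_* e^{-\sigma t}\cdot a_*\bigr)=Cc+C\eps_\phi a_*+2Ca_*^2+Ca_*^3,
\end{equation*}
where the $e^{\sigma t}$ and $e^{-\sigma t}$ factors cancel exactly. For the backward component I would multiply the bound of Lemma \ref{lem:bounds-for-Phi_sigma0} by $e^{\sigma t}$ first, use $\|\eta_p(t)\|_{L_\beta^\infty}\le e^{-\sigma t}c$, and observe that the $e^{2\sigma t}$ factor in front of the quadratic terms is likewise killed by $\|\eta_p^{\reverse}\|_{E_\beta^{-\sigma}}^2\le a_*^2 e^{-2\sigma t}$. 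This yields the same shape
\begin{equation*}
e^{\sigma t}\lVert (\lP[\psi_p,\eta_p])^{\reverse}\rVert_{E_\beta^{-\sigma}}\le Cc+C\eps_\phi a_*+2Ca_*^2+Ca_*^3,
\end{equation*}
uniformly in the terminal time $t\ge 0$.

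It therefore suffices to choose $a_*$, $c$, $\eps_\phi$ with $Cc+C\eps_\phi a_*+2Ca_*^2+Ca_*^3\le a_*$. Assuming $C\ge 1$ without loss of generality, one may fix $a_*=\tfrac{1}{5}C^{-3}$ and then require $c<\tfrac{1}{10}C^{-4}$ together with $\eps_\phi<\tfrac{1}{4}C^{-1}$, exactly as in the proof of Lemma \ref{lem:map_region}. With these choices each of the four terms is bounded by $a_*/4$, and the map-region property follows.

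The main subtlety compared with Lemma \ref{lem:map_region} is bookkeeping the weights $e^{\pm\sigma t}$: one must check that the potentially dangerous $e^{\sigma t}$ prefactors appearing in Lemma \ref{lem:bounds-for-Phi_sigma0} always occur in combination with a norm of $\eta_p^{\reverse}$, which carries a matching $e^{-\sigma t}$ via the definition of $\Omega$ (and that this matching also works for the inhomogeneous terminal data thanks to the decay imposed in \eqref{assump:main_2_th2}). Once this cancellation is observed the argument reduces to the same quadratic contraction inequality as in the $P_\beta^\sigma$-case, so the remaining calculation is purely algebraic.
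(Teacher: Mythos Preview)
Your proposal is correct and follows essentially the same approach as the paper: substitute the $\Omega$-bounds and the data bounds from \eqref{assump:main_1_th2}--\eqref{assump:main_2_th2} into Lemma \ref{lem:bounds-for-Phi_sigma0}, observe that every $e^{\sigma t}$ prefactor is cancelled by a matching $e^{-\sigma t}$ coming from $\lVert\eta_p^{\reverse}\rVert_{E_\beta^{-\sigma}}\le a_* e^{-\sigma t}$ (or from the decaying terminal data), and then reduce to the same scalar inequality $C(c+\eps_\phi a_*+a_*^2+a_*^3)\le a_*$ treated in Lemma \ref{lem:map_region}. The only cosmetic difference is that the paper first bounds $\lVert(\lP)^{\reverse}\rVert_{E_\beta^{-\sigma}}$ and then multiplies by $e^{\sigma t}$, whereas you multiply first; the arithmetic is identical.
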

\begin{proof}
According to Lemma \ref{lem:bounds-for-Phi_sigma0}, there is
\begin{equation*}
\lVert\rP[\psi_p,\eta_p]\rVert_{E_{\beta}^{0}}\leq C(c+a_*^2+\eps_{\phi}a_*)+Ce^{\sigma t}(e^{-\sigma t}a_*^3+e^{-\sigma t}a_*^2)=C\big(c+a_*^2+a_*^3+(\eps_{\phi}+1)a_*\big),
\end{equation*}
which is the same case as in the proof of Lemma \ref{lem:map_region} if $\eps_{\phi}$ is small enough. Thus the forward component $\rP$ can be estimated in a similar way. 

For the backward component $\lP$, using Lemma \ref{lem:bounds-for-Phi_sigma0} we have
\begin{equation}
\begin{split}
\lVert (\lP[\psi_p,\eta_p])^{\reverse}\rVert_{E_{\beta}^{-\sigma}}&\leq C\big(e^{-\sigma t}c+\eps_{\phi}e^{-\sigma t}a_*+e^{-\sigma t}a_*^2\big)+Ce^{\sigma t}\big((e^{-\sigma t}a_*)^2a_*+(e^{-\sigma t}a_*)^2\big)\\
&\leq Ce^{-\sigma t}\big(c+a_*^2+a_*^3+(\eps_{\phi}+1)a_*\big),
\end{split}
\end{equation}
or equivalently
\begin{equation*}
e^{\sigma t}\lVert (\lP[\psi_p,\eta_p])^{\reverse}\rVert_{E_{\beta}^{-\sigma}}\leq C\big(c+a_*^2+a_*^3+(\eps_{\phi}+1)a_*\big).
\end{equation*}
Consequently the estimate of $e^{\sigma t}\lVert (\lP[\psi_p,\eta_p])^{\reverse}\rVert_{E_{\beta}^{-\sigma}}$ is the same as that of $\lVert\rP[\psi_p,\eta_p]\rVert_{E_{\beta}^{0}}$. This concludes the proof.
\end{proof}
\begin{theorem}\label{th:global_fixed_point_E}
\textbf{\textup{[Fixed-Point Problem] }}For any parameters $\beta>4,\sigma>0$, suppose the functions $f^0$ and $g$ satisfy the assumptions \eqref{assump:main_1_th2}-\eqref{assump:main_3_th2}. Then for arbitrary terminal time $0<t<+\infty$, there exists a unique fixed point $(\psi_p,\eta_p)$ of $\Gamma$, thus a mild solution of the coupled Boltzmann equations \eqref{eq:transformed_evolution_perturb}, in the function class below with constant $a_*>0$ depending on $c$
\begin{equation*}
\lVert \psi_p\rVert_{E_{\beta}^{0}}<a_*,\quad e^{\sigma t}\lVert \eta_p^{\reverse}\rVert_{E_{\beta}^{-\sigma}}<a_*.
\end{equation*}
\end{theorem}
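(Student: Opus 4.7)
The plan is to apply the Banach fixed-point theorem to $\Gamma = (\rP, \lP)$ on the closed ball $\Omega$ equipped with $\lVert (\psi_p, \eta_p) \rVert_\Omega = \lVert \psi_p \rVert_{E_{\beta}^0} + e^{\sigma t} \lVert \eta_p^{\reverse} \rVert_{E_{\beta}^{-\sigma}}$, exactly as was done for Theorem \ref{th:global_fixed_point_P}. Lemma \ref{lem:map_region_sigma0} already delivers stability of $\Omega$ under $\Gamma$ once $c$, $\eps_\phi$, $a_*$ are chosen small enough, so the only task left is the contraction estimate with a rate uniform in the terminal time $t$.

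For two pairs $(\psi_p, \eta_p), (\psi_p^*, \eta_p^*) \in \Omega$, I would expand $\rP[\psi_p, \eta_p] - \rP[\psi_p^*, \eta_p^*]$ using the same bilinear identities as in \eqref{eq:fixed_point_difference}--\eqref{eq:fixed_point_difference_2}, so each resulting $\Psi^+$-term carries $\psi_p - \psi_p^*$ or $\eta_p - \eta_p^*$ in exactly one slot. Compared with Theorem \ref{th:global_fixed_point_P}, there is an additional transport-forcing difference $-\int_0^s e^{(s-\tau)B^+}(\psi_p - \psi_p^*)\phi\,d\tau$, which the bound \eqref{eq:control_phi_+} controls by $C \eps_\phi \lVert \psi_p - \psi_p^* \rVert_{E_{\beta}^0}$. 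Applying Lemma \ref{lem:convolu_nonlinear_E_+} to the $\Psi^+$-terms and plugging in the $\Omega$-membership bounds $\lVert \psi_p \rVert_{E_{\beta}^0}, \lVert \psi_p^* \rVert_{E_{\beta}^0} \leq a_*$ and $\lVert \eta_p^{\reverse} \rVert_{E_{\beta}^{-\sigma}}, \lVert (\eta_p^*)^{\reverse} \rVert_{E_{\beta}^{-\sigma}} \leq a_* e^{-\sigma t}$, the $e^{\sigma t}$ prefactors in the first two estimates of the lemma cancel exactly against the $e^{-\sigma t}$ coming from the $\eta$-slots, yielding
\begin{equation*}
\lVert \rP[\psi_p, \eta_p] - \rP[\psi_p^*, \eta_p^*] \rVert_{E_{\beta}^0} \leq C(\eps_\phi + a_*) \lVert (\psi_p - \psi_p^*, \eta_p - \eta_p^*) \rVert_\Omega.
\end{equation*}
The mirror argument for $\lP$ using Lemma \ref{lem:convolu_nonlinear_E_-} and \eqref{eq:control_phi_-} produces
\begin{equation*}
e^{\sigma t} \lVert (\lP[\psi_p, \eta_p] - \lP[\psi_p^*, \eta_p^*])^{\reverse} \rVert_{E_{\beta}^{-\sigma}} \leq C(\eps_\phi + a_*) \lVert (\psi_p - \psi_p^*, \eta_p - \eta_p^*) \rVert_\Omega.
\end{equation*}
Summing the two and choosing $c, \eps_\phi, a_*$ small enough that $2C(\eps_\phi + a_*) < 1$ gives the contraction and hence a unique fixed point.

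The main obstacle is the bookkeeping of $e^{\sigma t}$ factors on the backward side, which is precisely the reason $\lVert \cdot \rVert_\Omega$ weighs $\lVert \eta_p^{\reverse} \rVert_{E_{\beta}^{-\sigma}}$ by $e^{\sigma t}$. For example, the third estimate of Lemma \ref{lem:convolu_nonlinear_E_-} applied to $\Psi^-[\mathcal{G}, \eta_p + \eta_p^*, \eta_p - \eta_p^*]$ carries a bare $e^{\sigma t}$ factor that at first glance diverges as $t \to \infty$; the $\Omega$-bound $\lVert (\eta_p + \eta_p^*)^{\reverse} \rVert_{E_{\beta}^{-\sigma}} \leq 2 a_* e^{-\sigma t}$ absorbs that factor, and the output-side $e^{\sigma t}$ weight in the $\Omega$-norm then produces a clean $C a_*$ coefficient independent of $t$. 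Verifying this cancellation term-by-term for each of the five $\Psi^\pm$-difference terms plus the $\phi$-difference is the only delicate step; once checked, the argument is a direct rerun of the proof of Theorem \ref{th:global_fixed_point_P} with the $P_{\beta}^\sigma$-estimates replaced by their $E_{\beta}^0$/$E_{\beta}^{-\sigma}$ counterparts.
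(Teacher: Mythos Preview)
Your proposal is correct and follows essentially the same approach as the paper: invoke Lemma~\ref{lem:map_region_sigma0} for stability, expand the $\Gamma$-difference via the bilinear identities plus the extra $\phi$-term, and then apply Lemmas~\ref{lem:convolu_nonlinear_E_+}--\ref{lem:convolu_nonlinear_E_-} together with \eqref{eq:control_phi_+}--\eqref{eq:control_phi_-}, tracking the $e^{\sigma t}$ cancellations exactly as the $\Omega$-norm is designed to absorb. The paper carries out the same term-by-term verification you describe, arriving at the contraction factor $\tfrac14$ once $a_*$ and $\eps_\phi$ are small.
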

\begin{proof}
With Lemma \ref{lem:map_region_sigma0}, we only need to verify $\Gamma$ is a contraction map on $\Omega$.

The method is to prove for arbitrary $(\psi_p,\eta_p)\in\Omega$ and $(\psi_p^*,\eta_p^*)\in\Omega$ that the following holds
\begin{equation}\label{eq:contraction_exponential}
\begin{split}
&\Big\lVert \rP[\psi_p,\eta_p]- \rP[\psi_p^*,\eta_p^*]\Big\rVert_{E_{\beta}^{0}}\leq \frac{1}{4}\Big(\lVert \psi_p-\psi_p^*\rVert_{E_{\beta}^{0}}+e^{\sigma t}\lVert (\eta_p-\eta_p^*)^{\reverse}\rVert_{E_{\beta}^{-\sigma}}\Big),\\
&e^{\sigma t}\Big\lVert \Big(\lP[\psi_p,\eta_p]- \lP[\psi_p^*,\eta_p^*]\Big)^{\reverse}\Big\rVert_{E_{\beta}^{-\sigma}}\leq \frac{1}{4}\Big(\lVert \psi_p-\psi_p^*\rVert_{E_{\beta}^{0}}+e^{\sigma t}\lVert (\eta_p-\eta_p^*)^{\reverse}\rVert_{E_{\beta}^{-\sigma}}\Big).
\end{split}
\end{equation}

\underline{The Forward Component $\rP$:} Thus the difference between $\rP[\psi_p,\eta_p]$ and $\rP[\psi_p^*,\eta_p^*]$ is 
\begin{equation}\label{eq:map_difference_sigma0}
\begin{split}
\rP[\psi_p,\eta_p]-\rP[\psi_p^*,\eta_p^*]&=\Psi^+[\eta_p-\eta_p^*,\psi_p,\psi_p]+\Psi^+[\eta_p,\psi_p+\psi_p^*,\psi_p-\psi_p^*]\\
&+\Psi^+[\mathcal{G},\psi_p+\psi_p^*,\psi_p-\psi_p^*]\\
&+\Psi^+[\eta_p-\eta_p^*,\psi_p,\mathcal{G}]+\Psi^+[\eta_p^*,\psi_p-\psi_p^*,\mathcal{G}]\\
&+\int_0^se^{(s-\tau)B^+}(\psi_p-\psi_p^*)\phi d\tau.
\end{split}
\end{equation}
Modifying equation \eqref{eq:control_phi_+}, we obtain
\begin{equation*}
\Big\lVert \int_0^s e^{(s-\tau)B^+}(\psi_p-\psi_p^*)\phi d\tau\Big\rVert_{E_{\beta}^0}\leq C\eps_{\phi}\lVert \psi_p-\psi_p^*\rVert_{E_{\beta}^{0}}.
\end{equation*}
For the first three lines on the RHS of \eqref{eq:map_difference_sigma0}, their $E_{\beta}^0$ norms can be controlled using Lemma \ref{lem:convolu_nonlinear_E_+}. This implies
\begin{equation*}
\begin{split}
\lVert\rP[\psi_p,\eta_p]&-\rP[\psi_p^*,\eta_p^*]\rVert_{E_{\beta}^0}\\
\leq& C\Big(e^{\sigma t}\lVert (\eta_p-\eta_p^*)^{\reverse} \rVert_{E_{\beta}^{-\sigma}}\ \lVert \psi_p\rVert_{E_\beta^0}^2+e^{\sigma t}\lVert \eta_p^{*,\reverse}\rVert_{E_{\beta}^{-\sigma}}\ \lVert \psi_p+\psi_p^*\rVert_{E_\beta^0}\ \lVert (\psi_p-\psi_p^*)^{\reverse}\rVert_{E_\beta^0}\\
+&\lVert \psi_p+\psi_p^*\rVert_{E_\beta^0}\ \lVert \psi_p-\psi_p^*\rVert_{E_\beta^0}\\
+&e^{\sigma t}\lVert (\eta_p-\eta_p^*)^{\reverse}\rVert_{E_{\beta}^{-\sigma}}\ \lVert \psi_p\rVert_{E_\beta^0}
+e^{\sigma t}\lVert \eta_p^{*,\reverse}\rVert_{E_{\beta}^{-\sigma}}\ \lVert \psi_p-\psi_p^*\rVert_{E_\beta^0}+\eps_{\phi}\lVert\psi_p-\psi_p^*\rVert_{E_{\beta}^0}\Big),
\end{split}
\end{equation*}
which verifies the first equation in \eqref{eq:contraction_exponential} if we take $a_*>0$ small enough, similar to what is done in the proof of Theorem \ref{th:global_fixed_point_P}.

\underline{The Backward Component $\lP$:} Similar to the estimate of $\rP$, the difference between $\lP[\psi_p,\eta_p]$ and $\lP[\psi_p^*,\eta_p^*]$ is 
\begin{equation}\label{eq:map_difference_sigma0_-}
\begin{split}
\lP[\psi_p,\eta_p]-\lP[\psi_p^*,\eta_p^*]&=\Psi^-[\psi_p-\psi_p^*,\eta_p,\eta_p]+\Psi^-[\psi_p,\eta_p+\eta_p^*,\eta_p-\eta_p^*]\\
&+\Psi^-[\mathcal{G},\eta_p+\eta_p^*,\eta_p-\eta_p^*]\\
&+\Psi^-[\psi_p-\psi_p^*,\eta_p,\mathcal{G}]+\Psi^-[\psi_p^*,\eta_p-\eta_p^*,\mathcal{G}]\\
&+\int_s^te^{(\tau-s)B^-}(\eta_p-\eta_p^*)\phi d\tau.
\end{split}
\end{equation}
For the $E_{\beta}^{-\sigma}$ norm for the fourth line of \eqref{eq:map_difference_sigma0_-}, modifying equation \eqref{eq:control_phi_+} we obtain
\begin{equation*}
\Big\lVert \int_s^t e^{(\tau-s)B^-}(\eta_p-\eta_p^*)\phi d\tau\Big\rVert_{E_{\beta}^{-\sigma}}\leq C\eps_{\phi}\lVert \eta_p-\eta_p^*\rVert_{E_{\beta}^{-\sigma}}.
\end{equation*}
For the first three lines on the RHS of \eqref{eq:map_difference_sigma0_-}, their $E_{\beta}^{-\sigma}$ norms can be controlled using Lemma \ref{lem:convolu_nonlinear_E_-}. This implies
\begin{equation*}
\begin{split}
&\lVert(\lP[\psi_p,\eta_p]-\lP[\psi_p^*,\eta_p^*])^{\reverse}\rVert_{E_{\beta}^{-\sigma}}\\
\leq& C\Big(e^{\sigma t}\lVert \psi_p-\psi_p^* \rVert_{E_{\beta}^{0}}\ \lVert \eta_p^{\reverse}\rVert_{E_\beta^{-\sigma}}^2+e^{\sigma t}\lVert \psi_p^*\rVert_{E_{\beta}^{0}}\ \lVert (\eta_p+\eta_p^*)^{\reverse}\rVert_{E_\beta^{-\sigma}}\ \lVert (\eta_p-\eta_p^*)^{\reverse}\rVert_{E_\beta^{-\sigma}}\\
+&e^{\sigma t}\lVert (\eta_p+\eta_p^*)^{\reverse}\rVert_{E_\beta^{-\sigma}}\ \lVert (\eta_p-\eta_p^*)^{\reverse}\rVert_{E_\beta^{-\sigma}}\\
+&\lVert \psi_p-\psi_p^*\rVert_{E_{\beta}^{0}}\ \lVert \eta_p^{\reverse}\rVert_{E_\beta^{-\sigma}}
+\lVert \psi_p^*\rVert_{E_{\beta}^{0}}\ \lVert (\eta_p-\eta_p^*)^{\reverse}\rVert_{E_\beta^{-\sigma}}+\eps_{\phi}\lVert(\eta_p-\eta_p^*)^{\reverse}\rVert_{E_{\beta}^{-\sigma}}\Big).
\end{split}
\end{equation*}
This inequality can be reorganized by multiplying an $e^{\sigma t}$ over the two sides
\begin{equation*}
\begin{split}
&e^{\sigma t}\lVert(\lP[\psi_p,\eta_p]-\lP[\psi_p^*,\eta_p^*])^{\reverse}\rVert_{E_{\beta}^{-\sigma}}\\
\leq& C\Big(\lVert \psi_p-\psi_p^* \rVert_{E_{\beta}^{0}}\ (e^{\sigma t}\lVert \eta_p^{\reverse}\rVert_{E_\beta^{-\sigma}})^2+\lVert \psi_p^*\rVert_{E_{\beta}^{0}}\ e^{\sigma t}\lVert (\eta_p+\eta_p^*)^{\reverse}\rVert_{E_\beta^{-\sigma}}\ e^{\sigma t}\lVert (\eta_p-\eta_p^*)^{\reverse}\rVert_{E_\beta^{-\sigma}}\\
+&e^{\sigma t}\lVert (\eta_p+\eta_p^*)^{\reverse}\rVert_{E_\beta^{-\sigma}}\ e^{\sigma t}\lVert (\eta_p-\eta_p^*)^{\reverse}\rVert_{E_\beta^{-\sigma}}\\
+&\lVert \psi_p-\psi_p^*\rVert_{E_{\beta}^{0}}\ e^{\sigma t}\lVert \eta_p^{\reverse}\rVert_{E_\beta^{-\sigma}}
+\lVert \psi_p^*\rVert_{E_{\beta}^{0}}\ e^{\sigma t}\lVert (\eta_p-\eta_p^*)^{\reverse}\rVert_{E_\beta^{-\sigma}}+\eps_{\phi}e^{\sigma t}\lVert(\eta_p-\eta_p^*)^{\reverse}\rVert_{E_{\beta}^{-\sigma}}\Big),
\end{split}
\end{equation*}
which verifies the second inequality in \eqref{eq:contraction_exponential} if we take $a_*$ small enough. As a consequence $\Gamma$ is a contraction from $\Omega$ to $\Omega$, having a unique fixed point. This concludes the proof.
\end{proof}

\section{Justification of the Mild Functional Solution}\label{sec:characteristics}
In this section, we show in Theorem \ref{th:justification_mild} that we can construct mild solutions of the Hamilton-Jacobi equation, using the mild solution of the coupled Boltzmann equations. The notion of a mild solution for the Hamilton-Jacobi equation \eqref{eq:functional_HJ} has also been defined in \eqref{eq:mild_solution_HJ}. Theorem \ref{th:justification_mild} has been proved in \cite{BGSS_2023} under some analyticity assumptions, in the framework of the Cauchy-Kovalevskaya Theorem. In this paper we do not have these analyticity conditions, and our proof is a modification of the proof in \cite{BGSS_2023}, with some additional analysis.

To construct a mild solution of the functional Hamilton-Jacobi equation \eqref{eq:functional_HJ}, we will consider the Hamiltonian system characterized by the associated Euler-Lagrange equation. Specifically given a terminal time $t> 0$, we consider the following Hamiltonian system defined on $[0,t]$
\begin{equation}\label{eq:Hamiltonian_physical_1}
\begin{split}
&D_s\varphi_t=\frac{\p \mathcal{H}}{\p p}(\varphi_t,p_t),\quad \textup{with $\varphi_t(0)=f^0e^{p_t(0)}$}\\
&D_s(p_t-g)=-\frac{\p \mathcal{H}}{\p \varphi}(\varphi_t,p_t),\quad \textup{with $p_t(t)=g(t)$}.
\end{split}
\end{equation}
Here the Hamiltonian $\mathcal{H}$ has been defined in \eqref{eq:functional_hamiltonian} as
\begin{equation*}
\mathcal{H}(\varphi,p)=\frac{1}{2}\int\varphi(x,v)\varphi(x,v_*)\Big(e^{\Delta p (x,v,v_*)}-1\Big)\big((v_*-v)\cdot\omega\big)_+d\omega dv_* dvdx.
\end{equation*}
Given a mild solution of equation \eqref{eq:Hamiltonian_physical_1} on $[0,t]$, we define the functional $\widehat{\mathcal{I}}(t,g)$ as
\begin{equation}\label{eq:characteristic_functional}
    \widehat{\mathcal{I}}(t,g):=-1+\langle f^0,e^{p_t(0)} \rangle+\dl D_s\big(p_t(s)-g(s)\big),\varphi_t(s)\dr+\int_0^t\mathcal{H}\Big(\varphi_t(s),p_t(s)\Big)ds.
\end{equation}
Here the notation $\langle \cdot,\cdot\rangle$ refers to the inner product in $L^2(\mathds{T}_x^d\times \mathds{R}_v^d)$, while $\dl\cdot,\cdot\dr$ refers to the inner product in $L^2([0,t]\times \mathds{T}_x^d\times\mathds{R}_v^d)$, with given terminal time $t$.

However as the readers have seen in this paper, we do not directly deal with the Euler-Lagrange system \eqref{eq:Hamiltonian_physical_1}. Instead, we have performed the change of variables in \eqref{eq:change_of_variables_sec3} to make the system more symmetric
\begin{equation}\label{eq:change_of_variables}
    (\psi_t,\eta_t):=(\varphi_te^{-p_t+\alpha'|v|^2},e^{p_t-\alpha'|v|^2}).
\end{equation}
This change of variables provides a new Hamiltonian $\mathcal{H}'$ from the original $\mathcal{H}$
\begin{equation}\label{eq:new_hamiltonian}
\mathcal{H}'(\psi,\eta):=-\frac{1}{4}\int \Big(\psi(v')\psi(v_*')-\psi(v)\psi(v_*)\Big)\Big(\eta(v')\eta(v_*')-\eta(v)\eta(v_*)\Big)\big((v_*-v)\cdot\omega\big)_+d\omega dv_*dvdx.
\end{equation}
The Hamiltonian $\mathcal{H}'(\psi,\eta)$ can also be written equivalently as
\begin{equation*}
\begin{split}
\mathcal{H}'(\psi,\eta)&=\frac{1}{2}\int \psi(v)\psi(v_*)\Big(\eta(v')\eta(v_*')-\eta(v)\eta(v_*)\Big)\big((v_*-v)\cdot\omega\big)_+d\omega dv_*dvdx\\
&=\frac{1}{2}\int \Big(\psi(v')\psi(v_*')-\psi(v)\psi(v_*)\Big)\eta(v)\eta(v_*)\big((v_*-v)\cdot\omega\big)_+d\omega dv_*dvdx.
\end{split}
\end{equation*}
Replacing $(\varphi_t,\eta_t)$ by $(\psi_t,\eta_t)$ by the change of variables \eqref{eq:change_of_variables}, we have the evolution equation for $(\psi_t,\eta_t)$ during the time interval $[0,t]$
\begin{equation*}
\begin{split}
D_s \psi_t&=-\psi_t\phi+\mathcal{Q}_{\eta_t}(\psi_t,\psi_t),\quad \psi_t(0)=f^0\mathcal{B}^{-1}\\
D_s \eta_t&=\eta_t\phi-\mathcal{Q}_{\psi_t}(\eta_t,\eta_t),\quad \eta_t(t)=e^{g(t)}\mathcal{B}.
\end{split}
\end{equation*}
This equation for $(\psi_t,\eta_t)$ can also be written as
\begin{equation}\label{eq:coupled_Boltzmann_gradient}
\begin{split}
D_s \psi_t(s)&=-\psi_t(s)\phi(s)+\frac{\p \mathcal{H}'\big(\psi_t(s),\eta_t(s)\big)}{\p \eta},\quad \psi_t(0)=f^0\mathcal{B}^{-1},\\
D_s \eta_t(s)&=\eta_t(s)\phi(s)-\frac{\p \mathcal{H}'\big(\psi_t(s),\eta_t(s)\big)}{\p \psi},\quad \eta_t(t)=e^{g(t)}\mathcal{B}.
\end{split}
\end{equation}
In this case, the functional $\widehat{\mathcal{I}}(t,g)$ constructed in equation \eqref{eq:characteristic_functional} is equivalent to 
\begin{equation}\label{eq:characteristic_functional_2}
\widehat{\mathcal{I}}(t,g):=-1+\langle f^0\mathcal{B}^{-1},\eta_t(0) \rangle+\dl D_s\eta_t(s),\psi_t(s)\dr-\dl\phi(s),\psi_t(s)\eta_t(s)\dr+\int_0^t\mathcal{H}'\Big(\psi_t(s),\eta_t(s)\Big)ds.
\end{equation}
Theorem \ref{th:justification_mild} shows that if the $(\psi_t,\eta_t)$ in the definition of $\widehat{\mathcal{I}}(t,g)$ is the mild solution given in Theorem \ref{th:global_fixed_point_P} or \ref{th:global_fixed_point_E}, then the functional $\widehat{\mathcal{I}}(t,g)$ is a mild solution of the functional Hamilton-Jacobi equation.

Some ingredients are needed to prove Theorem \ref{th:justification_mild}, for example some continuity estimates. In Lemma \ref{lem:order_of_variation_2}, we will prove that the solution $(\psi_t(s),\eta_t(s))$ of the coupled Boltzmann equations, is continuous in $0\leq s\leq t$ under the $L_{\beta}^{\infty}$ norm. This enables us to give a precise definition of $D_s\psi_t(s)$ and $D_s\eta_t(s)$. Using Lemma \ref{lem:order_of_variation_2}, we are going to define $D_s\psi_t(s)$ and $D_s\eta_t(s)$ as the following limits in $L_{x,v}^2$
\begin{equation}\label{eq:definition_D_s}
\begin{split}
&D_s\psi_t(s)=\lim_{\tau\rightarrow 0}\Big(\frac{\psi_t(s+\tau)-\psi_t(s)}{\tau}-\frac{S_{\tau}\psi_t(s)-\psi_t(s)}{\tau}\Big),\\
&D_s\eta_t(s)=\lim_{\tau\rightarrow 0}\Big(\frac{\eta_t(s+\tau)-\eta_t(s)}{\tau}-\frac{S_{\tau}\eta_t(s)-\eta_t(s)}{\tau}\Big).
\end{split}
\end{equation}
Using the mild formulation of $(\psi_t(s),\eta_t(s))$, for each $\eps$ the difference is equal to a time integral. For example according to the mild formulation of the coupled Boltzmann equations
\begin{equation*}
\psi_t(s+\tau)=S_{\tau}\psi_t(s)-\int_{s}^{s+\tau}S_{u-s}\psi_t(u)\phi(u)du+\int_{s}^{s+\tau}S_{u-s}\frac{\p \mathcal{H}'\big(\psi_t(u),\eta_t(u)\big)}{\p \eta}du.
\end{equation*}
This implies
\begin{equation*}
\begin{split}
\lim_{\tau\rightarrow 0}\Big(\frac{\psi_t(s+\tau)-\psi_t(s)}{\tau}&-\frac{S_{\tau}\psi_t(s)-\psi_t(s)}{\tau}\Big)\\
=&\lim_{\tau\rightarrow 0}\Big(-\tau^{-1}\int_{s}^{s+\tau}S_{u-s}\psi_t(u)\phi(u)du+\tau^{-1}\int_{s}^{s+\tau}S_{u-s}\frac{\p \mathcal{H}'\big(\psi_t(u),\eta_t(u)\big)}{\p \eta}du\Big).
\end{split}
\end{equation*}
Thus by the continuity of $(\psi_t(s),\eta_t(s))$ under $L_{\beta}^{\infty}$ proved in Lemma \ref{lem:order_of_variation_2}, the integral is continuous with respect to $\tau$ and the limits in \eqref{eq:definition_D_s} exist
\begin{equation}\label{eq:D_s_as_limit}
D_s \psi_t(s)=-\psi_t(s)\phi(s)+\frac{\p \mathcal{H}'\big(\psi_t(s),\eta_t(s)\big)}{\p \eta},\ D_s \eta_t(s)=\eta_t(s)\phi(s)-\frac{\p \mathcal{H}'\big(\psi_t(s),\eta_t(s)\big)}{\p \psi}.
\end{equation}
Now the definition of $\widehat{\mathcal{I}}(t,g)$ is rigorous. 

\begin{theorem}\label{th:justification_mild} 
If we take $(\psi_t(s),\eta_t(s))$ to be the mild solution of the coupled Boltzmann equations given in Theorem \ref{th:global_fixed_point_P} or \ref{th:global_fixed_point_E}, then the functional $\widehat{\mathcal{I}}(t,g)$ defined in equation \eqref{eq:characteristic_functional_2} is a mild solution of the Hamilton-Jacobi equation \eqref{eq:functional_HJ}.
\end{theorem}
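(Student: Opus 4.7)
The proof follows the classical Hamilton-Jacobi-from-action mechanism: the action integral evaluated on an extremal of the Hamilton equations, viewed as a function of its boundary data, is stationary under perturbations of the extremal itself and produces the conjugate momenta as derivatives of its boundary data. I plan to proceed in three steps.

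First, I would verify the initial condition. At $t = 0$ the interval $[0, 0]$ is degenerate, so both integrals in \eqref{eq:characteristic_functional_2} vanish, and the terminal condition coincides with the initial time, forcing $\eta_0(0) = e^{g(0)} \mathcal{B}$. Together with $\mathcal{B}^{-1}\mathcal{B} = 1$, this gives $\widehat{\mathcal{I}}(0, g) = -1 + \int f^0 e^{g(0)} dv dx = \mathcal{I}(0, g)$.

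Second, the variational argument is more transparent in the original variables $(\varphi_t, p_t)$ via formula \eqref{eq:characteristic_functional}, equivalent to \eqref{eq:characteristic_functional_2} by the change of variables \eqref{eq:change_of_variables}. For an infinitesimal variation $(\delta t, \delta g)$ of the terminal time and the forcing, the corresponding extremal varies by $(\delta \varphi_t, \delta p_t)$ solving the linearised Hamilton system with linearised boundary data $\delta \varphi_t(0) = f^0 e^{p_t(0)} \delta p_t(0)$ and $\delta p_t(t) + (\p_s p_t)(t)\delta t = \delta g(t) + (\p_s g)(t)\delta t$. Substituting into \eqref{eq:characteristic_functional} and integrating by parts in $s$, the Hamilton equations
\begin{equation*}
D_s \varphi_t = \frac{\p \mathcal{H}}{\p p}, \qquad D_s(p_t - g) = -\frac{\p \mathcal{H}}{\p \varphi}
\end{equation*}
annihilate every bulk contribution that contains $(\delta \varphi_t, \delta p_t)$. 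The boundary terms at $s = 0$ cancel thanks to $\varphi_t(0) = f^0 e^{p_t(0)}$, and the terminal boundary contribution collapses to $\langle \varphi_t(t), \delta g(t)\rangle + \mathcal{H}(\varphi_t(t), p_t(t))\,\delta t$. Combined with $p_t(t) = g(t)$, this identifies
\begin{equation*}
\frac{\p \widehat{\mathcal{I}}(t, g)}{\p g(t)} = \varphi_t(t), \qquad \p_t \widehat{\mathcal{I}}(t, g) = \mathcal{H}\Big(\frac{\p \widehat{\mathcal{I}}(t, g)}{\p g(t)}, g(t)\Big).
\end{equation*}
The same argument applied at each intermediate $s \in [0, t]$ shows $\p_s \widehat{\mathcal{I}}(s, g) = \mathcal{H}(\varphi_s(s), g(s))$, and integrating from $0$ to $t$ produces the mild formula \eqref{eq:mild_solution_HJ}.

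The main obstacle is rigour in the mild-solution framework: the variational calculus is formal and requires $(\psi_t, \eta_t)$ to be continuous in $s$ and differentiable with respect to the data. Continuity in $s$ in the $L_\beta^\infty$ topology is supplied by Lemma \ref{lem:order_of_variation_2} (invoked in the paragraph just above the theorem statement), which also makes the material derivatives $D_s \psi_t, D_s \eta_t$ meaningful as $L^2_{x,v}$-limits via \eqref{eq:definition_D_s}. Differentiability with respect to $g$ is more delicate and is the genuinely new input compared with \cite{BGSS_2023}, where analyticity and a Cauchy-Kovalevskaya framework bypassed this issue; I would instead linearise the fixed-point map $\Gamma$ of Section \ref{sec:fixed_point} around its fixed point and invert the linearised operator by a Neumann series, using the contraction property established in Theorems \ref{th:global_fixed_point_P} and \ref{th:global_fixed_point_E}, to produce $(\delta \psi_t, \delta \eta_t)$ in a quantitatively controlled manner. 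This is the step where the bookkeeping of boundary terms and weights in $L_\beta^\infty$ must be carried out most carefully to guarantee that the integration-by-parts above makes sense term by term.
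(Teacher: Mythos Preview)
Your proposal is correct and follows essentially the same route as the paper: the action-principle computation in which the Hamilton equations annihilate the bulk variations after an integration by parts, leaving boundary terms that identify $\partial_t\widehat{\mathcal{I}}=\mathcal{H}(\varphi_t(t),g(t))$ and $\frac{\partial\widehat{\mathcal{I}}}{\partial g(t)}=\varphi_t(t)$, with rigour supplied by the contraction structure of $\Gamma$. The paper carries this out in the $(\psi,\eta)$ variables via explicit finite differences---needing only the $O(\tau)$ Lipschitz estimate of Lemma~\ref{lem:order_of_variation_2} rather than the full Fr\'echet differentiability your Neumann-series argument would produce---and isolates the terminal boundary cancellation you encode in the transversality condition as a separate technical lemma (Lemma~\ref{lem:ipp_cancellation}); these are bookkeeping differences, not substantive ones.
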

\begin{proof}
Take an arbitrary time $t$. We want to consider the difference $\widehat{\mathcal{I}}(t+\tau,g)-\widehat{\mathcal{I}}(t,g)$ with $\tau$ being a small positive number. We use $\delta_{\tau}$ to refer to the variation with respect to the terminal time
\begin{equation}
\delta_{\tau}\psi_t(s)=\psi_{t+\tau}(s)-\psi_{t}(s),\quad \delta_{\tau}\eta_t(s)=\eta_{t+\tau}(s)-\eta_{t}(s).
\end{equation}

\underline{Time differential of $\widehat{\mathcal{I}}$:}
according to the definition \eqref{eq:characteristic_functional_2} of $\widehat{\mathcal{I}}$, there is
\begin{equation}\label{eq:functional_difference_2}
\begin{split}
&\widehat{\mathcal{I}}(t+\tau)-\widehat{\mathcal{I}}(t,g)\\
=&\langle f^0,\delta_{\tau}\eta_t(0)\rangle+\dl D_s\eta_t,\delta_{\tau}\psi_t\dr+\dl D_s\delta_{\tau}\eta_t,\psi_t\dr+\underbrace{\dl D_s\delta_{\tau}\eta_t,\delta_{\tau}\psi_t\dr}_{\textup{\blue{Remainder}}}+\int_{t}^{t+\tau}\langle D_s\eta_{t+\tau}(s),\psi_{t+\tau}(s)\rangle ds\\
&-\dl\phi,\delta_{\tau}\psi_t\eta_t\dr-\dl\phi,\psi_t\delta_{\tau}\eta_t\dr-\underbrace{\dl\phi,\delta_{\tau}\psi_t\delta_{\tau}\eta_t\dr}_{\textup{\blue{Remainder}}}-\int_{t}^{t+\tau}\langle \phi(s),\psi_{t+\tau}(s)\eta_{t+\tau}(s)\rangle ds\\
&+\dl \delta_{\tau}\psi_t,\frac{\p \mathcal{H}'(\psi_t,\eta_t)}{\p \psi}\dr+\dl \delta_{\tau}\eta_t,\frac{\p \mathcal{H}'(\psi_t,\eta_t)}{\p \eta}\dr+\int_t^{t+\tau}\mathcal{H}'\Big(\psi_{t+\tau}(s),\eta_{t+\tau}(s)\Big)ds\\
&+\underbrace{\Big[\int_0^{t}\mathcal{H}'\Big(\psi_{t+\tau}(s),\eta_{t+\tau}(s)\Big)-\int_0^{t}\mathcal{H}'\Big(\psi_{t}(s),\eta_{t}(s)\Big)ds-\dl \delta_{\tau}\psi_t,\frac{\p \mathcal{H}'(\psi_t,\eta_t)}{\p \psi}\dr-\dl \delta_{\tau}\eta_t,\frac{\p \mathcal{H}'(\psi_t,\eta_t)}{\p \eta}\dr\Big]}_{\textup{\blue{Remainder}}}
\end{split}
\end{equation}
In the equation above, the first line on the RHS is the variation of $\langle D_s\eta_t,\psi_t \rangle$ with respect to $t$, and the second line is the variation of $\langle \phi,\psi_t\eta_t\rangle$ with respect to $t$. The third and the fourth lines are the variation of $\int_0^t \mathcal{H}'\big(\psi_t(s),\eta_t(s)\big)ds$.

It will be proved in Lemma \ref{lem:order_of_variation_2} that for arbitrary $0\leq s\leq t$, the variation is uniformly of order $\tau$
\begin{equation}\label{eq:order_of_variation}
\lVert \delta_{\tau}\psi_t(s)\rVert_{L_{\beta}^{\infty}}=O(\tau),\ \lVert \delta_{\tau}\eta_t(s)\rVert_{L_{\beta}^{\infty}}=O(\tau),
\end{equation}
since each component of $(\psi_t(s),\eta_t(s))$ is continuous in $0\leq s\leq t$ under the $L_{\beta}^{\infty}$ norm. Combine \eqref{eq:functional_difference_2} with \eqref{eq:order_of_variation}, we can show those higher-order remainders in \eqref{eq:functional_difference_2} are of order $o(\tau)$.

Performing integration by parts according to Lemma \ref{lem:integration_by_parts}, we have
\begin{equation}\label{eq:integration_by_parts}
\dl D_s\delta_{\tau}\eta_t,\psi_t\dr=-\dl \delta_{\tau}\eta_t,D_s\psi_t\dr+\langle \delta_{\tau}\eta_t(t),\psi_t(t)\rangle-\langle \delta_{\tau}\eta_t(0),\psi_t(0)\rangle.
\end{equation}
Combine \eqref{eq:functional_difference_2} with \eqref{eq:integration_by_parts} and Lemma \ref{lem:ipp_cancellation}, the difference $\widehat{\mathcal{I}}(t+\tau,g)-\widehat{\mathcal{I}}(t,g)$ becomes
\begin{equation}
\begin{split}
&\widehat{\mathcal{I}}(t+\tau,g)-\widehat{\mathcal{I}}(t,g)\\
=&\dl\delta_{\tau}\eta_t,-D_s \psi_t(s)-\psi_t(s)\phi(s)+\frac{\p \mathcal{H}'\big(\psi_t(s),\eta_t(s)\big)}{\p \eta}\dr\\
&+\dl \delta_{\tau}\psi_t,D_s \eta_t(s)-\eta_t(s)\phi(s)+\frac{\p \mathcal{H}'\big(\psi_t(s),\eta_t(s)\big)}{\p \psi}\dr+\int_t^{t+\tau}\mathcal{H}'\Big(\psi_{t+\tau}(s),\eta_{t+\tau}(s)\Big)ds+o(\tau).
\end{split}
\end{equation}
Using the fact that $(\psi_t,\eta_t)$ is also the mild solution of \eqref{eq:coupled_Boltzmann_gradient} and equation \eqref{eq:D_s_as_limit}, there is
\begin{equation*}
\widehat{\mathcal{I}}(t+\tau,g)-\widehat{\mathcal{I}}(t,g)=  \int_t^{t+\tau}\mathcal{H}'\Big(\psi_{t+\tau}(s),\eta_{t+\tau}(s)\Big)ds+o(\tau).
\end{equation*}
Since each component of $(\psi_t(s),\eta_t(s))$ is continuous in $0\leq s\leq t$ under the $L_{\beta}^{\infty}$ norm, the Hamiltonian $\mathcal{H}'\big(\psi_t(s),\eta_t(s)\big)$ is also continuous in $0\leq s\leq t$. Consequently the functional $\widehat{\mathcal{I}}(t,g)$ is differentiable in time
\begin{equation}\label{eq:time_differential_I}
\p_t\widehat{\mathcal{I}}(t,g)=\mathcal{H}'\big(\psi_t(t),\eta_t(t)\big)=\mathcal{H}\big(\varphi_t(t),p_t(t)\big).
\end{equation}
\underline{Differential of $\widehat{\mathcal{I}}$ with respect to $g$:} 
Now we want to fix $t$ and differentiate $\widehat{\mathcal{I}}(t,g)$ against $g(t)$. The rigorous proof is essentially the same as the proof of the differentiability in $t$. It is because if we have a variation of $g$, the terminal data $\eta_t(t)=e^{g(t)}\mathcal{B}$ will be changed accordingly. This is the same case as the change of terminal data when we are considering the differentiability in $t$. Here we give the formal proof for simplicity. Using $\delta$ to denote the variation, we differentiate $\widehat{\mathcal{I}}(t,g)$ against $g(t)$
\begin{equation}\label{eq:g_functional_derivative}
\begin{split}
\p\widehat{\mathcal{I}}(t,g)=&\langle f^0\mathcal{B}^{-1},\delta\eta_t(0)\rangle+\dl D_s\delta\eta_t,\psi_t\dr+\dl D_s\eta_t,\delta\psi_t\dr-\dl\phi,\delta\psi_t\eta_t\dr-\dl\phi,\psi_t\delta\eta_t\dr\\
&+\dl \delta\psi_t,\frac{\p \mathcal{H}'(\psi_t,\eta_t)}{\p \psi}\dr+\dl \delta\eta_t,\frac{\p \mathcal{H}'(\psi_t,\eta_t)}{\p \eta}\dr.
\end{split}
\end{equation}
Again we perform an integration by parts
\begin{equation}\label{eq:integration_by_parts_2}
\dl D_s\delta\eta_t,\psi_t\dr=-\dl \delta\eta_t,D_s\psi_t\dr+\langle \delta\eta_t(t),\psi_t(t)\rangle-\langle \delta\eta_t(0),\psi_t(0)\rangle.
\end{equation}
Using the same technique as in the study of $\p_t\widehat{\mathcal{I}}$, with $(\psi_t,\eta_t)$ being the mild solution of \eqref{eq:coupled_Boltzmann_gradient}, we get
\begin{equation}
\begin{split}
\p\widehat{\mathcal{I}}(t,g)=\langle \delta e^{g(t)}\mathcal{B},\psi_t(t)\rangle-\dl \delta \phi,\psi_t\eta_t\dr&=\langle \delta g(t) ,\eta_t(t)\psi_t(t)\rangle-\dl \delta \phi,\psi_t\eta_t\dr\\
&=\langle \delta g(t) ,\varphi_t(t)\rangle-\dl \delta \phi,\varphi_t(t)\dr.
\end{split}
\end{equation}
This shows that $\frac{\p \widehat{\mathcal{I}}(t,g)}{\p g(t)}=\varphi_t(t)$. Along with $p_t(t)=g(t)$, \eqref{eq:time_differential_I} is equivalently
\begin{equation}
\p_t\widehat{\mathcal{I}}(t,g)=\mathcal{H}\Big(\frac{\p \widehat{\mathcal{I}}(t,g)}{\p g(t)},g(t)\Big).
\end{equation}
This concludes the proof.
\end{proof}

\begin{lemma}\label{lem:order_of_variation_2}
\textbf{\textup{[Continuity in $0\leq s\leq t$]}} Consider the mild solution $(\psi_t(s),\eta_t(s))$ derived in Theorem \ref{th:global_fixed_point_P} or Theorem \ref{th:global_fixed_point_E} of the coupled Boltzmann equations \eqref{eq:transformed_evolution}. Each component is continuous w.r.t. $s$ and $t$ in the region $0\leq s\leq t$ under the $L_{\beta}^{\infty}$ norm.
\end{lemma}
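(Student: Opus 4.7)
The plan is to establish continuity in $s$ first for fixed terminal time $t$, then continuity in $t$, and combine the two via a triangle inequality for joint $(s,t)$-continuity.

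For the $s$-continuity with $t$ fixed, I would start from the mild formulation \eqref{eq:mild_solution_Boltzmann}. The two free-transport pieces $S_s(f^0\mathcal{B}^{-1})$ and $S_{-(t-s)}(e^{g(t)}\mathcal{B})$ are Lipschitz in $s$ under $L_\beta^\infty$: the former directly from the assumption $S_\tau f^0\mathcal{B}^{-1}\in\textup{Lip}(\mathds{R}_\tau;L_\beta^\infty)$ in \eqref{assump:main_3}/\eqref{assump:main_3_th2}, the latter because $g$ has uniformly bounded derivatives in $(t,x)$. For the two Duhamel integrals, I would use the decomposition $e^{sB^+}=\mathcal{D}_1^+(s)+\mathcal{D}_2^+(s)$, exploit the explicit expression $\mathcal{D}_1^+(s)f=e^{-\nu(v)s}f(x-sv,v)$, and combine it with Proposition \ref{prop:semigroup_bound_for_general} and the uniform $L_\beta^\infty$ bound on the integrand $\psi_t(\tau)\phi(\tau)$ and $\mathcal{Q}_{\eta_t(\tau)}(\psi_t(\tau),\psi_t(\tau))$ inherited from the fixed-point theorems and Lemma \ref{lem:multiplicative_nonlinearity}. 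Comparing $s$ with $s+h$, the integrals split into an $O(h)$ boundary piece on $[s,s+h]$ plus a bulk piece $\int_0^s[(\cdot)(s+h-\tau)-(\cdot)(s-\tau)]d\tau$ to which dominated convergence and the continuity of the transport action along characteristics apply, giving $L_\beta^\infty$-continuity.

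For the $t$-continuity, fix $t_1<t_2$ close and compare the restriction $(\psi_{t_2}|_{[0,t_1]},\eta_{t_2}|_{[0,t_1]})$ with $(\psi_{t_1},\eta_{t_1})$. Both pairs satisfy the same coupled mild equations on $[0,t_1]$, differing only in the terminal value at $s=t_1$: the first has $\eta_{t_2}(t_1)$, the second has $\eta_{t_1}(t_1)=e^{g(t_1)}\mathcal{B}$. The terminal difference splits as
\[
\eta_{t_2}(t_1)-e^{g(t_1)}\mathcal{B}=\bigl[\eta_{t_2}(t_1)-\eta_{t_2}(t_2)\bigr]+\bigl[e^{g(t_2)}\mathcal{B}-e^{g(t_1)}\mathcal{B}\bigr],
\]
which is small in $L_\beta^\infty$: the first bracket by the $s$-continuity of $\eta_{t_2}$ established above, the second by the regularity of $g$ in $t$. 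Rerunning the contraction computation from Theorem \ref{th:global_fixed_point_P} (respectively Theorem \ref{th:global_fixed_point_E}) on $[0,t_1]$ with two different terminal data yields Lipschitz dependence of the fixed point on the terminal datum: the contraction constant remains bounded by $1/4$, and the inhomogeneous part from the terminal-data difference enters linearly. Combining this with the triangle expansion
\[
\psi_{t_n}(s_n)-\psi_{t_0}(s_0)=\bigl[\psi_{t_n}(s_n)-\psi_{t_n}(s_0)\bigr]+\bigl[\psi_{t_n}(s_0)-\psi_{t_0}(s_0)\bigr]
\]
and the uniform $L_\beta^\infty$-bounds for all $\psi_{t_n}$, joint continuity follows.

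The main obstacle will be the $t$-continuity step, because the spaces $P_\beta^\sigma$ or $E_\beta^0\times E_\beta^{-\sigma}$ on which $\Gamma$ contracts intrinsically depend on the terminal time. Restriction from $[0,t_2]$ to $[0,t_1]$ is norm-decreasing, which is harmless; what requires care is verifying that the contraction constant for $\Gamma$ on the shorter interval is still controlled by the small constants in Lemma \ref{lem:map_region}/\ref{lem:map_region_sigma0} uniformly in $t$ near $t_0$, and that the Lipschitz dependence on the terminal datum can be extracted by a perturbation-of-fixed-point argument rather than only via existence and uniqueness. This is routine but bookkeeping-heavy.
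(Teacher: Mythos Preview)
Your $t$-continuity argument is essentially the paper's: compare the two fixed-point problems on $[0,t_1]$, note that they differ only through the terminal datum, and extract Lipschitz dependence from the contraction. The paper executes this by comparing the iteration sequences $(\psi_{t,p}^{(n)},\eta_{t,p}^{(n)})$ and $(\psi_{t+\tau,p}^{(n)},\eta_{t+\tau,p}^{(n)})$ and passing to the limit, but the mechanism is the same.

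The $s$-continuity argument, however, has a genuine gap. Your plan is to split $\psi_t(s+h)-\psi_t(s)$ via the mild formula into a data piece, an $O(h)$ boundary integral, and a bulk integral $\int_0^s\bigl[e^{(s+h-\tau)B^+}-e^{(s-\tau)B^+}\bigr]\mathcal{N}(\tau)\,d\tau$, then control the bulk by ``dominated convergence and the continuity of the transport action along characteristics.'' But the transport semigroup (hence $e^{hB^+}$ and $\mathcal{D}_1^+(h)$) is \emph{not} strongly continuous on $L_\beta^\infty$: for a generic $F\in L_\beta^\infty$ with no $x$-regularity, $\lVert S_hF-F\rVert_{L_\beta^\infty}$ need not tend to zero. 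The only functions for which you have this continuity are the initial and terminal data, via the explicit regularity hypotheses in \eqref{assump:main_3}/\eqref{assump:main_3_th2} (this is exactly what Lemma~\ref{lem:beta continuity} records). The nonlinear integrand $\mathcal{N}[\psi_p,\eta_p](\tau)$ carries no such a priori $x$-regularity, and the decomposition $\mathcal{D}_1^++\mathcal{D}_2^+$ does not help: $K$ smooths in $v$, not in $x$.

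The paper sidesteps this by using the \emph{same} iteration/contraction device for $s$ as for $t$. It compares the shifted iterates $\psi_{t,p}^{\tau,(n)}(s):=\psi_{t,p}^{(n)}(s+\tau)$ with $\psi_{t,p}^{(n)}(s)$ on $[0,t-\tau]$ and observes that both satisfy the fixed-point recursion up to (i)~a data term $e^{sB^+}\bigl[e^{\tau B^+}\psi_p(0)-\psi_p(0)\bigr]$, which is $O(\tau)$ by Lemma~\ref{lem:beta continuity} and boundedness of $e^{sB^+}$, and (ii)~an $O(\tau)$ boundary integral. The difference of the nonlinear integrals is then absorbed by the contraction factor $\tfrac12$, giving
\[
\lVert\psi_{t,p}^{\tau,(n+1)}-\psi_{t,p}^{(n+1)}\rVert+\lVert(\eta_{t,p}^{\tau,(n+1)}-\eta_{t,p}^{(n+1)})^{\reverse}\rVert\;\leq\;\tfrac12\bigl(\text{same at level }n\bigr)+C(f^0,g)\tau,
\]
and letting $n\to\infty$ yields Lipschitz $s$-continuity without ever needing $\lVert(e^{hB^+}-I)\mathcal{N}\rVert_{L_\beta^\infty}\to0$. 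You should replace the direct mild-formula argument for $s$ by this contraction-with-error argument.
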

\begin{proof}
We only detail the proof for $(\psi_t(s),\eta_t(s))$ in Theorem \ref{th:global_fixed_point_P}. The proof for $(\psi_t(s),\eta_t(s))$ in Theorem \ref{th:global_fixed_point_E} is essentially the same. In the proof, we will use $C(g)$ as a positive constant depending on $g$, $C(f^0)$ as a positive constant depending on $f^0$, and $C(f^0,g)$ as a positive constant dependent on $f^0$ and $g$.

\underline{Continuity in $s$:} suppose we want to consider the difference between $(\psi_{t,p}(s),\eta_{t,p}(s))$ and $(\psi_{t,p}(s+\tau),\eta_{t,p}(s+\tau))$ for $\tau>0$. We use the notation 
\begin{equation*}
\big(\psi_{t,p}^{\tau}(s),\eta_{t,p}^{\tau}(s)\big):=\big(\psi_{t,p}(s+\tau),\eta_{t,p}(s+\tau)\big),\quad 0\leq s\leq t-\tau.
\end{equation*}
The fixed-point $(\psi_{t,p},\eta_{t,p})$ is derived as the limit of $(\psi_{t,p}^{(n)},\eta_{t,p}^{(n)})_{n\geq 0}$ under the iteration $n\rightarrow +\infty$
\begin{equation}\label{eq:psi_t_n}
\begin{split}
&\psi_{t,p}^{(0)}:=e^{s\rB}(f^0\mathcal{B}^{-1}-\mathcal{G}),\ \eta_{t,p}^{(0)}:=e^{(t-s)\lB}(e^{g(t)}\mathcal{B}-\mathcal{G}),\\
&\psi_{t,p}^{(n+1)}=\rP_t[\psi_{t,p}^{(n)},\eta_{t,p}^{(n)}],\quad\ \eta_{t,p}^{(n+1)}=\lP_t[\psi_{t,p}^{(n)},\eta_{t,p}^{(n)}],\quad n\geq 0.
\end{split}
\end{equation}
It also satisfies the following iteration relation
\begin{equation*}
\begin{split}
&\psi_{t,p}^{(n+1)}(s)=e^{sB^+}(f^0\mathcal{B}^{-1}-\mathcal{G})+\int_0^s e^{(s-u)B^+}\mathcal{N}[\psi_{t,p}^{(n)},\eta_{t,p}^{(n)}]du,\\
&\eta_{t,p}^{(n+1)}(s)=e^{(t-\tau-s)B^-}e^{\tau B^-}(e^{g(t)}\mathcal{B}-\mathcal{G})+\int_s^{t-\tau} e^{(u-s)B^-}\mathcal{N}[\eta_{t,p}^{(n)},\psi_{t,p}^{(n)}]du+\underbrace{\int_{t-\tau}^{t}e^{(u-s)B^-}\mathcal{N}[\eta_{t,p}^{(n)},\psi_{t,p}^{(n)}]du}_{\textup{\blue{$O(\tau)$ Error}}}.
\end{split}
\end{equation*}
The fixed-point $(\psi_{t,p}^{\tau},\eta_{t,p}^{\tau})$ can be derived as the limit of $(\psi_{t,p}^{\tau,(n)},\eta_{t,p}^{\tau,(n)})_{n\geq 0}$ 
\begin{equation*}
\big(\psi_{t,p}^{\tau,(n)}(s),\eta_{t,p}^{\tau,(n)}(s)\big):=\big(\psi_{t,p}^{(n)}(s+\tau),\eta_{t,p}^{(n)}(s+\tau)\big),\quad 0\leq s\leq t-\tau.
\end{equation*}
It satisfies the iteration relation
\begin{equation*}
\begin{split}
&\psi_{t,p}^{\tau,(n+1)}(s)=e^{sB^+}e^{\tau B^+}(f^0\mathcal{B}^{-1}-\mathcal{G})+\int_0^s e^{(s-u)B^+}\mathcal{N}[\psi_{t,p}^{\tau,(n)},\eta_{t,p}^{\tau,(n)}]du+\underbrace{\int_0^\tau e^{(s-u)B^+}\mathcal{N}[\psi_{t,p}^{(n)},\eta_{t,p}^{(n)}]du}_{\textup{\blue{$O(\tau)$ Error}}},\\
&\eta_{t,p}^{\tau,(n+1)}(s)=e^{(t-\tau-s)B^-}(e^{g(t)}\mathcal{B}-\mathcal{G})+\int_s^{t-\tau} e^{(u-s)B^-}\mathcal{N}[\eta_{t-\tau,p}^{*,(n)},\psi_{t-\tau,p}^{*,(n)}]du.
\end{split}
\end{equation*}
For now we use $P_{\beta}^{\sigma}$ as the norm on the time interval $[0,t-\tau]$. For the forward component, there is 
\begin{equation}\label{eq:intermediate_contraction_+}
\begin{split}
&\lVert\psi_{t,p}^{\tau,(n+1)}-\psi_{t,p}^{(n+1)}\rVert_{P_{\beta}^{\sigma}}\\
\leq &\Big\lVert\int_{0}^se^{(s-u)\rB}\mathcal{N}[\psi_{t,p}^{\tau,(n)},\eta_{t,p}^{\tau,(n)}]du-\int_{0}^se^{(s-u)\rB}\mathcal{N}[\psi_{t,p}^{(n)},\eta_{t,p}^{(n)}]du\Big\rVert_{P_{\beta}^{\sigma}}+C(f^0)(1+t)^{\sigma}\tau\\
\leq &\frac{1}{4}\Big(\lVert\psi_{t,p}^{(n)}-\psi_{t,p}^{\tau,(n)}\rVert_{P_{\beta}^{\sigma}}+\lVert(\eta_{t,p}^{(n)}-\eta_{t,p}^{\tau,(n)})^{\reverse}\rVert_{P_{\beta}^{\sigma}}\Big)+C(f^0)(1+t)^{\sigma}\tau.
\end{split}
\end{equation}
Here the term $C(f^0)(1+t)^{\sigma}\tau$ is due to assumption \eqref{assump:main_1} and Lemma \ref{lem:beta continuity}
\begin{equation}\label{eq:7.22}
\begin{split}
\lVert e^{sB^+}e^{\tau B^+}(f^0\mathcal{B}^{-1}-\mathcal{G})-e^{sB^+}(f^0\mathcal{B}^{-1}-\mathcal{G})\rVert_{L_{\beta}^{\infty}}&\leq C\lVert e^{\tau B^+}(f^0\mathcal{B}^{-1}-\mathcal{G})-(f^0\mathcal{B}^{-1}-\mathcal{G})\rVert_{L_{\beta}^{\infty}}\\
&\leq C(f^0)(1+t)^{\sigma}\tau.
\end{split}
\end{equation}
Similarly for the backward component, we get
\begin{equation}\label{eq:intermediate_contraction_-}
\Big\lVert(\eta_{t,p}^{\tau,(n+1)}-\eta_{t,p}^{(n+1)})^{\reverse}\Big\rVert_{P_{\beta}^{\sigma}}\leq \frac{1}{4}\Big(\lVert\psi_{t,p}^{(n)}-\psi_{t,p}^{\tau,(n)}\rVert_{P_{\beta}^{\sigma}}+\lVert(\eta_{t,p}^{(n)}-\eta_{t,p}^{\tau,(n)})^{\reverse}\rVert_{P_{\beta}^{\sigma}}\Big)+C(g)(1+t)^{\sigma}\tau.
\end{equation}
Here the term $C(g)(1+t)^{\sigma}\tau$ is due to assumption \eqref{assump:main_3} and Lemma \ref{lem:beta continuity}, in a way similar to \eqref{eq:7.22}.

Equations \eqref{eq:intermediate_contraction_+} and \eqref{eq:intermediate_contraction_-} together imply the contraction relation with a $C(f^0,g)(1+t)^{\sigma}\tau$ error
\begin{equation*}
\begin{split}
&\lVert\psi_{t,p}^{(n+1)}-\psi_{t,p}^{\tau,(n+1)}\rVert_{P_{\beta}^{\sigma}}+\lVert\big(\eta_{t,p}^{(n+1)}-\eta_{t,p}^{\tau,(n+1)}\big)^{\reverse}\rVert_{P_{\beta}^{\sigma}}\\
\leq& \frac{1}{2}\Big(\lVert\psi_{t,p}^{(n)}-\psi_{t,p}^{\tau,(n)}\rVert_{P_{\beta}^{\sigma}}+\lVert(\eta_{t,p}^{(n)}-\eta_{t,p}^{\tau,(n)})^{\reverse}\rVert_{P_{\beta}^{\sigma}}\Big)+C(f^0,g)(1+t)^{\sigma}\tau.
\end{split}
\end{equation*}
Taking $n\rightarrow +\infty$, we have shown $(\psi_t(s),\eta_t(s))$ is right continuous in $s$
\begin{equation*}
\lVert\psi_{t,p}-\psi_{t,p}^{\tau}\rVert_{P_{\beta}^{\sigma}}+\lVert\big(\eta_{t,p}-\eta_{t,p}^{\tau}\big)^{\reverse}\rVert_{P_{\beta}^{\sigma}}\leq C(f^0,g)(1+t)^{\sigma}\tau\rightarrow 0,\quad \tau\rightarrow 0.
\end{equation*}
The same analysis also holds true if we take $\tau<0$, which proves $(\psi_t(s),\eta_t(s))$ is left continuous in $s$. This concludes the proof of the continuity in $s$.

\underline{Continuity in $t$:} to prove the lemma, we first notice
\begin{equation*}
\delta_{\tau}\psi_t(s)=\delta_{\tau}\psi_{t,p}(s)=\psi_{t+\tau,p}(s)-\psi_{t,p}(s),\quad  \delta_{\tau}\eta_t(s)=\delta_{\tau}\eta_{t,p}(s)=\eta_{t+\tau,p}(s)-\eta_{t,p}(s).
\end{equation*}
Here the $(\psi_{t,p},\eta_{t,p})$ is the fixed point of the map \eqref{eq:fixed_point_problem} with terminal time $t$. Recalling Theorem \ref{th:global_fixed_point_P}, we assume $\phi\equiv 0$
\begin{equation}\label{eq:Gamma_t}
\left\{
\begin{split}
&\rP_t[\psi_p,\eta_p](s):=e^{s\rB}(f^0\mathcal{B}^{-1}-\mathcal{G})+\int_{0}^se^{(s-u)\rB}\mathcal{N}[\psi_p,\eta_p]du,\\
&\lP_t[\psi_p,\eta_p](s):=e^{(t-s)\lB}(e^{g(t)}\mathcal{B}-\mathcal{G})+\int_{s}^t e^{(u-s)\lB}\mathcal{N}[\eta_p,\psi_p]du,
\end{split}
\right.
\end{equation}
while $(\psi_{t+\tau,p},\eta_{t+\tau,p})$ is the fixed point of the map \eqref{eq:fixed_point_problem} with terminal time $t+\tau$
\begin{equation*}
\left\{
\begin{split}
&\rP_{t+\tau}[\psi_p,\eta_p](s):=e^{s\rB}(f^0\mathcal{B}^{-1}-\mathcal{G})+\int_{0}^se^{(s-u)\rB}\mathcal{N}[\psi_p,\eta_p]du,\\
&\lP_{t+\tau}[\psi_p,\eta_p](s):=e^{(t+\tau-s)\lB}(e^{g(t+\tau)}\mathcal{B}-\mathcal{G})+\int_{s}^{t+\tau} e^{(u-s)\lB}\mathcal{N}[\eta_p,\psi_p]du.
\end{split}
\right.
\end{equation*}
The fixed-point $(\psi_{t,p},\eta_{t,p})$ is derived as the limit of $(\psi_{t,p}^{(n)},\eta_{t,p}^{(n)})_{n\geq 0}$ under the iteration $n\rightarrow +\infty$
\begin{equation}
\begin{split}
&\psi_{t,p}^{(0)}:=e^{s\rB}(f^0\mathcal{B}^{-1}-\mathcal{G}),\ \eta_{t,p}^{(0)}:=e^{(t-s)\lB}(e^{g(t)}\mathcal{B}-\mathcal{G}),\\
&\psi_{t,p}^{(n+1)}=\rP_t[\psi_{t,p}^{(n)},\eta_{t,p}^{(n)}],\quad\ \eta_{t,p}^{(n+1)}=\lP_t[\psi_{t,p}^{(n)},\eta_{t,p}^{(n)}],\quad n\geq 0.
\end{split}
\end{equation}
The fixed-point $(\psi_{t+\tau,p},\eta_{t+\tau,p})$ is derived as the limit of $(\psi_{t+\tau,p}^{(n)},\eta_{t+\tau,p}^{(n)})$ under the iteration $n\rightarrow +\infty$
\begin{equation}\label{eq:iteration_relation_t+tau}
\begin{split}
&\psi_{t+\tau,p}^{(0)}:=e^{s\rB}(f^0\mathcal{B}^{-1}-\mathcal{G}),\ \eta_{t+\tau,p}^{(0)}:=e^{(t+\tau-s)\lB}(e^{g(t+\tau)}\mathcal{B}-\mathcal{G}),\\
&\psi_{t+\tau,p}^{(n+1)}=\rP_{t+\tau}[\psi_{{t+\tau},p}^{(n)},\eta_{t+\tau,p}^{(n)}],\quad\ \eta_{t+\tau,p}^{(n+1)}=\lP_{t+\tau}[\psi_{t+\tau,p}^{(n)},\eta_{t+\tau,p}^{(n)}],\quad n\geq 0.
\end{split}
\end{equation}
Regarding the involved functions and the norm $P_{\beta}^{\sigma}$ as defined on the time interval $[0,t]$, we want to prove
\begin{equation}\label{eq:terminal_contraction}
\lVert\psi_{t,p}^{(n+1)}-\psi_{t+\tau,p}^{(n+1)}\rVert_{P_{\beta}^{\sigma}}+\lVert\big(\eta_{t,p}^{(n+1)}-\eta_{t+\tau,p}^{(n+1)}\big)^{\reverse}\rVert_{P_{\beta}^{\sigma}}\leq O(\tau)+\frac{1}{2}\Big(\lVert\psi_{t,p}^{(n)}-\psi_{t+\tau,p}^{(n)}\rVert_{P_{\beta}^{\sigma}}+\lVert(\eta_{t,p}^{(n)}-\eta_{t+\tau,p}^{(n)})^{\reverse}\rVert_{P_{\beta}^{\sigma}}\Big).
\end{equation}
Once \eqref{eq:terminal_contraction} is proved, we will have
\begin{equation}
\lVert\psi_{t,p}-\psi_{t+\tau,p}\rVert_{P_{\beta}^{\sigma}}+\lVert\big(\eta_{t,p}-\eta_{t+\tau,p}\big)^{\reverse}\rVert_{P_{\beta}^{\sigma}}\leq O(\tau).
\end{equation}
Now we prove \eqref{eq:terminal_contraction}. For the forward component, according to \eqref{eq:iteration_relation_t+tau} there is
\begin{equation}
\begin{split}
\lVert\psi_{t+\tau,p}^{(n+1)}-\psi_{t,p}^{(n+1)}\rVert_{P_{\beta}^{\sigma}}&=\Big\lVert\int_{0}^se^{(s-u)\rB}\mathcal{N}[\psi_{t+\tau,p}^{(n)},\eta_{t+\tau,p}^{(n)}]du-\int_{0}^se^{(s-u)\rB}\mathcal{N}[\psi_{t,p}^{(n)},\eta_{t,p}^{(n)}]du\Big\rVert_{P_{\beta}^{\sigma}}\\
&\leq \frac{1}{2}\Big(\lVert\psi_{t,p}^{(n)}-\psi_{t+\tau,p}^{(n)}\rVert_{P_{\beta}^{\sigma}}+\lVert(\eta_{t,p}^{(n)}-\eta_{t+\tau,p}^{(n)})^{\reverse}\rVert_{P_{\beta}^{\sigma}}\Big)
\end{split}
\end{equation}
For the backward component, we have
\begin{equation}
\begin{split}
\eta_{t+\tau,p}^{(n+1)}(s)-\eta_{t,p}^{(n+1)}(s)=&e^{(t+\tau-s)\lB}(e^{g(t+\tau)}\mathcal{B}-\mathcal{G})-e^{(t-s)\lB}(e^{g(t)}\mathcal{B}-\mathcal{G})\\
&+\int_{s}^{t+\tau}e^{(u-s)\lB}\mathcal{N}[\eta_{t+\tau,p}^{(n)},\psi_{t+\tau,p}^{(n)}]du-\int_{s}^te^{(u-s)\lB}\mathcal{N}[\eta_{t,p}^{(n)},\psi_{t,p}^{(n)}]du.
\end{split}
\end{equation}
To control the difference between these different terminal data, we decompose it as
\begin{equation*}
\begin{split}
&\Big\lVert e^{(t+\tau-s)\lB}(e^{g(t+\tau)}\mathcal{B}-\mathcal{G})-e^{(t-s)\lB}(e^{g(t)}\mathcal{B}-\mathcal{G})\Big\rVert_{L_{\beta}^{\infty}}\\
\leq &\Big\lVert\Big(e^{(t+\tau-s)\lB}(e^{g(t+\tau)}\mathcal{B}-\mathcal{G})-e^{(t+\tau-s)\lB}(e^{g(t)}\mathcal{B}-\mathcal{G})\Big)\Big\rVert_{L_{\beta}^{\infty}}\quad \textup{\blue{(I)}}\\
&+\Big\lVert\Big(e^{(t+\tau-s)\lB}(e^{g(t)}\mathcal{B}-\mathcal{G})-e^{(t-s)\lB}(e^{g(t)}\mathcal{B}-\mathcal{G})\Big)\Big\rVert_{L_{\beta}^{\infty}}\quad \ \ \textup{\blue{(II)}}
\end{split}
\end{equation*}
The term (I) is controlled using the boundedness of $e^{(t+\tau-s)B^-}$ in $L_{\beta}^{\infty}$ (see Lemma \ref{lem:L_beta_decay}) and assumption \eqref{assump:main_3}
\begin{equation*}
\textup{(I)}\leq C\lVert(e^{g(t+\tau)}\mathcal{B}-\mathcal{G})-(e^{g(t)}\mathcal{B}-\mathcal{G})\rVert_{L_{\beta}^{\infty}}\leq C(g)\tau.
\end{equation*} 

According to Lemma \ref{lem:beta continuity} and the boundedness of $e^{(t-s)B^-}$ in $L_{\beta}^{\infty}$, the term (II) is controlled as
\begin{equation*}
\begin{split}
\textup{(II)}&=\Big\lVert e^{(t-s)\lB}\Big(e^{\tau B^-}(e^{g(t)}\mathcal{B}-\mathcal{G})-(e^{g(t)}\mathcal{B}-\mathcal{G})\Big)\Big\rVert_{L_{\beta}^{\infty}}\\
&\leq C\lVert e^{\tau B^-}(e^{g(t)}\mathcal{B}-\mathcal{G})-(e^{g(t)}\mathcal{B}-\mathcal{G})\rVert_{L_{\beta}^{\infty}}\leq C(g)\tau.
\end{split}
\end{equation*}
These imply
\begin{equation}\label{eq:backward_terminal_continuity}
\begin{split}
\Big\lVert(\eta_{t+\tau,p}^{(n+1)}-\eta_{t,p}^{(n+1)})^{\reverse}\Big\rVert_{P_{\beta}^{\sigma}}&\leq \Big\lVert\Big(\lP_t[\psi_{t+\tau,p}^{(n)},\eta_{t+\tau,p}^{(n)}]-\lP_t[\psi_{t,p}^{(n)},\eta_{t,p}^{(n)}]\Big)^{\reverse}\Big\rVert_{P_{\beta}^{\sigma}}+C(g)(1+t)^{\sigma}\tau,\\
&\leq C(g)(1+t)^{\sigma}\tau+\frac{1}{2}\Big(\lVert\psi_{t,p}^{(n)}-\psi_{t+\tau,p}^{(n)}\rVert_{P_{\beta}^{\sigma}}+\lVert(\eta_{t,p}^{(n)}-\eta_{t+\tau,p}^{(n)})^{\reverse}\rVert_{P_{\beta}^{\sigma}}\Big).
\end{split}
\end{equation}
Now equation \eqref{eq:terminal_contraction} is proved, which concludes the proof of the continuity in $t$.
\end{proof}

We have performed two integrations by parts in the formal proof, which are \eqref{eq:integration_by_parts} and \eqref{eq:integration_by_parts_2}. Here we will justify the first one, while the second one can be justified in the same way. 
\begin{lemma}\label{lem:integration_by_parts}
\textup{\textbf{[Integration by Parts]}} Consider the solution $(\psi_t,\eta_t)$ derived in Theorem \ref{th:global_fixed_point_P} or Theorem \ref{th:global_fixed_point_E} of the coupled Boltzmann equations \eqref{eq:transformed_evolution}. For arbitrary $t>0$ and $\tau>0$, we have
\begin{equation}
\dl D_s\delta_{\tau}\eta_t,\psi_t\dr=-\dl \delta_{\tau}\eta_t,D_s\psi_t\dr+\langle \delta_{\tau}\eta_t(t),\psi_t(t)\rangle-\langle \delta_{\tau}\eta_t(0),\psi_t(0)\rangle.
\end{equation}
\end{lemma}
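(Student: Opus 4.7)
The strategy is to transform the convective derivative $D_s = \partial_s + v \cdot \nabla_x$ into an ordinary time derivative by conjugating with the free transport semigroup, and then perform the usual integration by parts for absolutely continuous functions of one variable. Concretely, set
\begin{equation*}
\widetilde{\psi}_t(s) := S_{-s}\psi_t(s), \qquad \widetilde{\eta}_t(s) := S_{-s}\eta_t(s), \qquad \widetilde{\delta_\tau \eta}_t(s) := S_{-s}\delta_\tau \eta_t(s).
\end{equation*}
Substituting the mild formulation \eqref{eq:mild_solution_Boltzmann} (applied to both terminal times $t$ and $t+\tau$ for the backward equation) into these expressions, the transport semigroups collapse because $S_{-s}S_{s-u} = S_{-u}$, and one is left with representations of the form
\begin{equation*}
\widetilde{\psi}_t(s) = \psi_t(0) + \int_0^s S_{-u}\bigl(-\psi_t(u)\phi(u)+\mathcal{Q}_{\eta_t(u)}(\psi_t(u),\psi_t(u))\bigr)\,du,
\end{equation*}
and analogously for $\widetilde{\delta_\tau \eta}_t(s)$. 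Combined with the continuity in $s$ proved in Lemma \ref{lem:order_of_variation_2} and the nonlinear estimates of Section \ref{sec:nonliearity}, this shows that $\widetilde{\psi}_t$ and $\widetilde{\delta_\tau \eta}_t$ are absolutely continuous maps $[0,t] \to L^\infty_\beta$ whose distributional time derivatives are precisely $S_{-s}D_s \psi_t(s)$ and $S_{-s}D_s\delta_\tau\eta_t(s)$ in the sense of \eqref{eq:definition_D_s}--\eqref{eq:D_s_as_limit}.

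Next, the key algebraic observation is that the shift $S_\tau$ is a unitary operator on $L^2(\mathds{T}^d \times \mathds{R}^d)$ since the $x$-translation is measure preserving on the torus and acts trivially on $v$; in particular $\langle f, g\rangle = \langle S_{-s}f, S_{-s}g\rangle$ for every $s$. I would apply this identity pointwise in $s$ to rewrite
\begin{equation*}
\dl D_s \delta_\tau \eta_t, \psi_t \dr = \int_0^t \langle S_{-s}D_s\delta_\tau \eta_t(s), \widetilde{\psi}_t(s)\rangle\,ds = \int_0^t \langle \partial_s \widetilde{\delta_\tau \eta}_t(s), \widetilde{\psi}_t(s)\rangle\,ds.
\end{equation*}

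At this point the problem has been reduced to the classical integration by parts formula for absolutely continuous $L^2$-valued functions on $[0,t]$: since both $\widetilde{\delta_\tau\eta}_t$ and $\widetilde{\psi}_t$ are absolutely continuous with derivatives in $L^1_s(L^\infty_\beta)$ (by the estimates of Lemmas \ref{lem:multiplicative_nonlinearity} and \ref{lem:(1,1,1)-estimate}, the integrand $\psi_t \phi$ is bounded and $\mathcal{Q}$ contributes a factor $(1+|v|)$ which is absorbed by the polynomial weight), we have
\begin{equation*}
\int_0^t \langle \partial_s \widetilde{\delta_\tau \eta}_t(s), \widetilde{\psi}_t(s)\rangle\,ds = -\int_0^t \langle \widetilde{\delta_\tau \eta}_t(s), \partial_s\widetilde{\psi}_t(s)\rangle\,ds + \langle \widetilde{\delta_\tau \eta}_t(t), \widetilde{\psi}_t(t)\rangle - \langle \widetilde{\delta_\tau \eta}_t(0), \widetilde{\psi}_t(0)\rangle.
\end{equation*}
Undoing the transport via $\partial_s \widetilde{\psi}_t(s) = S_{-s}D_s\psi_t(s)$ and the shift-invariance of the inner product yields exactly the claimed identity, with the transport semigroups at the boundaries disappearing because $S_0 = \mathrm{Id}$ and the endpoint pairing at $s=t$ is invariant under the common shift $S_{-t}$.

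The main obstacle will be the rigorous justification of the absolute continuity of $\widetilde{\psi}_t, \widetilde{\delta_\tau\eta}_t$ in an ambient space compatible with the pairing: the mild formulation produces derivatives that live in a space with one fewer polynomial weight than $L^\infty_\beta$ (because the nonlinearity loses one factor of $(1+|v|)$, as reflected in Lemma \ref{lem:multiplicative_nonlinearity}). The fix is to pair with $\psi_t \in L^\infty_\beta$ against $\partial_s \widetilde{\delta_\tau \eta}_t$ which lies in $L^\infty_{\beta-1}$; since $\psi_t$ has rapid decay contributed by $\mathcal{G}$ plus a small perturbation in $L^\infty_\beta$, and $\beta > 4$, the pairing $\langle \cdot, \cdot\rangle$ is absolutely convergent in $L^2$, and Fubini's theorem lets us interchange the time integration with the integration by parts without loss. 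Once these regularity points are pinned down, the identity is a direct consequence of the classical Newton--Leibniz formula.
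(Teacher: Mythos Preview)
Your proposal is correct and takes a genuinely different route from the paper's own proof.

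The paper works directly with the difference-quotient definition \eqref{eq:definition_D_s} of $D_s$: it writes $\dl D_s\delta_\tau\eta_t,\psi_t\dr$ as a limit of two integrals, one involving $\frac{\delta_\tau\eta_t(s+\eps)-\delta_\tau\eta_t(s)}{\eps}$ and one involving $\frac{S_\eps\delta_\tau\eta_t(s)-\delta_\tau\eta_t(s)}{\eps}$. For the first it performs a discrete shift $s\mapsto s-\eps$ in the time integral (producing the boundary terms), and for the second it uses the adjoint relation $\langle S_\eps f,g\rangle=\langle f,S_{-\eps}g\rangle$. The limit $\eps\to 0$ is then justified pointwise by the $L_\beta^\infty$-continuity of Lemma \ref{lem:order_of_variation_2}. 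Your approach instead absorbs the transport once and for all by passing to the transported variables $\widetilde\psi_t=S_{-s}\psi_t$, $\widetilde{\delta_\tau\eta}_t=S_{-s}\delta_\tau\eta_t$, observes from the mild formulation that these are absolutely continuous $[0,t]\to L_{\beta-1}^\infty$ with derivatives $S_{-s}D_s(\cdot)$, and then invokes the standard Newton--Leibniz formula for the bilinear pairing. The unitarity of $S_s$ on $L^2$ converts the result back. Your method is more structural and makes transparent why the formula holds (it is literally the product rule after a change of frame), at the price of appealing to an abstract absolute-continuity statement; the paper's argument is more hands-on and self-contained but somewhat obscures the mechanism. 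Both rely on the same ingredients: the mild formulation, the continuity Lemma \ref{lem:order_of_variation_2}, and the measure-preserving nature of the transport.
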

\begin{proof}
Since the operator $D_s$ is rigorously defined as the limit in \eqref{eq:definition_D_s} whose convergence is uniform in $s$, we have
\begin{equation}\label{eq:D_s_justification_1}
\begin{split}
&\int_0^t\langle D_s\delta_{\tau}\eta_t(s),\psi_t(s)\rangle ds\\
=&\lim_{\eps\rightarrow 0}\Big(\int_0^t \Big\langle \frac{\delta_{\tau}\eta_t(s+\eps)-\delta_{\tau}\eta_t(s)}{\eps},\psi_t(s)\Big\rangle ds-\int_0^t \Big\langle \frac{S_{\eps}\delta_{\tau}\eta_t(s)-\delta_{\tau}\eta_t(s)}{\eps},\psi_t(s)\Big\rangle ds\Big).
\end{split}
\end{equation}
For each $\eps>0$, the first term in the RHS of \eqref{eq:D_s_justification_1} equals to
\begin{equation*}
\int_0^t \Big\langle\delta_{\tau}\eta_t(s), \frac{\psi_t(s-\eps)-\psi_t(s)}{\eps}\Big\rangle ds+\int_t^{t+\eps} \Big\langle\delta_{\tau}\eta_t(s), \frac{\psi_t(s-\eps)}{\eps}\Big\rangle ds-\int_0^{\eps} \Big\langle\delta_{\tau}\eta_t(s), \frac{\psi_t(s-\eps)}{\eps}\Big\rangle ds.
\end{equation*}
The second term in the RHS of \eqref{eq:D_s_justification_1} equals to
\begin{equation*}
\int_0^t \Big\langle \frac{S_{\eps}\delta_{\tau}\eta_t(s)-\delta_{\tau}\eta_t(s)}{\eps},\psi_t(s)\Big\rangle ds=\int_0^t \Big\langle \delta_{\tau}\eta_t(s),\frac{S_{-\eps}\psi_t(s)-\psi_t(s)}{\eps}\Big\rangle ds.
\end{equation*}
These equations together imply
\begin{equation*}
\begin{split}
&\int_0^t\langle D_s\delta_{\tau}\eta_t(s),\psi_t(s)\rangle ds\\
=&\lim_{\eps\rightarrow 0}\Big[\int_0^t \Big\langle\delta_{\tau}\eta_t(s), \frac{\psi_t(s-\eps)-\psi_t(s)}{\eps}-\frac{S_{-\eps}\psi_t(s)-\psi_t(s)}{\eps}\Big\rangle ds\\
&\qquad\qquad\qquad +\int_t^{t+\eps} \Big\langle\delta_{\tau}\eta_t(s), \frac{\psi_t(s-\eps)}{\eps}\Big\rangle ds-\int_0^{\eps} \Big\langle\delta_{\tau}\eta_t(s), \frac{\psi_t(s-\eps)}{\eps}\Big\rangle ds\Big],
\end{split}
\end{equation*}
Each component of $(\psi_t(s),\eta_t(s))$ is continuous in $0\leq s\leq t$ under the $L_{\beta}^{\infty}$ norm. As a consequence the limit exists and implies
\begin{equation*}
\dl D_s\delta_{\tau}\eta_t(s),\psi_t(s)\dr =-\dl \delta_{\tau}\eta_t(s),D_s\psi_t(s)\dr +\langle \delta p_t(t),\varphi_t(t) \rangle-\langle \delta p_t(0),\psi_t(0) \rangle.
\end{equation*}
This concludes the proof of the lemma.
\end{proof}
To conclude this section, we give the postponed proof of a technical lemma used in the proof of Theorem \ref{th:justification_mild}.
\begin{lemma}\label{lem:ipp_cancellation}
Consider the mild solution $(\psi_t(s),\eta_t(s))$ derived in Theorem \ref{th:global_fixed_point_P} or Theorem \ref{th:global_fixed_point_E} of the coupled Boltzmann equations \eqref{eq:transformed_evolution}, the following estimate holds
\begin{equation}
\langle \delta_{\tau}\eta_t(t),\psi_t(t)\rangle+\int_{t}^{t+\tau}\langle D_s\eta_{t+\tau}(s),\psi_{t+\tau}(s)\rangle ds-\int_{t}^{t+\tau}\langle \phi(s),\psi_{t+\tau}(s)\eta_{t+\tau}(s)\rangle ds=o(\tau)
\end{equation}
\end{lemma}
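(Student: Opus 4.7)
My approach is to combine the two integrals using the backward equation, reducing both of them and the boundary term to a common leading-order expression in $\tau$, and then check that they cancel.

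First, since $(\psi_{t+\tau}, \eta_{t+\tau})$ is a mild solution of \eqref{eq:transformed_evolution}, the evolution identity $D_s \eta_{t+\tau}(s) = \eta_{t+\tau}(s)\phi(s) - \mathcal{Q}_{\psi_{t+\tau}(s)}(\eta_{t+\tau}(s), \eta_{t+\tau}(s))$ can be substituted into $\int_t^{t+\tau}\langle D_s\eta_{t+\tau}, \psi_{t+\tau}\rangle\,ds$. The $\phi$-contribution cancels against $\int_t^{t+\tau}\langle \phi, \psi_{t+\tau}\eta_{t+\tau}\rangle\,ds$ (both are equal to $\int_t^{t+\tau}\langle \phi, \psi_{t+\tau}\eta_{t+\tau}\rangle\,ds$), so the two integrals in the statement collapse to
\begin{equation*}
-\int_t^{t+\tau}\langle \mathcal{Q}_{\psi_{t+\tau}(s)}(\eta_{t+\tau}(s),\eta_{t+\tau}(s)),\psi_{t+\tau}(s)\rangle\,ds.
\end{equation*}
By the continuity in $s$ and $t$ of $(\psi_{t'}(s), \eta_{t'}(s))$ in $L_\beta^\infty$ established in Lemma \ref{lem:order_of_variation_2}, combined with the boundedness of the biased collision operator from $L_\beta^\infty$ into $(1+|v|)L_\beta^\infty$ given by Lemma \ref{lem:multiplicative_nonlinearity}, this integral equals
\begin{equation*}
-\tau\,\langle \mathcal{Q}_{\psi_t(t)}(\eta_t(t),\eta_t(t)),\psi_t(t)\rangle + o(\tau).
\end{equation*}

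Second, I would expand the boundary term $\langle \delta_\tau \eta_t(t),\psi_t(t)\rangle$ to first order in $\tau$. Applying the backward mild formulation \eqref{eq:mild_solution_Boltzmann} to $\eta_{t+\tau}$ at time $s=t$ gives
\begin{equation*}
\eta_{t+\tau}(t) = S_{-\tau}\big(e^{g(t+\tau)}\mathcal{B}\big) - \int_t^{t+\tau} S_{-(s-t)}\eta_{t+\tau}(s)\phi(s)\,ds + \int_t^{t+\tau} S_{-(s-t)}\mathcal{Q}_{\psi_{t+\tau}(s)}(\eta_{t+\tau}(s),\eta_{t+\tau}(s))\,ds.
\end{equation*}
A direct Taylor expansion of $g(t+\tau, x+\tau v, v)$ around $(t,x,v)$ together with $D_sg=\phi$ shows that
\begin{equation*}
S_{-\tau}\big(e^{g(t+\tau)}\mathcal{B}\big) = e^{g(t)}\mathcal{B} + \tau\,e^{g(t)}\mathcal{B}\,\phi(t) + o(\tau),
\end{equation*}
where the remainder is $o(\tau)$ in $L_\beta^\infty$ thanks to the regularity of $g$ in assumption \eqref{assump:main_3} (or \eqref{assump:main_3_th2}). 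The $\phi$-integral contributes $-\tau\,\eta_t(t)\phi(t)+o(\tau) = -\tau\,e^{g(t)}\mathcal{B}\,\phi(t)+o(\tau)$ by continuity, which cancels the $\phi$-term from the free-streaming expansion. The collision integral contributes $\tau\,\mathcal{Q}_{\psi_t(t)}(\eta_t(t),\eta_t(t))+o(\tau)$. Subtracting $\eta_t(t)=e^{g(t)}\mathcal{B}$ yields
\begin{equation*}
\delta_\tau \eta_t(t) = \tau\,\mathcal{Q}_{\psi_t(t)}(\eta_t(t),\eta_t(t)) + o(\tau)\quad\text{in }L_\beta^\infty.
\end{equation*}

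Third, pairing this expansion with $\psi_t(t)\in L_\beta^\infty$ (noting the inner product is continuous on $L_\beta^\infty\times L_\beta^\infty$ for $\beta>4$) gives
\begin{equation*}
\langle \delta_\tau\eta_t(t),\psi_t(t)\rangle = \tau\,\langle \mathcal{Q}_{\psi_t(t)}(\eta_t(t),\eta_t(t)),\psi_t(t)\rangle + o(\tau),
\end{equation*}
which is exactly the opposite of the integral identified in the first step. Their sum is therefore $o(\tau)$, as claimed.

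The main technical obstacle is justifying the Taylor expansion of $S_{-\tau}(e^{g(t+\tau)}\mathcal{B})$ in the $L_\beta^\infty$ norm with a genuine $o(\tau)$ remainder; this is where the Lipschitz continuity of $\tau\mapsto S_\tau f$ on the regular data (Lemma \ref{lem:beta continuity}, invoked also in the proof of Lemma \ref{lem:order_of_variation_2}) and the boundedness of the derivatives of $g$ from assumption \eqref{assump:main_3}/\eqref{assump:main_3_th2} enter. Once this expansion is controlled, the rest of the argument is just the continuity-based reduction to the common expression and algebraic cancellation.
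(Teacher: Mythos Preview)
Your proposal is correct and follows essentially the same approach as the paper: collapse the two integrals via the backward equation to $-\int_t^{t+\tau}\langle\mathcal{Q}_{\psi_{t+\tau}}(\eta_{t+\tau},\eta_{t+\tau}),\psi_{t+\tau}\rangle\,ds$, expand $\delta_\tau\eta_t(t)$ using the mild formulation of $\eta_{t+\tau}$ at $s=t$, and observe that both contributions reduce to $\pm\tau\langle\mathcal{Q}_{\psi_t(t)}(\eta_t(t),\eta_t(t)),\psi_t(t)\rangle$ and cancel. The paper organizes the expansion of $\delta_\tau\eta_t(t)$ slightly differently, writing it as $(\eta_{t+\tau}(t)-\eta_{t+\tau}(t+\tau))+(e^{g(t+\tau)}\mathcal{B}-e^{g(t)}\mathcal{B})$ and treating the transport and time pieces separately, but the computation is the same as your direct Taylor expansion of $S_{-\tau}(e^{g(t+\tau)}\mathcal{B})$.

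One small point: you claim the expansion $S_{-\tau}(e^{g(t+\tau)}\mathcal{B})=e^{g(t)}\mathcal{B}+\tau\,e^{g(t)}\mathcal{B}\,\phi(t)+o(\tau)$ holds in $L_\beta^\infty$. Under \eqref{assump:main_3} this is trivially exact since $\phi\equiv 0$ forces $g(t+\tau,x+\tau v,v)=g(t,x,v)$; under \eqref{assump:main_3_th2}, however, the remainder involves $\int_0^\tau[\phi(t+u,x+uv,v)-\phi(t,x,v)]\,du$, and uniform $o(\tau)$ control in $(x,v)$ requires more spatial regularity on $\phi$ than is explicitly assumed. The paper sidesteps this by taking the limit in $L_{x,v}^2$ (where the transport semigroup is strongly continuous), which suffices because $\psi_t(t)\in L_\beta^\infty\subset L^2$ and only the inner product is needed. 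Your argument goes through with this adjustment.
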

\begin{proof}
By the equivalent expression \eqref{eq:D_s_as_limit} of $D_s\eta$, we have
\begin{equation*}
\int_{t}^{t+\tau}\langle D_s\eta_{t+\tau}(s),\psi_{t+\tau}(s)\rangle ds-\int_{t}^{t+\tau}\langle \phi(s),\psi_{t+\tau}(s)\eta_{t+\tau}(s)\rangle ds   =-\int_{t}^{t+\tau}\langle \mathcal{Q}_{\psi_{t+\tau}}(\eta_{t+\tau},\eta_{t+\tau}),\psi_{t+\tau} \rangle ds.
\end{equation*}
Using the continuity (Lemma \ref{lem:order_of_variation_2}) of $(\psi_t(s),\eta_t(s))$ under the $L_{\beta}^{\infty}$ norm, we further have
\begin{equation}\label{eq:tau_diff_1}
\lim_{\tau\rightarrow 0}\tau^{-1}\Big(-\int_{t}^{t+\tau}\langle \mathcal{Q}_{\psi_{t+\tau}}(\eta_{t+\tau},\eta_{t+\tau}),\psi_{t+\tau} \rangle ds\Big)=-\langle \mathcal{Q}_{\psi_{t}(t)}(\eta_{t}(t),\eta_{t}(t)),\psi_{t}(t) \rangle.
\end{equation}

The variation $\delta_{\tau}\eta_t(t)$ is decomposed as
\begin{equation}\label{eq:delta_eta_decomposition}
\begin{split}
\delta_{\tau}\eta_{t}(t)&=\big(\eta_{t+\tau}(t)-\eta_{t+\tau}(t+\tau)\big)+\big(\eta_{t+\tau}(t+\tau)-\eta_{t}(t)\big)\\
&=\big(\eta_{t+\tau}(t)-\eta_{t+\tau}(t+\tau)\big)+\big(e^{g(t+\tau)}\mathcal{B}-e^{g(t)}\mathcal{B}\big).
\end{split}
\end{equation}
Using the mild formulation of the coupled Boltzmann equations, there is
\begin{equation*}
\begin{split}
\eta_{t+\tau}(t)-\eta_{t+\tau}(t+\tau)=&S_{-\tau}e^{g(t+\tau)}\mathcal{B}-e^{g(t+\tau)}\mathcal{B}-\int_t^{t+\tau}S_{-(s-t)}\eta_{t+\tau}\phi ds\\
&+\int_{t}^{t+\tau} S_{-(s-t)}\mathcal{Q}_{\psi_{t+\tau}}(\eta_{t+\tau},\eta_{t+\tau}) ds.
\end{split}
\end{equation*}
Based on the continuity of $(\psi_t(s),\eta_t(s))$ under the $L_{\beta}^{\infty}$ norm (Lemma \ref{lem:order_of_variation_2}), the continuity of $\phi$ under the $L_{x,v}^{\infty}$ norm (assumption \eqref{assump:main_3} or \eqref{assump:main_3_th2}), as well as the continuity of the transport semigroup in $L_{x,v}^2$, we obtain the following convergence in $L_{x,v}^2$
\begin{equation}
\begin{split}
&\lim_{\tau\rightarrow +\infty}\frac{\eta_{t+\tau}(t+\tau)-\eta_{t+\tau}(\tau)}{\tau}\\
&\qquad\quad =\lim_{\tau\rightarrow +\infty}\Big(v\cdot\nabla_xe^{g(t+\tau)}\mathcal{B}-\tau^{-1}\int_t^{t+\tau}\eta_{t+\tau}\phi ds+\tau^{-1}\int_{t}^{t+\tau} \mathcal{Q}_{\psi_{t+\tau}}(\eta_{t+\tau},\eta_{t+\tau}) ds\Big)\\
&\qquad\quad=v\cdot\nabla_xe^{g(t+\tau)}\mathcal{B}-\eta_{t}(t)\phi(t)+\mathcal{Q}_{\psi_{t}(t)}(\eta_{t}(t),\eta_{t}(t)).
\end{split}
\end{equation}
Combine the equation above with \eqref{eq:delta_eta_decomposition}, we have
\begin{equation}\label{eq:tau_diff_2}
\lim_{\tau\rightarrow 0}\frac{\delta_{\tau}\eta_t(t)}{\tau}=D_se^{g(t)}\mathcal{B}-D_sg(t)e^{g(t)}\mathcal{B}+\mathcal{Q}_{\psi_{t}(t)}(\eta_{t}(t),\eta_{t}(t))=\mathcal{Q}_{\psi_{t}(t)}(\eta_{t}(t),\eta_{t}(t)).
\end{equation}
Equations \eqref{eq:tau_diff_1} and \eqref{eq:tau_diff_2} together conclude the proof of the lemma.
\end{proof}

\section{Uniform Boundedness, Long-time Behaviour, and Stationary Solutions}\label{sec:construct_solution}
In Theorem \ref{th:justification_mild} we have constructed a solution $\widehat{\mathcal{I}}(t,g)$ of the Hamilton-Jacobi equation. In this section we will give a description of its various properties.  Recall that $\langle \cdot,\cdot\rangle$ refers to the inner product in $L^2(\mathds{T}_x^d\times \mathds{R}_v^d)$, and $\dl\cdot,\cdot\dr$ refers to the inner product in $L^2([0,t]\times \mathds{T}_x^d\times\mathds{R}_v^d)$, with given terminal time $t$.  

According to Theorem \ref{th:justification_mild}, the functional $\widehat{\mathcal{I}}(t,g)$ defined as
\begin{equation*}
\widehat{\mathcal{I}}(t,g):=\langle f^0,\eta_t(0)-1 \rangle+\dl D_s\eta_t(s),\psi_t(s)\dr-\dl\eta_t(s),\psi_t(s)\phi(s)\dr+\int_0^t\mathcal{H}'\Big(\psi_t(s),\eta_t(s)\Big)ds.
\end{equation*}
is a mild solution of the Hamilton-Jacobi equation. The Hamiltonian $\mathcal{H}'$ is defined as
\begin{equation*}
\mathcal{H}'(\psi,\eta):=\frac{1}{2}\int \eta(v)\eta(v_*)\Big(\psi(v')\psi(v_*')-\psi(v)\psi(v_*)\Big)\big((v_*-v)\cdot\omega\big)_+d\omega dv_*dvdx,
\end{equation*}
Here $\big(\psi_t(s),\eta_t(s)\big)$ is the mild solution of the coupled Boltzmann equations \eqref{eq:transformed_evolution} during the time interval $[0,t]$. 
\subsection{Functional Solution for Theorem \ref{th:main_theorem_1}}
\begin{theorem}\label{th:global_HJ_solution_sigma1}
Under the assumptions \eqref{assump:main_1}-\eqref{assump:main_3}, the functional solution $\widehat{\mathcal{I}}(t,g)$ constructed in Theorem \ref{th:justification_mild} of the Hamilton-Jacobi equation is uniformly bounded for arbitrary time $t\geq 0$ and function $g$ satisfying the assumptions \eqref{assump:main_2}-\eqref{assump:main_3}.
\end{theorem}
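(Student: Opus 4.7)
The plan is to reduce $\widehat{\mathcal{I}}(t,g)$ to a single manageable integral by exploiting the algebraic identity
\begin{equation*}
\langle \mathcal{Q}_{\psi_t}(\eta_t, \eta_t), \psi_t\rangle = 2\mathcal{H}'(\psi_t, \eta_t),
\end{equation*}
which follows directly from the definitions \eqref{eq:biased_collision_operator} and \eqref{eq:new_hamiltonian} after noticing $\mathcal{Q}_\psi(\eta,\eta)(v) = \int ((v_*-v)\cdot\omega)_+\psi(v_*)[\eta(v')\eta(v_*')-\eta(v)\eta(v_*)]\,d\omega dv_*$. Under \eqref{assump:main_3} one has $\phi \equiv 0$, killing the $\dl \phi,\psi_t\eta_t\dr$ term in \eqref{eq:characteristic_functional_2}, and the backward equation in \eqref{eq:coupled_Boltzmann_sym_intro} reduces to $D_s\eta_t = -\mathcal{Q}_{\psi_t}(\eta_t,\eta_t)$. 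Combining these collapses the functional to
\begin{equation*}
\widehat{\mathcal{I}}(t,g) = \langle f^0, \eta_t(0)-1\rangle - \int_0^t \mathcal{H}'\bigl(\psi_t(s), \eta_t(s)\bigr)\, ds.
\end{equation*}
The boundary term is uniformly bounded by a constant depending only on the constant $c$ in \eqref{assump:main_1} and $a_*$: by \eqref{assump:main_1}, $f^0$ is close to $M$ in $L_{\beta+1}^{\infty}$ and hence has Gaussian decay in $v$, while $\eta_t(0) = \mathcal{G} + \eta_p(0)$ is uniformly bounded in $L_\beta^\infty$ by Theorem \ref{th:global_fixed_point_P}.

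For the Hamiltonian integral, the crucial point is that in the symmetric form \eqref{eq:new_hamiltonian}, both factors $\psi(v')\psi(v_*')-\psi(v)\psi(v_*)$ and $\eta(v')\eta(v_*')-\eta(v)\eta(v_*)$ vanish identically when their argument equals the Maxwellian $\mathcal{G}$. Decomposing $\psi_t = \mathcal{G}+\psi_p$ and $\eta_t = \mathcal{G}+\eta_p$ therefore makes the leading-order integrand bilinear in $(\psi_p,\eta_p)$, and a trilinear-type estimate analogous to Lemma \ref{lem:multiplicative_nonlinearity} (bounding $((v_*-v)\cdot\omega)_+ \leq |v|+|v_*|$ and using $L_\beta^\infty$-weights together with the Gaussian decay of $\mathcal{G}$) gives
\begin{equation*}
\bigl|\mathcal{H}'\bigl(\psi_t(s),\eta_t(s)\bigr)\bigr| \leq C\,\lVert \psi_p(s)\rVert_{L_\beta^\infty}\lVert \eta_p(s)\rVert_{L_\beta^\infty}\bigl(1 + \lVert \psi_p(s)\rVert_{L_\beta^\infty} + \lVert \eta_p(s)\rVert_{L_\beta^\infty}\bigr).
\end{equation*}

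Applying the decay bounds $\lVert \psi_p(s)\rVert_{L_\beta^\infty}\leq a_*(1+s)^{-\sigma}$ and $\lVert \eta_p(s)\rVert_{L_\beta^\infty}\leq a_*(1+(t-s))^{-\sigma}$ from Theorem \ref{th:global_fixed_point_P}, and splitting $[0,t]$ at $t/2$, the integrand's $L^1_s$-norm is controlled by
\begin{equation*}
a_*^2 \int_0^t (1+s)^{-\sigma}(1+(t-s))^{-\sigma}\,ds \leq C,
\end{equation*}
uniformly in $t$, because $\sigma > 1$. This yields the claimed uniform bound on $\widehat{\mathcal{I}}(t,g)$. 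The main obstacle is establishing the trilinear estimate for $\mathcal{H}'$ in the $L_\beta^\infty$ framework: one must expand both differences into six terms each, carefully track pre-/post-collisional variables $(v',v_*')$ so that Gaussian factors from $\mathcal{G}$ compensate the kinematic growth $|v|+|v_*|$, and verify integrability of the resulting polynomial weights given $\beta>4$ and $d\geq 3$. This is routine but bookkeeping-heavy, mirroring the computation already carried out in the proof of Lemma \ref{lem:multiplicative_nonlinearity}.
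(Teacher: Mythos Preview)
Your approach is correct and genuinely different from the paper's. The paper performs an integration by parts to move $D_s$ onto $\psi_t$, uses the forward equation together with $\mathcal{H}'=\tfrac12\langle\eta_t,\mathcal{Q}_{\eta_t}(\psi_t,\psi_t)\rangle$, and reaches the form $-1+\langle\eta_t(t),\psi_t(t)\rangle-\tfrac12\dl\eta_t,\mathcal{Q}_{\eta_t}(\psi_t,\psi_t)\dr$; it then splits $\eta_t=\mathcal{G}+\eta_{t,p}$ and handles the $\mathcal{G}$-piece by a mild-formulation trick that collapses the time integral to boundary values, while only the $\eta_{t,p}$-piece is controlled by decay. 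You instead substitute the backward equation $D_s\eta_t=-\mathcal{Q}_{\psi_t}(\eta_t,\eta_t)$ directly (no integration by parts), use the dual identity $\langle\mathcal{Q}_{\psi_t}(\eta_t,\eta_t),\psi_t\rangle=2\mathcal{H}'$, and land on $\langle f^0\mathcal{B}^{-1},\eta_t(0)\rangle-1-\int_0^t\mathcal{H}'\,ds$; the key observation that in the symmetric form \eqref{eq:new_hamiltonian} \emph{both} difference factors vanish at $\mathcal{G}$ then makes the integrand bilinear in $(\psi_p,\eta_p)$, and the convolution $\int_0^t(1+s)^{-\sigma}(1+(t-s))^{-\sigma}ds$ is uniformly bounded for $\sigma>1$. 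Your route is shorter for uniform boundedness and sidesteps both the integration-by-parts justification (Lemma~\ref{lem:integration_by_parts}) and the mild-formulation manipulation in (II.1). The paper's route, however, produces an explicit decomposition into boundary terms that is immediately reused in Proposition~\ref{prop:time_limit} to identify the long-time limit $\widehat{\mathcal{I}}_\infty(\widehat g)=-1+\langle e^{\widehat g},M\rangle$; your residual integral $\int_0^\infty\mathcal{H}'\,ds$ does not obviously evaluate in closed form, so extracting that limit would require an extra argument.
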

\begin{proof}
First we perform an integration by parts
\begin{equation*}
\begin{split}
\dl D_s\eta_t(s),\psi_t(s)\dr&=\dl \eta_t(s),-D_s\psi_t(s)\dr+\langle \eta_t(t),\psi_t(t) \rangle-\langle \eta_t(0),\psi_t(0)\rangle\\
&=\dl \eta_t(s),-D_s\psi_t(s)\dr+\langle \eta_t(t),\psi_t(t) \rangle-\langle \eta_t(0),f^0\rangle,
\end{split}
\end{equation*}
where in the second equality we have used the fact that $\psi_t(0)=f^0$. The integration by parts can be justified in the spirit of Lemma \ref{lem:integration_by_parts}.

This integration by parts, along with the fact that $\int f^0dxdv=1$, implies 
\begin{equation*}
\widehat{\mathcal{I}}(t,g)=-1+\langle \eta_t(t),\psi_t(t) \rangle+\dl\eta_t(s),-D_s\psi_t(s) \dr-\dl \eta_t(s),\psi_t(s)\phi_t(s) \dr+\int_0^t\mathcal{H}'\Big(\psi_t(s),\eta_t(s)\Big)ds.
\end{equation*}
According to the definition of $\mathcal{H}'$, we have
\begin{equation}\label{eq:hamiltonian_derivative_equality}
\mathcal{H}'\Big(\psi_t(s),\eta_t(s)\Big)=\frac{1}{2}\Big\langle \eta_t(s),\mathcal{Q}_{\eta_t(s)}\big(\psi_t(s),\psi_t(s)\big)\Big\rangle,
\end{equation}
thus
\begin{equation*}
\begin{split}
&\widehat{\mathcal{I}}(t,g)\\
=&-1+\langle \eta_t(t),\psi_t(t) \rangle+\dl\eta_t(s),-D_s\psi_t(s) \dr-\dl \eta_t(s),\psi_t(s)\phi_t(s) \dr+\frac{1}{2}\Bdl\eta_t(s),\mathcal{Q}_{\eta_t(s)}\big(\psi_t(s),\psi_t(s)\big)\Bdr\\
=&-1+\langle \eta_t(t),\psi_t(t) \rangle+\Bdl\eta_t(s),-D_s\psi_t(s)-\psi_t(s)\phi(s)+\mathcal{Q}_{\eta_t(s)}\big(\psi_t(s),\psi_t(s)\big)\Bdr\\
&\hspace{9cm}-\frac{1}{2}\Bdl\eta_t(s),\mathcal{Q}_{\eta_t(s)}\big(\psi_t(s),\psi_t(s)\big)\Bdr.
\end{split}
\end{equation*}
Since $(\psi_t(s),\eta_t(s))$ is the mild solution of the coupled Boltzmann equations \eqref{eq:coupled_Boltzmann_sym_intro}, we have
\begin{equation}\label{eq:functional_decomposition_1}
\widehat{\mathcal{I}}(t,g)=-1+\underbrace{\langle \eta_t(t),\psi_t(t) \rangle}_{\textup{\blue{(I)}}}-\frac{1}{2}\underbrace{\Bdl\eta_t(s),\mathcal{Q}_{\eta_t(s)}\big(\psi_t(s),\psi_t(s)\big)\Bdr}_{\textup{\blue{(II)}}}.
\end{equation}
Recall the perturbations $\psi_{t,p}$ and $\eta_{t,p}$ introduced in \eqref{eq:perturbation_decomposition}. The estimate of term (I) is relatively immediate with $L_{\beta}^{\infty}$ being a stronger topology than $L^2$ due to $\beta>4$
\begin{equation*}
\begin{split}
\int \eta_t(t)\psi_t(t)dvdx\leq C \lVert \eta_t(t)\rVert_{L_{\beta}^\infty}\lVert \psi_t(t)\rVert_{L_\beta^\infty}&\leq C \Big(\lVert \mathcal{G}\rVert_{L_{\beta}^\infty}+\lVert \eta_{p,t}(t)\rVert_{L_{\beta}^\infty}\Big)\Big(\lVert \mathcal{G}\rVert_{L_{\beta}^\infty}+\lVert \psi_{p,t}(t)\rVert_{L_\beta^\infty}\Big)\\
&\leq C \Big(\lVert \mathcal{G}\rVert_{L_{\beta}^\infty}+a_*\Big)\Big(\lVert \mathcal{G}\rVert_{L_{\beta}^\infty}+a_*\Big).
\end{split}
\end{equation*}
To estimate the term (II), we first perform the perturbation decomposition for $\eta_t$
\begin{equation}\label{eq:functional_decomposition_2}
\textup{(II)}=\underbrace{\Bdl\mathcal{G},\mathcal{Q}_{\eta_t(s)}\big(\psi_t(s),\psi_t(s)\big)\Bdr}_{\textup{\blue{(II.1)}}}+\underbrace{\Bdl\eta_{t,p}(s),\mathcal{Q}_{\eta_t(s)}\big(\psi_t(s),\psi_t(s)\big)\Bdr}_{\textup{\blue{(II.2)}}}.
\end{equation}
To estimate the (II.1) term, we first notice that Lemma \ref{lem:multiplicative_nonlinearity} implies $\mathcal{Q}_{\eta_t(s)}(\psi_t(s),\psi_t(s))\in L_{\beta-1}^{\infty}$. Consequently the integrand in (II.1) is absolutely integrable. The fact that $(\psi_t,\eta_t)$ is the mild solution of the coupled Boltzmann equations \eqref{eq:transformed_evolution}
\begin{equation*}
\psi_t(t)=S_t\psi_t(0)-\int_0^tS_{t-s}\psi_t(s)\phi_t(s)ds+\int_0^tS_{t-s}\mathcal{Q}_{\eta_t(s)}\big(\psi_t(s),\psi_t(s)\big)ds.
\end{equation*}
Using the fact that $\mathcal{G}$ is a function independent of time and space, we have
\begin{equation*}
\begin{split}
\int_0^t\Big(\int \mathcal{G}\mathcal{Q}_{\eta_t(s)}\big(\psi_t(s),\psi_t(s)\big)dvdx\Big)ds&=\int_0^t\Big(\int \mathcal{G}S_{t-s}\mathcal{Q}_{\eta_t(s)}\big(\psi_t(s),\psi_t(s)\big)dvdx\Big)ds\\
&=\int \mathcal{G}\Big(\int_0^tS_{t-s}\mathcal{Q}_{\eta_t(s)}\big(\psi_t(s),\psi_t(s)\big)ds\Big)dvdx.
\end{split}
\end{equation*}
Since by assumption \eqref{assump:main_3} $\phi\equiv 0$, the equation above eventually yields the (II.1) term is equal to
\begin{equation}\label{eq:functional_decomposition_3}
\textup{(II.1)}=\int \mathcal{G}\Big(\psi_t(t)+\int_0^tS_{t-s}\psi_t(s)\phi_t(s)ds-S_t\psi_t(0)\Big)dvdx=\int \mathcal{G}\psi_t(t)dvdx-\int \mathcal{G}\psi_t(0)dvdx.
\end{equation}
For the (II.2) term, it has the upper bound as
\begin{equation}\label{eq:II.2_estimate}
\begin{split}
\textup{(II.2)}=&\int_0^t\Big[\int \eta_{t,p}(s)\mathcal{Q}_{\eta_t(s)}\big(\psi_t(s),\psi_t(s)\big)dvdx\Big]ds\\
\leq &C\int_0^t\lVert(1+|v|)\eta_{p,t}(s)\rVert_{L_{\beta-1}}\lVert(1+|v|)^{-1}\mathcal{Q}_{\eta_t(s)}\big(\psi_t(s),\psi_t(s)\big)\rVert_{L_{\beta}^{\infty}}ds\\
\leq &C\int_0^t \lVert\eta_{p,t}(s)\rVert_{L_{\beta}^{\infty}}\lVert\eta_{t}(s)\rVert_{L_{\beta}^{\infty}}\lVert\psi_{t}(s)\rVert_{L_{\beta}^{\infty}}\lVert\psi_{t}(s)\rVert_{L_{\beta}^{\infty}}ds,
\end{split}
\end{equation}
where in the last inequality we have used Lemma \ref{lem:multiplicative_nonlinearity}. This upper bound can be further written as
\begin{equation*}
\begin{split}
\textup{(II.2)}\leq &C\lVert\eta_{t,p}^{\reverse}\rVert_{P_{\beta}^{\sigma}}\int_0^t \big(1+(t-s)\big)^{-\sigma}\lVert\eta_{t}(s)\rVert_{L_{\beta}^{\infty}}\lVert\psi_{t}(s)\rVert_{L_{\beta}^{\infty}}\lVert\psi_{t}(s)\rVert_{L_{\beta}^{\infty}}ds\\
\leq& Ca_*\big(\lVert \mathcal{G}\rVert_{L_{\beta}^{\infty}}+a_*\big)^3.
\end{split}
\end{equation*}
Now it has been proved that there is a uniform bound for all terms in the decomposition of $\widehat{\mathcal{I}}$. This yields the uniform boundedness of $\widehat{\mathcal{I}}(t,g)$, and concludes the proof of the theorem.
\end{proof}

With some additional effort, we can prove the proposition below for the mild solution $\widehat{\mathcal{I}}(t,g)$ in Theorem \ref{th:global_HJ_solution_sigma1}.
\begin{proposition}\label{prop:time_limit}
\textup{\textbf{[Long-Time Behaviour]}} Fix $\widehat{g}$ as a function in $C_{x,v}^0$ which is time independent. For arbitrary function $g$ satisfying the assumptions \eqref{assump:main_2}-\eqref{assump:main_3} and such that $g(t)=\widehat{g}$, there is
\begin{equation}
\lim_{t\rightarrow +\infty}\widehat{\mathcal{I}}(t,g)=\widehat{\mathcal{I}}_{\infty}(\widehat{g}),
\end{equation}
with
\begin{equation}\label{eq:the_time_limit}
\widehat{\mathcal{I}}_{\infty}(\widehat{g}):=-1+\langle e^{\widehat{g}},M\rangle.
\end{equation}
\end{proposition}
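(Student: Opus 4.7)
The plan is to start from the decomposition established in the proof of Theorem \ref{th:global_HJ_solution_sigma1}, namely $\widehat{\mathcal{I}}(t,g) = -1 + \langle \eta_t(t), \psi_t(t)\rangle - \tfrac{1}{2}[\textup{(II.1)} + \textup{(II.2)}]$, where $\textup{(II.1)} = \int\mathcal{G}\psi_t(t)\,dvdx - \int\mathcal{G}\psi_t(0)\,dvdx$ and $\textup{(II.2)} = \dl\eta_{t,p}(s), \mathcal{Q}_{\eta_t(s)}(\psi_t(s),\psi_t(s))\dr$, and to show that each of the three terms converges separately as $t\to+\infty$.

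For the boundary term $\langle \eta_t(t), \psi_t(t)\rangle$, I will use that stationarity of $\widehat{g}$ makes $\eta_t(t) = e^{\widehat{g}}\mathcal{B}$ independent of $t$, while Theorem \ref{th:main_theorem_1} provides $\lVert\psi_t(t) - \mathcal{G}\rVert_{L_{\beta}^{\infty}} \leq a_*(1+t)^{-\sigma} \to 0$. Since $e^{\widehat{g}}\mathcal{B}$ has quadratic exponential decay in $v$, its pairing with any $L_{\beta}^{\infty}$ function is continuous, so the limit equals $\langle e^{\widehat{g}}\mathcal{B}, \mathcal{G}\rangle = \langle e^{\widehat{g}}, M\rangle$, using the identity $\mathcal{B}\mathcal{G} = M$ from \eqref{eq:coupled_equilibrium}--\eqref{eq:normalizing_function}.

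For $\textup{(II.1)}$, the orthogonality in \eqref{assump:main_1} together with $\mathcal{G} \in \mathcal{K}$ forces $\int\mathcal{G}\psi_t(0)\,dvdx = \langle \mathcal{G}, f^0\mathcal{B}^{-1}\rangle = \langle \mathcal{G}, \mathcal{G}\rangle$, which is independent of $t$, so the same $L_{\beta}^{\infty}$ convergence $\psi_t(t) \to \mathcal{G}$ yields $\textup{(II.1)} \to 0$.

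The main obstacle will be $\textup{(II.2)}$: the bound extracted in the proof of Theorem \ref{th:global_HJ_solution_sigma1} is only uniformly finite in $t$, not vanishing. The key observation I will exploit is that $\mathcal{Q}_\eta(\mathcal{G},\mathcal{G}) = 0$ for \emph{every} $\eta$, because $\mathcal{G}$ is the exponential of a collision invariant and hence $\mathcal{G}(v')\mathcal{G}(v_*') = \mathcal{G}(v)\mathcal{G}(v_*)$. Expanding $\mathcal{Q}_{\mathcal{G}+\eta_{t,p}}(\mathcal{G}+\psi_{t,p},\mathcal{G}+\psi_{t,p})$ by multilinearity, both $\mathcal{Q}_{\mathcal{G}}(\mathcal{G},\mathcal{G})$ and $\mathcal{Q}_{\eta_{t,p}}(\mathcal{G},\mathcal{G})$ vanish, so every remaining term carries at least one factor of $\psi_{t,p}$. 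Combined with Lemma \ref{lem:multiplicative_nonlinearity} and the forward decay $\lVert\psi_{t,p}(s)\rVert_{L_{\beta}^{\infty}} \leq a_*(1+s)^{-\sigma}$, this yields $\lVert\mathcal{Q}_{\eta_t(s)}(\psi_t(s),\psi_t(s))\rVert_{L_{\beta-1}^{\infty}} \leq Ca_*(1+s)^{-\sigma}$. Pairing with $\lVert\eta_{t,p}(s)\rVert_{L_{\beta}^{\infty}} \leq a_*(1+(t-s))^{-\sigma}$ and integrating via the convolution estimate Lemma \ref{lem:convolution_inequality} produces $|\textup{(II.2)}| \leq C(1+t)^{-\sigma} \to 0$. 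Combining the three limits gives the claimed convergence to $\widehat{\mathcal{I}}_\infty(\widehat{g}) = -1 + \langle e^{\widehat{g}},M\rangle$.
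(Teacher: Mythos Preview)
Your proposal is correct and follows essentially the same approach as the paper: both start from the decomposition in Theorem \ref{th:global_HJ_solution_sigma1}, handle the boundary term via $\psi_t(t)\to\mathcal{G}$ and the identity $\mathcal{B}\mathcal{G}=M$, dispose of (II.1) using the orthogonality in \eqref{assump:main_1}, and treat (II.2) by the key observation $\mathcal{Q}_{\eta}(\mathcal{G},\mathcal{G})=0$, which forces at least one $\psi_{t,p}$ factor in every surviving term and yields the convolution bound $C(1+t)^{-\sigma}$ via Lemma \ref{lem:convolution_inequality}.
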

In this framework the function $g$ is fixed at terminal time $g(t)=\widehat{g}$. According to assumption \eqref{assump:main_3}, there is $D_sg\equiv 0$. Consequently the function $g$ is determined on the whole time interval $[0,t]$ by $g(t)=\widehat{g}$.
\begin{proof}
According to equations \eqref{eq:functional_decomposition_1}, \eqref{eq:functional_decomposition_2}, and \eqref{eq:functional_decomposition_3} in the proof of Theorem \ref{th:global_HJ_solution_sigma1}, the functional $\widehat{\mathcal{I}}(t,g)$ can be rewritten as
\begin{equation*}
\widehat{\mathcal{I}}(t,g)=-1+\langle \eta_t(t),\psi_t(t) \rangle+\frac{1}{2}\langle \mathcal{G}, \psi_t(0)-\psi_t(t)\rangle+\frac{1}{2}\underbrace{\Bdl\eta_{t,p}(s),\mathcal{Q}_{\eta_t(s)}\big(\psi_t(s),\psi_t(s)\big)\Bdr}_{\textup{\blue{(II.2)}}}.
\end{equation*}
We want to prove the (II.2) term converge to $0$ as $t\rightarrow +\infty$. This can be achieved by the decomposition of the collision operator
\begin{equation*}
\begin{split}
\textup{(II.2)}=&2\Bdl\eta_{t,p}(s),\mathcal{Q}_{\eta_t(s)}\big(\psi_{t,p}(s),\mathcal{G}\big)\Bdr+\Bdl\eta_{t,p}(s),\mathcal{Q}_{\eta_t(s)}\big(\psi_{t,p}(s),\psi_{t,p}(s)\big)\Bdr\\
\leq& 2\int_0^t \lVert\eta_{p,t}(s)\rVert_{L_{\beta}^{\infty}}\lVert\eta_{t}(s)\rVert_{L_{\beta}^{\infty}}\lVert\psi_{t,p}(s)\rVert_{L_{\beta}^{\infty}}\lVert\psi_{t}(s)\rVert_{L_{\beta}^{\infty}}ds\\
&+\int_0^t \lVert\eta_{p,t}(s)\rVert_{L_{\beta}^{\infty}}\lVert\eta_{t}(s)\rVert_{L_{\beta}^{\infty}}\lVert\psi_{t,p}(s)\rVert_{L_{\beta}^{\infty}}\lVert\psi_{t,p}(s)\rVert_{L_{\beta}^{\infty}}ds\\
\leq &C\int_0^t (1+s)^{-\sigma}\big(1+(t-s)\big)^{-\sigma}ds\leq C(1+t)^{-\sigma}\rightarrow 0,\quad t\rightarrow +\infty,
\end{split}
\end{equation*}
where we have used Lemma \ref{lem:convolution_inequality} and the fact according to Theorem \ref{th:global_fixed_point_P}
\begin{equation*}
\lVert \psi_{t,p}\rVert_{P_{\beta}^{\sigma}}<a_*,\quad \lVert \eta_{t,p}^{\reverse}\rVert_{P_{\beta}^{\sigma}}<a_*.
\end{equation*}
In Theorem \ref{th:global_fixed_point_P}, the bound $\lVert \psi_{t,p}\rVert_{P_{\beta}^{\sigma}}<a_*$ is uniform for all $t>0$. According to the definition \eqref{eq:forward_P_norm} of $P_{\beta}^{\infty}$, there is
\begin{equation*}
(1+t)^{\sigma}\lVert \psi_{t,p}(t)\rVert_{L_{\beta}^{\infty}}<a_*.
\end{equation*}

Consequently as $t\rightarrow +\infty$, there is $\psi_t(t)\rightarrow \mathcal{G}$ in the $L_{\beta}^{\infty}$ norm. This implies
\begin{equation*}
\lim_{t\rightarrow +\infty}\langle \eta_t(t),\psi_t(t)\rangle=\lim_{t\rightarrow +\infty}\langle e^{g(t)}\mathcal{B},\mathcal{G}\rangle=\lim_{t\rightarrow +\infty}\langle e^{g(t)},M\rangle=\langle e^{\widehat{g}},M\rangle,
\end{equation*}
where we have used the fact that $\eta_t(t)=e^{g(t)}$ and $g(t)=\widehat{g}$.

These imply
\begin{equation*}
\lim_{t\rightarrow +\infty}\widehat{\mathcal{I}}(t,g)=-1+\langle e^{\widehat{g}},M\rangle+\frac{1}{2}\langle \mathcal{G},\mathcal{B}^{-1}f^0-\mathcal{G}\rangle.
\end{equation*}
The orthogonality assumption \eqref{assump:main_1} means $\langle \mathcal{G},\mathcal{B}^{-1}f^0-\mathcal{G} \rangle=0$, which concludes the proof of the lemma.
\end{proof}
The relation between the result above and the Schrödinger problem has been discussed in Subsection \ref{sec:implications}. 

In addition, we can further prove the long-time limit of $\widehat{\mathcal{I}}(t,g)$ given in Proposition \ref{prop:time_limit}, is also a non-trivial stationary solution of the Hamilton-Jacobi equation.
\begin{proposition}\label{prop:stationary_solution}
\textup{\textbf{[Stationary Solution]}} The functional $\widehat{\mathcal{I}}_{\infty}(\widehat{g})$ in \eqref{eq:the_time_limit} is a stationary solution of the Hamilton-Jacobi equation.
\end{proposition}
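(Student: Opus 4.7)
The plan is to verify that $\widehat{\mathcal{I}}_\infty$ satisfies the mild Hamilton-Jacobi equation \eqref{eq:mild_solution_HJ} by showing that the Hamiltonian contribution vanishes identically. Since $\widehat{\mathcal{I}}_\infty(\widehat g)$ has no explicit $t$-dependence, the mild equation reduces to the requirement $\mathcal{H}\big(\frac{\partial \widehat{\mathcal{I}}_\infty}{\partial \widehat g},\widehat g\big)=0$, and this is what I would prove.

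First I would compute the functional derivative. From $\widehat{\mathcal{I}}_\infty(\widehat g)=-1+\int e^{\widehat g(x,v)}M(x,v)\,dv\,dx$, direct variation gives $\frac{\partial \widehat{\mathcal{I}}_\infty}{\partial \widehat g}=e^{\widehat g}M$. Plugging into \eqref{eq:functional_hamiltonian} and setting $F:=e^{\widehat g}M$, the key algebraic observation is
\begin{equation*}
F(v)F(v_*)\,e^{\Delta \widehat g(x,v,v_*)}=e^{\widehat g(v')+\widehat g(v_*')}\,M(v)M(v_*)=e^{\widehat g(v')+\widehat g(v_*')}\,M(v')M(v_*')=F(v')F(v_*'),
\end{equation*}
where the middle equality uses the Maxwellian detailed balance $M(v)M(v_*)=M(v')M(v_*')$, which in turn follows from the conservation $|v|^2+|v_*|^2=|v'|^2+|v_*'|^2$ at collision. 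Hence
\begin{equation*}
\mathcal{H}\big(e^{\widehat g}M,\widehat g\big)=\tfrac{1}{2}\int\bigl(F(v')F(v_*')-F(v)F(v_*)\bigr)\bigl((v_*-v)\cdot\omega\bigr)_+ d\omega\,dv_*\,dv\,dx.
\end{equation*}

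Next I would kill this integral by the standard Boltzmann pre/post-collisional involution. Performing the change of variables $(v,v_*,\omega)\mapsto(v',v_*',-\omega)$ has Jacobian one and preserves the kernel $\bigl((v_*-v)\cdot\omega\bigr)_+$ (a direct computation, already used implicitly in the derivation of \eqref{eq:second_order_ortho}), while swapping $F(v')F(v_*')$ with $F(v)F(v_*)$. The integrand therefore changes sign, and the integral equals its own negative, giving $\mathcal{H}\big(e^{\widehat g}M,\widehat g\big)=0$. Equivalently, one can recognize the integrand as $Q(F,F)$, the classical Boltzmann collision operator with symmetric kernel, whose integral over $v$ vanishes by mass conservation; here the factor $F$ is the local Maxwellian $e^{\widehat g}M$ which, however, is not spatially homogeneous, but this plays no role since the argument is pointwise in $x$.

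Finally, combining $\partial_t\widehat{\mathcal{I}}_\infty\equiv 0$ with $\mathcal{H}\big(\frac{\partial \widehat{\mathcal{I}}_\infty}{\partial \widehat g},\widehat g\big)=0$ shows that the mild formulation \eqref{eq:mild_solution_HJ} reduces to the tautology $\widehat{\mathcal{I}}_\infty(\widehat g)=\widehat{\mathcal{I}}_\infty(\widehat g)$, so $\widehat{\mathcal{I}}_\infty$ is indeed a stationary solution; non-triviality is clear because $\widehat{\mathcal{I}}_\infty(\widehat g)=-1+\langle e^{\widehat g},M\rangle$ is not identically zero nor constant in $\widehat g$. The only minor subtlety — and the closest thing to an obstacle — is checking that the involution argument is legitimate under only the integrability provided by assumption \eqref{assump:main_2} (so that $e^{\widehat g}M$ decays fast enough to make the fourfold integral absolutely convergent), but this is already guaranteed by the $L^\infty_{\beta+1}$ control on $e^{\widehat g}\mathcal{B}-\mathcal{G}$ and the quadratic exponential decay of $M$.
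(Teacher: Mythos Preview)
Your proposal is correct and follows essentially the same argument as the paper: compute $\frac{\partial\widehat{\mathcal{I}}_\infty}{\partial\widehat g}=e^{\widehat g}M$, expand $\mathcal{H}(e^{\widehat g}M,\widehat g)$, and kill the resulting integral by the pre/post-collisional involution $(v,v_*,\omega)\mapsto(v',v_*',-\omega)$. The only cosmetic difference is that you invoke the Maxwellian detailed balance $M(v)M(v_*)=M(v')M(v_*')$ to rewrite the integrand as $F(v')F(v_*')-F(v)F(v_*)$ with $F=e^{\widehat g}M$, whereas the paper keeps $M(v)M(v_*)$ as a prefactor and writes the antisymmetric part as $e^{\widehat g(v')+\widehat g(v_*')}-e^{\widehat g(v)+\widehat g(v_*)}$; the change of variables then acts identically in both presentations.
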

\begin{proof}
It is sufficient to verify 
\begin{equation*}
\mathcal{H}\Big(\frac{\p \widehat{\mathcal{I}}_{\infty}(\widehat{g})}{\p \widehat{g}},\widehat{g}\Big)=0.
\end{equation*}
By the definition \eqref{eq:the_time_limit} of $\widehat{\mathcal{I}}_{\infty}(\widehat{g})$, the derivative with respect to $\widehat{g}$ is
\begin{equation*}
\frac{\p \widehat{\mathcal{I}}_{\infty}(\widehat{g})}{\p \widehat{g}}=\frac{\p \langle e^{\widehat{g}},M\rangle}{\p \widehat{g}}=e^{\widehat{g}}M.
\end{equation*}
Then according to the definition \eqref{eq:functional_hamiltonian} of $\mathcal{H}$, we have
\begin{equation}
\begin{split}
&\mathcal{H}\big(e^{\widehat{g}}M,\widehat{g}\big)\\
=&\frac{1}{2}\int M(v)M(v_*)e^{\widehat{g}(v)+\widehat{g}(v_*)}\Big(e^{\widehat{g}(v')+\widehat{g}(v_*')-\widehat{g}(v)-\widehat{g}(v_*)}-1\Big)\big((v_*-v)\cdot\omega\big)_+d\omega dv_* dvdx\\
=&\frac{1}{2}\int M(v)M(v_*)\Big(e^{\widehat{g}(v')+\widehat{g}(v_*')}-e^{\widehat{g}(v)+\widehat{g}(v_*)}\Big)\big((v_*-v)\cdot\omega\big)_+d\omega dv_* dvdx=0,
\end{split}
\end{equation}
where the last inequality is by the change of variables from $(v,v_*,\omega)$ to $(v',v_*',-\omega)$. This concludes the proof.
\end{proof}

The two propositions above together states that, the solution $\widehat{\mathcal{I}}(t,g)$ converges to a stationary solution $\widehat{\mathcal{I}}_{\infty}(\widehat{g})$ as $t\rightarrow +\infty$. The stationary solution $\widehat{\mathcal{I}}_{\infty}(\widehat{g})=\langle e^{\widehat{g}},M \rangle-1$ is the cumulant generating functional of a random gas with Poisson-distributed total number, and i.i.d. distribution of the variables $(x,v)$.

\subsection{Functional Solution for Theorem \ref{th:main_theorem_2}}
\begin{theorem}\label{th:global_HJ_solution_sigma0}
Under the assumptions \eqref{assump:main_1_th2}-\eqref{assump:main_3_th2}, the functional solution $\widehat{\mathcal{I}}(t,g)$ constructed in Theorem \ref{th:justification_mild} of the Hamilton-Jacobi equation is uniformly bounded for arbitrary time $t\geq 0$ and function $g$ satisfying Assumptions \eqref{assump:main_2_th2}-\eqref{assump:main_3_th2}.
\end{theorem}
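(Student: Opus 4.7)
My plan is to mimic the proof of Theorem \ref{th:global_HJ_solution_sigma1} almost verbatim, with two modifications dictated by the weaker hypotheses \eqref{assump:main_1_th2}--\eqref{assump:main_3_th2}.

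First, I would repeat the integration by parts of Lemma \ref{lem:integration_by_parts} applied to $\dl D_s\eta_t,\psi_t\dr$, and then substitute the coupled Boltzmann equation $D_s\psi_t=-\psi_t\phi+\mathcal{Q}_{\eta_t}(\psi_t,\psi_t)$. As in Theorem \ref{th:global_HJ_solution_sigma1}, the two occurrences of the transport term $\dl\eta_t,\psi_t\phi\dr$ cancel identically, which importantly does \emph{not} require $\phi\equiv 0$, and the identity $\mathcal{H}'(\psi_t,\eta_t)=\tfrac{1}{2}\langle\eta_t,\mathcal{Q}_{\eta_t}(\psi_t,\psi_t)\rangle$ collapses the result to exactly the same compact decomposition
\[
\widehat{\mathcal{I}}(t,g)=-1+\underbrace{\langle\eta_t(t),\psi_t(t)\rangle}_{\textup{(I)}}-\tfrac12\underbrace{\dl\eta_t,\mathcal{Q}_{\eta_t}(\psi_t,\psi_t)\dr}_{\textup{(II)}}.
\]
Term (I) is immediate: since $\beta>4$ and $d\geq 3$ the weight $(1+|v|)^{-2\beta}$ is integrable, and Theorem \ref{th:global_fixed_point_E} gives $\lVert\psi_t(t)\rVert_{L_\beta^\infty},\lVert\eta_t(t)\rVert_{L_\beta^\infty}\leq \lVert\mathcal{G}\rVert_{L_\beta^\infty}+a_*$, so $|(\textup{I})|$ is uniformly bounded.

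For (II) I would split into $(\textup{II.1})+(\textup{II.2})$ via the perturbation decomposition $\eta_t=\mathcal{G}+\eta_{t,p}$, with $(\textup{II.1})=\dl\mathcal{G},\mathcal{Q}_{\eta_t}(\psi_t,\psi_t)\dr$. Using that $\mathcal{G}$ is $x$-independent so that $\int\mathcal{G}\,S_\tau f\,dvdx=\int\mathcal{G}\,f\,dvdx$, and substituting the mild formulation of $\psi_t$, I get
\[
(\textup{II.1})=\langle\mathcal{G},\psi_t(t)-\psi_t(0)\rangle+\int_0^t\langle\mathcal{G},\psi_t(s)\phi(s)\rangle\,ds.
\]
The first bracket is bounded by $C(\lVert\mathcal{G}\rVert_{L_\beta^\infty}+a_*)\lVert\mathcal{G}\rVert_{L_\beta^\infty}$. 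The new term (absent from Theorem \ref{th:global_HJ_solution_sigma1}) is controlled by
\[
C\,\lVert\psi_t\rVert_{E_\beta^0}\,\lVert\phi\rVert_{L_t^1(L_{x,v}^\infty)}\leq C(\lVert\mathcal{G}\rVert_{L_\beta^\infty}+a_*)\eps_{\phi},
\]
uniform in $t$ by assumption \eqref{assump:main_3_th2}.

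The one genuinely new step, which I expect to be the main obstacle (though a mild one), is the control of $(\textup{II.2})$: the polynomial-decay trick used in Theorem \ref{th:global_HJ_solution_sigma1} no longer applies, since we no longer work with the $P_\beta^\sigma$ norm. Instead, Theorem \ref{th:global_fixed_point_E} combined with the change of variables $u=t-s$ provides the pointwise exponential bound $\lVert\eta_{t,p}(s)\rVert_{L_\beta^\infty}\leq a_* e^{-\sigma s}$ for all $s\in[0,t]$. Pairing this with Lemma \ref{lem:multiplicative_nonlinearity} and the trilinear $L_\beta^\infty$ estimate yields
\[
|(\textup{II.2})|\leq C\int_0^t\lVert\eta_{t,p}(s)\rVert_{L_\beta^\infty}\,\lVert\eta_t(s)\rVert_{L_\beta^\infty}\,\lVert\psi_t(s)\rVert_{L_\beta^\infty}^2\,ds\leq C a_*(\lVert\mathcal{G}\rVert_{L_\beta^\infty}+a_*)^3\int_0^t e^{-\sigma s}ds,
\]
which is bounded uniformly in $t$ by $Ca_*/\sigma$ since $\sigma>0$. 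Assembling the three uniform bounds for (I), (II.1), and (II.2) concludes the proof. I don't anticipate any serious difficulty beyond this: the integration by parts is inherited from Lemma \ref{lem:integration_by_parts}, the decomposition identity is algebraic and independent of the hypotheses on $\phi$, and the only genuine change is trading a polynomial-in-$(t-s)$ time integral for an exponential-in-$s$ one.
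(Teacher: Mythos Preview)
Your proof is correct and follows essentially the same approach as the paper: the same integration by parts and decomposition into (I), (II.1), (II.2), with (II.2) controlled via the exponential bound $\lVert\eta_{t,p}(s)\rVert_{L_\beta^\infty}\leq a_* e^{-\sigma s}$ (the paper writes this equivalently as $e^{\sigma t}\lVert\eta_{t,p}^{\reverse}\rVert_{E_\beta^{-\sigma}}\leq a_*$ and integrates $e^{\sigma(t-s)}$ against it). In fact you are slightly more careful than the paper about the extra $\phi$-term appearing in (II.1) under \eqref{assump:main_3_th2}, which the paper glosses over by asserting that the estimate of (II.1) is ``exactly the same'' as in Theorem~\ref{th:global_HJ_solution_sigma1}; your bound $C\lVert\psi_t\rVert_{E_\beta^0}\lVert\phi\rVert_{L_t^1(L_{x,v}^\infty)}\leq C(\lVert\mathcal{G}\rVert_{L_\beta^\infty}+a_*)\eps_\phi$ fills that small gap.
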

\begin{proof}
In the proof of Theorem \ref{th:global_HJ_solution_sigma1}, we have decomposed the functional $\widehat{\mathcal{I}}$ into several terms: first it is decomposed as \eqref{eq:functional_decomposition_1}, then the (II) term is decomposed into the summation of (II.1) term and (II.2) term in \eqref{eq:functional_decomposition_2}. The same decomposition applies here. Except for the (II.2) term, the estimate of the others is exactly the same as in the proof of theorem \ref{th:global_HJ_solution_sigma0}. Thus we only detail the estimate of the (II.2) term here.

In equation \eqref{eq:II.2_estimate}, it has been proved that
\begin{equation}
\begin{split}
\textup{(II.2)}=&\int_0^t\Big[\int \eta_{t,p}(\tau)\mathcal{Q}_{\eta_t(\tau)}\big(\psi_t(\tau),\psi_t(\tau)\big)dvdx\Big]d\tau\\
\leq &C\int_0^t \lVert\eta_{t,p}(\tau)\rVert_{L_{\beta}^{\infty}}\lVert\eta_{t}(\tau)\rVert_{L_{\beta}^{\infty}}\lVert\psi_{t}(\tau)\rVert_{L_{\beta}^{\infty}}\lVert\psi_{t}(\tau)\rVert_{L_{\beta}^{\infty}}d\tau
\end{split}
\end{equation}
According to the definition \eqref{eq:forward_E_norm} of the $E_{\beta}^0$ and the $E_{\beta}^{-\sigma}$ norms, we get
\begin{equation*}
\begin{split}
\textup{(II.2)}&\leq Ce^{\sigma t} \lVert\eta_{t,p}^{\reverse}\rVert_{E_{\beta}^{-\sigma}} \lVert\eta_{t}\rVert_{E_{\beta}^{0}}\lVert\psi_{t}\rVert_{E_{\beta}^{0}}\lVert\psi_{t}\rVert_{E_{\beta}^{0}}\leq Ce^{\sigma t}a_*e^{-\sigma t}\big(\lVert \mathcal{G}\rVert_{L_{\beta}^{\infty}}+a_*\big)^3,
\end{split}
\end{equation*}
where we have used the condition $\lVert \eta_{t,p}^{\reverse} \rVert_{E_{\beta}^{-\sigma}}<a_*e^{-\sigma t}$.

It shows the (II.2) term is also uniformly bounded. This concludes the proof of the uniform boundedness of $\widehat{\mathcal{I}}(t,g)$.
\end{proof}

\appendix
\section{Decomposition of Semigroup and Relevant Estimates}\label{app:decomposition_of_semigroup}
By Definition \ref{def:fixed_point}, the operator $B^+$ consists of a transport operator $-v\cdot\nabla_x$, and a linearized collision operator $2\mathcal{Q}_{\mathcal{G}}(\cdot,\mathcal{G})$. The operator $2\mathcal{Q}_{\mathcal{\mathcal{G}}}(\cdot,\mathcal{G})$ can be decomposed as the summation of a frequency multiplier $-\nu$ and a convolution operator $K$. This decomposition is initially due to \cite{Grad_1965} 
\begin{equation*}
2\mathcal{Q}_{\mathcal{\mathcal{G}}}(\cdot,\mathcal{G})=-\nu+K,
\end{equation*}
with
\begin{equation}\label{eq:multiplier_and_K}
\begin{split}
&\nu f(v)=\int_{\mathds{R}^d}\int_{\mathds{S}^{d-1}}\big((v_*-v)\cdot\omega\big)_+f(v)\mathcal{G}^2(v_*)d\omega dv_*,\\
&Kf(v)=\int_{\mathds{R}^d}\int_{\mathds{S}^{d-1}}\big((v_*-v)\cdot\omega\big)_+\Big(f(v')\mathcal{G}(v_*')+f(v_*')\mathcal{G}(v')-f(v_*)\mathcal{G}(v)\Big)\mathcal{G}(v_*)d\omega dv_*.
\end{split}
\end{equation}
The operator $K$ can also be written using the related transition kernel, which is given explicitly on Page 19 of \cite{ukai_note}. The following lemma about the convolution operator $K$ is classical \cite{cercignani_1994,Grad_1965,Ukai_1986}.
\begin{lemma}\label{lem:smoothing_effect}
The operator $K$ is a self-adjoint compact operator on $L^2$. For any $2\leq p\leq r\leq \infty$, it is also a bounded operator from $L^p$ to $L^r$. If $\beta\geq 0$, then it is a bounded operator from $L_{\beta}^{\infty}$ to $L_{\beta}^{\infty}$.
\end{lemma}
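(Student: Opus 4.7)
The plan is to first put $K$ into a standard integral kernel form $Kf(v) = \int k(v, v_*) f(v_*) dv_*$, and then to derive from explicit pointwise bounds on $k$ all three statements of the lemma. Starting from the formula \eqref{eq:multiplier_and_K}, the three summands of $K$ contain $f(v')$, $f(v_*')$, and $f(v_*)$. For the $f(v_*)$ summand the kernel is already manifest. For the summands involving $f(v')$ and $f(v_*')$ I would perform the change of variables due to Grad that fixes $v$ and $v_*' = u$ (or $v_* = u$) and integrates out $\omega$ to land on a representation of the form $\int k_i(v, u) f(u) du$. The net result is Grad's classical decomposition $k(v, v_*) = k_2(v, v_*) - k_1(v, v_*)$ where $k_1$ carries a Gaussian factor $\mathcal{G}(v)\mathcal{G}(v_*)$ and $k_2$ carries a factor of the form $|v - v_*|^{-1} \exp\!\bigl(-c|v - v_*|^2 - c' (|v|^2 - |v_*|^2)^2/|v-v_*|^2\bigr)$, where the constants $c, c' > 0$ depend on the Gaussian parameter $\frac{1}{2} - \alpha > 0$ of $\mathcal{G}$.

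Two symmetries of the kernel are readable from this representation: first, $k(v, v_*) = k(v_*, v)$, which proves that $K$ is self-adjoint on $L^2$; second, the classical pointwise bound
\begin{equation*}
|k(v, v_*)| \leq C \bigl(1 + |v - v_*|\bigr)^{-1} \exp\!\bigl(-c_0 |v - v_*|^2\bigr) \exp\!\bigl(-c_0 (|v|-|v_*|)^2\bigr),
\end{equation*}
which is the main workhorse. For $L^2$-compactness I would either check directly that the Hilbert--Schmidt norm $\iint |k|^2\, dv\, dv_*$ is finite (using the Gaussian decay above), or if this fails on the diagonal I would approximate $K$ in the operator norm by truncated kernels $k_R(v, v_*) = k(v,v_*) \mathbf{1}_{|v-v_*| \geq 1/R, |v|, |v_*| \leq R}$, each of which is evidently Hilbert--Schmidt, and bound the tails using the decay.

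For the $L^p \to L^r$ boundedness with $2 \leq p \leq r \leq \infty$, the plan is to interpolate between the two extremes: boundedness from $L^2$ to $L^2$ (already established by the compactness argument) and boundedness from $L^2$ to $L^\infty$ via Cauchy--Schwarz, $|Kf(v)| \leq \|k(v, \cdot)\|_{L^2} \|f\|_{L^2}$, the first factor being uniformly bounded in $v$ because of the exponential decay in $|v - v_*|$. From these two endpoints, the full range follows by Riesz--Thorin interpolation together with the trivial $L^\infty \to L^\infty$ bound from the $L^1_{v_*}$ estimate of $k(v, \cdot)$. For $L_\beta^\infty \to L_\beta^\infty$ with $\beta \geq 0$ the argument is direct: the pointwise estimate above controls
\begin{equation*}
(1+|v|)^\beta |Kf(v)| \leq C \int (1 + |v|)^\beta (1+|v_*|)^{-\beta} e^{-c_0|v-v_*|^2} e^{-c_0(|v|-|v_*|)^2}\, dv_* \cdot \|f\|_{L_\beta^\infty},
\end{equation*}
and the integral is uniformly bounded in $v$ because the exponential in $(|v|-|v_*|)^2$ forces $|v_*| \sim |v|$ and hence absorbs the polynomial factor $(1+|v|)^\beta (1+|v_*|)^{-\beta}$.

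The main obstacle is not any of the three individual conclusions, which all reduce to integral estimates once the pointwise bound on $k$ is granted; the hard step is deriving that pointwise bound, as it requires a careful change of variables (the Carleman representation) on the collision integral and a nontrivial completion-of-squares argument to isolate the decay in $|v|-|v_*|$ from the decay in $|v-v_*|$. Since this is precisely the content of the cited works of Grad, Cercignani and Ukai, I would quote it rather than rederive it, and focus the proof on verifying how the three conclusions follow from the stated pointwise estimate together with standard Hilbert--Schmidt and interpolation arguments.
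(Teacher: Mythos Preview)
The paper does not actually prove this lemma; it states it as classical and cites \cite{cercignani_1994,Grad_1965,Ukai_1986}. Your sketch is precisely the standard argument carried out in those references (Grad's kernel representation, the pointwise Gaussian bound on $k$, Hilbert--Schmidt compactness, and the weighted $L^\infty$ estimate), so your approach is correct and agrees with what the paper defers to.
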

For a detailed proof of the following lemma, the reader may see Section 2.2. of \cite{ukai_note}.
\begin{lemma}\label{lem:L_2_decay}
\textbf{\textup{[$L^2$-Decay Estimate] }}
The operator $\rB$ (resp. $\lB$) in Definition \ref{def:fixed_point} generates a strongly continuous semigroup $e^{s\rB}$ (resp. $e^{s\lB}$) on $L^2(\mathds{T}_x^d\times \mathds{R}_v^d)$. Both semigroups decay exponentially in the $L_{x,v}^2$-norm if the initial data is orthogonal to the kernel $\mathcal{K}$: there exists constants $\nu_*>0$ and $C>0$ such that if $f\in\mathcal{K}^{\perp}$, then
\begin{equation*}
\lVert e^{s\rB}f\rVert_{L_{x,v}^2}\leq Ce^{-\nu_* s}\lVert f\rVert_{L_{x,v}^2},\quad \lVert e^{s\lB}f\rVert_{L_{x,v}^2}\leq Ce^{-\nu_* s}\lVert f\rVert_{L_{x,v}^2}.
\end{equation*}
If the function $f$ belongs to the kernel $\mathcal{K}$, then we have $e^{s\rB}f=e^{s\lB}f=f$.
\end{lemma}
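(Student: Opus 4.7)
My plan is to establish the three parts of the lemma separately: generation of the strongly continuous semigroup, the identity action on the kernel $\mathcal{K}$, and the exponential decay on $\mathcal{K}^{\perp}$. For the generation statement, I would decompose $B^+ = A + K$, where $A := -v\cdot\nabla_x - \nu(v)$ and $K$ is the compact convolution part of $2\mathcal{Q}_{\mathcal{G}}(\cdot,\mathcal{G})$ from \eqref{eq:multiplier_and_K}. The operator $A$ generates the explicit contractive semigroup $e^{sA}f(x,v)=e^{-\nu(v)s}f(x-sv,v)$ on $L^2(\mathds{T}^d\times\mathds{R}^d)$, and since Lemma \ref{lem:smoothing_effect} gives that $K$ is bounded on $L^2$, the bounded perturbation theorem for $C_0$-semigroups implies that $B^+ = A+K$ generates a strongly continuous semigroup on $L^2$. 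The same argument applies verbatim to $B^-$ (only the sign of the transport term changes, and $A'=v\cdot\nabla_x-\nu(v)$ generates an analogous semigroup).

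For the identity action, each basis element of $\mathcal{K}$ has the form $\mathcal{G}(v)p(v)$ with $p\in\{1,v_1,\dots,v_d,|v|^2\}$, i.e. $p$ is a collision invariant. Being independent of $x$, such an element is annihilated by the transport operator $v\cdot\nabla_x$. Using the detailed balance $\mathcal{G}(v)\mathcal{G}(v_*)=\mathcal{G}(v')\mathcal{G}(v_*')$ together with the standard pre-post change of variables on the collision kernel $((v_*-v)\cdot\omega)_+\,d\omega\,dv_*\,dv$ (the same symmetrization used in the proof of Lemma \ref{lem:(0,1,1)}), one checks that $2\mathcal{Q}_{\mathcal{G}}(\mathcal{G}p,\mathcal{G})=0$. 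Hence $B^+(\mathcal{G}p)=0$, so $e^{sB^+}$ acts as the identity on $\mathcal{K}$ (and likewise for $B^-$).

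The decay on $\mathcal{K}^{\perp}$ is the main obstacle. I would proceed by Fourier decomposition in the spatial variable: write $f(x,v)=\sum_{k\in\mathds{Z}^d}\widehat{f}_k(v)e^{2\pi i k\cdot x}$. The evolution decouples mode-by-mode into $\partial_s\widehat{f}_k = B^+_k\widehat{f}_k$ with $B^+_k:=-2\pi i k\cdot v + L$ where $L:=-\nu+K$ is the classical linearized Boltzmann operator in velocity. For the zero mode $k=0$, Grad's spectral gap gives $\langle Lg,g\rangle_{L^2_v}\leq -\nu_0\|g\|_{L^2_v}^2$ whenever $g$ is orthogonal to the velocity kernel $\mathcal{K}_v:=\mathrm{span}\{\mathcal{G},\mathcal{G}v_i,\mathcal{G}|v|^2\}$, which is precisely the condition extracted from $f\in\mathcal{K}^{\perp}$ at $k=0$. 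For the nonzero modes $k\neq 0$, the operator $B_k^+$ is non self-adjoint and $L$ has only a partial gap, so one must combine the degenerate dissipation of $L$ with the mixing effect of $-2\pi i k\cdot v$; this is done either via hypocoercivity (constructing a modified scalar product $\|\cdot\|_k\sim\|\cdot\|_{L^2_v}$ for which a coercive estimate $\mathrm{Re}\langle B^+_kg,g\rangle_k\leq -\nu_*\|g\|_k^2$ holds, uniformly in $k$) or via Ukai's direct spectral analysis of $B^+_k$. Either route yields a uniform spectral gap $\nu_*>0$ and hence $\|e^{sB^+_k}\widehat{f}_k\|_{L^2_v}\leq Ce^{-\nu_* s}\|\widehat{f}_k\|_{L^2_v}$ for all $k$. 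Summing over $k$ by Parseval produces the desired bound $\|e^{sB^+}f\|_{L^2_{x,v}}\leq Ce^{-\nu_* s}\|f\|_{L^2_{x,v}}$ for $f\in\mathcal{K}^{\perp}$.

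Finally, the backward operator $B^-$ differs from $B^+$ only by the sign of the transport term; the change of variables $v\mapsto -v$ (or equivalently $k\mapsto -k$ in the Fourier decomposition) preserves both $\nu(v)$ and the kernel $K$, and sends $B^+_k$ to $B^-_{-k}$. Thus the decay estimate for $e^{sB^-}$ follows directly from that of $e^{sB^+}$ with the same constants $\nu_*,C$. As the author notes, all these steps are carried out in detail in Section 2.2 of \cite{ukai_note}; the only genuinely nontrivial part is the hypocoercive/spectral bound on the nonzero Fourier modes.
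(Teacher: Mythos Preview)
Your proposal is correct and follows the standard approach that the paper defers to (Section~2.2 of \cite{ukai_note}); the paper does not supply its own proof but simply cites this reference. Your outline---bounded perturbation for generation, direct verification on the kernel, and Fourier mode-by-mode spectral analysis for the decay on $\mathcal{K}^\perp$---is precisely Ukai's method, with the hypocoercivity route you mention being a valid but more recent alternative for handling the nonzero modes.
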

Since $B^+$ is a bounded perturbation of $A^+:=-v\cdot\nabla_x-\nu$, according to Corollary 1.7 in Page 119 of \cite{Axler_Ribet_2005}, the semigroup $e^{sB^+}$ can be written as
\begin{equation}\label{eq:bootstrap}
e^{sB^+}=e^{sA^+}+\Big(e^{sA^+}K\Big)\ast e^{sB^+}.
\end{equation}
Here $\ast$ refers to the convolution over $[0,s]$,
\begin{equation*}
\Big(e^{sA^+}K\Big)\ast e^{sB^+} :=\int_0^s e^{(s-\tau)A^+}Ke^{\tau B^+}.  
\end{equation*}
Iterate this and we will have
\begin{equation*}
\begin{split}
e^{sB^+}&=e^{sA^+}+\Big(e^{sA^+}K\Big)\ast e^{sA^+}+\Big(e^{sA^+}K\Big)\ast \Big(e^{sA^+}K\Big)\ast e^{sA^+}+...+\Big(e^{sA^+}K\Big)^{\ast N}e^{sB^+}\\
&=e^{sA^+}+\sum_{j=1}^{N-1}\Big(e^{sA^+}K\Big)^{\ast j}\ast e^{sA^+}+\Big(e^{sA^+}K\Big)^{\ast N}\ast e^{sB^+}.
\end{split}
\end{equation*}
For the decomposition \eqref{eq:decomposition_semigroup} of $e^{sB^+}=\mathcal{D}_1^+(s)+\mathcal{D}_2^+(s)$, we define $\mathcal{D}_1^+(s)$ and $\mathcal{D}_2^+(s)$ as
\begin{equation}\label{eq:D_1 and D_2 +}
\mathcal{D}_1^+(s):=e^{sA^+},\quad D_2^+(s):=\sum_{j=1}^{N-1}\Big(e^{sA^+}K\Big)^{\ast j}\ast e^{sA^+}+\Big(e^{sA^+}K\Big)^{\ast N}\ast e^{sB^+}.
\end{equation}
The operator $\mathcal{D}_1^+(s)$ has the explicit expression
\begin{equation}
\mathcal{D}_1^{+}(s)f(x,v)=e^{-\nu(v)s}f(x-sv,v).
\end{equation}
The operator $\mathcal{D}_2^+$ has a decay estimate as a map from $L_{\beta-1}^{\infty}$ to $L_{\beta}^{\infty}$ (see Lemma \ref{lem:L_beta_decay}) due to the smoothing effect (Lemma \ref{lem:smoothing_effect}) of $K$.

The analysis above is also true for the backward component, where for the decomposition \eqref{eq:decomposition_semigroup} of $e^{sB^-}=\mathcal{D}_1^-(s)+\mathcal{D}_2^-(s)$, we define $\mathcal{D}_1^-(s)$ and $\mathcal{D}_2^-(s)$ as
\begin{equation}\label{eq:D_1 and D_2 -}
\mathcal{D}_1^-(s):=e^{sA^-},\quad \mathcal{D}_2^-(s):=\sum_{j=1}^{N-1}\Big(e^{sA^-}K\Big)^{\ast j}\ast e^{sA^-}+\Big(e^{sA^-}K\Big)^{\ast N}\ast e^{sB^-}.
\end{equation}
The operator $\mathcal{D}_1^{-}(s)$ is explicitly written as
\begin{equation*}
\mathcal{D}_1^{-}(s)f(x,v)=e^{-\nu(v)s}f(x+sv,v).
\end{equation*}
The decomposition has been well established in the literature \cite{Ukai_2006}, whose modification gives the proof of Lemma \ref{lem:L_beta_decay}.
\begin{proof}[Proof of Lemma \ref{lem:L_beta_decay}]
We only give the proof for the forward component $\mathcal{D}_2^+(s)$. The proof for $\mathcal{D}_2^-(s)$ defined in \eqref{eq:D_1 and D_2 -} is the same.

\underline{The case of $f\in\mathcal{K}^{\perp}$:} Using Lemma \ref{lem:smoothing_effect} with $p=2$ and $r=\infty$, we can prove $\big(e^{sA^+}K\big)\ast e^{sB^+}$ is a bounded operator from $L^2$ to $L_{0}^\infty=L^{\infty}$, with $Ce^{-\nu_*s}$ as the upper bound for the operator norm
\begin{equation}
\Big\lVert\int_0^{s} e^{(s-\tau)A^+}Ke^{\tau B^+}fd\tau\Big\rVert_{L_{0}^{\infty}}\leq C\int_0^{s}e^{-\nu_*(s-\tau)}e^{-\nu_*\tau}\lVert f\rVert_{L^2}\leq Ce^{-\nu_*s}\lVert f\rVert_{L^2}.
\end{equation}
Having an additional convolution with $e^{sA^+}K$, the operator $\big(e^{sA^+}K\big)^{\ast 2}\ast e^{sB^+}$ is a bounded operator from $L^2$ to $L_{1}^{\infty}$. Iterating this bootstrap argument and choosing $N=\lceil \beta\rceil$, we have $\big(e^{sA^+}K\big)^{\ast N}\ast e^{sB^+}$ is a bounded operator from $L^2$ to $L_{\beta}^{\infty}$ with $\beta>4$, also with $Ce^{-\nu_*s}$ as the upper bound for the operator norm. This implies
\begin{equation*}
\lVert \big(e^{sA}K\big)^{\ast N}\ast e^{sB}f\rVert_{L_{\beta}^{\infty}}\leq Ce^{-\nu_*s}\lVert f\rVert_{L^2}\leq Ce^{-\nu_*s}\lVert f\rVert_{L_{\beta-1}^{\infty}}=Ce^{-\nu_*s}\lVert (1+|v|)^{-1}f\rVert_{L_{\beta}^{\infty}}.
\end{equation*}
Using the explicit expression of $e^{sA^+}$ and the smoothing effect of $K$, we can prove for any $j\geq 1$ that $\Big(e^{sA^+}K\Big)^{\ast j}\ast e^{sA^+}$ has the decay estimate
\begin{equation}\label{eq:convolution_app}
\Big\lVert\Big(e^{sA^+}K\Big)^{\ast j}\ast e^{sA^+}f\Big\rVert_{L_{\beta}^{\infty}}\leq Ce^{-\nu_* s}\lVert (1+|v|)^{-1}f\rVert_{L_{\beta}^{\infty}}.
\end{equation}
\underline{The case of $f\in\mathcal{K}$:} Now the estimate \eqref{eq:convolution_app} is still true since it only depends on the explicit expression of $e^{sA^+}$ and the smoothing effect of $K$. For the other term in $\mathcal{D}_2^+$, it becomes
\begin{equation*}
\lVert \big(e^{sA}K\big)^{\ast N}\ast e^{sB}f\rVert_{L_{\beta}^{\infty}}=\lVert \big(e^{sA}K\big)^{\ast N}\ast \textup{Id}f\rVert_{L_{\beta}^{\infty}}\leq C\lVert (1+|v|)^{-1}f\rVert_{L_{\beta}^{\infty}}.
\end{equation*}
This concludes the proof of the lemma.
\end{proof}
Next we prove the continuity of $e^{\tau B^+}$ (resp. $e^{\tau B^-}$) with respect to the forward initial perturbation $f^0\mathcal{B}^{-1}-\mathcal{G}$ (resp. the backward terminal perturbation $e^{g(t)}\mathcal{B}-\mathcal{G}$). This lemma is crucial to the proof of the continuity of $(\psi_t(s),\eta_t(s))$ (see Lemma \ref{lem:order_of_variation_2}).
\begin{lemma}\label{lem:beta continuity}
\textup{\textbf{[Continuity Estimate]}} Under assumptions \eqref{assump:main_1}-\eqref{assump:main_3} or \eqref{assump:main_1_th2}-\eqref{assump:main_3_th2}, for any parameter $\beta>4$ we have
\begin{equation}\label{eq:beta continuity}
\begin{split}
&\lVert e^{\tau B^+}(f^0\mathcal{B}^{-1}-\mathcal{G})-(f^0\mathcal{B}^{-1}-\mathcal{G})\rVert_{L_{\beta}^{\infty}}\leq C(f_0)\tau,\\
&\lVert e^{\tau B^-}(e^{g(t)}\mathcal{B}-\mathcal{G})-(e^{g(t)}\mathcal{B}-\mathcal{G})\rVert_{L_{\beta}^{\infty}}\leq C(g)\tau,
\end{split}
\end{equation}
with $C(f_0)$ being a constant dependent on $f_0$, and $C(g)$ being a constant dependent on $g$.
\end{lemma}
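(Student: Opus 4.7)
The plan is to use the splitting $B^{+} = A^{+} + K$ with $A^{+} = -v\cdot\nabla_{x} - \nu(v)$ from Appendix \ref{app:decomposition_of_semigroup}, combined with the observation that $\mathcal{G}$ is annihilated by $B^{\pm}$.

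First I would perform a kernel reduction. Since $\mathcal{G}$ is $x$-independent and the relation $\mathcal{G}(v)\mathcal{G}(v_{*}) = \mathcal{G}(v')\mathcal{G}(v_{*}')$ gives $\mathcal{Q}_{\mathcal{G}}(\mathcal{G},\mathcal{G}) = 0$, one has $B^{\pm}\mathcal{G} = 0$ and therefore $e^{\tau B^{\pm}}\mathcal{G} = \mathcal{G}$. So it suffices to prove $\lVert e^{\tau B^{+}} h - h\rVert_{L_{\beta}^{\infty}} \leq C(h)\tau$ separately for $h = f^{0}\mathcal{B}^{-1}$ and for $h = e^{g(t)}\mathcal{B}$.

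Next, using Duhamel
$$e^{\tau B^{+}} h - h = \bigl(e^{\tau A^{+}}h - h\bigr) + \int_{0}^{\tau} e^{(\tau-s)A^{+}} K\, e^{sB^{+}} h\, ds,$$
I would estimate each piece separately. For the integral term, Lemma \ref{lem:smoothing_effect} gives boundedness of $K$ on $L_{\beta}^{\infty}$, Lemma \ref{lem:L_beta_decay} gives uniform boundedness of $e^{sB^{+}}$ on $L_{\beta}^{\infty}$, and $e^{(\tau-s)A^{+}}$ is a contraction on $L_{\beta}^{\infty}$ because $e^{-s\nu(v)} \leq 1$; so the integral is bounded by $C\tau\lVert h\rVert_{L_{\beta}^{\infty}}$. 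For the free transport term, I use the explicit formula $e^{\tau A^{+}}h(x,v) = e^{-\tau\nu(v)} (S_{\tau}h)(x,v)$ and split
$$e^{\tau A^{+}}h - h = e^{-\tau\nu(v)}\bigl(S_{\tau}h - h\bigr) + \bigl(e^{-\tau\nu(v)} - 1\bigr)h.$$
The first piece is $O(\tau)$ in $L_{\beta}^{\infty}$ by Lipschitz continuity of $S_{\tau}h$ in $\tau$: for $h = f^{0}\mathcal{B}^{-1}$ this is \eqref{assump:main_3} or \eqref{assump:main_3_th2} directly; for $h = e^{g(t)}\mathcal{B}$ it follows from $\lVert \nabla_{x}g\rVert_{\infty} < \infty$ together with $e^{g(t)}\mathcal{B}\in L_{\beta+1}^{\infty}$, via the mean-value-theorem estimate $|e^{g(t,x+\tau v)}-e^{g(t,x)}|\leq \tau |v|\,\lVert\nabla_{x}g\rVert_{\infty}\sup_{y}e^{g(t,y)}$, the extra $|v|$ being absorbed by the weight shift from $\beta+1$ to $\beta$. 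The second piece uses $|e^{-\tau\nu(v)} - 1| \leq \tau\nu(v) \leq C\tau(1+|v|)$, giving $\lVert (e^{-\tau\nu}-1)h\rVert_{L_{\beta}^{\infty}} \leq C\tau\lVert h\rVert_{L_{\beta+1}^{\infty}}$; the $L_{\beta+1}^{\infty}$ norm is finite by \eqref{assump:main_1}--\eqref{assump:main_2} or \eqref{assump:main_1_th2}--\eqref{assump:main_2_th2} combined with the trivial bound $\mathcal{G}\in L_{\beta+1}^{\infty}$.

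The backward estimate is entirely analogous, replacing $A^{+}$ by $A^{-} = v\cdot\nabla_{x} - \nu$ and $S_{\tau}$ by $S_{-\tau}$. The main conceptual obstacle is that $B^{+}h$ is \emph{not} bounded in $L_{\beta}^{\infty}$ when $h \in L_{\beta}^{\infty}$ (the transport and $-\nu h$ pieces each lose one power of $(1+|v|)$), so the naive estimate $\lVert e^{\tau B^{+}}h - h\rVert_{L_{\beta}^{\infty}} \leq \tau\lVert B^{+}h\rVert_{L_{\beta}^{\infty}}$ is unavailable. The decomposition $B^{+} = A^{+} + K$ circumvents this precisely because the $\nu$-loss is absorbed by the pointwise damping $e^{-\tau\nu(v)} \leq 1$, while $K$ is smoothing and hence bounded on $L_{\beta}^{\infty}$; meanwhile the $(1+|v|)$ loss coming from $|e^{-\tau\nu}-1|$ is paid for by the additional weight $\beta+1$ available on the initial/terminal data.
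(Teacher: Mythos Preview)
Your proof is correct and follows essentially the same approach as the paper: both use the Duhamel splitting $e^{\tau B^{\pm}} = e^{\tau A^{\pm}} + \int_0^\tau e^{(\tau-s)A^{\pm}}K e^{sB^{\pm}}\,ds$, bound the integral term by the $L_\beta^\infty$-boundedness of $K$ and $e^{sB^{\pm}}$, and handle the free-transport piece by splitting off the $e^{-\tau\nu}$ factor and paying the $(1+|v|)$ loss with the extra weight available on the data. The only cosmetic difference is that you first invoke $e^{\tau B^{\pm}}\mathcal{G}=\mathcal{G}$ to reduce to $h=f^0\mathcal{B}^{-1}$ or $h=e^{g(t)}\mathcal{B}$, whereas the paper works directly with $G=h-\mathcal{G}$; and your transport splitting is $e^{-\tau\nu}(S_\tau h-h)+(e^{-\tau\nu}-1)h$ while the paper's is $(S_\tau G)(e^{-\tau\nu}-1)+(S_\tau G-G)$, but these are equivalent rearrangements of the same four-term identity.
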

\begin{proof}
We only detail the proof of the second equation in \eqref{eq:beta continuity}, since the other one is the same. For simplicity in notation and also in accordance with the choice of terminal data, we write $G(x,v):=(e^{g(t)}\mathcal{B}-\mathcal{G})(x,v)$. Using the bootstrap argument \eqref{eq:bootstrap}, we have
\begin{equation}\label{eq:beta+2 continuity}
\begin{split}
&\lVert e^{\tau B^-}G-G\rVert_{L_{\beta}^{\infty}}\\
\leq&\lVert e^{\tau A^-}G-G\rVert_{L_{\beta}^{\infty}}+\int_0^\tau\lVert e^{u A^-}Ke^{(\tau-u)B^-}G\rVert_{L_{\beta}^{\infty}}du\\
=&\lVert G(x+\tau v,v)e^{-\nu(v)\tau}-G(x,v)\rVert_{L_{\beta}^{\infty}}+ \int_0^\tau\lVert e^{u A^-}Ke^{(\tau-u)B^-}G\rVert_{L_{\beta}^{\infty}}du
\end{split}
\end{equation}
The first term in the third line of \eqref{eq:beta+2 continuity} is controlled as
\begin{equation*}
\begin{split}
&\lVert G(x+\tau v,v)e^{-\nu(v)\tau}-G(x,v)\rVert_{L_{\beta}^{\infty}}\\
\leq& \lVert G(x+\tau v,v)\big(1-e^{-\nu(v)\tau }\big)\rVert_{L_{\beta}^{\infty}}+\lVert G(x+\tau v,v)-G(x,v)\rVert_{L_{\beta}^{\infty}}\\
\leq& \Big\lVert (1+|v|)G(x+\tau v,v)\frac{1-e^{-\nu(v)\tau}}{(1+|v|)}\Big\rVert_{L_{\beta}^{\infty}}+\lVert e^{g(t,x+\tau v,v)}\mathcal{B}(v)-e^{g(t,x,v)}\mathcal{B}(v)\rVert_{L_{\beta}^{\infty}}\\
\leq& C\lVert G\rVert_{L_{\beta+1}^{\infty}}\tau+C\lVert G\rVert_{L_{\beta+1}^{\infty}}\lVert g\rVert_{C_{t,x}^1}\tau,
\end{split}
\end{equation*}
where the term $C\lVert G\rVert_{L_{\beta+1}^{\infty}}\tau$ is due to
\begin{equation*}
\frac{1-e^{-\nu(v)\tau}}{(1+|v|)}\leq C\tau  ,
\end{equation*}
and the term $C\lVert G\rVert_{L_{\beta+1}^{\infty}}\lVert g\rVert_{C_{t,x}^1}\tau$ is because
\begin{equation*}
e^{g(t,x+\tau v,v)}\mathcal{B}(v)-e^{g(t,x,v)}\mathcal{B}(v)
=\int_0^{\tau}\Big[\big(v\cdot \nabla_xg\big)e^{g}\mathcal{B}\Big](t,x+uv,v)du.
\end{equation*}
The norm $\lVert g\rVert_{C_{t,x}^1}$ is finite due to assumption \eqref{assump:main_3} or \eqref{assump:main_3_th2}, where we have assumed $g$ has uniformly bounded derivatives in $t$ and $x$. The control of the second term in the third line of \eqref{eq:beta+2 continuity} is straightforward, since all the involved operators are bounded operators from $L_{\beta}^{\infty}$ to $L_{\beta}^{\infty}$
\begin{equation*}
\int_0^\tau\lVert e^{u A^-}Ke^{(\tau-u)B^-}G(x,v)\rVert_{L_{\beta}^{\infty}}du\leq C\lVert G\rVert_{L_{\beta}^{\infty}}\tau
\end{equation*}
This concludes the proof of the lemma.
\end{proof}
In the end of this appendix, we give the proof of Lemma \ref{lem:convolution_inequality} for completeness. The proof is elementary.
\begin{lemma}\label{lem:convolution_inequality}
Given $\sigma_1>1$ and $\sigma_2>1$, we have the following inequality for a convolution,
\begin{equation*}
\int_{0\leq s\leq t}\Big(1+(t-s)\Big)^{-\sigma_2}(1+s)^{-\sigma_1}ds\leq C (1+t)^{-\min\{\sigma_1,\sigma_2\}}
\end{equation*}
\end{lemma}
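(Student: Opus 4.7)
The plan is to prove this by splitting the integration interval $[0,t]$ into two halves $[0,t/2]$ and $[t/2,t]$, on each of which exactly one of the two factors $\bigl(1+(t-s)\bigr)^{-\sigma_2}$ or $(1+s)^{-\sigma_1}$ is trivially bounded by its value at the endpoint $t$, while the remaining factor is integrable on the whole half-line thanks to the assumption $\sigma_i > 1$.

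More precisely, on $[0,t/2]$ we have $t-s \geq t/2$, so $\bigl(1+(t-s)\bigr)^{-\sigma_2} \leq C(1+t)^{-\sigma_2}$. Pulling this prefactor out and using $\sigma_1 > 1$,
\begin{equation*}
\int_0^{t/2}\bigl(1+(t-s)\bigr)^{-\sigma_2}(1+s)^{-\sigma_1}ds \leq C(1+t)^{-\sigma_2}\int_0^\infty (1+s)^{-\sigma_1}ds \leq C(1+t)^{-\sigma_2}.
\end{equation*}
Symmetrically, on $[t/2,t]$ we have $s \geq t/2$, giving $(1+s)^{-\sigma_1} \leq C(1+t)^{-\sigma_1}$, and after the change of variable $u = t-s$ together with $\sigma_2 > 1$,
\begin{equation*}
\int_{t/2}^t\bigl(1+(t-s)\bigr)^{-\sigma_2}(1+s)^{-\sigma_1}ds \leq C(1+t)^{-\sigma_1}\int_0^\infty (1+u)^{-\sigma_2}du \leq C(1+t)^{-\sigma_1}.
\end{equation*}

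Adding the two contributions gives an upper bound of $C\bigl((1+t)^{-\sigma_1}+(1+t)^{-\sigma_2}\bigr) \leq 2C(1+t)^{-\min\{\sigma_1,\sigma_2\}}$, which is the desired estimate. There is no genuine obstacle here: the only things to be careful of are ensuring both $\sigma_1 > 1$ and $\sigma_2 > 1$ are needed (so that the tail integrals on the half-line are finite, yielding a constant $C$ independent of $t$), and that the symmetric splitting at $t/2$ produces precisely the two complementary bounds.
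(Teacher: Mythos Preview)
Your proof is correct and follows essentially the same approach as the paper's: split at $t/2$, bound the slowly varying factor by its value at the midpoint on each half, and use $\sigma_i>1$ to make the remaining integral a constant independent of $t$. The only cosmetic difference is that you extend the tail integrals to $[0,\infty)$ whereas the paper keeps them on $[0,t/2]$, which changes nothing.
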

\begin{proof}
We split the integral
\begin{equation*}
\begin{split}
&\int_{0\leq s\leq t}\Big(1+(t-s)\Big)^{-\sigma_2}(1+s)^{-\sigma_1}ds\\
=&\int_{0\leq s\leq \frac{t}{2}}\Big(1+(t-s)\Big)^{-\sigma_2}(1+s)^{-\sigma_1}ds+\int_{\frac{t}{2}\leq s\leq t}\Big(1+(t-s)\Big)^{-\sigma_2}(1+s)^{-\sigma_1}ds\\
\leq &   (1+\frac{t}{2})^{-\sigma_2}\int_{0\leq s\leq \frac{t}{2}}(1+s)^{-\sigma_1}ds+(1+\frac{t}{2})^{-\sigma_1}\int_{\frac{t}{2}\leq s\leq t}\Big(1+(t-s)\Big)^{-\sigma_2}ds
\end{split}
\end{equation*}
This is further less than
\begin{equation*}
\begin{split}
...&\leq C(1+\frac{t}{2})^{-\sigma_2}+C(1+\frac{t}{2})^{-\sigma_1}\\
&\leq C(1+t)^{-\min\{\sigma_1,\sigma_2\}}
\end{split}
\end{equation*}
This concludes the proof.
\end{proof}

\bibliography{main.bib}

\begin{thebibliography}{10}

\bibitem{Léonard_2020}
Julio Backhoff, Giovanni Conforti, Ivan Gentil, and Christian Léonard.
\newblock The mean field schrödinger problem: ergodic behavior, entropy estimates and functional inequalities.
\newblock {\em Probability Theory and Related Fields}, 2020.

\bibitem{BGSS_cluster}
Thierry Bodineau, Isabelle Gallagher, Laure Saint-Raymond, and Sergio Simonella.
\newblock Cluster expansion for a dilute hard sphere gas dynamics.
\newblock {\em Journal of Mathematical Physics}, 2022.

\bibitem{BGSS_2023}
Thierry Bodineau, Isabelle Gallagher, Laure Saint-Raymond, and Sergio Simonella.
\newblock Statistical dynamics of a hard sphere gas: fluctuating {B}oltzmann equation and large deviations.
\newblock {\em Annals of Mathematics}, 2023.

\bibitem{Bouchet_2020}
Freddy Bouchet.
\newblock Is the {B}oltzmann equation reversible? a large deviation perspective on the irreversibility paradox.
\newblock {\em Journal of Statistical Physics}, 2020.

\bibitem{cercignani_1994}
Carlo Cercignani, Reinhard Illner, and Mario Pulvirenti.
\newblock {\em The Mathematical Theory of Dilute Gases}.
\newblock Springer, 1994.

\bibitem{Deng_Hani_Ma_2024}
Yu~Deng, Zaher Hani, and Xiao Ma.
\newblock Long time derivation of {B}oltzmann equation from hard sphere dynamics.
\newblock {\em arxiv 2408.07818}, 2024.

\bibitem{DiPerna_Lions_1989}
Ronald~J. DiPerna and Pierre~L. Lions.
\newblock On the {C}auchy problem for {B}oltzmann equations: Global existence and weak stability.
\newblock {\em Annals of Mathematics}, 1989.

\bibitem{Axler_Ribet_2005}
Klaus-Jochen Engel and Rainer Nagel.
\newblock {\em A Short Course on Operator Semigroups}.
\newblock Springer, 2005.

\bibitem{Grad_1949}
Harold Grad.
\newblock On the kinetic theory of rarefied gases.
\newblock {\em Communications in Pure and Applied Mathematics}, 1949.

\bibitem{Grad_1965}
Harold Grad.
\newblock Asymptotic equivalence of the {N}avier-{S}tokes and nonlinear {B}oltzmann equations.
\newblock {\em Proceedings of Symposia in Applied Mathematics}, 1965.

\bibitem{Illner_Shinbrot_1984}
Reinhard Illner and Marvin Shinbrot.
\newblock The {B}oltzmann equation: global existence for a rare gas in an infinite vacuum.
\newblock {\em Communications in Mathematical Physics}, 1984.

\bibitem{Leonard_2014}
Christian Léonard.
\newblock A survey of the {S}chrödinger problem and some of its connection with optimal transport.
\newblock {\em Discrete and Continuous Dynamical System}, 2014.

\bibitem{Lanford_1975}
Lanford~III Oscar~E.
\newblock {\em Time evolution of large classical systems}, chapter in 'Dynamical systems, theory and applications'.
\newblock Springer-Verlag, Berlin, 1975.

\bibitem{ukai_1977}
Seiji Ukai.
\newblock On the existence of global solutions of mixed problem for non-linear {B}oltzmann equation.
\newblock {\em Proceedings of the Japan Academy}, 1974.

\bibitem{Ukai_1986}
Seiji Ukai.
\newblock {\em Solutions of the {B}oltzmann Equation}, volume~18 of {\em Studies of Mathematics and its Applications}, pages 37--96.
\newblock Kinokuniya-North-Holland, Tokyo, 1986.

\bibitem{Ukai_2006}
Seiji Ukai.
\newblock The {B}oltzmann equation in the space {$L^2\cap L_{\beta}^{\infty}$}: global and time-periodic solutions.
\newblock {\em Analysis and Applications}, 2006.

\bibitem{ukai_note}
Seiji Ukai and Tong Yang.
\newblock Mathematical theory of {B}oltzmann equation.
\newblock {\em Lecture Note}.

\end{thebibliography}
\bibliographystyle{plain}
\end{document}